\long\def\commentout#1{}
\newif\ifprint
\renewcommand{\mathbf}[1]{\bm{#1}} 
	\definecolor{linkred}{rgb}{0,0,0} 
	\definecolor{linkblue}{rgb}{0,0,0} 
	\definecolor{linkred}{rgb}{0.7,0.2,0.2}
	\definecolor{linkblue}{rgb}{0,0.2,0.6}
\numberwithin{equation}{section} 
\def\ps@handbook{\def\@oddhead{\hfill \leftmark \hfill\thepage }
\def\@evenhead{\thepage \hfill \rightmark \hfill}
\def\@oddfoot{}
\def\@evenfoot{}}
\def\@evenhead{}
\def\@oddfoot{}
\def\@evenfoot{\hfill\copyright\ China Higher Education Press}
\def\list#1#2{\ifnum \@listdepth >5\relax \@toodeep \else \global
\advance \@listdepth\@ne \fi \rightmargin \z@ \listparindent\z@
\itemindent\z@ \csname @list\romannumeral\the\@listdepth\endcsname
\def\@itemlabel{#1}\let\makelabel\@mklab \@nmbrlistfalse #2\relax
\@trivlist \parskip -\parsep \parindent\listparindent \advance
\linewidth -\rightmargin \advance\linewidth -\leftmargin \advance
\@totalleftmargin \leftmargin \parshape \@ne \@totalleftmargin
\linewidth \ignorespaces}
\renewcommand*\l@section{\@tocline{1}{0pt}{0em}{1.75em}{}}
\renewcommand*\l@subsection{\@tocline{2}{0pt}{1.75em}{2em}{}} 
\renewcommand{\theequation}{\thesection.\arabic{equation}}
\def\thebibliography#1{\section*{References}
\list{[\arabic{enumi}]}{\settowidth \labelwidth{[#1]} \leftmargin
\labelwidth \advance \leftmargin \labelsep \usecounter{enumi}}
\def\newblock{\hskip .11em plus .33em minus .07em} \sloppy
\clubpenalty 4000 \widowpenalty 4000 \sfcode`\.=1000 \relax}
\titleformat{\section}{\normalfont\large\bfseries}{\thesection.}{0.5em}{}[\kern0.em]
\titleformat{\subsection}{\normalfont\bfseries}{\thesubsection.}{0.3em}{}[\kern0.em]
\titleformat{\subsubsection}[runin]{\normalfont\bfseries}{\thesubsubsection.}{0.5em}{}[\kern0.5em]
\def\fofsubsubsection#1{\refstepcounter{equation}\subsubsection*{\theequation.\kern0.25em #1}}
\def\foisubsubsection#1{\refstepcounter{equation}\subsubsection*{\kern\parindent\theequation.\kern0.25em #1}}
\newcommand{\cA}{\mathcal{A}}
\newcommand{\cG}{\mathcal{G}}
\newcommand{\cM}{\mathcal{M}}
\newcommand{\cV}{\mathcal{V}}
\newcommand{\cX}{\mathcal{X}}
\newcommand\A[1]{\mathcal{A}_{#1}}
\newcommand\M[1]{\mathcal{M}_{#1}}
\newcommand\X[1]{\mathcal{X}_{#1}}
\newcommand\barA[1]{\tilde{\mathcal{A}}_{#1}}
\newcommand\SatA[1]{{\mathcal{A}}^{*}_{#1}}
\newcommand\F[1]{\mathcal{F}_{#1}}
\newcommand{\Xg}{\mathcal{X}_g}
\newcommand{\HDR}{\mathcal{H}^1_{\rm dR}}
\newcommand{\Hg}{\mathfrak{H}_g}
\newcommand{\SatAg}{\mathcal{A}^{*}_g}
\newcommand{\CH}{\mathrm{CH}}
\newcommand{\CHQ}{\mathrm{CH}_{\bQ}}
\newcommand{\bC}{\mathbb{C}}
\newcommand{\bE}{\mathbb{E}}
\newcommand{\bF}{\mathbb{F}}
\newcommand{\bG}{\mathbb{G}}
\newcommand{\bL}{\mathbb{L}}
\newcommand{\bP}{\mathbb{P}}
\newcommand{\bQ}{\mathbb{Q}}
\newcommand{\bR}{\mathbb{R}}
\newcommand{\bZ}{\mathbb{Z}}
\newcommand{\GL}{\mathrm{GL}}
\newcommand{\GSp}{\mathrm{GSp}}
\newcommand{\mult}{\mathrm{m}}
\newcommand{\Sp}{\mathrm{Sp}}
\newcommand{\SpgZ}{\mathrm{Sp}(2g,\bZ)}
\newcommand{\Gm}{\bG_\mult}
\def\nr{\global\advance\caseno by 1({\number\caseno})}
\newtheorem{theorem}[equation]{Theorem}
\newtheorem{proposition}[equation]{Proposition}
\newtheorem{lemma}[equation]{Lemma}
\newtheorem{corollary}[equation]{Corollary}
\newtheorem{conjecture}[equation]{Conjecture}
\theoremstyle{definition}
\newtheorem{definition}[equation]{Definition}
\newtheorem{example}[equation]{Example}
\numberwithin{equation}{section}
\begin{document}
\setcounter{page}{1}


\title{The Cohomology of the Moduli Space of Abelian Varieties}
   
\author{Gerard van der Geer}
\address{University of Amsterdam}
\email{geer@science.uva.nl}

   
\subjclass[2000]{Primary 14; Secondary: 11G, 14F, 14K, 14J10, 10D}
\keywords{Abelian Varieties, Moduli, Modular Forms}

\maketitle
\thispagestyle{empty}


\section*{Introduction}
That the moduli spaces of abelian varieties are a rich source of 
arithmetic and geometric information slowly emerged
from the work of Kronecker, Klein, Fricke and many others at the end 
of the 19th century.
Since the 20th century we know that the first place to dig for these
hidden treasures is the cohomology of these moduli spaces. 

In this survey we are concerned with the cohomology of the moduli
space of abelian varieties. Since this is an extensive and widely
ramified topic that connects to many branches of algebraic geometry
and number theory we will have to limit ourselves. I have chosen to
stick to the moduli spaces of principally polarized abelian varieties,
leaving aside the moduli spaces of abelian varieties with non-principal
polarizations and the other variations like the moduli spaces with extra
structure (like conditions on their endomorphism rings) since often
the principles are the same, but the variations are clad in
heavy notation.

The emphasis in this survey is on the tautological ring of the moduli
space of principally polarized abelian varieties. We discuss the cycle
classes of the Ekedahl-Oort stratification, that can be expressed in
tautological classes, and discuss differential forms on the moduli
space. We also discuss complete subvarieties of~$\A{g}$.
Finally, we discuss Siegel modular forms and its relations
to the cohomology of these moduli spaces. We sketch the approach developed
jointly with Faber and Bergstr\"om to calculate the traces of the Hecke
operators by counting curves of genus $\leq 3$ over finite fields,
an approach that opens a new window on Siegel modular forms.

\setcounter{tocdepth}{1}
\tableofcontents

\section{The Moduli Space of Principally Polarized Abelian Varieties}\label{Ag}

We shall assume the existence of the moduli space of principally
polarized abelian varieties as given. So
throughout this survey $\A{g}$ will denote the Deligne-Mumford stack
of principally polarized abelian varieties of dimension $g$. It is 
a smooth Deligne-Mumford stack over ${\rm Spec}({\bZ})$ of relative
dimension $g(g+1)/2$, see \cite{F-C}.

Over the complex numbers this moduli space can be described as the
arithmetic quotient (orbifold)
 ${\Sp}(2g,{\bZ}) \backslash {\Hg}$
of the Siegel upper half space by the symplectic group.
This generalizes the well-known
description of the moduli of complex elliptic curves as
${\rm SL}(2,{\bZ})\backslash \mathfrak{H}$ with $\mathfrak{H}=\mathfrak{H}_1$
the usual upper half plane of the complex plane. We refer to Milne's
account in this Handbook for the general theory of Shimura varieties.

The stack $\A{g}$ comes with a universal family of principally 
polarized abelian varieties $\pi: \X{g} \to \A{g}$. Since abelian 
varieties can degenerate the stack $\A{g}$ is not proper or complete.

The moduli space $\A{g}$ admits several compactifications. The first one
is the Satake compactification or Baily-Borel compactification. It is 
defined by considering the vector space of Siegel modular forms of 
sufficiently high weight, by using these to map $\A{g}$ to projective
space and then by taking the closure of the image 
of $\A{g}$ in the receiving projective space. This construction was
first done by Satake and by Baily-Borel over the field of
complex numbers, cf.\ \cite{B-B}. 
The Satake compactification $\SatA{g}$ is very singular for $g\geq 2$. 
It has a stratification
$$
\SatA{g}=\A{g}\sqcup \SatA{g-1}=\A{g}\sqcup \A{g-1} \sqcup \cdots \sqcup
\A{1}\sqcup \A{0}.
$$
In an attempt to construct non-singular compactifications of arithmetic
quotients, such  as ${\Sp}(2g,{\bZ}) \backslash {\Hg}$, Mumford with a team of 
co-workers created a theory of so-called toroidal compactifications of 
${\A{g}}$ in \cite{AMRT}, cf.\ also the new edition \cite{AMRT-2}. 
These compactifications are not unique but depend on a 
choice of combinatorial data, an admissible cone decomposition of the cone of
positive (semi)-definite symmetric bilinear forms in $g$ variables. 
Each such toroidal compactification
admits a morphism $q: \barA{g} \to \SatA{g}$. 
Faltings and Chai showed how Mumford's theory of compactifying quotients of
bounded symmetric domains could be used to extend
this to a compactification of $\A{g}$ over the integers, 
see \cite{F1, F-C, AMRT}. 
This also led to the Satake compactification over the integers.

There are a number of special  choices for the cone decompositions, 
such as the second 
Voronoi decomposition, the central cone decomposition or the 
perfect cone decomposition. Each of these choices has its advantages 
and disadvantages. Moreover, Alexeev has constructed a functorial 
compactification of $\A{g}$, see \cite{Alexeev, Alexeev-Nakamura}. 
It has as a disadvantage that for $g\geq 4$ 
it is not irreducible but possesses extra components. 
The main component of this  corresponds to
${\cA}_g^{\rm Vor}$, the toroidal compactification 
defined by the second Voronoi compactification. 
Olsson has adapted Alexeev's  construction using log  structures 
and obtained a functorial compactification. The normalization 
of this compactification is the compactification ${\cA}_g^{\rm Vor}$, cf\ 
\cite{Olsson}.

The toroidal compactification ${\cA}_g^{\rm perf}$ defined by the perfect cone
decomposition is a canonical model of $\A{g}$ for $g \geq 12$
as was shown by Shepherd-Barron, see \cite{Sh-B} and his contribution to 
this Handbook. 

The partial desingularization of the Satake compactification obtained by Igusa
in \cite{Igusa} 
coincides with toroidal compactification corresponding to the central cone
decomposition. 
We refer to a survey
by Grushevsky (\cite{Gr}) on the geometry of the moduli space
of abelian varieties.

These compactifications (2nd Voronoi, central and perfect cone)
agree for $g\leq 3$, but are different for higher $g$.
For $g=1$ one has $\barA{1}=\SatA{1}=\overline{\cM}_{1,1}$, the Deligne-Mumford
moduli space of $1$-pointed stable curves of genus $1$. For $g=2$ 
the Torelli morphism gives an identification $\barA{2}=\overline{\cM}_{2}$, 
the Deligne-Mumford moduli space of stable curves of genus $2$. 
The open part $\A{2}$ corresponds to stable curves of genus $2$ of
compact type. But please note that the Torelli 
morphism of stacks ${\cM}_{g} \to \A{g}$
is a morphism of degree $2$ for $g\geq 3$, since every 
principally polarized abelian variety
posseses an automorphism of order $2$, 
but the generic curve of genus $g\geq 3$
does not.

We shall use the term Faltings-Chai compactifications for the compactifications
(over rings of integers) defined by admissible cone decompositions.
 
\section{The Compact Dual}\label{compactdual}
The moduli space $\A{g}(\bC)$ has the analytic description as ${\Sp}(2g,{\bZ})
\backslash {\Hg}$. The Siegel upper half space ${\Hg}$
can be realized in various ways,
one of which is as an open subset of the so-called compact dual and the
arithmetic quotient inherits various properties from this compact quotient.
For this reason we first treat the compact dual at some length.

To construct $\mathfrak{H}_g$, 
we start with an non-degenerate symplectic form on a complex
vector space, necessarily of even dimension, say $2g$. To be explicit, consider
the vector space ${\bQ}^{2g}$ with basis $e_1,\ldots,e_{2g}$ and 
symplectic form $\langle \, , \, \rangle $
given by $J(x,y)=x^t J y$ with 
$$
J= \left( \begin{matrix} 0 & 1_g \\ -1_g & 0 \\ \end{matrix} \right) \, .
$$
We let $G={\rm GSp}(2g,{\bQ})$ be the algebraic group over 
${\bQ}$ of symplectic similitudes of this symplectic vector space
$$
G= \{ g \in {\GL}(2g,{\bQ}): J(gx,gy)= \eta(g) J(x,y)\} \, ,
$$
where $\eta: G \to {\Gm}$ is the so-called multiplyer. 
Then $G$ is the group of matrices
$\gamma=(A \, B; C \, D)$ with $A,B,C,D$ integral $g \times g$-matrices
with 
$$
A^t \cdot C = C^t \cdot A, \, B^t \cdot D = D^t \cdot B \quad
\hbox{\rm and} \quad A^t \cdot D - C^t \cdot B = \eta(g) 1_{2n}.
$$
We denote the kernel of $\eta$ by $G^0$.

Let $Y_g$ be the Lagrangian Grassmannian
$$
Y_g =\{ L \subset {\bC}^{2g}: \dim(L)=g,\, 
 J(x,y)=0 \, \text{for all $ x,y \in L$}\}
$$
that parametrizes all totally isotropic subspaces of our complex symplectic 
vector space. This is a homogeneous manifold of complex dimension $g(g+1)/2$
for the action of $G({\bC})={\GSp}(2g,{\bC})$; 
in fact this group acts transitively
and the quotient of ${\GSp}(2g,{\bC})$ by the central ${\Gm}({\bC})$ 
acts effectively. 
We can write $Y_g$ as a quotient $Y_g=G({\bC})/Q$, where $Q$ is a 
parabolic subgroup of $G({\bC})$. More precisely, if we fix a point
$y_0= e_{1}\wedge \ldots \wedge e_{g} \in Y_g$
then we can write $Q$ as the group of matrices $(A\, B ; C \, D)$ in
${\GSp}(2g,{\bC})\}$ with $C=0$. This parabolic group $Q$ has a Levi 
decomposition as $Q=M \ltimes U$ with $M$ the subgroup of $G$
that respects the decomposition of the symplectic space as 
${\bQ}^g\oplus {\bQ}^g$; the matrices of $M$ are of the form 
$(A \, 0 ; 0 \, D)$ and is isomorphic to ${\rm GL}(g)\times {\bG}_m$,
while those of $U$ are of the form $(1_g \, B ; 0 \, 1_g)$ with
$B$ symmetric.  

There is an embedding of ${\Hg}$ into $Y_g$ as follows.
We consider the group $G^0({\bR})$
and the maximal compact subgroup $K$ of elements that fix $\sqrt{-1} \, 1_g$.
Its elements are described as $(A\, -B ; B \, A)$ and assigning to
it the element $A+\sqrt{-1} B$ gives an isomorphism of $K$ with
the unitary group  $U(g)$. One way to describe the Siegel upper half space
is as the orbit $X_g=G^0({\bR})/K$ under the action of $G^0$; this can be 
embedded in $Y_g=(G/Q)({\bC})$ as the set of all maximal
isotropic subspaces $V$ 
such that $-\sqrt{-1} \langle v,\bar{v} \rangle $ is positive definite
on $V$. Each such subspace has a basis consisting of the columns of the
transpose of the matrix $(-1_g \, \tau)$ for a unique $\tau \in {\Hg}$.
The subgroup $G^{+}({\bR})$ leaves this subset invariant and
this establishes the embedding of the domain ${\Hg}$ in its $Y_g$
and this space $Y_g$ is called the {\sl compact dual} of ${\Hg}$.
It contains $X_g \thicksim {\Hg}$ as
an open subset. The standard example (for $g=1$) is that of the 
upper half plane contained in ${\bP}^1$.

For later use we extend this a bit by looking not only at 
maximal isotropic subspaces, but also at symplectic filtrations
on our standard symplectic space.

Consider for $i=1, \ldots ,g$ the (partial) flag variety $U_g^{(i)}$ of
symplectic flags $E_i \subset \ldots \subset  E_{g-1} \subset E_g$,
of linear subspaces $E_j$ of ${\bC}^{2g}$ with  $\dim (E_j)=j$ and $E_g$  
totally isotropic. We have $U_g^{(g)} = Y_g$. There are  natural maps 
$\pi_i: U_g^{(i)} \to U_g^{(i+1)}$ and the fibre of $\pi_i$ is a 
Grassmann variety of dimension $i$. We can represent $U_g^{(1)}$ as a 
quotient $G/B$, where $B$ is a Borel subgroup of $G$. 
These spaces $U_g^{(i)}$ come equipped with universal flags
$E_i \subset E_{i+1} \subset \ldots \subset E_g$.

The manifold $Y_g$ possesses, as all Grassmannians 
do, a cell decomposition. To define it, choose a fixed
isotropic flag
$
\{ 0 \} = Z_0 \subsetneq Z_{1} \subsetneq Z_2 \subsetneq
\ldots \subsetneq  Z_g$; that is, $\dim Z_i=i$ and $Z_g$ is an isotropic
subspace of our symplectic space.
We extend the filtration by setting
 $Z_{g+i} = (Z_{g-i})^{\bot}$ for $i=1,\ldots, g$.
For general $V \in Y_g$ we expect that  $V\cap Z_{j} = \{ 0 \} $ for
$j\leq g$. Therefore,
for $\mu=(\mu_1,\ldots, \mu_r)$ with $\mu_i$ non-negative
integers satisfying
$$
0 \leq \mu_i \leq g,\quad
\mu_i-1\leq \mu_{i+1} \leq \mu_{i} \eqno(1)
$$
we put 
$$
W_{\mu}=\{ V \in Y_g : \dim (V \cap Z_{g+1-\mu_i})=i\}\,  .
$$
This gives a cell decomposition with cells only in even real dimension. The cell
$W_{\mu}$ has (complex) codimension $\sum \mu_i$.
Denote the set of  $n$-tuples $\mu = (\mu_1,\ldots , \mu_g)$
satisfying (1) by $M_g$. Then $\# M_g = 2^g$.  Moreover, we
have
$$
W_{\mu} \subseteq {\overline W}_{\nu}\iff \mu_i\geq
\nu_i \quad {\rm for }\quad 1\leq i \leq g \, .
$$
From the cell decomposition we find the homology of $Y_g$.
\begin{proposition}\label{cohcompdual}
The integral homology of $Y_g$ is generated by the cycle classes of
$[{\overline W}_{\mu}]$ of the closed cells with $\mu \in M_g$. 
The Poincar\'e-polynomial of
$Y_g$ is given by $\sum b_{2i} \, t^i=(1+t)(1+t^2)\cdots
(1+t^g)$.
\end{proposition}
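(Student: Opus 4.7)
The approach is to check that the stratification $Y_g=\bigsqcup_{\mu \in M_g} W_\mu$ given in the excerpt is a CW decomposition with cells only in even real dimension, and then count. For any $V\in Y_g$ the function $j\mapsto \dim(V\cap Z_j)$ is non-decreasing with values in $\{0,1,\ldots,g\}$, starts at $0$, ends at $g$, and jumps by at most one at each step because $Z_j/Z_{j-1}$ is one-dimensional. Reading off the positions where the dimension jumps produces a unique $\mu\in M_g$ with $V\in W_\mu$, so the cells genuinely partition $Y_g$.

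The next step is to show that each $W_\mu$ is a complex affine space of codimension $|\mu|:=\sum \mu_i$. I would fix a basis of $\bC^{2g}$ adapted to $Z_\bullet$ and write $V\in W_\mu$ in row-echelon form with pivot columns determined by $\mu$; the Lagrangian condition $J(v,w)=0$ cuts out a linear subvariety, and a direct parameter count identifies $W_\mu$ with $\bA^{g(g+1)/2-|\mu|}$. A cleaner route is to invoke the Bruhat decomposition of $G(\bC)/Q$: the flag variety decomposes into Schubert cells indexed by minimal coset representatives in $W_Q\backslash W$, where $W$ and $W_Q$ are the Weyl groups of $\GSp(2g)$ and of the Levi of $Q$; these cells match the $W_\mu$ via the incidence conditions on $Z_\bullet$, and their affineness is then automatic.

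Since every cell has even real dimension, the cellular chain complex is concentrated in even degrees and all boundary maps vanish for degree reasons, so $H_*(Y_g;\bZ)$ is free abelian on the classes $[\overline{W}_\mu]$. For the Poincar\'e polynomial I would set up a bijection between $M_g$ and the subsets $S\subseteq\{1,\ldots,g\}$ recording the jump positions of $\dim(V\cap Z_\bullet)$, giving $|\mu|=\sum_{i\in S}i$ and hence
\[
\sum_{\mu\in M_g} t^{|\mu|}\;=\;\sum_{S\subseteq\{1,\ldots,g\}} t^{\sum_{i\in S}i}\;=\;\prod_{i=1}^{g}(1+t^i).
\]
The main obstacle is the explicit affine parametrization of $W_\mu$ in the second step: the linear algebra is straightforward but tedious when done by hand, which is why appealing to the general Bruhat decomposition for $G/Q$ is preferable; everything else is essentially bookkeeping once that structural fact is in place.
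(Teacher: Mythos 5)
Your argument is correct and is exactly the route the paper takes: the paper simply asserts the proposition "from the cell decomposition" (the Schubert/Bruhat cells $W_\mu$ of the Lagrangian Grassmannian, affine of complex codimension $\sum\mu_i$, so all cellular differentials vanish and the Poincar\'e polynomial is $\sum_{\mu\in M_g}t^{|\mu|}=\prod_{i=1}^g(1+t^i)$), and you are supplying the standard details it omits. The only point worth making explicit in your first step is why the jump data give only $2^g$ cells rather than $\binom{2g}{g}$: since $V=V^\perp$ and $Z_{g+i}=Z_{g-i}^\perp$, one has $\dim(V\cap Z_j^\perp)=g-j+\dim(V\cap Z_j)$, so position $2g+1-j$ is a jump of $j\mapsto\dim(V\cap Z_j)$ exactly when position $j$ is not, and the cell is determined by a subset of $\{1,\dots,g\}$ as you use in the final count.
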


Note that $Y_g$ is a rational variety and that the Chow ring 
$R_g$ of $Y_g$ is the isomorphic image of the cohomology ring 
of $Y_g$ under the usual cycle class map. On $Y_g$ we
have a sequence of tautological vector bundles 
$0 \to E \to H \to Q \to 0$,
where $H$ is the trivial bundle of rank $2g$ defined by our fixed 
symplectic space and where the fibre of $E_y$ of $E$ over $y$ is 
the isotropic subspace of dimension $g$ corresponding to $y$. 
The bundle $E$ corresponds to the standard representation of $K=U(g)$, 
see also Section \ref{The Hodge Bundle}. 
The tangent space to a point $e=[E]$ of $Y_g$ is ${\rm
Hom}^{\rm sym}(E,Q)$, the space of symmetric homomorphisms from
$E$ to $Q$; indeed, usually the tangent space of a Grassmannian is
described as ${\rm Hom}(E,Q)$ (``move $E$ a bit and it moves infinitesimally
out of the kernel of $H\to Q$"), but we have to preserve the symplectic
form that identifies $E$ with the dual of $Q$; therefore we have
to take the `symmetric' homomorphisms. 
We can identify this with ${\rm Sym}^2(E^{\vee})$.

We consider the Chern classes  $u_i= c_i(E) \in R_g= {\CH}^*(Y_g)$ for $
i=1,\ldots,g$. We call them {\sl tautological classes}. The symplectic form
$J$ on $H$ can be used to identify $E$ with the dual of the
quotient bundle $Q$ . The triviality of the bundle $H$ 
implies the following relation for the Chern classes of $E$ in $R_g$:
$$
(1+u_1+u_2+\ldots +u_g)(1-u_1+u_2+\ldots +(-1)^gu_g)=1 \, .
$$
These relations may be succinctly stated as
$${\rm ch}_{2k}(E)=0 \qquad (k\geq 1), \eqno(2)$$
where ${\rm ch}_{2k}$ is the part of degree $2k$ of the Chern character.

\begin{proposition} The Chow ring $R_g$ of $Y_g$ is
the ring generated by the $u_i$ with as only relations
${\rm ch}_{2k}(E)=0$ for $k\geq 1$.
\end{proposition}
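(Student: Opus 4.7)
The plan is to exhibit a surjective ring homomorphism from the candidate presentation onto $R_g$ and then match ranks using Proposition \ref{cohcompdual}, which has already pinned down the additive structure of $R_g$.

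First, I verify the relations hold in $R_g$. Since the symplectic form on the trivial bundle $H$ identifies $Q$ with $E^{\vee}$, the sequence $0 \to E \to H \to Q \to 0$ gives $\mathrm{ch}(E) + \mathrm{ch}(E^{\vee}) = \mathrm{ch}(H)$, and the degree-$2k$ component for $k \geq 1$ yields $2\,\mathrm{ch}_{2k}(E) = 0$. Because $Y_g$ admits a cell decomposition concentrated in even real dimensions (Proposition \ref{cohcompdual}), $R_g$ is torsion-free, so in fact $\mathrm{ch}_{2k}(E) = 0$ holds in $R_g$. This produces a ring map $\varphi : A \to R_g$, where $A := \bZ[u_1,\ldots,u_g]/(\mathrm{ch}_{2k}(E) : k \geq 1)$, the ideal interpreted integrally as the one generated by the integer relations coming from $c(E)c(E^{\vee}) = 1$, which after rationalization agree with $\mathrm{ch}_{2k}(E)=0$.

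Next, $\varphi$ is surjective. By Proposition \ref{cohcompdual}, $R_g$ is spanned additively by the Schubert classes $[\ol{W}_{\mu}]$ for $\mu \in M_g$, so it suffices to express each such class as an integer polynomial in $u_1, \ldots, u_g$. This is supplied by the Giambelli-type formula for the Lagrangian Grassmannian (due to Pragacz and Ratajski): each $[\ol W_{\mu}]$ is a Schur $P$-polynomial $P_{\lambda(\mu)}$ evaluated on the Chern roots of $E$, which is an integer polynomial in the $u_i$.

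It remains to show $\mathrm{rank}_{\bZ} A \leq 2^g$; together with the two preceding steps and the identity $\mathrm{rank}\, R_g = 2^g$ from Proposition \ref{cohcompdual}, this forces $\varphi$ to be an isomorphism. Passing to Chern roots identifies $\bZ[u_1,\ldots,u_g]$ with $\bZ[\alpha_1,\ldots,\alpha_g]^{S_g}$ and rewrites the defining ideal as that generated by the power sums $p_{2k}(\alpha)=0$ for $k \geq 1$. I would then take as candidate additive basis for $A$ the Schur $P$-polynomials $P_{\lambda}$ attached to strict partitions $\lambda_1 > \lambda_2 > \cdots > 0$ with $\lambda_1 \leq g$; these are $2^g$ in number, matching $\#M_g$. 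I expect the main obstacle to be this basis claim: the spanning statement requires a monomial reduction (or a Gr\"obner basis computation) against the relations $p_{2k}=0$, whereas the linear independence of the $P_{\lambda}$ is essentially free given the surjection already constructed, since the images $\varphi(P_\lambda) = [\ol W_{\mu(\lambda)}]$ are linearly independent in $R_g$.
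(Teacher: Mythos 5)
Your argument is essentially the paper's: the relations follow from the triviality of $H$ together with the identification $Q\cong E^{\vee}$, surjectivity of $\bZ[u_1,\dots,u_g]/(\text{relations})\to R_g$ comes from writing the Schubert basis of Proposition~\ref{cohcompdual} as integral polynomials in the $u_i$ via the Lagrangian Giambelli (Pragacz) formula, and the isomorphism is then forced by a rank comparison. The one step you leave open --- that the abstract quotient is spanned over $\bZ$ by $2^g$ elements --- is precisely the step the paper also only asserts (``one can check algebraically'' that the quotient has the Betti numbers of Proposition~\ref{cohcompdual}), so your reconstruction matches the intended proof at the same level of detail.
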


One can check algebraically that the ring which is the quotient of
${\bZ}[u_1,\ldots,u_g]$ by the relation (2) has Betti numbers as
given in Prop.\ \ref{cohcompdual}.  This description  implies that
this ring after tensoring with ${\bQ}$ 
is in fact generated by the $u_{j}$ with $j$ odd.
Furthermore, by using induction one obtains the relations
$$
u_g u_{g-1}\cdots
u_{k+1}u_k^2=0  \qquad \text{for $k=g, \ldots, 1$}. \eqno(3)
$$
It follows that this ring has the following set of $2^g$  
basis elements
$$
\prod_{i \in I} u_i \, ,\qquad I \subseteq \{1,\ldots,g\}.
$$
The ring $R_g$ is a so-called Gorenstein ring with socle 
$u_gu_{g-1} \cdots u_1$.

Using $R_g/ (u_g) \cong R_{g-1}$ one finds the following properties.
\begin{lemma} 
In $R_g/(u_g)$ we have $u_1^{g(g-1)/2}\neq 0$ and $u_1^{g(g-1)/2+1}=0$. 
\end{lemma}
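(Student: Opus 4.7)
The plan is to apply the hinted isomorphism $R_g/(u_g) \cong R_{g-1}$, under which $u_1$ maps to $u_1$, and thereby reduce the statement to showing that in $R_{g-1}$ one has $u_1^{g(g-1)/2}\neq 0$ while $u_1^{g(g-1)/2+1}=0$, where $u_1$ is now the first Chern class of the tautological isotropic subbundle on the compact dual $Y_{g-1}$.

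The vanishing is immediate for degree reasons. By the preceding proposition, $R_{g-1}$ is a Gorenstein ring whose socle $u_{g-1}u_{g-2}\cdots u_1$ sits in degree $1+2+\cdots+(g-1)=g(g-1)/2$; equivalently, the Poincar\'e polynomial $\prod_{i=1}^{g-1}(1+t^i)$ of $Y_{g-1}$ is concentrated in degrees $\leq g(g-1)/2$. Since $u_1$ is homogeneous of degree $1$, every power $u_1^N$ with $N>g(g-1)/2$ is automatically zero.

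For the non-vanishing I would argue geometrically. The ring $R_{g-1}\otimes\bQ$ is canonically the rational cohomology ring of $Y_{g-1}$, which is a smooth projective variety of complex dimension $g(g-1)/2$. The line bundle $\det(E^{\vee})$ is the ample generator of $\Pic(Y_{g-1})$ realising the Pl\"ucker embedding of the Lagrangian Grassmannian, so the class $-u_1 = c_1(\det(E^{\vee}))$ is ample. Consequently $(-u_1)^{g(g-1)/2}$ equals the (positive) degree of $Y_{g-1}$ under this embedding, and in particular does not vanish; hence $u_1^{g(g-1)/2}\neq 0$ in $R_{g-1}\otimes\bQ$ and therefore also in $R_{g-1}$, which is torsion-free. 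The main obstacle is exactly this non-vanishing step, as it rests on the geometric input that $-u_1$ generates the ample cone of $Y_{g-1}$. A purely algebraic alternative would use the relations $\mathrm{ch}_{2k}(E)=0$ together with the relations (3) to expand $u_1^{g(g-1)/2}$ in the monomial basis $\{\prod_{i\in I}u_i\}$ and check that the coefficient of the socle $u_{g-1}\cdots u_1$ is a positive Schubert-calculus count, bypassing ampleness at the cost of an explicit combinatorial calculation.
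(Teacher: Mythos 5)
Your proof is correct, and it follows the route the paper intends: the paper gives no argument beyond the hint ``Using $R_g/(u_g)\cong R_{g-1}$,'' and your reduction to $R_{g-1}$ plus the degree count $\dim Y_{g-1}=g(g-1)/2$ disposes of the vanishing exactly as expected. The only place you diverge is the non-vanishing. The paper's implicit argument (made explicit when Theorem \ref{tautringofAg} is deduced, and quantified by the formula $u_1^N=N!\prod_{k}(2k-1)!!^{-1}$ of Section \ref{compactdual}) is algebraic: $R_{g-1}$ is Gorenstein with socle $u_{g-1}u_{g-2}\cdots u_1$, and $u_1^{g(g-1)/2}$ is a nonzero multiple of that socle. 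You instead invoke the ampleness of $-u_1=c_1(\det E^{\vee})$ on the Lagrangian Grassmannian $Y_{g-1}$ and the positivity of its Pl\"ucker degree. Both are valid; your geometric argument is shorter and needs no computation inside the presentation of $R_{g-1}$, while the socle argument stays purely within the generators-and-relations description and produces the exact (positive) coefficient of the point class. One small economy: once you know $u_1^{g(g-1)/2}\neq 0$ in $R_{g-1}\otimes\bQ$ you already know it is nonzero in $R_{g-1}$, without appealing to torsion-freeness (that would only be needed for the converse implication).
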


Define now classes in the Chow ring of the flag spaces
$U_g^{(i)}$ as follows:
$$
v_j = c_1(E_j/E_{j-1}) \quad  \in CH^*(U_g^{(i)})\quad j=i, \ldots, g.
$$
Moreover, we set
$$
u_j^{(i)} = c_j(E_i) \quad \in CH^*(U_g^{(i)}) \quad j=1,\ldots, i.
$$
We can view $u_j^{(i)}$ as the $j$th symmetric function in $v_1, \ldots,
v_i$. Then we have the relations under the forgetful maps $\pi_i:
U_g^{(i)} \to U_g^{(i+1)}$
$$
\pi_i^*(u_j^{(i+1)})= u_j^{(i)} + v_{i+1} u_{j-1}^{(i)}\qquad j=1,\ldots,
i+1,\eqno(4)
$$
and
$$
v_j^j-u_1^{(j)}v_j^{j-1}+ \ldots + (-1)^ju_j^{(j)}=0 \qquad j=1,\ldots,
g.\eqno(5) $$
The Chow ring of $U_g^{(1)}$ has generators
$$
v_1^{\eta_1}v_2^{\eta_2} \cdots v_g^{n_g}
u_g^{\epsilon_g}u_{g-1}^{\epsilon_{g-1}}\cdots
u_1^{\epsilon_1}
$$
with $0\leq \eta_i < i$ and $\epsilon_i =0 $ or $1$.
\begin{lemma}\label{Gysin} 
The following Gysin formulas hold:
\begin{enumerate}
\item{} $(\pi_i)_*(c)=0 {\rm ~for~all~} c \in CH^*(U_g^{(i+1)})$;
\item{} $(\pi_i)_*(v_{i+1}^i)=1$;
\item{} $(\pi_i)_*(u_i^{(i)})= (-1)^i$.
\end{enumerate}
\end{lemma}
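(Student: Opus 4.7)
The assertion is precisely the projective bundle pushforward formula for the map $\pi_i\colon U_g^{(i)}\to U_g^{(i+1)}$, so the strategy is to exhibit this map as a $\bP^i$-bundle and then read (1)--(3) off standard Chern-class calculus.

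To identify the bundle structure, observe that forgetting $E_i$ in a flag $E_i\subset E_{i+1}\subset\cdots\subset E_g$ amounts to choosing a hyperplane $E_i\subset E_{i+1}$. Hence the fibre of $\pi_i$ is $\mathrm{Gr}(i,E_{i+1})\cong\bP^i$, and globally $\pi_i$ is the projective bundle $\bP(E_{i+1}^\vee)\to U_g^{(i+1)}$. The tautological exact sequence
\begin{equation*}
0\longrightarrow E_i\longrightarrow \pi_i^*E_{i+1}\longrightarrow E_{i+1}/E_i\longrightarrow 0
\end{equation*}
identifies the quotient line $E_{i+1}/E_i$ with $\cO_{\bP(E_{i+1}^\vee)}(1)$, so $v_{i+1}$ is the hyperplane class $\xi$ on this projective bundle.

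Once this is in hand, (1) and (2) are immediate from the Grothendieck pushforward formulas: on a projective bundle of relative dimension $i$ one has $(\pi_i)_*(\xi^k)=0$ for $0\leq k<i$ and $(\pi_i)_*(\xi^i)=1$, the latter being exactly (2); the case $k=0$ combined with the projection formula $(\pi_i)_*(\pi_i^*c)=c\cdot(\pi_i)_*(1)$ yields (1). For (3) I would apply the Whitney sum formula to the sequence above to write $c(E_i)=\pi_i^*c(E_{i+1})\cdot(1+v_{i+1})^{-1}$, extract the degree-$i$ component
\begin{equation*}
u_i^{(i)}=c_i(E_i)=\sum_{k=0}^{i}(-1)^k\,\pi_i^*u_{i-k}^{(i+1)}\cdot v_{i+1}^{k}\qquad (u_0^{(i+1)}:=1),
\end{equation*}
and push forward. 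The projection formula combined with the vanishing of $(\pi_i)_*(v_{i+1}^k)$ for $k<i$ kills every term but $k=i$, and the surviving term contributes $(-1)^i\cdot\pi_i^*(1)\cdot(\pi_i)_*(v_{i+1}^i)=(-1)^i$; the same identity can also be obtained by iterating relation~(4).

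\textbf{Main obstacle.} The argument is a mechanical unpacking once the bundle structure is in place; the only delicate point is the duality convention in the identification above, which must be chosen so that $v_{i+1}$ corresponds to $\cO(+1)$ rather than $\cO(-1)$. This sign choice is what pins down the $+1$ on the right-hand side of (2), and consequently the $(-1)^i$ in (3).
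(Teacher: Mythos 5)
Your argument is correct: $\pi_i$ is indeed the bundle of hyperplanes in $E_{i+1}$, i.e. $\bP(E_{i+1}^\vee)\to U_g^{(i+1)}$ with $E_{i+1}/E_i\cong\cO(1)$, so (1)--(3) follow from the standard pushforward formulas exactly as you compute, and your derivation of (3) is just the inversion of the paper's relation (4). The paper states the lemma without proof, and your projective-bundle computation is the standard argument it implicitly relies on (and is consistent with relations (4) and (5) in the text), including the duality convention you flag, which correctly yields $v_{i+1}=c_1(\cO(1))$ and hence the signs in (2) and (3).
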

These formulas together with (4) and (5) completely determine the image of the
Gysin map $(\pi)_*= (\pi_{g-1} \cdots \pi_1)_*$.
\bigskip

For a sequence of $r\leq g$ positive integers $\mu_i$
with $\mu_i \geq \mu_{i+1}$ we define an element of $R_g$:
$$
\Delta_{\mu}(u)= \det \left( \begin{matrix}
(u_{\mu_i-i+j})_{1\leq i \leq r; 1\leq j \leq
r} \end{matrix} \right).
$$
We also define so-called $Q$-polynomials by the formula:
$$
Q_{i,j} = u_i u_j - 2 u_{i+1}u_{j-1} + \ldots
+(-1)^j 2 u_{i+j} ~~{\rm for}~~ i<j.
$$
We have $Q_{i 0} = u_i$ for $i=1, \ldots ,g$.
Let $\mu$ be a strict partition.
For $r$ even we define an anti-symmetric matrix $[x_{\mu}]
=[x_{ij}]$ as follows. Let
$$
x_{i,j} = Q_{\mu_i \mu_j}(u)
\quad {\rm for ~~} 1\leq i < j \leq r.
$$
 We then set for even $r$
$$
\Xi_{\mu} = {\rm Pf}([x_{\mu}]),
$$
while for $r$ odd we define $\Xi_{\mu} = \Xi_{\mu_1 \ldots \mu_r 0}$.
These expressions may look a bit artificial, but their purpose is 
clearly demonstrated by the following Theorem, due to Pragacz \cite{Pragacz,F-P}.

\begin{theorem} (Pragacz's formula)   The
class of the cycle $[{\overline W}_{\mu}]$ in the Chow ring
is given by (a multiple of) $\Xi_{\mu}(u)$.
\end{theorem}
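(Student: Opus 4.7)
The plan is to establish Pragacz's formula in three steps, using the flag tower $U_g^{(1)}\to U_g^{(2)}\to\cdots\to U_g^{(g)}=Y_g$ together with the Gysin formulas of Lemma \ref{Gysin} as the main computational tool. First I would verify the formula for one-part partitions $\mu=(k)$; second I would treat two-row partitions $\mu=(i,j)$, where the symplectic self-duality $E^{\vee}\cong Q$ first produces the doubled polynomial $Q_{ij}(u)$; third I would reduce the general case to the two-row case by a Pfaffian expansion.

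For $\mu=(k)$ the cycle $\overline{W}_{(k)}=\{V\in Y_g : V\cap Z_{g+1-k}\neq 0\}$ is the degeneracy locus where the composition $Z_{g+1-k}\otimes\mathcal{O}_{Y_g}\to H\to Q$ drops rank; Thom-Porteous gives its class as $c_k(Q)$, which by the self-duality encoded in relation (2) equals $\pm u_k = \Xi_{(k)}(u)$ up to sign. For $\mu=(i,j)$ I would lift to the partial flag variety $U_g^{(g-1)}$ carrying $E_{g-1}\subset E_g$ and cut out $\overline{W}_{(i,j)}$ by the pair of conditions $\dim(E_{g-1}\cap Z_{g+1-i})\geq 1$ and $\dim(E_g\cap Z_{g+1-j})\geq 2$. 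A direct Thom-Porteous calculation, whose cross terms are doubled by the symplectic pairing identifying $E$ with $Q^{\vee}$, yields $Q_{ij}(u)$ after the pushforward $(\pi_{g-1})_*$ via Lemma \ref{Gysin}.

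For general $\mu$ of length $r$ I would induct on $r$. Lifting $\overline{W}_\mu$ to $U_g^{(g-r)}$ presents it as $r$ nested rank conditions relative to $E_{g-r}\subset\cdots\subset E_g$. Pushing down through the forgetful tower and repeatedly invoking Lemma \ref{Gysin} writes the class as an alternating sum of products of two-row pieces $Q_{\mu_i\mu_j}(u)$ with length-$(r{-}2)$ cycles $[\overline{W}_{\mu'}]$; this is exactly the row expansion of the Pfaffian $\Xi_\mu(u)$, and induction closes the argument. \textbf{The main obstacle} is this last step: tracking the signs and doubling factors through the flag tower and matching them with the algebraic Pfaffian expansion. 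The cleanest resolution is to invoke the Jozefiak-Pragacz theorem on Chow classes of symmetric degeneracy loci and verify that our nested configuration falls under its hypotheses. A more hands-on alternative is to show that both sides satisfy the same divided-difference recursion for the type-$C_g$ Weyl group and agree on the fundamental class via iterated application of a symplectic Pieri rule.
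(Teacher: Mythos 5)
The paper gives no proof of this theorem; it is quoted from Pragacz \cite{Pragacz, F-P}, so there is no in-text argument to measure your proposal against. Your first two steps are sound in outline: identifying $[\overline{W}_{(k)}]$ with $\pm u_k$ via Thom--Porteous is standard, and the doubled cross-terms in $Q_{i,j}$ for two-row strict partitions do come from the Lagrangian self-duality $E^{\vee}\cong Q$. Even there, though, the clean route is to first establish a Pieri-type rule $u_k\cdot[\overline{W}_{\nu}]=\sum 2^{m(\nu,\mu)}[\overline{W}_{\mu}]$ and extract the two-row Giambelli formula from it; your lift to $U_g^{(g-1)}$ is a resolution with potential excess intersection, and the pushforward computation is less routine than you suggest.

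The genuine gap is the third step. The identity $\Xi_{\mu}=\mathrm{Pf}\bigl([Q_{\mu_i\mu_j}]\bigr)$ and its row expansion $\Xi_{\mu}=\sum_{j\geq 2}(-1)^{j}Q_{\mu_1\mu_j}\,\Xi_{\mu\smallsetminus\{\mu_1,\mu_j\}}$ are purely algebraic facts about the $Q$-polynomials (Schur's Pfaffian identity). There is no reason the Gysin pushforward of your nested rank conditions through the forgetful tower organizes itself into precisely this alternating sum of two-row factors times shorter cycles, and you offer no mechanism for it. You acknowledge the obstacle, but your first fallback --- invoking the J\'ozefiak--Pragacz theorem --- is circular: the Giambelli formula for Lagrangian Schubert classes in terms of $Q$-polynomials \emph{is} the content of the Pragacz result being proved. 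Your second fallback (showing that both sides satisfy the same type-$C$ divided-difference recursion and agree on the fundamental class) is the correct strategy and is essentially how the known proofs proceed, but as written it is a one-sentence placeholder: you would need to exhibit the divided-difference operators on $\CH^*(U_g^{(1)})$, show they carry Schubert classes to Schubert classes, and verify the matching recursion for the $Q$-polynomials. Until one of these routes is actually carried out, the general case remains unproven.
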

From (2.4) we have the property:
for partitions $\mu$ and $\nu$
with $\sum \mu_i + \sum \nu_i = g(g+1)/2$ we have
$$
\Xi_{\mu} \Xi_{\nu} = 
\begin{cases}   1 & \text{if $\nu = \rho -\mu $,} \\
 0 & \text{if $\nu \neq \rho -\mu$,} \\ 
\end{cases}
$$
where $\rho = \{ g, g-1, \ldots, 1 \}$.
 We have the following relations in $CH^*(Y_g)$:
$$
\begin{aligned}
i) \quad & u_gu_{g-1}\ldots u_1=1,\cr
ii)\quad & u_1^N= N!\prod_{k=1}^g \frac{1}{(2k-1)!!} \, , 
\end{aligned}
$$
where $1$ represents the class of a point and $N=g(g+1)/2$.
\begin{proof}
For i) We have $T_{Y_g}\cong {\rm Sym}^2(E^{\vee})$, thus  the
Chern classes of the tangent bundle $T_{Y_g}$  are
expressed in the $u_i$. By \cite{Fulton}, Ex.\ 14.5.1, p.\ 265)
for the top Chern class of
${\rm Sym}^2$ of a vector bundle we have
$$
e(Y_g)= 2^g = 2^g \Delta_{(g, g-1, \ldots , 1)}(u)= 2^g \det \left(
\begin{matrix} u_g&0&0&\dots &0\cr
u_{g-2}&u_{g-1}&u_g&0\ldots\cr
&&\dots\cr
&&&&u_1\cr
\end{matrix}
\right).
$$
Developing the determinant gives
$$
1= u_gu_{g-1}\ldots
u_1+u_g^2A(u)+u_gu_{g-1}^2B(u)+ \ldots \, ,
$$
from which the desired identity results by using (3).
Alternatively, one may use Pragacz's formula above for the degeneracy 
locus given by $Z_g = V$ for some subspace $V$. 
\end{proof}
\section{The Hodge Bundle}\label{The Hodge Bundle}
Since $\A{g}$ comes with a universal principally polarized abelian variety
$\pi: \X{g} \to \A{g}$ we have a rank $g$ vector bundle or locally
free sheaf 
$$
{\bE}={\bE}_g = \pi_*(\Omega^1_{\X{g}/\A{g}}),
$$
called the {\sl Hodge Bundle}.
It has an alternative definition as 
$$
{\bE}= s^*(\Omega^1_{\X{g}/\A{g}})\, ,
$$
i.e., as the cotangent bundle to the zero section $s: \A{g} \to \X{g}$.
If ${\cX}_g^t$ is the dual abelian variety (isomorphic to $\X{g}$ because
we stick to a principal polarization) then the Hodge bundle ${\bE}^t$
of ${\cX}_g^t$ satisfies
$$
({\bE}^t)^{\vee}={\rm Lie}({\cX}_g^t)\cong R^1\pi_* {\mathcal O}_{\X{g}}.
$$
The Hodge bundle ${\bE}$ can be extended to any toroidal compactification
$\barA{g}$ of Faltings-Chai type. In fact, over $\barA{g}$ we have a
universal family of semi-abelian varieties and one takes the dual
of ${\rm Lie}(\tilde{\cX})$, cf.\ \cite{F-C}. 
We shall denote it again by ${\bE}$.

If we now go to a fine moduli space, say $\A{g}[n]$ with $n\geq 3$,
the moduli space of principally polarized abelian varieties with a
level $n$ structure, and take a smooth toroidal compactification
then we can describe the sheaf of holomorphic 
$1$-forms in terms of the Hodge bundle. With
 $D=\barA{g}[n]-\A{g}[n]$, the divisor at infinity,
we have the important result:

\begin{proposition}\label{OmegaisSym2}
The Hodge bundle ${\bE}$ on $\barA{g}[n]$ for $n\geq 3$
satisfies the identity
$$
{\rm Sym}^2({\bE})\cong \Omega^1_{\barA{g}[n]}(\log D).
$$
\end{proposition}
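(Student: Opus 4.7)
The plan is to establish the isomorphism first on the open stratum $\A{g}[n]$ via deformation theory (Kodaira--Spencer), and then to extend it across the boundary $D$ using the Faltings--Chai description of $\barA{g}[n]$ in terms of semi-abelian degenerations.

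First I would treat the interior. Over $\A{g}[n]$ the universal abelian scheme $\pi \colon \X{g}[n] \to \A{g}[n]$ gives rise to the Hodge filtration
\[
0 \to \bE \to \HDR(\X{g}[n]/\A{g}[n]) \to \bE^{\vee} \to 0,
\]
and to the Gauss--Manin connection $\nabla \colon \HDR \to \HDR \otimes \Omega^1_{\A{g}[n]}$. Composing the inclusion of $\bE$ with $\nabla$ and then projecting to $\bE^{\vee}$ yields the Kodaira--Spencer map
\[
\mathrm{KS} \colon \bE \otimes \bE \longrightarrow \Omega^1_{\A{g}[n]}.
\]
Standard deformation theory identifies the tangent space at a point $[A]$ with $H^1(A, T_A) \cong H^1(A, \cO_A) \otimes H^0(A,\Omega^1_A)^{\vee} = \bE^{\vee} \otimes \bE^{\vee}$; the principal polarization identifies $A$ with its dual and forces the symmetry condition, so that this deformation space is $\mathrm{Sym}^2(\bE^{\vee})$. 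Dualizing, $\mathrm{KS}$ factors through $\mathrm{Sym}^2(\bE) \to \Omega^1_{\A{g}[n]}$ and is an isomorphism. Here the assumption $n\geq 3$ is used only to guarantee that $\A{g}[n]$ is a fine moduli scheme, so that the sheaf-theoretic identifications are unambiguous.

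Next I would extend across the boundary $D = \barA{g}[n] \setminus \A{g}[n]$. Over a Faltings--Chai toroidal compactification there is a universal semi-abelian scheme $\tilde\pi \colon \tilde{\cX} \to \barA{g}[n]$; its relative Lie algebra is locally free, and its dual extends the Hodge bundle $\bE$ to all of $\barA{g}[n]$. The Gauss--Manin connection likewise extends to a connection with logarithmic poles along $D$, so the Kodaira--Spencer construction gives a global map
\[
\mathrm{KS} \colon \mathrm{Sym}^2(\bE) \longrightarrow \Omega^1_{\barA{g}[n]}(\log D)
\]
restricting to the Kodaira--Spencer isomorphism on $\A{g}[n]$. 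Both sheaves are locally free of the same rank $g(g+1)/2$, so it suffices to prove the map is an isomorphism along the boundary.

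The main obstacle, and where I would focus most of the work, is this local verification on $D$. By Faltings--Chai, each point of $\barA{g}[n]$ has an \'etale neighborhood which is a formal toroidal chart associated to an admissible cone $\sigma$ in the cone of positive semi-definite symmetric forms. In such coordinates the universal semi-abelian variety is explicitly built by Mumford's construction from a split torus extension of an abelian scheme of lower dimension, and the resulting Lie algebra of the torus factor is canonically identified with the dual of the span of $\sigma$. A direct computation of the Kodaira--Spencer map in these torus-embedding coordinates shows that the logarithmic differentials $\tfrac{dq_{ij}}{q_{ij}}$ coming from the torus factor correspond precisely to the symmetric tensors in $\bE$ arising from the toric Lie algebra, while the remaining differentials match the interior Kodaira--Spencer on the abelian part. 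Combining these two contributions establishes that $\mathrm{KS}$ is an isomorphism along each boundary stratum, completing the proof.
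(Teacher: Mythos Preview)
Your approach via the Kodaira--Spencer map, first on the open part and then extended logarithmically across the boundary using the Faltings--Chai toroidal charts, is correct and is precisely the argument the paper has in mind: the paper does not give its own proof but simply refers to \cite{F-C}, p.~86, where exactly this Kodaira--Spencer computation is carried out. So there is nothing to add.
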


This result extends the description of the tangent space to ${\Hg}$
in the compact dual. It is proven in the general setting 
in \cite{F-C}, p.\ 86.

Recall the description of $Y_g$ as the symplectic Grassmannian 
$G({\bC})/Q({\bC})$ with $G={\rm GSp}(2g,{\bZ})$ and $Q$ the parabolic subgroup
with Levi decomposition $M\ltimes U$ with $M={\rm GL}(g)\times {\bG}_m$. 
The Siegel upper half space $\mathfrak{H}_g$
can be viewed as an open subset of $Y_g$. Put $G_0={\rm Sp}(2g,{\bZ})$,
$Q_0=Q\cap G_0$ and $M_0=M \cap G_0$. Then if $\rho: Q_0 \to {\rm GL}(V)$
is a finite-dimensional complex representation, we define an equivariant
vector bundle ${\cV}_{\rho}$ on $Y_g$ by 
$$
{\cV}_{\rho} = G_0({\bC}) \times^{Q_0({\bC})} V,
$$
where the contracted product is defined by the usual equivalence relation
$(g,v) \sim (gq, \rho(q)^{-1} v)$ for all $g \in G_0({\bC})$ and 
$q \in Q_0({\bC})$. Then our group ${\rm Sp}(2g, {\bZ})$ acts
on the bundle ${\cV}_{\rho}$ and the quotient is a vector bundle $V_{\rho}$
in the orbifold sense on 
$\A{g}({\bC})={\rm Sp}(2g, {\bZ})\backslash \mathfrak{H}_g$.

A representation of ${\rm GL}(g)$ can be lifted to a representation 
of $Q_0$ by letting it act trivially on the unipotent radical $U$ 
of $M_0$. Carrying out this construction 
with the standard (tautological) representation 
of ${\rm GL}(g)$ produces the Hodge bundle. 

The Hodge bundle can be extended to a bundle over our toroidal 
compactification $\barA{g}$. Since any bundle $V_{\rho}$ is obtained
by applying Schur functors to powers of the Hodge bundle 
(see \cite{F-H}) we can extend
the bundle $V_{\rho}$ by applying the Schur functor to the extended
power of the Hodge bundle. In this way we obtain a canonical extension
to $\tilde{\A{g}}$ for all equivariant holomorphic bundles $V_{\rho}$.

\section{The Tautological Ring of $\A{g}$}
The moduli space $\A{g}$ is a Deligne-Mumford stack or orbifold
and as such it has a Chow ring. To be precise, consider the moduli 
space $\A{g}\otimes k$ over an algebraically closed field $k$. 
For simplicity we shall write $\A{g}$ instead. 

We are interested in
the Chow rings $\CH(\A{g})$ and ${\CHQ}(\A{g})$ of this moduli space.
In general these rings seems unattainable. However, they contain subrings 
that we can describe well and that play an important role.

We denote the Chern classes of the Hodge bundle by
$$
\lambda_i := c_i ({\bE}), \qquad i=1,\ldots,g.
$$
We define the {\sl tautological subring} of the Chow ring 
${\CHQ}(\A{g})$ as the subring generated by the Chern classes of the Hodge
bundle ${\bE}$.

The main result is a description of the tautological ring in terms of
the Chow ring $R_{g-1}$ of the compact dual $Y_{g-1}$ (see \cite{vdG:Cycles}).

\begin{theorem}\label{tautringofAg}
The tautological subring $T_g$ of the Chow ring ${\CHQ}(\A{g})$ generated 
by the $\lambda_i$ is isomorphic to the ring $R_{g-1}$.
\end{theorem}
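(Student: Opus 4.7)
The plan is to identify $T_g$ with $R_g/(u_g)$, which a direct inspection of the Mumford-style presentation shows equals $R_{g-1}$. I carry this out in three steps.

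First, I construct a surjection $\phi \colon R_g \twoheadrightarrow T_g$ by $u_i \mapsto \lambda_i$. The ring $R_g$ is presented by $u_1,\dots,u_g$ modulo $\mathrm{ch}_{2k}(E) = 0$ for $k \ge 1$; the Hodge bundle on $\A{g}$ satisfies the analogous Mumford relation $c(\bE) \cdot c(\bE^\vee) = 1$, equivalently $\mathrm{ch}_{2k}(\bE) = 0$, so $\phi$ is well-defined. Surjectivity is immediate from the definition of $T_g$.

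Second, I would show $\lambda_g = 0$ in $\CHQ(\A{g})$, so that $\phi$ factors through $R_g/(u_g) \cong R_{g-1}$. Apply Grothendieck--Riemann--Roch to $\pi \colon \X{g} \to \A{g}$ with sheaf $\cO_{\X{g}}$. On an abelian scheme $R^i\pi_*\cO \cong \Lambda^i \bE^\vee$, so
$$
\mathrm{ch}(R\pi_*\cO) \;=\; \sum_i (-1)^i \mathrm{ch}(\Lambda^i\bE^\vee) \;=\; \frac{c_g(\bE)}{\mathrm{td}(\bE)}.
$$
On the other hand, the relative tangent bundle is $T_\pi \cong \pi^*\bE^\vee$, pulled back from the base, so
$$
\pi_*\bigl(\mathrm{td}(T_\pi)\bigr) \;=\; \mathrm{td}(\bE^\vee) \cdot \pi_*(1) \;=\; 0
$$
for $g \ge 1$. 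The degree-$g$ component of the identity $c_g(\bE)/\mathrm{td}(\bE) = 0$ is $\lambda_g = 0$. Since modding out by $u_g$ in $R_g$ reduces Mumford's relations of rank $g$ to those of rank $g-1$, $\phi$ descends to an induced surjection $\bar\phi \colon R_{g-1} \twoheadrightarrow T_g$.

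Third --- and this is the main obstacle --- one must show $\bar\phi$ is injective. Since $R_{g-1}$ is Gorenstein with one-dimensional socle spanned by $u_1 u_2 \cdots u_{g-1}$, every non-zero ideal contains this socle, so it suffices to prove $\lambda_1 \lambda_2 \cdots \lambda_{g-1} \neq 0$ in $\CHQ(\A{g})$. This non-vanishing is the only substantive geometric input in the argument; it cannot follow from the formal GRR manipulations of the first two steps. My plan is to pass to a smooth toroidal compactification $\barA{g}$ of Faltings--Chai type, use the identification $\Omega^1_{\barA{g}}(\log D) \cong \mathrm{Sym}^2(\bE)$ of Proposition~\ref{OmegaisSym2}, and compute a top-degree intersection on $\barA{g}$ by Hirzebruch--Mumford proportionality, relating it to the analogous non-zero intersection product on the compact dual $Y_{g-1}$. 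The delicate point is isolating the interior contribution from the boundary $D$, since on $\barA{g}$ the class $\lambda_g$ no longer vanishes, and this boundary analysis is where the real work lies.
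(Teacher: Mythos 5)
Your Steps 1 and 2 reproduce the paper's argument: the relation $\mathrm{ch}_{2k}(\bE)=0$ (this is the content of Theorem \ref{lambdarelation}, proved in the paper by Grothendieck--Riemann--Roch applied to the theta divisor on $\X{g}$, not merely quoted) gives the surjection $R_g\twoheadrightarrow T_g$, and GRR applied to $\cO_{\X{g}}$ together with the Borel--Serre identity and $\pi_*(1)=0$ gives $\lambda_g=0$, so the surjection factors through $R_g/(u_g)\cong R_{g-1}$. You also correctly reduce injectivity, via the Gorenstein property of $R_{g-1}$, to the single non-vanishing $\lambda_1\lambda_2\cdots\lambda_{g-1}\neq 0$, equivalently $\lambda_1^{g(g-1)/2}\neq 0$, in $\CHQ(\A{g})$.

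The gap is in Step 3, and it is exactly where you locate "the real work." Hirzebruch--Mumford proportionality computes top intersection numbers on a toroidal compactification $\barA{g}$, and indeed shows $\lambda_1^{g(g+1)/2}$ and $\lambda_1\cdots\lambda_g$ are nonzero there; but this says nothing about classes on the open part, since the restriction $\CHQ(\barA{g})\to\CHQ(\A{g})$ can kill classes --- $\lambda_g$ itself is the prime example, nonzero on $\barA{g}$ and zero on $\A{g}$. To conclude you would have to show that $\lambda_1^{g(g-1)/2}$, a class of dimension $g$, is not supported on the boundary, and you offer no mechanism for this; there is no known way to extract it from a proportionality computation. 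The paper's actual input here is Theorem \ref{completecodimg}: in characteristic $p>0$ the $p$-rank zero locus is a \emph{complete} subvariety $V\subset\A{g}\otimes{\bF}_p$ of codimension $g$, hence of dimension $g(g-1)/2$; since $\lambda_1$ is ample (Theorem \ref{lambda1isample}, valid in all characteristics by Moret-Bailly), $\lambda_1^{g(g-1)/2}\cdot[V]$ has positive degree, so $\lambda_1^{g(g-1)/2}\neq 0$ in $\CHQ(\A{g}\otimes{\bF}_p)$ and hence, by specialization, in $\CHQ(\A{g})$ over any field. No characteristic-zero substitute for this complete subvariety exists (by Keel--Sadun there is none for $g\geq 3$), so the detour through positive characteristic is essential and your plan as stated cannot be completed.
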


This implies that a basis of the codimension $i$ part is given by the monomials
$$
\lambda_1^{e_1} \lambda_2^{e_2} \cdots \lambda_{g-1}^{e_{g-1}}
\quad \text{ $e_j \in \{0,1\}$ and   $\sum_{j=1}^{g-1} j\, e_j =i$.}
$$
This theorem follows from the following four results, each interesting
in its own right.

\begin{theorem}\label{lambdarelation}
The Chern classes $\lambda_i$ of the Hodge bundle ${\bE}$ satisfy the relation
$$
(1+\lambda_1+\cdots +\lambda_g)(1-\lambda_1+\cdots +(-1)^g \lambda_g)=1.
$$
\end{theorem}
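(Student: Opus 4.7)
The plan is to realise the stated relation as the Whitney product formula for a rank-$2g$ bundle on $\A{g}$ whose total Chern class is trivial in $\CHQ(\A{g})$. The candidate is the relative de Rham cohomology sheaf
\[
\cH := R^1\pi_*\Omega^{\bullet}_{\X{g}/\A{g}}
\]
of the universal family $\pi\colon\X{g}\to \A{g}$, locally free of rank $2g$. For a smooth proper morphism with abelian fibres the Hodge-to-de Rham spectral sequence degenerates at $E_1$ (Berthelot-Breen-Messing in arbitrary characteristic), giving the Hodge filtration short exact sequence
\[
0 \to \bE \to \cH \to R^1\pi_*\cO_{\X{g}} \to 0.
\]
The principal polarization identifies $\X{g}$ with its dual abelian scheme $\X{g}^t$, so $R^1\pi_*\cO_{\X{g}}=\mathrm{Lie}(\X{g})\cong \mathrm{Lie}(\X{g}^t) = ({\bE}^t)^{\vee}\cong \bE^{\vee}$, as recalled at the beginning of Section~\ref{The Hodge Bundle}. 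Whitney multiplicativity then yields
\[
c(\cH) = c(\bE)\cdot c(\bE^{\vee}) = (1+\lambda_1+\cdots+\lambda_g)(1-\lambda_1+\cdots+(-1)^g\lambda_g),
\]
and the statement of the theorem reduces to showing that $c_i(\cH) = 0$ in $\CHQ(\A{g})$ for every $i\geq 1$.

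To prove this vanishing I would invoke the Gauss-Manin connection $\nabla\colon \cH \to \Omega^1_{\A{g}}\otimes \cH$, a canonical integrable connection on $\cH$. Analytically, $\cH$ is the holomorphic vector bundle attached to the rank-$2g$ local system $R^1\pi_*\bQ$ with monodromy representation $\pi_1(\A{g}(\bC))\to \Sp(2g,\bZ)$; since the Gauss-Manin connection is flat its curvature form is zero, so Chern-Weil theory gives $c_i(\cH)=0$ in $H^{2i}(\A{g}(\bC),\bQ)$ for every $i\geq 1$. This is the moduli-space counterpart of the triviality of the rank-$2g$ bundle $H$ on the compact dual $Y_g$ used in Section~\ref{compactdual} to derive $c(E)c(E^{\vee})=1$; the essential difference is that on $\A{g}$ the bundle $\cH$ is only flat, not holomorphically trivial.

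The hard part is to lift this cohomological vanishing to a statement in the rational Chow ring, since the cycle class map $\CHQ(\A{g})\to H^{2*}(\A{g}(\bC),\bQ)$ is not \emph{a priori} injective on the tautological subring. To bridge the gap I would pass to a level cover $\A{g}[n]$ with $n\geq 3$ and to a smooth Faltings-Chai toroidal compactification $\barA{g}[n]$, on which $\cH$ admits Deligne's canonical extension $\tilde\cH$ equipped with a connection having logarithmic poles and nilpotent residues along the boundary divisor. Characteristic-class computations for such log connections (in the spirit of Esnault's vanishing results for flat bundles with unipotent monodromy) force the rational Chern classes of $\tilde\cH$ in $\CHQ(\barA{g}[n])$ to vanish in positive degree. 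Restricting to $\A{g}[n]\subset \barA{g}[n]$ and descending along the finite \'etale $\Sp(2g,\bZ/n)$-quotient $\A{g}[n]\to \A{g}$ then transports the vanishing $c_i(\cH)=0$ to $\CHQ(\A{g})$, completing the proof.
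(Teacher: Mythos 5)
Your reduction is clean and correct: the Hodge filtration $0\to\bE\to\cH\to\bE^{\vee}\to 0$ together with Whitney's formula turns the theorem into the single statement $c(\cH)=1$ in $\CHQ(\A{g})$, and this is a genuinely different route from the paper's. The paper instead applies Grothendieck--Riemann--Roch to a normalized theta line bundle $L$ on $\X{g}$, uses the Heisenberg representation to see that $\mathrm{ch}(\pi_{!}L^{\otimes n})$ scales by $n^{g}$, and extracts $e^{-\lambda_1/2}=\mathrm{Td}^{\vee}(\bE)$, which is equivalent to $\mathrm{ch}_{2k}(\bE)=0$ for $k\geq 1$ and hence to the stated relation; that argument works over the integers and yields the Key Formula $2\theta=-\lambda_1$ as a by-product. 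Your route is essentially the Esnault--Viehweg approach, which the paper invokes not here but for the stronger Theorem \ref{extendedlambdarelation} on $\barA{g}$.

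The one step you should not treat as routine is the final vanishing of $c_i(\tilde{\cH})$ in $\CHQ(\barA{g}[n])$. Flatness of the Gauss--Manin connection (or a logarithmic extension with nilpotent residues) kills the Chern classes in Betti or de Rham cohomology via Chern--Weil or the residue formula, but there is no general principle forcing rational Chern classes of flat bundles to vanish in the \emph{Chow ring}; for arbitrary flat bundles this is not known. For the weight-one Gauss--Manin bundle the Chow-theoretic vanishing is precisely the main theorem of Esnault--Viehweg \cite{E-V}, and their proof is not a formal characteristic-class computation: it applies Grothendieck--Riemann--Roch to a log extension and uses the action of $-1$ on the abelian scheme to separate the weights, after which one specializes to positive characteristic. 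So your argument closes up once you cite \cite{E-V} for this step, but as written the claim that ``characteristic-class computations for such log connections force the rational Chern classes to vanish'' overstates what formal arguments give: the cohomological vanishing is formal, while the Chow-ring vanishing has geometric content comparable to the paper's own GRR-on-the-theta-divisor proof.
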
 

\begin{theorem}\label{lambdagiszero}
The top Chern class $\lambda_g$ of the Hodge bundle ${\bE}$ vanishes 
in ${\CHQ}(\A{g})$.
\end{theorem}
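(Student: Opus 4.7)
The plan is to work on a smooth Faltings--Chai toroidal compactification $\barA{g}$ of $\A{g}$, after passing to a level-$n$ cover with $n\geq 3$ to ensure smoothness; this is harmless for rational Chow, since the forgetful map is finite \'etale modulo a finite group action. The Hodge bundle $\bE$ extends over $\barA{g}$ as the dual Lie algebra of the universal semi-abelian scheme. My aim is to realize $\lambda_g=c_g(\bE)\in\CHQ^g(\barA{g})$ as a pushforward from the boundary divisor $D=\barA{g}\setminus \A{g}$; then the excision sequence
$$
\CHQ^{g-1}(D)\xrightarrow{\iota_*}\CHQ^g(\barA{g})\to \CHQ^g(\A{g})\to 0
$$
forces $\lambda_g=0$ in $\CHQ^g(\A{g})$.

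The structural input is the canonical presence of a trivial quotient of $\bE$ along the boundary. Over the open part $D_I^\circ$ of any boundary stratum of toric rank $r\geq 1$, the universal semi-abelian scheme is an extension
$$
0\to T\to G\to B\to 0
$$
of a relative abelian scheme $B$ of dimension $g-r$ by a torus $T$ of rank $r$. Dualizing the Lie algebra sequence yields
$$
0\to p^*\bE_{g-r}\to \bE|_{D_I^\circ}\to \mathrm{Lie}(T)^\vee\to 0,
$$
where $p\colon D_I^\circ\to \A{g-r}$ is the natural projection to the abelian part and the canonical invariant differentials $d\log t_1,\ldots,d\log t_r$ trivialize $\mathrm{Lie}(T)^\vee$ as $\mathcal{O}^r$ (after a harmless finite \'etale cover splitting the torus, if needed). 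Multiplicativity of the total Chern class then gives $c(\bE|_{D_I^\circ})=c(p^*\bE_{g-r})\cdot c(\mathcal{O}^r)=c(p^*\bE_{g-r})$, which is concentrated in degrees $\leq g-r<g$. Hence $\lambda_g|_{D_I^\circ}=0$, and by closure $\lambda_g|_{D_I}=0$ on every irreducible boundary component.

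The hard step, and the main obstacle I anticipate, is bridging the gap between the pullback-vanishing $\lambda_g|_D=0$ and the desired pushforward representation $\lambda_g=\iota_*\beta$: the former is a statement about $\iota^*\lambda_g$, while the latter is about the image of $\iota_*$, and the two are not formally equivalent. Two natural routes seem available. The intersection-theoretic route combines the stratum-by-stratum vanishing with the Mumford relation of Theorem~\ref{lambdarelation} (which already yields $\lambda_g^2=0$ on $\barA{g}$) and exploits the combinatorics of the Faltings--Chai fan to write $\lambda_g$ explicitly as a rational combination of boundary cycles, in the spirit of the Mumford-type formulas on $\overline{\cM}_g$. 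The more geometric route constructs $g$ global sections of $\bE$ over $\barA{g}$ via suitable vector-valued Siegel modular forms whose joint zero locus is forced into $D$, so that $\lambda_g=c_g(\bE)$ is realized by a degeneracy locus supported on the boundary. Either route, once completed, closes the argument via the excision sequence above.
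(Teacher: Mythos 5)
You have put your finger on the fatal gap yourself: knowing $\iota^*\lambda_g=0$ on the boundary strata says nothing about whether $\lambda_g$ lies in the image of $\iota_*$, and it is the latter that the excision sequence requires. Neither of your proposed repairs closes this. The ``intersection-theoretic route'' amounts to producing an explicit boundary cycle pushing forward to $\lambda_g$; this is essentially the Ekedahl--van der Geer formula $\lambda_g=(-1)^g\zeta(1-2g)\,\delta_g$ on ${\cA}_g^{(1)}$ (see \cite{E-vdG:lambdag} and the section on the top class of the Hodge bundle), a substantially harder theorem which does not follow from $\lambda_g^2=0$ together with fan combinatorics. The ``geometric route'' is not merely hard but impossible: a global section of ${\bE}$ over $\barA{g}$ is a vector-valued Siegel modular form of weight $(1,0,\ldots,0)$, and this space vanishes (for instance by Weissauer's vanishing theorem quoted in Section~\ref{DF}; for $g=1$ it is the space of weight-one forms on ${\rm SL}(2,{\bZ})$), so $c_g({\bE})$ cannot be realized as the zero locus of a section --- and even granting sections of a twist, nothing forces the zero locus into $D$. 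Two smaller points: passing from vanishing on the open strata $D_I^{\circ}$ to vanishing on their closures is not legitimate in Chow groups (a class restricting to zero on a dense open is merely supported on the complement), and the boundary computation with the trivial quotient ${\rm Lie}(T)^{\vee}$, while correct, cannot possibly suffice by itself, since on the deepest stratum $\A{g}$ there is no torus part to exploit.

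The paper's proof is entirely different and essentially one line: apply Grothendieck--Riemann--Roch to ${\mathcal O}_{\X{g}}$ for the universal family $\pi:\X{g}\to\A{g}$. Since the fibres have dimension $g$, $\pi_*(1)=0$, so ${\rm ch}(\pi_!{\mathcal O}_{\X{g}})=\pi_*(1)\cdot{\rm Td}^{\vee}({\bE})=0$; on the other hand $R^i\pi_*{\mathcal O}_{\X{g}}\cong\wedge^i{\bE}^{\vee}$, and the Borel--Serre identity $\sum_j(-1)^j{\rm ch}(\wedge^jB^{\vee})=c_r(B)\,{\rm Td}(B)^{-1}$ identifies the left-hand side as $\lambda_g\,{\rm Td}({\bE})^{-1}$, whose lowest-degree term is $\lambda_g$ itself; hence $\lambda_g=0$ in ${\CHQ}(\A{g})$. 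If you wish to pursue your boundary-support strategy, you must first prove the boundary representation of $\lambda_g$ by independent means, which is the content of \cite{E-vdG:lambdag} rather than a shortcut to the present theorem.
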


\begin{theorem}\label{lambda1isample}
The first Chern class $\lambda_1$ of ${\bE}$ is ample on $\A{g}$.
\end{theorem}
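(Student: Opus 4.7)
The plan is to identify $\lambda_1=c_1(\det\bE)$ with the first Chern class of the automorphic line bundle whose holomorphic sections on $\A{g}(\bC)=\SpgZ\backslash\Hg$ are the scalar-valued Siegel modular forms of weight one, and then to invoke the Baily-Borel construction of the Satake compactification. Since $\A{g}$ itself is not proper, the precise assertion is that some positive power of $\det\bE$ extends to a very ample line bundle on $\SatA{g}$, so that $\lambda_1$ is ample in the standard sense on a projective compactification.

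First I would make the identification explicit. By the description of $\bE$ in Section~\ref{The Hodge Bundle} as the equivariant bundle attached to the standard representation of $\mathrm{GL}(g)$ lifted trivially to the parabolic $Q_0$, the determinant $\det\bE$ corresponds to the determinant character of $\mathrm{GL}(g)$, and its sections pulled back to $\Hg$ are precisely the holomorphic $f\colon\Hg\to\bC$ satisfying $f(\gamma\cdot\tau)=\det(C\tau+D)\,f(\tau)$ for every $\gamma=(A\,B;C\,D)\in\SpgZ$. Second, I would invoke the theorem of Baily-Borel (in its algebraic incarnation over $\bZ$ due to Faltings-Chai, see~\cite{F-C}): the graded ring
\[
\bigoplus_{k\geq 0} H^0\!\bigl(\A{g},(\det\bE)^{\otimes k}\bigr)
\]
of Siegel modular forms is a finitely generated $\bC$-algebra whose $\mathrm{Proj}$ is by definition $\SatA{g}$. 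Hence for all sufficiently large $k$ the sections of $(\det\bE)^{\otimes k}$ yield a closed embedding of $\SatA{g}$ into projective space, proving very ampleness on $\SatA{g}$ and so ampleness of $\lambda_1$ in the asserted sense.

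The main obstacle is not positivity itself but the precise meaning of ampleness on the non-proper stack $\A{g}$: one needs both the extension of $\det\bE$ across the boundary of $\SatA{g}$ and the finite generation of the ring of modular forms. Both rest on the Koecher principle (automatic boundedness at the cusps when $g\geq 2$) and on Baily-Borel's analysis of Fourier expansions along the boundary strata $\A{j}\subset\SatA{g}$; the case $g=1$ is classical. A more differential-geometric alternative would be to endow $\bE$ with the Hodge metric induced by the polarization of the universal abelian variety and to verify directly that the curvature form of $\det\bE$ on $\Hg$ is a positive multiple of the $\Sp(2g,\bR)$-invariant Kähler form, then combine this positivity with quasi-projectivity of $\A{g}$ to conclude.
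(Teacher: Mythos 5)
Your proposal is correct and follows essentially the same route as the paper: identify $\det\bE$ with the automorphic line bundle for the factor of automorphy $\det(c\tau+d)$, so that its sections are scalar-valued Siegel modular forms, and invoke Baily--Borel to conclude that a high power is very ample (the paper phrases this as modular forms of sufficiently high weight embedding $\SpgZ\backslash\Hg$ into projective space, and then cites Moret-Bailly for the extension to all characteristics). Your extra care about what ampleness means on the non-proper $\A{g}$, via the Satake compactification as $\mathrm{Proj}$ of the ring of modular forms, is a welcome sharpening of the same argument rather than a different one.
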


\begin{theorem}\label{completecodimg}
In characteristic $p>0$ the moduli space $\A{g}\otimes {\bF}_p$ contains
a complete subvariety of codimension $g$.
\end{theorem}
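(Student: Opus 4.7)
The plan is to exhibit the locus $V_0 \subset \A{g}\otimes\bF_p$ of principally polarized abelian varieties with $p$-rank zero as a complete closed substack of codimension $g$; pulling back to a fine level cover $\A{g}[n]\otimes \bF_p$ (any $n\geq 3$ prime to $p$) then produces an honest complete subvariety of the same codimension. Recall that for $A/\bar{\bF}_p$ of dimension $g$, the $p$-rank $f(A)$ is defined by $A[p](\bar{\bF}_p)\cong(\bZ/p\bZ)^{f(A)}$, with $0\leq f(A)\leq g$, and is lower semicontinuous in families. This gives a stratification $\A{g}\otimes\bF_p=\bigsqcup_{f=0}^{g}V_f$ with $\overline{V_f}=\bigcup_{f'\leq f}V_{f'}$. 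Non-emptiness of $V_0$ is immediate: a product of $g$ supersingular elliptic curves with the product principal polarization lies in $V_0$.

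Next I would check that $V_0$ has codimension exactly $g$. On the Serre-Tate formal deformation space $\mathrm{Spf}\,k[[t_{ij}]]_{1\leq i\leq j\leq g}$ of a principally polarized $A/k$ (with $k$ perfect), the Hasse-Witt matrix $H$ of the universal deformation is a $g\times g$ matrix whose rank modulo the maximal ideal equals $f(A)$. The condition $f\leq r$ is then cut out by the vanishing of the $(r{+}1)$-minors of $H$, and a direct Dieudonné-theoretic analysis (or the fact that the appropriate Schubert-style expected codimension is attained) shows that $\{f\leq r\}$ is everywhere of codimension $g-r$; in particular $V_0$ has pure codimension $g$.

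The heart of the argument is that $V_0$ is proper, which I would establish via the valuative criterion. Let $R$ be a complete DVR with fraction field $K$ of characteristic $p$, and let $A_K/K$ be a principally polarized abelian variety with $f(A_K)=0$. After a finite base extension, Grothendieck's semistable reduction theorem provides a semi-abelian prolongation $\mathcal{A}/R$ whose special fibre fits in an exact sequence $0\to T\to \mathcal{A}_0 \to B \to 0$ with $T$ a torus of rank $t$ and $B$ an abelian variety. The $p$-divisible group $\mathcal{A}[p^\infty]$ is a Barsotti-Tate group over $R$, and it contains the multiplicative subgroup $T[p^\infty]\cong \mu_{p^\infty}^{\,t}$ specializing the toric part; since multiplicative $p$-divisible groups are rigid, this spreads out and restricts on the generic fibre to a multiplicative subgroup $\mu_{p^\infty}^{\,t}\subset A_K[p^\infty]$. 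By duality, combined with the principal polarization $A_K\cong A_K^{\vee}$, the presence of a $t$-dimensional multiplicative part forces $f(A_K)\geq t$. Hence $t=0$, so $\mathcal{A}/R$ is an abelian scheme, and the principal polarization on the generic fibre extends to $\mathcal{A}$ by the Néron mapping property applied to the induced morphism $\mathcal{A}\to \mathcal{A}^{\vee}$. Thus the $R$-valued point lifts into $V_0$, proving properness.

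The main obstacle is precisely this last step: one needs to know that toric rank of a semistable reduction contributes to the $p$-rank of the generic fibre, which relies on the interplay between Grothendieck's semistable reduction theorem, the rigidity of multiplicative Barsotti-Tate groups over $R$, and duality via the principal polarization. By contrast, the codimension count in the second step is a local deformation-theoretic statement for which the Serre-Tate coordinates and Hasse-Witt matrix provide a rather mechanical proof. Taken together, the $p$-rank zero locus $V_0$ (or, after pullback, any irreducible component of its preimage in $\A{g}[n]\otimes\bF_p$) is the required complete subvariety of codimension $g$.
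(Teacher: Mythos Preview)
Your argument is correct and takes essentially the same approach as the paper: both exhibit the $p$-rank zero locus $V_0$ as the desired complete subvariety, arguing completeness from the fact that a semi-abelian degeneration with positive torus rank carries a copy of $\mu_{p^\infty}$ (so cannot arise as a limit from $p$-rank zero), and invoking Oort's result for the codimension. The paper's treatment (in the section on complete subvarieties, citing Koblitz and Oort) is a one-line sketch of the same idea, whereas your valuative-criterion argument via rigidity of the multiplicative part of the $p$-divisible group spells it out; one small caveat is that the $p$-rank is governed by the stable rank of the iterated Hasse--Witt map rather than the minors of $H$ itself, but your appeal to the Dieudonn\'e-theoretic analysis covers this.
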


To deduce Theorem \ref{tautringofAg} from Thms \ref{lambdarelation} -
\ref{completecodimg} we argue as follows.
By Theorem \ref{lambdarelation} the tautological subring $T_g$ is 
a quotient ring of $R_g$ via $u_i \mapsto \lambda_i$, 
and then by Theorem \ref{lambdagiszero} also
of $R_{g-1}\cong R_g/(u_g)$. The ring $R_{g-1}$ is a Gorenstein ring with 
socle $u_1u_2 \cdots u_{g-1}$ and this is a non-zero multiple of 
$u_1^{g(g-1)/2}$. By Theorem \ref{lambda1isample} the ample class
$\lambda_1$ satisfies $\lambda_1^j \cdot [V]\neq 0$ on any complete
subvariety $V$ of codimension~$j$ in $\A{g}$. The existence of a complete
subvariety of codimension $g$ in positive characteristic implies that
$\lambda_1^{g(g-1)/2}$ does not vanish in ${\CHQ}(\A{g}\otimes {\bF}_p)$,
hence the class $\lambda_1^{g(g-1)/2}$ does not vanish in ${\CHQ}(\A{g})$.
If the map $T_g \to R_{g-1}$ defined by $\lambda_i \mapsto u_i$ would 
have a non-trivial kernel then $\lambda_1^{g(g-1)/2}$ would have to vanish.

Theorem \ref{lambdarelation} was proved in \cite{vdG:Cycles}.
The proof is obtained by
applying the Grothen\-dieck-Riemann-Roch formula to the theta divisor
on the universal abelian variety ${\Xg}$ over $\A{g}$. In fact, take
a line bundle $L$ on ${\Xg}$ that provides on each fibre $X$ a theta 
divisor and normalize $L$ such that it vanishes on the zero section of
${\Xg}$ over $\A{g}$. Then the Grothendieck-Riemann-Roch Theorem tells us
that
$$
\begin{aligned}
{\rm ch}(\pi_{!}L) &= \pi_{*}({\rm ch}(L) \cdot 
{\rm Td}^{\vee}(\Omega^1_{{\Xg}/\A{g}}))\cr
&=\pi_{*}({\rm ch}(L) \cdot 
{\rm Td}^{\vee}(\pi^*({\bE_g})))\cr
&= \pi_{*}({\rm ch}(L)) \cdot {\rm Td}^{\vee}({\bE}),\cr
\end{aligned} \eqno(1)
$$
where we used  $\Omega^1_{{\Xg}/\A{g}} \cong \pi^*({\bE})$ and
the projection formula. 
Here ${\rm Td}^{\vee}$ is defined for a line bundle with first Chern class 
$\gamma $ by $\gamma/(e^{\gamma}-1)$.
But $R^i\pi_*L=0$ for our relatively
ample $L$ for $i>0$ and so $\pi_{!}L$ is represented by a vector bundle,
and because $L$ defines a principal polarization, by a line bundle.
We write $\theta=c_1(\pi_{!}L)$. This gives the identity
$$
\sum_{k=0}^{\infty} \frac{\theta^k}{k!}= 
\pi_{*}(\sum_{k=0}^{\infty} \frac{\Theta^{g+k}}{(g+k)!}) \, 
{\rm Td}^{\vee}({\bE}). \eqno(2)
$$
Recall that 
${\rm Td}^{\vee}({\bE})=1-\lambda_1/2+ (\lambda_1^2+\lambda_2)/12+ \ldots $. 
We can compare terms of equal codimension; the codimension $1$ term gives
$$
\theta=-\lambda_1/2 +\pi_{*}(\Theta^{g+1}/(g+1)!). \eqno(3)
$$
If we now look how both sides of (2) 
behave when we replace $L$ by $L^n$, we
see immediately that the term $\Theta^{g+k}$ of the right hand side changes 
by a factor $n^{g+k}$. As to the left hand side, for a principally polarized 
abelian variety $X$ the space of sections $H^0(X,L_X^{\otimes n})$ is a representation 
of the Heisenberg group; it is the irreducible representation of degree $n^g$.
This implies that ${\rm ch}(\pi_{!}L)=n^g {\rm ch}(\pi_{!}L)$.
We see
$$
n^g \, \sum_{k=0}^{\infty} \frac{\theta^k}{k!} = \pi_*\left(\sum_{k=0}^{\infty} 
\frac{ n^{g+k} \Theta^{g+k}}{(g+k)!}\right) \cdot {\rm Td}^{\vee}({\bE}).
$$
Comparing the coefficients leads immediately to the following result.
\begin{corollary} In ${\CHQ}(\A{g})$ we have
$\pi_*({\rm ch}(L))=1$ and ${\rm ch}(\pi_*L)={\rm Td}^{\vee}({\bE})$.
\end{corollary}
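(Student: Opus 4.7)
The plan is to derive both claimed identities from the equation
$$
n^g \sum_{k=0}^{\infty} \frac{\theta^k}{k!} = \left(\sum_{k=0}^{\infty} \frac{n^{g+k} \pi_*(\Theta^{g+k})}{(g+k)!}\right) \cdot {\rm Td}^{\vee}({\bE})
$$
established just before the corollary, by a clean comparison of coefficients of $n$. First I would divide both sides by $n^g$ to obtain
$$
\sum_{k=0}^{\infty} \frac{\theta^k}{k!} = \left(\sum_{k=0}^{\infty} \frac{n^{k} \pi_*(\Theta^{g+k})}{(g+k)!}\right) \cdot {\rm Td}^{\vee}({\bE}),
$$
and then observe that the left-hand side is manifestly independent of $n$, while each codimensionally homogeneous component of the right-hand side is a polynomial in $n$ with coefficients in ${\CHQ}(\A{g})$. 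Since this identity holds for every positive integer $n$, both sides must match coefficient-by-coefficient as polynomials in $n$.

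The key step exploits the invertibility of ${\rm Td}^{\vee}({\bE})$: since it begins with $1$, it is a unit in ${\CHQ}(\A{g})$, so multiplying through by its inverse and reading off the coefficient of $n^k$ for each $k \geq 1$ forces
$$
\pi_*(\Theta^{g+k}) = 0 \qquad \text{in } {\CHQ}(\A{g}),\ k \geq 1.
$$
What remains is the constant-in-$n$ identity
$e^{\theta} = \frac{\pi_*(\Theta^g)}{g!}\cdot {\rm Td}^{\vee}({\bE})$,
whose codimension-zero component yields $\pi_*(\Theta^g) = g!$, the familiar self-intersection of the theta divisor on a principally polarized abelian variety.

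To conclude, I would combine these pieces. The vanishings $\pi_*(\Theta^k)=0$ for $k<g$ (by relative dimension), together with $\pi_*(\Theta^g)=g!$ and $\pi_*(\Theta^{g+k})=0$ for $k\geq 1$, collapse the Chern character to
$$
\pi_*({\rm ch}(L)) \;=\; \sum_{k\geq 0} \frac{\pi_*(\Theta^k)}{k!} \;=\; 1,
$$
giving the first assertion. Substituting $\pi_*(\Theta^g)=g!$ back into the surviving constant-in-$n$ identity reduces it to $e^{\theta}={\rm Td}^{\vee}({\bE})$, which is the second assertion, since $\pi_*L$ is a line bundle with $c_1=\theta$ and hence ${\rm ch}(\pi_*L)=e^{\theta}$. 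There is no real obstacle here: once the polynomial-in-$n$ viewpoint is adopted the argument is entirely mechanical, relying only on the Heisenberg input ${\rm ch}(\pi_!L^n)=n^g\,{\rm ch}(\pi_!L)$ which is already supplied in the discussion preceding the corollary.
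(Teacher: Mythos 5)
Your proposal is correct and is exactly the coefficient-comparison argument the paper intends when it says ``comparing the coefficients leads immediately to the following result'': isolating the $n^{g+k}$-terms via the invertibility of ${\rm Td}^{\vee}({\bE})$ to get $\pi_*(\Theta^{g+k})=0$ for $k\geq 1$, extracting $\pi_*(\Theta^g)=g!$ from the codimension-zero part, and using the dimension vanishing $\pi_*(\Theta^j)=0$ for $j<g$. No substantive difference from the paper's route.
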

In particular $\pi_*(\Theta^{g+1})=0$. Substituting this in (3)
gives the following result.

\begin{corollary} (Key Formula) We have $2\theta= -\lambda_1$.
\end{corollary}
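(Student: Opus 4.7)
The plan is to read off the Key Formula directly from equation~(3) by combining it with the preceding Corollary, which has already done the heavy lifting via Grothendieck--Riemann--Roch and the Heisenberg-group scaling trick.

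First, I would recall equation~(3):
$$
\theta = -\lambda_1/2 + \pi_{*}(\Theta^{g+1}/(g+1)!),
$$
obtained by extracting the codimension-$1$ part of the GRR identity~(2). So establishing the Key Formula is the same as showing that the term $\pi_{*}(\Theta^{g+1}/(g+1)!)$ vanishes in $\CHQ(\A{g})$.

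Next, I would invoke the preceding Corollary, which asserts $\pi_{*}(\mathrm{ch}(L)) = 1$ in $\CHQ(\A{g})$. Expanding $\mathrm{ch}(L) = e^{\Theta} = \sum_{k \geq 0} \Theta^k/k!$ and using that $\pi \colon \Xg \to \A{g}$ has relative dimension $g$, the pushforward $\pi_{*}(\Theta^{g+k}/(g+k)!)$ lives in codimension $k$ on $\A{g}$. Matching the two sides of $\pi_{*}(\mathrm{ch}(L)) = 1$ in each codimension separately, the codimension-$0$ component yields $\pi_{*}(\Theta^g/g!) = 1$, while every codimension-$k$ component with $k \geq 1$ must vanish. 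In particular, the codimension-$1$ piece gives $\pi_{*}(\Theta^{g+1}/(g+1)!) = 0$.

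Substituting this vanishing into equation~(3) immediately produces $\theta = -\lambda_1/2$, i.e.\ $2\theta = -\lambda_1$. There is no real obstacle: the only delicate input has been absorbed into the Corollary, whose proof hinges on the Heisenberg-representation identity $\mathrm{ch}(\pi_{!} L^{\otimes n}) = n^g \, \mathrm{ch}(\pi_{!} L)$ combined with GRR. Once that is granted, the Key Formula is a one-line consequence of comparing codimensions.
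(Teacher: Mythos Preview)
Your proof is correct and follows exactly the same route as the paper: from the preceding Corollary $\pi_*(\mathrm{ch}(L))=1$ one extracts the codimension-$1$ vanishing $\pi_*(\Theta^{g+1})=0$, and substituting this into equation~(3) yields $2\theta=-\lambda_1$. The paper states this in one line (``In particular $\pi_*(\Theta^{g+1})=0$. Substituting this in (3) gives the following result''), while you spell out the codimension bookkeeping a bit more explicitly, but the argument is the same.
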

\begin{corollary}
We have ${\rm ch}_{2k}({\bE})=0$ for $k\geq 1$.
\end{corollary}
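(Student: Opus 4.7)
The plan is to derive the vanishing of $\mathrm{ch}_{2k}({\bE})$ directly from Theorem \ref{lambdarelation} by a purely formal argument with Chern roots. The essential observation is that the identity of Theorem \ref{lambdarelation} is nothing other than
$$
c({\bE}) \cdot c({\bE}^{\vee}) = 1 \quad \text{in } {\CHQ}(\A{g}),
$$
since $c_i({\bE}^{\vee}) = (-1)^i \lambda_i$. Equivalently, the total Chern class of the rank $2g$ bundle ${\bE} \oplus {\bE}^{\vee}$ is trivial, so all of its positive-degree Chern classes vanish.

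Write $\alpha_1,\ldots,\alpha_g$ for the Chern roots of ${\bE}$. Then the Chern roots of ${\bE} \oplus {\bE}^{\vee}$ are $\alpha_1,\ldots,\alpha_g,-\alpha_1,\ldots,-\alpha_g$, and
$$
c({\bE} \oplus {\bE}^{\vee}) = \prod_{i=1}^g (1-\alpha_i^2).
$$
The hypothesis $c({\bE} \oplus {\bE}^{\vee}) = 1$ therefore says that the elementary symmetric functions $e_k(\alpha_1^2,\ldots,\alpha_g^2)$ all vanish for $k \geq 1$. By Newton's identities over ${\bQ}$ this is equivalent to the vanishing of the power sums $p_k(\alpha_1^2,\ldots,\alpha_g^2) = \sum_{i=1}^g \alpha_i^{2k}$ for every $k \geq 1$.

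Finally, the definition of the Chern character gives
$$
\mathrm{ch}_{2k}({\bE}) = \frac{1}{(2k)!}\sum_{i=1}^g \alpha_i^{2k},
$$
so each $\mathrm{ch}_{2k}({\bE})$ is a rational multiple of a power sum that has just been shown to vanish. This yields $\mathrm{ch}_{2k}({\bE}) = 0$ in ${\CHQ}(\A{g})$ for every $k \geq 1$.

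There is really no obstacle here beyond invoking Theorem \ref{lambdarelation}: the argument is a formal manipulation of symmetric functions, and the passage from Chern classes to Chern character components requires rational coefficients, which matches the ring ${\CHQ}(\A{g})$ in the statement. Alternatively, one can avoid Newton's identities altogether by taking the formal logarithm of the identity $\prod_i(1-\alpha_i^2)=1$ and expanding, which directly produces $\sum_i \alpha_i^{2k} = 0$ term by term.
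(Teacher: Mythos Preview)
Your argument is correct. You invoke Theorem~\ref{lambdarelation} and reduce to a symmetric-function identity: $c({\bE})c({\bE}^{\vee})=1$ means $\prod_i(1-\alpha_i^2)=1$, so the elementary symmetric functions in the $\alpha_i^2$ vanish, and Newton's identities over ${\bQ}$ force the even power sums---hence the even Chern characters---to vanish. This is exactly the equivalence the paper already records for the compact dual in Section~\ref{compactdual} (relation~(2) there), now transported to $\A{g}$.

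The paper's own proof is organized slightly differently. Rather than citing Theorem~\ref{lambdarelation} as a finished statement, it stays inside the Grothendieck--Riemann--Roch calculation and combines the Key Formula $2\theta=-\lambda_1$ with $\pi_*(\mathrm{ch}(L))=1$ to obtain the Todd-class identity $e^{-\lambda_1/2}=\mathrm{Td}^{\vee}({\bE})$, equivalently $\prod_i \frac{e^{\rho_i}-e^{-\rho_i}}{\rho_i}=1$ in Chern roots, and then observes that this forces $\mathrm{ch}_{2k}({\bE})=0$. Both routes rest on the same GRR input and end with a formal Chern-root manipulation; yours is marginally more direct in that it avoids the detour through the Todd class, while the paper's version has the minor bonus of exhibiting the equivalent formulation $\mathrm{Td}({\bE}\oplus{\bE}^{\vee})=1$ along the way.
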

\begin{proof}
The relation (1) reduces now to
$$
e^{-\lambda_1/2}= {\rm Td}^{\vee}({\bE}).
$$
If, say $\rho_1,\ldots,\rho_g$ are the Chern roots of ${\bE}$, so that
our $\lambda_i$ is the $i$th elementary symmetric function in the $\rho_j$,
then this relation is
$$
\prod_{i=1}^g \frac{e^{\rho_i}-e^{-\rho_i}}{\rho_i} = 1\, .
$$
This is equivalent with ${\rm ch}_{2k}({\bE})=0$ for $k>0$ and also with
${\rm Td}({\bE}\oplus {\bE}^{\vee})=0$.
\end{proof}

The proof of Theorem \ref{lambdagiszero} is also an application of 
Grothendieck-Riemann-Roch, this time applied to the structure sheaf
(i.e., $n=0$ in the preceding case). We find
$$
{\rm ch}(\pi_! {\mathcal O}_{\Xg})= 
\pi_{*}({\rm ch}({\mathcal O}_{\Xg})\cdot
{\rm Td}^{\vee}\pi^*({\bE}_g))= \pi_{*}(1) {\rm Td}^{\vee}({\bE}_g)=0.
$$
For an abelian variety $X$ the cohomology group $H^i(X,{\mathcal O}_X)$ 
is the $i$th 
exterior power of $H^1(X,{\mathcal O}_X)$ and  we thus
see that the left hand side equals
(global Serre duality)
$$
{\rm ch}(1-{\bE}^{\vee}+ \wedge^2{\bE}^{\vee}- \cdots 
+(-1)^g \wedge^g {\bE}^{\vee}).
$$
But by some general yoga (see \cite{BS}) we have for a vector bundle $B$ 
of rank $r$ the relation
$ \sum_{j=0}^r (-1)^j {\rm ch}(\wedge^j B^{\vee})= c_r(B) {\rm Td}(B)^{-1}$.
So the left hand side is $\lambda_g\,  {\rm Td}({\bE})^{-1}$ and since
${\rm Td}({\bE})$ starts with $1+\ldots$ it follows that $\lambda_g$ is zero. 

Theorem \ref{lambda1isample} is a classical result in characteric zero.
If we present our moduli space $\A{g}$ over ${\bC}$ as the quotient
space ${\SpgZ} \backslash {\Hg}$ then the determinant of the Hodge bundle
is a (n orbifold) line bundle that corresponds to the factor of automorphy
$$
\det (c\tau +d) \qquad \text{ for $\tau \in {\Hg}$ and $
\left(\begin{matrix} a & b \\ c & d \\ \end{matrix} \right) \in {\SpgZ}$.}
$$
In other words, a section of the $k$th power of $\det({\bE})$ gives by
pull back to ${\Hg}$ a holomorphic function $f: {\Hg} \to {\bC}$
with the property that
$$
f((a\tau+b)(c\tau+d)^{-1}) = \det (c\tau+d)^k f(\tau).
$$
A very well-known theorem by Baily and Borel (see \cite{B-B})
says that modular forms
of sufficiently high weight $k$ define an embedding of 
 ${\SpgZ} \backslash {\Hg}$ 
into projective space. The idea is that one can construct sufficiently many
modular forms to separate points and tangent vectors on $\A{g}({\bC})$. 
So $\lambda_1$ is an ample class. 
Clearly this holds then also in characteristic $p$
if $p$ is sufficiently large. So if we have a complete subvariety of
of codimension $j$ in $\A{g}\otimes {\bF}_p$ for every $p$ then
$\lambda_1^{j}$ cannot vanish when restricted to this subvariety.

Theorem \ref{lambda1isample} was extended to all characteristics by Moret-Bailly, see \cite{M-B}. 

\section{The Tautological Ring of ${\barA{g}}$}
In this section, just as in the preceding one, we will write $\A{g}$
for $\A{g}\otimes k$ with $k$ an algebraically closed field.

The class $\lambda_g$ vanishes in the rational Chow ring of $\A{g}$.
However, in a suitable compactification of $\A{g}$ this is no longer true
and instead of the quotient $R_{g-1}=R_g/(u_g)$ we find a copy of $R_g$
in the Chow ring of such a compactification.

We consider a smooth toroidal compactification $\barA{g}$
of $\A{g}$ of the type constructed by Faltings and Chai.  Over $\barA{g}$
we have a `universal' semi-abelian variety ${\cG}$ with a zero-section $s$,
see \cite{F-C}. Then $s^*{\rm Lie}({\cG})$ defines in a canonical way
an extension of the Hodge bundle ${\bE}$ on $\A{g}$ to a vector bundle
$\barA{g}$. We will denote this extension again by ${\bE}$. 

The relation (1) of the preceding section
can now be extended to ${\barA{g}}$.  A proof of this extension 
was given by Esnault and Viehweg, see \cite{E-V}. They show that in
characteristic $0$ for the Deligne extension of the cohomology sheaf
${\mathcal H}^1$ with its Gauss-Manin connection the Chern character
vanishes in degree $\neq 0$ by applying Grothendieck-Riemann-Roch to a
log extension and using the action of $-1$ to separate weights. 
By specializing one finds the following result.

\begin{theorem}\label{extendedlambdarelation}
The Chern classes $\lambda_i$ of the Hodge bundle ${\bE}$ on ${\barA{g}}$
satisfy the relation
$$
(1+\lambda_1+\cdots + \lambda_g)(1-\lambda_1+\cdots +(-1)^g \lambda_g)=1.
$$
\end{theorem}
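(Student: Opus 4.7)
Expanding the total Chern-class product, the identity is equivalent to the vanishing ${\rm ch}_{2k}({\bE})=0$ in ${\CHQ}(\barA{g})$ for every $k\ge 1$; since the Chern character is additive on short exact sequences, it is in turn equivalent to
$$
{\rm ch}({\bE})+{\rm ch}({\bE}^{\vee})\;=\;2g
$$
(the rank) in positive degree. The plan is to realise ${\bE}$ and ${\bE}^{\vee}$ as the graded pieces of an extended Hodge filtration on the Deligne canonical extension of the weight-$1$ Gauss-Manin local system on $\A{g}$, and to invoke the Esnault-Viehweg theorem that such canonical extensions have vanishing rational Chern character in positive degree.

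Working first in characteristic zero, I would pull back to $\A{g}[n]$ with $n\ge 3$ and fix smooth toroidal compactifications $\tilde{\pi}:\tilde{\cX}_g[n]\to\barA{g}[n]$ whose boundary divisors $D$ and $\tilde D$ are SNC. The sheaf
$$
\overline{\cH}^{1}_{\rm dR}\;=\;R^{1}\tilde{\pi}_{*}\,\Omega^{\bullet}_{\tilde{\cX}_g[n]/\barA{g}[n]}(\log \tilde D)
$$
is then the Deligne canonical extension of $R^{1}\pi_{*}{\bQ}\otimes\cO$; it carries a logarithmic Gauss-Manin connection whose residues along $D$ are nilpotent because the monodromy of the Siegel local system around the toroidal boundary is unipotent. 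The extended Hodge filtration supplies a short exact sequence
$$
0\to{\bE}\to\overline{\cH}^{1}_{\rm dR}\to{\bE}^{\vee}\to 0,
$$
in which both extreme terms agree with the Faltings-Chai extensions (they are equally characterised as Deligne canonical extensions of their open counterparts). The inversion $[-1]$ extends to $\tilde{\cX}_g[n]\to\barA{g}[n]$ and acts as $(-1)^{i}$ on $R^{i}\tilde{\pi}_{*}\Omega^{\bullet}(\log \tilde D)$; its $(-1)$-eigen-projector isolates $\overline{\cH}^{1}_{\rm dR}$ from the other weights in the total relative de Rham bundle, which is the role of ``$-1$ to separate weights'' alluded to in the excerpt.

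The crucial input is the Esnault-Viehweg vanishing \cite{E-V}: a vector bundle on a smooth projective variety carrying a flat logarithmic connection with nilpotent residues along an SNC divisor has ${\rm ch}_{k}=0$ rationally for every $k\ge 1$. Applied to $(\overline{\cH}^{1}_{\rm dR},\nabla_{\rm GM})$ this gives the vanishing of ${\rm ch}_{k}(\overline{\cH}^{1}_{\rm dR})$ in positive degree, and the exact sequence above then finishes the proof on $\barA{g}[n]$. The relation descends to $\barA{g}$ by taking ${\rm Sp}(2g,{\bZ}/n)$-invariants rationally, and it passes from characteristic $0$ to arbitrary characteristic because $\barA{g}$ is flat over $\Spec({\bZ})$ and a universal Chern-class relation in ${\CHQ}$ of the smooth stack $\barA{g}$ which holds on the generic fibre holds globally. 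The main obstacle is the Esnault-Viehweg theorem itself: on the open part a flat connection trivialises Chern classes analytically by Chern-Weil, but on the compactification one must analyse the logarithmic Chern-Weil forms carefully and exploit the nilpotence of residues to show that the boundary contributions to the rational Chern character all vanish — this is precisely where the cited work does its real work.
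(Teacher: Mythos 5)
Your proposal is correct and follows essentially the same route as the paper, which likewise reduces the identity to ${\rm ch}_{2k}({\bE})=0$ and invokes the Esnault--Viehweg vanishing of the positive-degree Chern character of the Deligne canonical extension of $\mathcal{H}^1_{\rm dR}$ (using $[-1]$ to isolate the weight-one part), followed by specialization to positive characteristic. Your write-up simply fills in more of the intermediate structure (the extended Hodge filtration $0\to{\bE}\to\overline{\cH}^1_{\rm dR}\to{\bE}^{\vee}\to 0$ and the descent from level $n$) than the paper's brief citation of \cite{E-V} does.
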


This implies the following result:

\begin{theorem}
The tautological subring $\tilde{T}_g$ of ${\CHQ}({\barA{g}})$ 
is isomorphic to $R_g$
\end{theorem}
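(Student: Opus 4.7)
The plan is to build on Theorem~\ref{extendedlambdarelation} to produce a surjection $R_g\twoheadrightarrow \tilde T_g$, and then to deduce injectivity from the Gorenstein structure of $R_g$ together with an ampleness-driven non-vanishing in top degree.

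For surjectivity, recall from Proposition~2.2 that (after tensoring with $\bQ$) the ring $R_g$ is presented as $\bQ[u_1,\ldots,u_g]$ modulo the single relation
$$
(1+u_1+\cdots+u_g)(1-u_1+\cdots+(-1)^g u_g)=1.
$$
Theorem~\ref{extendedlambdarelation} asserts that the classes $\lambda_1,\ldots,\lambda_g$ of the extended Hodge bundle on $\barA{g}$ satisfy exactly this relation. Hence $u_i\mapsto \lambda_i$ defines a $\bQ$-algebra homomorphism $\phi: R_g\to \CHQ(\barA{g})$ whose image is $\tilde T_g$ by definition, and $\phi$ is surjective. Everything therefore reduces to showing $\ker\phi=0$.

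For injectivity I would use the Gorenstein structure of $R_g$ already recorded in Section~2: $R_g$ is a graded Artinian $\bQ$-algebra with one-dimensional socle spanned by $u_1u_2\cdots u_g$, and, writing $N=g(g+1)/2$, the two identities $u_gu_{g-1}\cdots u_1=1$ and $u_1^N=N!\prod_{k=1}^g(2k-1)!!^{-1}$ from the corollary at the end of Section~2 exhibit $u_1^N$ as a non-zero rational multiple of the socle generator. In any graded Artinian local ring with one-dimensional socle, the socle lies in every non-zero ideal (take the top non-vanishing piece of any non-zero ideal: it is annihilated by the maximal ideal, hence sits inside the socle, hence equals it). Consequently, if $\ker\phi\neq 0$ then $u_1u_2\cdots u_g\in\ker\phi$, and therefore $\lambda_1^N=0$ in $\CHQ(\barA{g})$.

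The main obstacle is to rule this out by proving $\lambda_1^N\neq 0$ in $\CHQ(\barA{g})$. Here I would use that a sufficiently high tensor power of $\det\bE$ on $\barA{g}$ is the pullback along the contraction $q:\barA{g}\to\SatA{g}$ of an ample line bundle on the Satake compactification: this is Baily-Borel~\cite{B-B} in characteristic zero, and the extension of Theorem~\ref{lambda1isample} by Moret-Bailly~\cite{M-B} in positive characteristic. Since $q$ is proper and birational between spaces of the same dimension $N$, the projection formula gives $q_*\lambda_1^N$ equal to a positive rational multiple of $c_1(L)^N$ on $\SatA{g}$, which is strictly positive as the top self-intersection of an ample class. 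Hence $\lambda_1^N\neq 0$, so $\phi$ is injective, and $\tilde T_g\cong R_g$ as claimed.
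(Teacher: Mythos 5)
Your proposal is correct and follows essentially the same route as the paper: Theorem~\ref{extendedlambdarelation} gives the surjection $R_g\twoheadrightarrow\tilde T_g$, and the isomorphism then follows because $R_g$ is Gorenstein with socle $u_1\cdots u_g$ (a nonzero multiple of $u_1^{g(g+1)/2}$) and $\lambda_1^{g(g+1)/2}\neq 0$, which the paper deduces from ampleness of $\lambda_1$ on the dense open part $\A{g}$. Your elaborations — the observation that the socle lies in every nonzero homogeneous ideal, and the justification of $\lambda_1^{g(g+1)/2}\neq 0$ via the birational contraction $q:\barA{g}\to\SatA{g}$ and the ample class on the Satake compactification — are exactly the details the paper's one-line proof leaves implicit.
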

\begin{proof}
Clearly, by the relation of \ref{extendedlambdarelation} the tautological 
ring is a quotient of $R_g$; since $\lambda_1$ is ample on an open dense part
($\A{g}$) the socle 
$$
\lambda_g\lambda_{g-1}
\cdots \lambda_1 = \frac{1}{(g(g+1)/2)!} \left( \prod_{k=1}^g (2k-1)!! \right)
\lambda_1^{g(g+1)/2}
$$ 
does not vanish, 
and the tautological ring must be isomorphic to $R_g$.
\end{proof}

The Satake compactification $\SatAg$ of $\A{g}$, defined in general as the proj of the ring of Siegel modular forms, 
possesses a stratification
$$
\SatAg= \A{g} \sqcup {\cA}_{g-1} \sqcup \cdots \sqcup {\cA}_1 \sqcup {\cA}_0.
$$
By the natural map $q: \barA{g} \to \SatAg$ this induces a stratification of
${\barA{g}}$: we let ${\cA}_g^{(t)}$ be the inverse image $q^{-1}(
{\cA}_g \sqcup {\cA}_{g-1} \sqcup \cdots {\cA}_{g-t})$, the moduli space of
abelian varieties of torus rank $\leq t$. 
We have the following extension of Theorems \ref{lambdagiszero} and 
\ref{completecodimg}.

\begin{theorem}
For $t<g$ we have the relation
 $\lambda_g\lambda_{g-1} \cdots \lambda_{g-t}=0$
in the Chow group ${\rm CH}_{\bQ}^m({\cA}_g^{(t)})$ with $m=\sum_{i=0}^t g-i$.
\end{theorem}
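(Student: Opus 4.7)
I would proceed by induction on $t$, combining the torus-rank stratification of ${\cA}_g^{(t)}$ with Theorem~\ref{lambdagiszero} applied to the smaller moduli spaces $\A{h}$. The base case $t=0$ is just ${\cA}_g^{(0)}=\A{g}$ and $\lambda_g=0$, which is Theorem~\ref{lambdagiszero} itself.

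For the inductive step, write $P_{t-1} := \lambda_g \lambda_{g-1} \cdots \lambda_{g-(t-1)}$ and suppose $P_{t-1}=0$ in $\CHQ^{*}({\cA}_g^{(t-1)})$. The inclusion $j:{\cA}_g^{(t-1)}\hookrightarrow {\cA}_g^{(t)}$ is open with closed complement $i:Z_t\hookrightarrow {\cA}_g^{(t)}$, where $Z_t := q^{-1}({\cA}_{g-t})$ is the locus of semi-abelian varieties of torus rank exactly $t$. The localization exact sequence
$$
\CHQ^{*}(Z_t) \xrightarrow{i_*} \CHQ^{*}({\cA}_g^{(t)}) \xrightarrow{j^*} \CHQ^{*}({\cA}_g^{(t-1)}) \to 0,
$$
together with $j^*P_{t-1}=0$, gives $P_{t-1}=i_*\alpha$ for some cycle $\alpha$ on $Z_t$. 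By the projection formula,
$$
\lambda_g \lambda_{g-1} \cdots \lambda_{g-t} \;=\; P_{t-1}\cdot \lambda_{g-t} \;=\; i_*\bigl(\alpha\cdot i^*\lambda_{g-t}\bigr),
$$
so the task reduces to showing $i^*\lambda_{g-t}=0$ in $\CHQ^{g-t}(Z_t)$.

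For this, I would use the canonical filtration of the Hodge bundle along the boundary. Over $Z_t$, the universal semi-abelian variety is an extension $0 \to T \to \cG \to A \to 0$, with $A\to Z_t$ an abelian scheme of relative dimension $g-t$ classified by $\tilde q := q|_{Z_t}:Z_t\to \A{g-t}$, and $T$ a torus of rank $t$. Dualizing the induced sequence on Lie algebras gives
$$
0 \to {\bE}_A \to {\bE}|_{Z_t} \to {\bE}_T \to 0,
$$
in which ${\bE}_A = \tilde q^*{\bE}^{(g-t)}$ is pulled back from $\A{g-t}$ and ${\bE}_T = X^*(T)\otimes \cO_{Z_t}$ comes from the character lattice of $T$. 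Granting that the rational Chern classes of ${\bE}_T$ vanish, multiplicativity of $c(-)$ yields
$$
i^*\lambda_{g-t} \;=\; c_{g-t}({\bE}|_{Z_t}) \;=\; c_{g-t}({\bE}_A) \;=\; \tilde q^*\lambda^{(g-t)}_{g-t},
$$
which is zero by Theorem~\ref{lambdagiszero} applied to $\A{g-t}$ (using $g-t\geq 1$, since $t<g$).

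The main technical hurdle is the vanishing of the rational Chern classes of ${\bE}_T$. This is a standard ingredient from the Faltings-Chai toroidal construction: on each boundary component, after passing to a level-$n$ cover with $n\geq 3$, the character local system $X^*(T)$ becomes constant and ${\bE}_T$ is a trivial bundle; since $\CHQ^{*}({\cA}_g^{(t)})$ is obtained as invariants under the finite group action of $\Sp(2g,\bZ/n)$ on $\CHQ^{*}$ of the level cover, this triviality descends to give $c_i({\bE}_T)=0$ in $\CHQ^{*}(Z_t)$ for all $i\geq 1$.
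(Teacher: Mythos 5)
The survey states this theorem without proof (it is imported from \cite{vdG:Cycles}), so there is no in-text argument to compare against; but your scheme --- induction on $t$, the localization sequence $\CHQ_*(Z_t)\to\CHQ_*({\cA}_g^{(t)})\to\CHQ_*({\cA}_g^{(t-1)})\to 0$, the projection formula, and the reduction to $i^*\lambda_{g-t}=0$ on the torus-rank-$t$ locus via the exact sequence $0\to{\bE}_A\to{\bE}|_{Z_t}\to{\bE}_T\to 0$ together with Theorem \ref{lambdagiszero} on $\A{g-t}$ --- is exactly the intended argument. The formal steps are in order, and you are right that you need the full vanishing $c_b({\bE}_T)=0$ for all $b\geq 1$ (not merely a splitting of the top Chern class), because of the cross terms $c_a({\bE}_A)\,c_b({\bE}_T)$ in the Whitney expansion of $c_{g-t}(i^*{\bE})$; your write-up handles this correctly.

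The one step that is under-justified is the assertion that a level-$n$ structure with $n\geq 3$ makes $X^*(T)$ constant. A level-$n$ structure only rigidifies $\cG[n]$, hence trivializes $X_*(T)\otimes\mu_n$, i.e.\ $X^*(T)\bmod n$; for $t\geq 2$ this only forces the monodromy into the \emph{infinite} congruence subgroup $\ker(\GL(t,\bZ)\to\GL(t,\bZ/n))$, so the level structure alone does not trivialize the local system. What makes the statement true is the toroidal structure of $Z_t$: it is a union of locally closed strata $Z(\sigma)$ indexed by rational polyhedral cones $\sigma\subset{\rm Sym}^2(\bR^t)$ whose relative interior consists of positive definite forms, and the monodromy of $X^*(T)$ on $Z(\sigma)$ factors through the stabilizer $\Gamma_\sigma\subset\GL(t,\bZ)$ of $\sigma$. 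That stabilizer permutes the finitely many primitive generators of the extremal rays of $\sigma$ and therefore fixes their sum, which is positive definite; hence $\Gamma_\sigma$ is finite, and its intersection with the torsion-free group $\ker(\GL(t,\bZ)\to\GL(t,\bZ/n))$ is trivial. Thus $X^*(T)$ is constant on each stratum of the level-$n$ cover, hence on each component of $Z_t[n]$ (the dense open stratum of a normal component has surjective $\pi_1$), and your transfer argument then yields $c_i({\bE}_T)=0$ in $\CHQ(Z_t)$ for $i\geq 1$. With this point repaired --- finiteness of the cone stabilizers plus neatness, rather than the level structure per se --- the proof is complete.
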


\begin{theorem}
The moduli space ${\cA}_g^{(t)}\otimes {\bF}_p$ in characteristic $p>0$
contains a complete subvariety of codimension $g-t$, namely 
the locus of abelian
varieties of $p$-rank~$\leq t$.
\end{theorem}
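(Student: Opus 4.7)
The plan is to let $V_t^\circ \subset \A{g}\otimes{\bF}_p$ be the locus of principally polarized abelian varieties of $p$-rank $\leq t$ and to show that its Zariski closure $V_t$ inside $\barA{g}\otimes{\bF}_p$ is a complete subvariety of codimension $g-t$ contained in ${\cA}_g^{(t)}\otimes{\bF}_p$. For the codimension, I would invoke the theorem of Norman and Oort that $V_t^\circ$ is non-empty and pure of codimension $g-t$ in $\A{g}\otimes{\bF}_p$; since $\A{g}$ is open and dense in $\barA{g}$, the closure $V_t$ has the same codimension.

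The substantial step is to prove the containment $V_t \subset {\cA}_g^{(t)}$. By the valuative criterion of properness, this reduces to the assertion: if a one-parameter family of principally polarized abelian varieties over a complete DVR $R$ degenerates to a semi-abelian variety of torus rank $r$, then the $p$-rank of the generic fibre $X_\eta$ is at least $r$. For this I would use the Mumford--Raynaud uniformization at the boundary of $\barA{g}$ (\cite{F-C}, Ch.\ II--III): on the generic fibre one has $X_\eta = \tilde{G}_\eta/Y$, where
\begin{equation*}
0 \to T \to \tilde{G} \to B \to 0
\end{equation*}
is the Raynaud extension, with $T$ a torus of rank $r$ over $R$ and $B$ an abelian scheme of relative dimension $g-r$, and $Y$ is an étale lattice of rank $r$. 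Passing to $p$-divisible groups on the generic fibre yields
\begin{equation*}
0 \to \mu_{p^\infty}^r \to \tilde{G}_\eta[p^\infty] \to B_\eta[p^\infty] \to 0
\end{equation*}
and
\begin{equation*}
0 \to \tilde{G}_\eta[p^\infty] \to X_\eta[p^\infty] \to ({\bQ}_p/{\bZ}_p)^r \to 0,
\end{equation*}
which exhibit both a multiplicative subgroup and an étale quotient of height $r$ inside $X_\eta[p^\infty]$. Since the $p$-divisible group of a principally polarized abelian variety is self-dual, its multiplicative rank, étale rank, and $p$-rank all coincide, so $p$-rank$(X_\eta) \geq r$ as required.

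With the containment established, $V_t$ is closed in the proper scheme $\barA{g}\otimes{\bF}_p$, hence complete, and lies in ${\cA}_g^{(t)}\otimes{\bF}_p$ with the desired codimension $g-t$. The main obstacle is making the Mumford--Raynaud description of the $p$-divisible group of a boundary degeneration precise; this is technical but is standard content of Chapters II and III of \cite{F-C}.
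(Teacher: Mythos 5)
Your argument is correct and is essentially the rigorous form of the argument the paper itself only sketches (and attributes to Koblitz and Oort): the $p$-rank $\leq t$ locus is closed in ${\cA}_g^{(t)}\otimes {\bF}_p$ because a degeneration to torus rank $r$ forces the nearby abelian varieties to have $p$-rank at least $r$, and the codimension comes from Oort's computation of the $p$-rank strata (recovered in the paper from the Ekedahl--Oort theory). Your appeal to the Raynaud extension and the self-duality of the $p$-divisible group supplies exactly the detail hidden in the paper's one-line heuristic that ``a semi-abelian variety with a positive torus rank has points of order $p$.''
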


\begin{corollary}
We have $\lambda_1^{g(g-1)/2+t} \neq 0 $ on ${\cA}_g^{(t)}$.
\end{corollary}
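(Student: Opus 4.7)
The plan is to mirror the argument used to deduce Theorem \ref{tautringofAg} from Theorems \ref{lambdarelation}--\ref{completecodimg}, now applied on ${\cA}_g^{(t)}$ with the complete subvariety $V$ of codimension $g-t$ furnished by the preceding theorem. Since ${\cA}_g^{(t)}$ is open in $\barA{g}$ and has dimension $g(g+1)/2$, the subvariety $V$ is complete of dimension exactly $g(g-1)/2+t$, matching the exponent we want to rule out. Hence it suffices to show that the top self-intersection $\lambda_1^{\dim V}\cdot [V]$ is strictly positive.

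The key new ingredient, not needed in the proof of Theorem \ref{tautringofAg}, is that $V$ generally meets the boundary of $\A{g}$ inside $\barA{g}$, so we cannot simply invoke ampleness of $\lambda_1$ on $\A{g}$. Instead I would push down to the Satake compactification: on $\barA{g}$ the Hodge class $\lambda_1$ is the pullback $q^{*}L$ of a class $L$ on $\SatAg$, and $L$ is ample on $\SatAg$ by the Baily--Borel construction of the projective embedding via Siegel modular forms. The projection formula then gives
$$
\lambda_1^{\dim V}\cdot [V] \;=\; L^{\dim V}\cdot q_{*}[V],
$$
which is strictly positive provided $q_{*}[V]\neq 0$, equivalently provided the image $q(V)\subset \SatAg$ has the same dimension as $V$.

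Since $q$ restricts to an isomorphism over $\A{g}$, preservation of dimension reduces to showing that $V\cap \A{g}$ is dense in $V$. Here $V$ is the $p$-rank $\leq t$ locus in ${\cA}_g^{(t)}$, so $V\cap \A{g}$ is the $p$-rank $\leq t$ locus in $\A{g}\otimes {\bF}_p$, which by the Norman--Oort purity theorem is pure of codimension $g-t$, hence of dimension $g(g-1)/2+t = \dim V$. The remainder of $V$ lies in boundary strata of torus rank $1\leq s\leq t$; each such stratum already has codimension $s(s+1)/2\geq 1$ in $\barA{g}$, and cutting further by the $p$-rank condition only reduces dimension, so the boundary contribution to $V$ has strictly smaller dimension and leaves $V\cap \A{g}$ dense.

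The main obstacle is this density/dimension-count step, which relies on both the Norman--Oort purity theorem for the $p$-rank stratification on $\A{g}\otimes {\bF}_p$ and the codimension formula for the torus rank stratification on $\barA{g}$. Once these ingredients are in hand, the reduction to ampleness on $\SatAg$ is essentially formal, and non-vanishing of $\lambda_1^{g(g-1)/2+t}$ in ${\CHQ}({\cA}_g^{(t)}\otimes {\bF}_p)$ follows; it propagates to ${\CHQ}({\cA}_g^{(t)})$ because a class that vanishes on the generic fiber would specialize to vanishing in every characteristic.
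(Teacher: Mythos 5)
Your proposal is correct and is essentially the argument the paper intends (the paper states the corollary without proof, but the deduction pattern is the same one used to derive Theorem \ref{tautringofAg}): take the complete $p$-rank $\leq t$ locus of codimension $g-t$, note that $\lambda_1$ is pulled back from an ample class on the Satake compactification, and conclude positivity of the top self-intersection in characteristic $p$, then lift to arbitrary characteristic by specialization. One step is stated loosely: the torus-rank-$s$ boundary stratum of a toroidal compactification has codimension $s$ (not $s(s+1)/2$), and ``codimension $\geq 1$ plus the $p$-rank condition only reduces dimension'' does not by itself force the boundary part of $V$ to have dimension strictly less than $\dim V$ when $g-t\geq 2$; what makes the count work is that on the torus-rank-$s$ stratum the condition $p$-rank $\leq t$ forces the abelian part to have $p$-rank $\leq t-s$, which still cuts codimension $g-t$ there, so the boundary part of $V$ has codimension at least $s+(g-t)>g-t$ and every component of $V$ (each of codimension at most $g-t$, being a degeneracy locus) must meet $\A{g}$.
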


\begin{proposition}
For $0\leq k \leq g$ and $r>0$ we have on the boundary 
$\SatA{k} \subset \SatA{g}$  of the Satake compactification
$$
\lambda_1^{k(k+1)/2} \neq 0 \qquad \text{and} \qquad
\lambda_1^{k(k+1)/2 +r}=0 .
$$
\end{proposition}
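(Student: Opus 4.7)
The plan is to reduce both statements to facts about $\SatA{k}$ viewed as a projective variety in its own right, equipped with the ample class $\lambda_1$. The key preliminary observation is that the stratum closure $\SatA{k} \subset \SatA{g}$ is canonically isomorphic to the Satake compactification of $\A{k}$, and the restriction of $\det(\bE_g)$ to $\SatA{k}$ coincides with $\det(\bE_k)$. This compatibility reflects exactly the action of the Siegel $\Phi$-operator, which sends a Siegel modular form of weight $h$ on $\A{g}$ to a Siegel modular form of weight $h$ on $\A{g-1}$; since the Satake compactification is by construction $\mathrm{Proj}$ of the graded ring of such modular forms, the line bundle $\det(\bE_g)$ on $\SatA{g}$ pulls back to $\det(\bE_k)$ on $\SatA{k}$, and hence the restriction of the class $\lambda_1$ from $\SatA{g}$ to $\SatA{k}$ is the class $\lambda_1$ on $\SatA{k}$.

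With this identification in hand, I would deduce the first assertion as follows. By Theorem \ref{lambda1isample} (extended to all characteristics by Moret-Bailly), $\lambda_1$ is ample on $\A{k}$, and in fact, since a sufficiently high power $\det(\bE_k)^{\otimes N}$ furnishes the projective embedding of $\SatA{k}$, the class $\lambda_1$ is ample on the full Satake compactification. Because $\SatA{k}$ is projective and irreducible of dimension $k(k+1)/2$, the top self-intersection of an ample class is a positive integer: it equals, up to a factor of $N^{k(k+1)/2}$, the degree of $\SatA{k}$ in its Baily-Borel-Satake embedding. This gives $\lambda_1^{k(k+1)/2} \neq 0$ in $\CHQ(\SatA{k})$, and pushing forward along $\SatA{k} \hookrightarrow \SatA{g}$ (or equivalently, reading the class in $\CHQ(\SatA{g})$ supported on $\SatA{k}$) shows non-vanishing there too.

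For the second assertion, I would argue purely by dimension. The class $\lambda_1^{k(k+1)/2+r} \cap [\SatA{k}]$ lies in $CH_{-r}(\SatA{k})$, which is zero since $\SatA{k}$ has no cycles of negative dimension. Equivalently, after passing to a resolution of singularities of $\SatA{k}$ and using the projection formula, $\lambda_1^{k(k+1)/2+r}$ must vanish because the pulled-back line bundle on a variety of dimension $k(k+1)/2$ has vanishing powers beyond that dimension.

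The main obstacle in the above plan is the restriction statement for the Hodge line bundle in Step 1: while the Hodge \emph{bundle} $\bE_g$ does not restrict to $\bE_k$ along the boundary stratum (the ranks differ), only the determinant line bundle does so cleanly, and the justification uses the modular-forms description of $\SatA{g}$ rather than any geometric construction on the possibly highly singular boundary $\SatA{k}$. Once this identification is granted, both halves of the proposition follow from the single fact that $\lambda_1$ is an ample class on a projective variety of dimension exactly $k(k+1)/2$.
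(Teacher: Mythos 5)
Your proposal is correct and follows essentially the same route as the paper: the paper's own (one-sentence) proof likewise rests on the three facts that $\lambda_1$ is ample on $\SatA{g}$ via the Baily--Borel projective embedding, that its restriction to the stratum $\SatA{k}$ is again the class $\lambda_1$ of $\SatA{k}$, and that $\dim \SatA{k}=k(k+1)/2$, whence positivity of the top self-intersection and vanishing of all higher powers. Your elaboration of the restriction statement through the Siegel $\Phi$-operator is a correct and welcome filling-in of the detail the paper leaves implicit.
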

This follows from the fact that $\lambda_1$ is an ample class on $\SatA{g}$,
that $\lambda_1 | \SatA{k}$ is again $\lambda_1$ (now of course defined
on $\SatA{k}$) and the fact that $\dim \SatA{k}= k(k+1)/2$. 

\section{The Proportionality Principle}

The result on the tautological ring has the following immediate corollary,
a special case of the so-called Proportionality Principle of 
Hirzebruch-Mumford:

\begin{theorem}
The characteristic numbers
of the Hodge bundle are proportional to those of the tautological bundle 
on the `compact dual' $Y_g$:
$$
\lambda_1^{n_1} \cdots \lambda_g^{n_g}([{\barA{g}}])=
(-1)^{g(g+1)/2} \frac{1}{2^g} 
\left( \prod_{k=1}^g \zeta(1-2k)
\right) \cdot u_1^{n_1}\cdots u_g^{n_g} ([Y_g])
$$
for all $(n_1,\ldots,n_g)$ with $\sum n_i= g(g+1)/2$.
\end{theorem}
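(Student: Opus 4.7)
The plan is to reduce the proportionality to a single characteristic number --- the value of $\lambda_1\lambda_2\cdots\lambda_g$ on $[\barA{g}]$ --- using the isomorphism of tautological rings proved just above, and then to evaluate that number via Hirzebruch--Mumford proportionality and Siegel's volume formula.

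For the reduction, the isomorphism $R_g\xrightarrow{\sim}\tilde T_g$ sending $u_i\mapsto\lambda_i$ lets us write each top-degree monomial in $R_g$ uniquely as
$$
u_1^{n_1}\cdots u_g^{n_g} \;=\; c_{n_1,\ldots,n_g}\cdot u_1u_2\cdots u_g,\qquad c_{n_1,\ldots,n_g}\in\bQ,
$$
and the same identity transports to $\tilde T_g$ with $\lambda_i$ replacing $u_i$. Since $u_1u_2\cdots u_g$ is the point class on $Y_g$, evaluating against $[Y_g]$ gives $c_{n_1,\ldots,n_g}=u_1^{n_1}\cdots u_g^{n_g}([Y_g])$, and evaluating the $\lambda$-identity against $[\barA{g}]$ yields
$$
\lambda_1^{n_1}\cdots\lambda_g^{n_g}([\barA{g}]) \;=\; u_1^{n_1}\cdots u_g^{n_g}([Y_g])\cdot\lambda_1\lambda_2\cdots\lambda_g([\barA{g}]).
$$
Hence the theorem reduces to the single identity
$$
\lambda_1\lambda_2\cdots\lambda_g([\barA{g}]) \;=\; (-1)^{g(g+1)/2}\,2^{-g}\,\prod_{k=1}^g\zeta(1-2k). \qquad (\ast)
$$

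To prove $(\ast)$, I would combine it with the socle relation from Section~2, $u_1^N=N!\prod_{k=1}^g(2k-1)!!^{-1}\cdot u_1u_2\cdots u_g$ with $N=g(g+1)/2$; transported to $\tilde T_g$ this gives
$$
\lambda_1\lambda_2\cdots\lambda_g([\barA{g}]) \;=\; \frac{\prod_{k=1}^g(2k-1)!!}{N!}\,\lambda_1^N([\barA{g}]),
$$
so it is enough to compute $\lambda_1^N([\barA{g}])$. Over $\bC$, $\det(\bE)$ carries the natural Siegel-invariant Hermitian metric, whose Chern form is $\mathrm{Sp}(2g,\bR)$-invariant on $\Hg$ and, by Mumford's log-canonical extension across the Faltings--Chai boundary, computes $\lambda_1^N([\barA{g}])$ as an $L^1$-integral over the non-compact locus. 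Hirzebruch--Mumford proportionality then identifies this integral with an explicit rational multiple of the covolume $\mathrm{vol}(\mathrm{Sp}(2g,\bZ)\backslash\Hg)$ for the invariant volume form, the constant being fixed by a pointwise Chern--Weil calculation at a basepoint such as $\tau=\sqrt{-1}\,1_g$. Inserting Siegel's formula for that covolume --- a product of $\zeta(2k)$ with explicit factors of $(2k-1)!$ and $(2\pi)^{-2k}$ --- and applying the functional equation $\zeta(1-2k)=2(-1)^k(2k-1)!(2\pi)^{-2k}\zeta(2k)$ converts the answer into the product of negative zeta values with the exact sign and normalization of $(\ast)$.

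The main obstacle is the bookkeeping in this last step. One must pin down the Chern--Weil constant relating $\lambda_1^N$ to the invariant volume form at the chosen basepoint; reconcile the $(2\pi)^\bullet$ factors between the algebraic and de Rham normalizations of Chern classes; and account for the sign $(-1)^{g(g+1)/2}$ arising jointly from the orientation of $\Hg$ and from the $g$ factors of $(-1)^k$ in the functional equation. A secondary point, supplied by Faltings--Chai and Mumford, is the justification that the improper integral of $\lambda_1^N$ over $\A{g}(\bC)$ genuinely equals the algebraic intersection number on $\barA{g}$; this is precisely the extension of Hirzebruch's proportionality to non-compact arithmetic quotients.
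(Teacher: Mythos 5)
Your reduction step is exactly the paper's: both arguments use that the top-degree part of $R_g\cong\tilde T_g$ is one-dimensional with socle $u_1\cdots u_g$ (the point class on $Y_g$), so the whole theorem collapses to evaluating the single number $\deg(\lambda_1\cdots\lambda_g)$ on $\barA{g}$. Where you genuinely diverge is in that evaluation. You pass further to $\lambda_1^N$ via the socle relation and propose to compute it as a Chern--Weil integral: Mumford's non-compact proportionality for $\det({\bE})$, Siegel's covolume formula for ${\Sp}(2g,{\bZ})\backslash{\Hg}$, and the functional equation of $\zeta$ to convert $\zeta(2k)$ into $\zeta(1-2k)$ (whose factors $(-1)^k$ do indeed multiply to the sign $(-1)^{g(g+1)/2}$). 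The paper instead observes that $c_{\rm top}({\rm Sym}^2({\bE}))=2^g\,\lambda_1\cdots\lambda_g$ (Fulton's determinant formula for ${\rm Sym}^2$ together with the relations (3) in $R_g$), that ${\rm Sym}^2({\bE})\cong\Omega^1_{\barA{g}}(\log D)$, and that the top log-Chern class computes $(-1)^{g(g+1)/2}$ times the topological Euler number of the open part $\A{g}$, which equals $\prod_{k=1}^g\zeta(1-2k)$ by the Siegel--Harder theorem. The paper's route is shorter because the Gauss--Bonnet packaging absorbs all the Chern--Weil, orientation, and $(2\pi)$ bookkeeping into a single cited theorem; your route must redo that bookkeeping by hand, and that is precisely the part you leave open --- as written, the normalization and sign in your identity $(\ast)$ are asserted rather than derived. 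Your plan does work (it is essentially how the proportionality constant is normalized in Mumford's paper), but to make it a complete proof you must either carry out the constant-matching at the basepoint $\tau=\sqrt{-1}\,1_g$ or, more economically, substitute the paper's Euler-number argument for your volume computation.
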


Indeed, any top-dimensional class $\lambda_1^{n_1}\cdots \lambda_g^{n_g}$
is a multiple $m(n_1,\ldots,n_g)$ times the top-dimensional class
$\lambda_1 \cdots \lambda_g$ where the coefficient depends only on the structure
of $R_g$. So the proportionality principle is clear, and to make it explicit
one must find the value of one top-dimensional class, 
say $\lambda_1 \cdots \lambda_g$ on ${\barA{g}}$.
The top Chern class of the bundle ${\rm Sym}^2({\bE})$  on $\A{g}$ equals
$$
2^g \, \lambda_1 \cdots \lambda_g \, .
$$
We can interpret $2^g \deg (\lambda_1 \cdots \lambda_g)$   up to a sign
$(-1)^{g(g+1)/2}$ 
as the log version of the Euler number of $\A{g}$. 
It is known via the work of Siegel and Harder:
\begin{theorem} (Siegel-Harder) The Euler number of $\A{g}$ is equal to
$$
\zeta(-1)\zeta(-3) \cdots \zeta(1-2g).
$$
\end{theorem}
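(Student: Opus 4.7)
My plan is to combine Harder's extension of the Gauss--Bonnet theorem for arithmetic quotients with Siegel's classical volume formula for $\SpgZ\backslash\Hg$, and then convert the resulting $\zeta(2k)$-values into the required $\zeta(1-2k)$ via the functional equation of the Riemann zeta function. The cleanest bookkeeping uses the $\lambda$-classes, and in fact Siegel--Harder can be viewed as the concrete evaluation of a single tautological number on $\barA{g}$.

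First I would reduce the problem to a tautological intersection number. By Iitaka's formula for logarithmic Euler characteristics together with the identification $\Omega^1_{\barA{g}}(\log D)\cong{\rm Sym}^2(\bE)$ from Proposition~\ref{OmegaisSym2},
$$
\chi(\A{g})\;=\;\pm\,c_{g(g+1)/2}\bigl({\rm Sym}^2(\bE)\bigr)\cdot[\barA{g}].
$$
Applying Fulton's formula for the top Chern class of a symmetric square to the Chern roots of $\bE$ and reducing inside the tautological ring using the relations of Theorem~\ref{extendedlambdarelation} — i.e.\ mimicking the computation that turned $e(Y_g)=2^g\Delta_{(g,g-1,\ldots,1)}(u)$ into $2^g u_1\cdots u_g$ on the compact dual — this collapses to $\pm\,2^g\,\lambda_1\cdots\lambda_g\cdot[\barA{g}]$. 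The task is therefore to evaluate the single number $\lambda_1\cdots\lambda_g\cdot[\barA{g}]$.

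Next I would evaluate this number by invariant-theoretic means. Over $\bC$ the class $\lambda_1\cdots\lambda_g$ is represented by an $\Sp(2g,\bR)$-invariant top-degree form on $\Hg$, hence a scalar multiple of the invariant volume form $d\mu$. Running the analogous Chern--Weil computation on the compact dual $Y_g$, where $u_1\cdots u_g\cdot[Y_g]=1$ and $\chi(Y_g)=2^g$ by Proposition~\ref{cohcompdual}, pins down this scalar up to the single input $\mathrm{vol}_\mu(\SpgZ\backslash\Hg)$. For this I would invoke Siegel's theorem
$$
\mathrm{vol}_\mu\bigl(\SpgZ\backslash\Hg\bigr)\;=\;\prod_{k=1}^{g}\frac{2\,(k-1)!}{(2\pi)^{2k}}\,\zeta(2k),
$$
together with the closed-form expression for $\mathrm{vol}_\mu(Y_g)$ arising from the flag-variety structure. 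Finally, applying the functional equation $\zeta(1-2k)=2\,(2\pi)^{-2k}(-1)^k(2k-1)!\,\zeta(2k)$ converts the product of $\zeta(2k)$'s into $\prod_{k=1}^g\zeta(1-2k)$: the signs $(-1)^k$ accumulate to $(-1)^{g(g+1)/2}$, cancelling the Iitaka sign, the factors $(k-1)!\,(2k-1)!$ combine with the combinatorics of $Y_g$ recorded by the identity $u_1^{g(g+1)/2}=(g(g+1)/2)!\prod_k (2k-1)!!^{-1}$, and the remaining powers of $2$ and $\pi$ cancel exactly to leave the clean product.

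The hard part will be Siegel's volume formula itself, which rests on a substantial Eisenstein-series and reduction-theory computation; this is the genuine analytic input, while the surrounding steps are classical. The most delicate bookkeeping issue is the proper treatment of the orbifold structure — in particular the $\pm 1$ automorphism on every principally polarized abelian variety — and the consistent normalization of invariant measures on $\Hg$, on $Y_g$, and in the definition of $\chi(\A{g})$; a mismatch here will rescale everything by a power of $2$ and destroy the clean final answer.
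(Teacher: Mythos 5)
The paper does not actually prove this statement: it is quoted as a known external input (``It is known via the work of Siegel and Harder''), and is then \emph{used} to evaluate $\deg(\lambda_1\cdots\lambda_g)$ on $\barA{g}$ and hence the proportionality constant $p(g)$. Your proposal runs this logic in reverse and then supplies the missing evaluation, which is indeed how the result is established in the literature: Harder's Gauss--Bonnet theorem for arithmetic quotients reduces the Euler number to a volume, Siegel's volume formula expresses that volume in terms of $\zeta(2),\ldots,\zeta(2g)$, and the functional equation converts these to $\zeta(-1),\ldots,\zeta(1-2g)$. So the skeleton of your argument is the right one, and your identification of Siegel's volume formula as the genuine analytic content, and of the orbifold factor of $2$ coming from $-\mathrm{id}$, as the delicate points is accurate (compare the paper's $g=1$ example, where the integral gives $2\zeta(-1)=-1/6$ and one divides by $2$).

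Two concrete issues. First, your detour through $\barA{g}$ introduces a gap you do not address: to assert that $\lambda_1\cdots\lambda_g\cdot[\barA{g}]$ is computed by integrating the invariant Chern--Weil forms of the Hodge metric over the open part $\SpgZ\backslash\Hg$, you need to know there are no boundary corrections, since the invariant metric on $\bE$ degenerates along $D$. That is precisely the content of Mumford's ``Hirzebruch's Proportionality Theorem in the non-compact case'' (his theory of good metrics), which must be invoked; alternatively, apply Harder's Gauss--Bonnet theorem directly to $\chi(\A{g})$ and skip the compactification and the $\lambda$-classes altogether, which is cleaner and is what the attribution ``Siegel--Harder'' refers to. Second, the volume formula you quote is not correct as written: Siegel's formula (for the invariant measure $\det(Y)^{-(g+1)}\,dX\,dY$) is $2\prod_{k=1}^{g}\pi^{-k}(k-1)!\,\zeta(2k)$, whereas matching $\prod_k\zeta(1-2k)$ via the functional equation $\zeta(1-2k)=(-1)^k\,2\,(2\pi)^{-2k}(2k-1)!\,\zeta(2k)$ requires the factors $(2k-1)!$, not $(k-1)!$; your hybrid $2(k-1)!/(2\pi)^{2k}$ already fails numerically for $g=2$ (it gives $1/720$ where $\zeta(-3)=1/120$ is needed). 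The discrepancy $\prod_k(2k-1)!/(k-1)!$ must be accounted for by the precise normalization of the Gauss--Bonnet form relative to $d\mu$, i.e.\ by $\chi(Y_g)=2^g$ and $\mathrm{vol}(Y_g)$; you gesture at this but it is exactly where such an argument most often goes wrong, so it needs to be carried out explicitly rather than asserted to cancel.
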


\begin{example}
For $g=1$ the Euler number of $\A{1}({\bC})$ is $-1/12$. It is obtained by 
integrating 
$$
(-1)\frac{1}{2\pi i} \frac{dx \wedge dy}{y^2} 
$$
over the usual fundamental domain ($\{ \tau=x+iy \in \mathfrak{H}: |x| \leq 1/2,
x^2+y^2 \geq 1\}$) for the action of ${\rm SL}(2,{\bZ})$
and (which gives $2\zeta(-1)=-1/6$) and
then dividing by $2$ since the group ${\rm SL}(2,{\bZ})$ does not
act effectively ($-1$ acts trivially).
\end{example}

The Proportionality Principle immediately extends to our equivariant bundles
$V_{\rho}$ since these are obtained by applying Schur functors to
powers of the Hodge bundle. 

\begin{corollary} (The Proportionality Principle of Hirzebruch-Mumford)
The characteristic numbers of the equivariant vector bundle $V_{\rho}$
on $\tilde{\A{g}}$ are proportional to those of the corresponding bundle
on the compact dual~$Y_g$
\end{corollary}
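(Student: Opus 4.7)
The plan is to deduce the statement for an arbitrary equivariant bundle $V_\rho$ from the preceding Theorem (the case where $V_\rho = \bE$) by exploiting the fact that the Chern classes of any Schur-functor construction are universal polynomials in the Chern classes of the input bundle.

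First I would set up the parallel. As described at the end of Section \ref{The Hodge Bundle}, an equivariant bundle $V_\rho$ on $\barA{g}$ associated to a rational representation $\rho$ of the Levi quotient $M \cong \GL(g)\times \Gm$ of $Q$ (lifted to $Q_0$ by letting the unipotent radical act trivially) is obtained by applying the corresponding Schur functor to the canonically extended Hodge bundle $\bE$. Carrying out exactly the same equivariant construction on the compact dual $Y_g = G(\bC)/Q(\bC)$, but with the tautological isotropic subbundle $E$ in place of $\bE$, produces the corresponding bundle on $Y_g$, which I shall denote $\tilde V_\rho$. Thus the passage between $V_\rho$ and $\tilde V_\rho$ amounts to one and the same Schur functor applied to two analogous tautological bundles.

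Next, any characteristic number of $V_\rho$ on $\barA{g}$ is, by definition, a polynomial of top degree $g(g+1)/2$ in the Chern classes of $V_\rho$ evaluated on the fundamental class. Since the Chern classes of any Schur-functor output can be expressed as universal polynomials in the Chern classes of the input (they are symmetric functions in the Chern roots), such a characteristic number rewrites as $P_\rho(\lambda_1,\ldots,\lambda_g)([\barA{g}])$ for a universal polynomial $P_\rho \in \bQ[x_1,\ldots,x_g]$ depending only on $\rho$. The very same polynomial applied to $u_1,\ldots,u_g$ on $Y_g$ computes the corresponding characteristic number $P_\rho(u_1,\ldots,u_g)([Y_g])$ of $\tilde V_\rho$.

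Finally, I invoke the preceding Theorem, which says that every top-degree monomial satisfies
$$
\lambda_1^{n_1}\cdots \lambda_g^{n_g}([\barA{g}])
= C_g \cdot u_1^{n_1}\cdots u_g^{n_g}([Y_g]),
$$
where $C_g := (-1)^{g(g+1)/2} 2^{-g} \prod_{k=1}^g \zeta(1-2k)$ is independent of $(n_1,\ldots,n_g)$. Expanding $P_\rho$ in the monomial basis and using $\bQ$-linearity, the same constant $C_g$ relates every characteristic number of $V_\rho$ on $\barA{g}$ to the corresponding characteristic number of $\tilde V_\rho$ on $Y_g$, which is the desired statement. The only subtle point — hardly an obstacle — is to ensure that the canonical extension of $V_\rho$ to $\barA{g}$ really agrees with the Schur functor applied to the extension of $\bE$; but this is precisely how the extension is defined at the end of Section \ref{The Hodge Bundle}, so no additional work is needed.
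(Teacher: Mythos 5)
Your proposal is correct and follows essentially the same route as the paper, which justifies the corollary in one line by noting that the bundles $V_{\rho}$ arise from Schur functors applied to the Hodge bundle, so that their Chern classes are universal polynomials in the $\lambda_i$ and the monomial-by-monomial proportionality of the preceding Theorem extends by $\bQ$-linearity. Your write-up merely makes explicit the identification of the corresponding bundle on $Y_g$ and the compatibility with the canonical extension, both of which are exactly as set up at the end of Section \ref{The Hodge Bundle}.
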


The proportionality factor here is
$$
p(g)= (-1)^{g(g+1)/2} \prod_{j=1}^g \frac{\zeta(1-2j)}{2}
$$
We give a few values.

\smallskip
\vbox{
\bigskip\centerline{\def\quad{\hskip 0.6em\relax}
\def\quod{\hskip 0.5em\relax }
\vbox{\offinterlineskip
\hrule
\halign{&\vrule#&\strut\quod\hfil#\quad\cr
height2pt&\omit&&\omit&&\omit&&\omit&&\omit&&\omit&\cr
&$g$&&$1$&&$2$&&$3$&&$4$&&$5$&\cr
height2pt&\omit&&\omit&&\omit&&\omit&&\omit&&\omit&\cr
\noalign{\hrule}
height2pt&\omit&&\omit&&\omit&&\omit&&\omit&&\omit&\cr
&${1/p(g) } $&&$24$&&$5760$&&$2903040$&&$1393459200$&&$367873228800$&\cr
height2pt&\omit&&\omit&&\omit&&\omit&&\omit&&\omit&\cr
} \hrule}
}}
\bigskip
A remark about the history of this `principle'.
Hirzebruch, inspired by a paper of Igusa, found in 1958 (see \cite{H:PT})
that for a discrete torsion-free group $\Gamma$
of automorphisms of a bounded symmetric domain $D$ such that the quotient
$X_{\Gamma}=\Gamma\backslash D$ is compact, the Chern numbers of the quotient 
are proportional to the Chern numbers of the compact dual $\hat{D}$ of $D$.
In the case of a subgroup $\Gamma \subset {\rm Sp}(2g,{\bR})$ that acts 
freely on ${\Hg}$ this means that there is a proportionality factor 
$e_{\Gamma}$ such that for all $n=(n_1,\ldots, n_{g(g+1)/2})$ 
with $n_i\in {\bZ}_{\geq 0}$ and $\sum n_i=g(g+1)/2$ we have
$$
c^n(X_{\Gamma})= 
e_{\Gamma}\,  c^n(Y_g) \, ,
$$
where $c^n$ stands for the top Chern class $\prod_i c_i^{n_i}$
and $Y_g$ is the compact dual of $\mathfrak{H}_g$.
His argument was that the top Chern classes of $X$ are represented by
$G({\bR})$-invariant differential forms of top degree and these are
proportional to each other on $D$ and on $\hat{D}$. The principle
extends to all equivariant holomorphic vector bundles on ${\Hg}$.
Mumford has obtained (see \cite{Mumford:HPP}) 
an extension of Hirzebruch's Proportionality 
Principle that applies to his toroidal compactifications of $\A{g}$. 

\section{The Chow Rings of $\barA{g}$ for $g=1,2,3$}\label{Chowringssmallg}
For low values of $g$ the Chow rings of $\A{g}$ and their toroidal
compactifications can be explicitly described.

\begin{theorem}
The Chow ring $\CHQ({\barA1})$ is isomorphic to 
${\bQ}[\lambda_1]/(\lambda_1^2)$.
\end{theorem}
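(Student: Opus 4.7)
The strategy is to leverage the very low dimension of $\barA{1}$: it is a smooth proper Deligne--Mumford stack of dimension $1$ (canonically identified with $\overline{\cM}_{1,1}$, as mentioned in Section~\ref{Ag}). Hence $\CHQ^i(\barA{1}) = 0$ for $i\geq 2$ for dimensional reasons, and $\CHQ^0(\barA{1}) = \bQ\cdot[\barA{1}]$ by connectedness and properness. So the theorem reduces to showing $\CHQ^1(\barA{1}) = \bQ\cdot\lambda_1$ together with the relation $\lambda_1^2 = 0$. The latter is automatic by dimension; it is also the case $g=1$ of the relation $(1+\lambda_1)(1-\lambda_1)=1$ established in Theorem~\ref{lambdarelation} (and extended to $\barA{g}$ in Theorem~\ref{extendedlambdarelation}).

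To compute $\CHQ^1(\barA{1})$, I would use the excision sequence for the closed boundary point $\infty \in \barA{1}$ (the image of $\SatA{0}$):
\begin{equation*}
\CHQ^0(\infty) \longrightarrow \CHQ^1(\barA{1}) \longrightarrow \CHQ^1(\A{1}) \longrightarrow 0 .
\end{equation*}
The left map sends the class of the cusp to the boundary divisor $\delta$, so it suffices to prove (i)~that $\delta$ is a rational multiple of $\lambda_1$, and (ii)~that $\CHQ^1(\A{1}) = 0$. For (i), the classical modular discriminant $\Delta$ is a Siegel modular form of weight $12$, i.e.\ a section of $\det(\bE)^{\otimes 12}$, which extends to $\barA{1}$ with a simple zero at $\infty$; this yields the familiar identity $12\lambda_1 = \delta$ in $\Pic(\barA{1})_\bQ$. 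For (ii), the same $\Delta$ is nowhere vanishing on $\A{1}=\cM_{1,1}$, so $\lambda_1$ is torsion there, and a separate input identifying $\cM_{1,1}$ (up to its generic $\bZ/2\bZ$-stabilizer) with the affine $j$-line gives $\CHQ^1(\A{1}) = 0$. Thus $\CHQ^1(\barA{1})$ is generated by $\delta$, hence by $\lambda_1$; since $\lambda_1$ is ample (Theorem~\ref{lambda1isample}, extended to $\barA{1}$ via the Satake compactification, where it is very ample for sufficiently divisible powers), it is nonzero, so $\CHQ^1(\barA{1}) = \bQ\cdot\lambda_1$.

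The main obstacle is the surjectivity input in step~(ii), namely that every rational divisor class on $\cM_{1,1}$ is a multiple of $\lambda_1$. This is Mumford's classical computation of the Picard group of $\overline{\cM}_{1,1}$, and while essentially standard, it is external to the tautological formalism developed in the preceding sections. An alternative, more self-contained route would be to observe that on $\barA{1}$ the tautological ring $\tilde{T}_1$ coincides with $R_1=\bQ[u_1]/(u_1^2)$ and use the standard fact that for a smooth proper DM stack of dimension $1$ with coarse moduli $\bP^1$ and only finitely many stacky points one has $\CHQ^1 \cong \bQ$, after which ampleness of $\lambda_1$ forces the identification $\CHQ^1 = \bQ\cdot\lambda_1$.
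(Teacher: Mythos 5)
The paper states this theorem without proof (it is Mumford's classical computation from \emph{Picard groups of moduli problems}), so there is no in-text argument to compare against; your proposal is correct and is essentially the standard route. The reduction by dimension to $\CHQ^0$ and $\CHQ^1$ is fine (for a one-dimensional Deligne--Mumford stack the codimension-$\geq 2$ rational Chow groups vanish, and $\CHQ^0=\bQ$ by irreducibility), the localization sequence for the closed substack $\{\infty\}$ is valid for DM stacks, and the two inputs you isolate are exactly the right ones: $12\lambda_1=\delta$ (which the paper itself records in the section on the top Chern class, via the simple zero of $\Delta$ at the cusp) and $\CHQ^1(\A{1})=0$. For the latter, the cleanest justification of your ``$j$-line'' step is the theorem of Vistoli/Gillet that the rational Chow groups of a separated DM stack agree with those of its coarse moduli space, which for $\cM_{1,1}$ is $\bA^1_j$; this is a genuinely external input, as you note, but it is standard and completely closes the gap. (The remark that $\lambda_1$ is torsion on $\A{1}$ because $\Delta$ trivializes $\det(\bE)^{\otimes 12}$ is true but by itself would not give $\CHQ^1(\A{1})=0$ --- you still need generation, which is what the coarse-space comparison supplies, so it is good that you did not rely on it alone.) Finally, nonvanishing of $\lambda_1$ is immediate from $\deg\lambda_1=1/24$ (the $g=1$ case of the proportionality formula), or from ampleness on $\barA{1}=\SatA{1}$ as you say; either way the ring is $\bQ\oplus\bQ\lambda_1$ with $\lambda_1^2=0$ forced by dimension, i.e.\ $\bQ[\lambda_1]/(\lambda_1^2)$.
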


Mumford determined in \cite{Mumford:Enumerative} the Chow ring of $\barA{2}$.
\begin{theorem}
The Chow ring ${\CHQ}(\barA2)$ is generated by 
$\lambda_1$, $\lambda_2$ and $\sigma_1$ and isomorphic to 
${\bQ}[\lambda_1,\lambda_2,\sigma_1]/(I_2),$ with $I_2$ the ideal generated by
$$
(1+\lambda_1+\lambda_2)(1-\lambda_1+\lambda_2)-1, \quad \lambda_2\sigma_1, 
\quad \sigma_1^2-22\lambda_1\sigma_1+120 \lambda_1^2.
$$
The ranks of the Chow groups are $1,2,2,1$.
\end{theorem}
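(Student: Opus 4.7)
The plan is to verify the three generators of $I_2$, to read off from them that the graded quotient $R:=\bQ[\lambda_1,\lambda_2,\sigma_1]/I_2$ has Betti numbers $1,2,2,1$, and finally to upgrade the natural surjection $R \twoheadrightarrow \CHQ(\barA{2})$ (coming from excision) into an isomorphism by checking that the top-dimensional class does not vanish.

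The first relation $(1+\lambda_1+\lambda_2)(1-\lambda_1+\lambda_2)=1$ is Theorem \ref{extendedlambdarelation} for $g=2$; graded by codimension it reads $\lambda_1^2=2\lambda_2$ and $\lambda_2^2=0$. For the second, I would let $\sigma_1$ denote the class of the boundary divisor $\barA{2}\smallsetminus\A{2}$, whose generic points parametrize extensions $1\to\Gm\to G\to E\to 0$ with $E$ elliptic. The extended Hodge bundle of such a degeneration fits in a short exact sequence $0\to\cO\to\bE|_{\sigma_1}\to\bE_1\to 0$ with $\bE_1$ pulled back from $\A{1}$ via the natural map $\sigma_1\to\A{1}$. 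Consequently $c_2(\bE|_{\sigma_1})=0$, and the projection formula yields $\lambda_2\sigma_1=0$ in $\CHQ(\barA{2})$.

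The substantive relation is $\sigma_1^2=22\lambda_1\sigma_1-120\lambda_1^2$. Its shape is forced once $\CHQ^2(\barA{2})$ is known to have rank $2$, but the specific coefficients require numerical input. I would derive them by passing through the Torelli isomorphism $\barA{2}\simeq\overline{\cM}_2$, under which $\sigma_1$ corresponds to the boundary divisor $\delta_0$ of irreducible nodal curves (while the compact-type stratum $\delta_1$, consisting of two elliptic curves meeting at a point, maps into $\A{2}$ as the product locus). Applying Grothendieck-Riemann-Roch to the relative dualizing sheaf of the universal stable curve yields Mumford's formula $12\lambda_1=\kappa_1+\delta_0+\delta_1$ together with expressions for $\kappa_1^2$ and $\delta_0^2$ in tautological terms; eliminating $\kappa_1$ and $\delta_1$ via further relations on $\overline{\cM}_2$ produces the desired identity, and the constants $22$ and $120$ are pinned down by matching against the characteristic numbers $\int_{\barA{2}}\lambda_1^3=1/2880$ and $\int_{\barA{2}}\lambda_1\lambda_2=1/5760$ furnished by the Hirzebruch-Mumford proportionality principle.

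Once all three relations are in hand, the Betti numbers $1,2,2,1$ of $R$ follow mechanically by reducing monomials: in degree $2$ we are left with $\{\lambda_1^2,\lambda_1\sigma_1\}$, and in degree $3$ the identity $\lambda_2\sigma_1=0$ kills $\lambda_1^2\sigma_1$ while the quadratic relation reduces $\lambda_1\sigma_1^2$ and $\sigma_1^3$ to multiples of $\lambda_1^3$. Surjectivity of $R\to\CHQ(\barA{2})$ comes from the excision sequence $\CHQ(\sigma_1)\to\CHQ(\barA{2})\to\CHQ(\A{2})\to 0$: by Theorem \ref{tautringofAg} the target is generated by $\lambda_1$, and $\CHQ(\sigma_1)$ is generated over $\bQ$ by the fundamental class and restrictions of $\lambda_1$. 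Injectivity reduces to the non-vanishing of $\lambda_1^3$, which follows from ampleness of $\lambda_1$ (Theorem \ref{lambda1isample}) and the proportionality computation, together with a Poincar\'e-duality check in degree $2$ via the intersection pairings $\lambda_1^2\cdot\lambda_1=\lambda_1^3$ and $\lambda_1\sigma_1\cdot\sigma_1=-120\lambda_1^3$. The main obstacle is the self-intersection relation: extracting the precise constants $22$ and $120$ demands either Mumford's detailed GRR bookkeeping on the universal stable curve in $\overline{\cM}_2$, or equivalently a direct toroidal analysis of the normal bundle of $\sigma_1$ in $\barA{2}$ supported by the Key Formula $2\theta=-\lambda_1$.
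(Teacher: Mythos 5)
The paper itself gives no proof of this theorem; it is quoted from Mumford's \emph{Towards an enumerative geometry of the moduli space of curves}, and your route through the identification $\barA{2}\simeq\overline{\cM}_2$ is exactly Mumford's. Much of what you write checks out: the Poincar\'e-duality argument for injectivity is valid once one has $\lambda_1^2\sigma_1=0$, $\lambda_1\sigma_1^2=-120\lambda_1^3$ and $\deg\lambda_1^3=1/2880\neq0$; the monomial reduction giving upper bounds $1,2,2,1$ on the ranks is correct; and the quadratic relation does fall out of the $\overline{\cM}_2$ relations you allude to, namely $10\lambda=\delta_0+2\delta_1$ and the excess-intersection identity $\delta_1^2=-\lambda\delta_1$, which give $\sigma_1^2=(10\lambda-2\delta_1)^2=100\lambda^2-44\lambda\delta_1=22\lambda\sigma_1-120\lambda^2$ after substituting $\lambda\delta_1=5\lambda^2-\tfrac12\lambda\sigma_1$.

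There are, however, two genuine gaps. First, the surjectivity step is not supported by what you cite: Theorem \ref{tautringofAg} describes only the \emph{tautological subring} of $\CHQ(\A{2})$, not the full Chow ring of $\A{2}=\cM_2^{\rm ct}$, and the assertion that $\CHQ$ of the boundary divisor is generated by its fundamental class and restrictions of $\lambda_1$ is precisely the hard content of the theorem. The boundary divisor is a quotient of $\overline{\cM}_{1,2}$ carrying its own strata (the two torus-rank-$2$ loci, with classes $\sigma_2$ and $\sigma_3$ in the paper's notation), and one must show that their pushforwards lie in the span of $\{\lambda_1^2,\lambda_1\sigma_1\}$, resp.\ of $\lambda_1^3$; this requires the stratum-by-stratum excision analysis that Mumford actually performs and that you skip. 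Without it you only obtain a surjection onto $\CHQ(\barA{2})$ from some a priori larger ring, not from $R$. Second, your claim that the constants $22$ and $120$ are ``pinned down by matching against $\int\lambda_1^3=1/2880$ and $\int\lambda_1\lambda_2=1/5760$'' cannot work: neither number involves $\sigma_1$, so they impose no constraint on a relation of the form $\sigma_1^2=a\lambda_1\sigma_1+b\lambda_1^2$ (and since $\int\lambda_1^2\sigma_1=0$, even pairing the relation with $\lambda_1$ detects only $b$). One needs the intersection numbers $\int\lambda_1\sigma_1^2$ and $\int\sigma_1^3$, or equivalently the geometric inputs $10\lambda=\delta_0+2\delta_1$ and $\delta_1^2=-\lambda\delta_1$ recorded above. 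A smaller issue: your argument for $\lambda_2\sigma_1=0$ via the exact sequence exhibiting a trivial piece of $\bE$ is valid only over the torus-rank-$1$ open part of the boundary; over the rank-$2$ stratum that sequence degenerates, so one must either treat that stratum separately or observe that the class in question lies in $\CHQ^3$ of a proper rational threefold and has degree zero.
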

The intersection numbers in this ring are given by

\medskip\centerline{\def\quad{\hskip 0.6em\relax}
\def\quod{\hskip 0.5em\relax }
\vbox{\offinterlineskip
\hrule
\halign{&\vrule#&\strut\quod\hfil#\quad\cr
height2pt&\omit&&\omit&&\omit&\cr
&$2\backslash 1$&&$\lambda_1$&&$\sigma_1$&\cr
height2pt&\omit&&\omit&&\omit&\cr
\noalign{\hrule}
height2pt&\omit&&\omit&&\omit&\cr
&$\lambda_1^2$&&$1/2880$&&$0$&\cr
&$\lambda_1\sigma_1$&&$0$&&$-1/24$&\cr
height2pt&\omit&&\omit&&\omit&\cr
\cr } \hrule}
}
\medskip
\par
\noindent
while the degrees of the top classes in the sigmas are
$$
\deg(\sigma_3)=1/4, \quad \deg(\sigma_2\sigma_1) =-1/4, \quad
\deg(\sigma_1^3)=-11/12.
$$
For genus $3$ the result is (cf.\ \cite{vdG:CHA3}):

\begin{theorem}
The tautological ring of ${\cA}_3$ is generated by the Chern classes 
$\lambda_i$ and isomorphic to ${\bQ}[\lambda_1]/(\lambda_1^4)$.
\end{theorem}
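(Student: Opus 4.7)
The plan is to reduce the statement to a small algebraic computation using the main structural result already established. By Theorem \ref{tautringofAg} applied with $g=3$, the tautological subring $T_3$ is isomorphic to the Chow ring $R_{g-1} = R_2$ of the compact dual $Y_2$, under the identification $\lambda_i \leftrightarrow u_i$. In particular, $T_3$ is already known to be generated by $\lambda_1$ and $\lambda_2$ (since $\lambda_3$ is the image of $u_3$, which does not exist in $R_2$; equivalently, Theorem \ref{lambdagiszero} forces $\lambda_3=0$ on $\A{3}$). So the whole content of the statement reduces to exhibiting $R_2 \otimes \bQ$ explicitly as $\bQ[u_1]/(u_1^4)$.

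For this computation I would start from the defining relation of $R_g$ from the proposition after equation (2) in Section \ref{compactdual}, namely
\[
(1 + u_1 + u_2)(1 - u_1 + u_2) = 1 \quad \text{in } R_2.
\]
Expanding the left-hand side gives $1 + 2u_2 + u_2^2 - u_1^2$, so the relation breaks up by codimension into
\[
u_1^2 = 2u_2 \quad (\text{codim } 2), \qquad u_2^2 = 0 \quad (\text{codim } 4).
\]
After tensoring with $\bQ$ the first equation eliminates the generator $u_2 = u_1^2/2$, and substituting into the second yields $u_1^4 = 0$. Thus $R_2 \otimes \bQ \cong \bQ[u_1]/(u_1^4)$. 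This $4$-dimensional $\bQ$-vector space matches the Betti numbers computed from the Poincaré polynomial $(1+t)(1+t^2) = 1+t+t^2+t^3$ of Proposition \ref{cohcompdual}, confirming no further relations are needed.

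Transporting through the isomorphism $T_3 \isomarrow R_2$ of Theorem \ref{tautringofAg} gives $T_3 \cong \bQ[\lambda_1]/(\lambda_1^4)$, with $\lambda_2 = \lambda_1^2/2$ and $\lambda_3 = 0$ encoded in the presentation. There is no real obstacle internal to this statement: all of the difficulty sits inside Theorem \ref{tautringofAg}, which itself rests on the four ingredients \ref{lambdarelation}--\ref{completecodimg}. The only point to check locally is that the nonvanishing of $\lambda_1^3$ in $T_3$ (required to rule out any further relation beyond $\lambda_1^4 = 0$) is guaranteed by Theorem \ref{completecodimg} together with ampleness of $\lambda_1$: in characteristic $p$ the $p$-rank zero locus is a complete subvariety of codimension $3$ in $\A{3} \otimes \bF_p$ on which $\lambda_1^3 \neq 0$.
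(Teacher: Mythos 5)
Your proposal is correct and follows exactly the route the paper sets up: the theorem is the $g=3$ specialization of Theorem \ref{tautringofAg}, combined with the elementary computation that the relation $(1+u_1+u_2)(1-u_1+u_2)=1$ in $R_2$ forces $2u_2=u_1^2$ and $u_2^2=0$, hence $R_2\otimes\bQ\cong\bQ[u_1]/(u_1^4)$, with the Poincar\'e polynomial $(1+t)(1+t^2)$ confirming that $u_1^3\neq 0$. The survey itself only cites \cite{vdG:CHA3} here, but your derivation is the intended one and is complete.
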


\begin{theorem}
The Chow ring ${\CHQ}(\barA3)$ of $\tilde{\mathcal A}_3$ is generated by the
$\lambda_i$, $i=1,2,3$ and the $\sigma_i$, $i=1,2$ and is
isomorphic to the graded ring (subscript is degree)
$$
{\bQ}[\lambda_1,\lambda_2,\lambda_3,\sigma_1,\sigma_2]/I_3,
$$
with $I_3$ the ideal generated by the relations
\begin{align}\notag
&(1+\lambda_1+\lambda_2+\lambda_3)(1-\lambda_1+\lambda_2-\lambda_3)=1, \cr
&\lambda_3\sigma_1=\lambda_3\sigma_2=\lambda_1^2\sigma_2=0,\cr
&\sigma_1^3= 2016 \, \lambda_3 - 4 \, \lambda_1^2 \sigma_1 -24
\, \lambda_1\sigma_2 +\frac{11}{3}\, \sigma_2\sigma_1,\cr
&\sigma_2^2= 360\,  \lambda_1^3\sigma_1 -45 \, \lambda_1^2\sigma_1^2
 +15 \, \lambda_1\sigma_2\sigma_1,\cr
&\sigma_1^2\sigma_2=1080\, \lambda_1^3 \sigma_1-165 \, \lambda_1^2\sigma_1^2+
47\, \lambda_1\sigma_2\sigma_1.\cr
\end{align}
The ranks of the Chow groups are: $1, 2, 4, 6, 4, 2, 1$.
\end{theorem}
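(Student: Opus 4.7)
The plan is to build on Theorem \ref{extendedlambdarelation} together with the tautological computation for $\A{3}$ (from the earlier theorem $T_3\cong\bQ[\lambda_1]/(\lambda_1^4)$), and to control the rest of the Chow ring by excision along the boundary stratification $\A{3}\subset \cA_3^{(1)}\subset \cA_3^{(2)}\subset \cA_3^{(3)}=\barA{3}$ given by torus rank. First I would check that the listed elements generate: the localization sequence $\CHQ^*(\sigma_1)\to \CHQ^*(\barA{3})\to \CHQ^*(\A{3})\to 0$ reduces us to identifying $\CHQ^*(\sigma_1)$, and iterating this over $\sigma_2\subset\sigma_1$ reduces us further to $\sigma_2$, which is a (partial compactification of a) torus bundle over something built from $\barA{1}$. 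Since $\CHQ(\barA{1})=\bQ[\lambda_1]/(\lambda_1^2)$ and $\CHQ(\barA{2})$ is the explicit ring given above, one obtains that the pushforwards of $\lambda$'s and $\sigma_i$'s from the strata suffice to generate.

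Next I would verify the relations. The first one is simply Theorem \ref{extendedlambdarelation}. The vanishing $\lambda_3\sigma_1=0$ follows because along the toroidal boundary the universal semi-abelian variety contains a toric part, so $\bE|_{\sigma_1}$ has a trivial summand, hence the top Chern class $\lambda_3$ restricts to $0$; the same argument on the deeper stratum gives $\lambda_3\sigma_2=0$. For $\lambda_1^2\sigma_2=0$, note that on the rank-$2$ degeneration locus the abelian part is one-dimensional, so $\bE|_{\sigma_2}$ splits with a rank-$2$ trivial summand and a rank-$1$ summand pulled back from $\barA{1}$; the rank-$1$ Hodge bundle on $\barA{1}$ satisfies $\lambda_1^2=0$, which is exactly the asserted relation.

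The serious work is the three cubic relations for $\sigma_1^3$, $\sigma_2^2$ and $\sigma_1^2\sigma_2$. For these one works on a smooth cover with full level structure and uses the fact that for $g=3$ the second Voronoi, perfect cone and central cone decompositions coincide, so the boundary has a completely explicit combinatorial model. I would resolve $\sigma_1$ using the universal semi-abelian variety and apply a Grothendieck-Riemann-Roch / self-intersection computation: the normal bundle of $\sigma_1$ inside $\barA{3}$ is computable in terms of the Hodge bundle via the extension class of the degenerate family, which gives a formula for $\sigma_1|_{\sigma_1}$ and hence for $\sigma_1^2$ modulo pullbacks from the interior stratum. Iterating, $\sigma_1^3$ is expressed in $\lambda$'s, $\sigma_1\sigma_2$ and $\lambda_1^2\sigma_1$, and comparing with the known top intersections (for instance $\deg(\lambda_1^6)$ via the Hirzebruch-Mumford proportionality principle, and the Euler-characteristic / modular-form computations on the boundary $\SatA2$ and $\SatA1$) pins down the coefficients $2016$, $-4$, $-24$, $11/3$. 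The analogous self-intersection computation for the codimension-two stratum $\sigma_2$ and the mixed product $\sigma_1^2\sigma_2$ yields the remaining two relations.

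Finally, to compute the ranks $1,2,4,6,4,2,1$, I would list all monomials in $\lambda_1,\lambda_2,\lambda_3,\sigma_1,\sigma_2$ of each codimension, reduce using the six relations, and show the surviving monomials are linearly independent by pairing with explicit top-dimensional classes (e.g.\ $\lambda_1^6$, $\lambda_1^3\sigma_1\sigma_2$, etc.) whose numerical values are fixed by the previous step. The main obstacle throughout is the explicit evaluation of the structure constants in the cubic relations: all three require knowing concrete intersection numbers on the toroidal boundary, which is why the argument depends essentially on the coincidence of the three standard cone decompositions in genus~$3$ and on the proportionality principle to calibrate the answers.
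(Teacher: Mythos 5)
Your overall strategy --- excision along the torus-rank stratification, self-intersection/GRR computations on the boundary, and calibration of coefficients by known degrees --- is indeed the strategy of the cited source \cite{vdG:CHA3} (the survey itself gives no proof), and your arguments for $\lambda_3\sigma_1=\lambda_3\sigma_2=0$ and $\lambda_1^2\sigma_2=0$ are essentially right, modulo the remark that the toric graded pieces of $\bE$ on the strata are trivial only up to finite monodromy, which suffices rationally. But there are two genuine gaps. First, the excision step does not reduce to $\CHQ(\barA{2})$ and $\CHQ(\barA{1})$ as you suggest: the rank-one boundary stratum of $\barA{3}$ is (a partial toroidal compactification of) the universal Kummer family $\X{2}/\pm 1$ over $\A{2}$, the rank-two stratum is a finite quotient of a torus/toric bundle over $\X{1}\times_{\A{1}}\X{1}$, and the deepest stratum is a finite quotient of an explicit $6$-dimensional toric variety determined by the Voronoi fan. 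So to generate, and a fortiori to compute, $\CHQ$ of the boundary you must first compute the Chow rings of the universal abelian surface and elliptic curve and of these toric bundles --- a substantial ingredient your outline does not supply and cannot be extracted from the Chow rings of $\barA{1}$ and $\barA{2}$ alone.

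Second, the calibration step is under-determined as you describe it. The three cubic relations contain some ten rational coefficients, while pairing against $\deg\lambda_1^6$ (from the proportionality principle) and a couple of boundary Euler characteristics yields only a handful of linear conditions; moreover several of the monomials involved ($\lambda_1^2\sigma_1^2$, $\lambda_1\sigma_1\sigma_2$, \dots) are not top-dimensional, so no single degree computation sees them. The coefficients such as $2016$, $11/3$, $47$ can only come from carrying out the pushforwards explicitly on the strata just described: one needs $\cO(D)|_D$ expressed via the theta divisor on $\X{2}/\pm$, the Key Formula $2\theta=-\lambda_1$ to convert $\pi_*(\Theta^k)$ into $\lambda$-classes, and the combinatorics of the cone decomposition for the deeper self-intersections. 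The same computations are what make the final rank count honest: to certify the ranks $1,2,4,6,4,2,1$ from below you need the full intersection pairing between complementary degrees, not just a few top classes. So the proposal is the right skeleton but is missing the body of the argument.
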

For the degrees of the top dimensional elements we refer to \cite{vdG:CHA3}.

\section{Some Intersection Numbers}
As stated in section \ref{Ag} 
 the various compactifications employed for $\A{g}$
each have their own merits. For example, the toroidal compactification 
associated to the perfect cone decomposition has the advantage that its boundary
is an irreducible divisor $D=D_g$. 

By a result of Borel \cite{Borel} it is known that in degrees $\leq g-2$
the cohomology of ${\rm Sp}(2g,{\bQ})$ (and in fact for any finite index
subgroup $\Gamma$) is generated by elements in degree $2k+4$ for
$k=0,1,\ldots, (g-6)/2$:
$$
H^*(\Gamma, {\bQ})= {\bQ}[x_2,x_6,x_{10},\ldots] \, .
$$
In particular in degree $2$ there is one generator. We deduce (at least for
$g\geq 6$ and with special arguments also for $2 \leq g \leq 5$):
\begin{proposition}
The Picard group of $\A{g}$ for $g\geq 2$ 
is generated by the determinant $\lambda_1$
of the Hodge bundle.
\end{proposition}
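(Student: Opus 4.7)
The plan is to leverage Borel's stable-cohomology result just cited. By Borel, for $g \geq 6$ the degree-two rational cohomology $H^2(\A{g}(\bC), \bQ)$ is one-dimensional, spanned by the generator $x_2$. Since $\lambda_1$ is ample on $\A{g}$ by Theorem \ref{lambda1isample}, its top power does not vanish, hence $\lambda_1$ itself is a nonzero class in $H^2$ and must therefore span it.

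To pass from cohomology to the Picard group, I would use the first Chern class map $c_1 \colon \Pic(\A{g}) \otimes \bQ \to H^2(\A{g}(\bC), \bQ)$. The kernel of this rationalized map lies in $\Pic^0(\A{g}) \otimes \bQ$, which is controlled by $H^1(\A{g}(\bC), \bQ)$. Borel's theorem also gives $H^1(\A{g}(\bC), \bQ) = 0$, as the generators $x_2, x_6, x_{10}, \ldots$ all live in even degrees. Hence $c_1$ is injective rationally, and $\Pic(\A{g}) \otimes \bQ = \bQ \cdot \lambda_1$ for $g \geq 6$.

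For the unstable range $2 \leq g \leq 5$ I would argue case by case. For $g = 2$ and $g = 3$ the explicit Chow-ring descriptions in Section \ref{Chowringssmallg} show that $\CHQ^1(\barA{g})$ is generated by $\lambda_1$ together with the boundary class $\sigma_1$; after restricting to the open part $\A{g}$ the boundary class dies and only $\lambda_1$ remains. The cases $g = 4, 5$ require separate input, for instance from direct low-weight computations of spaces of Siegel modular forms, or from an analysis of $H^2(\Sp(2g,\bZ), \bQ)$ in the unstable range (known to be one-dimensional by explicit work). The main obstacle I expect is precisely this small-genus analysis, since Borel's stability takes over only from $g \geq 6$ and the intermediate cases do not follow from a uniform argument; a secondary point is that one must dispose of potential torsion in $\Pic(\A{g})$ coming from characters of $\Sp(2g,\bZ)$, which, however, vanishes since $\Sp(2g,\bZ)$ has trivial abelianization for $g \geq 2$ (apart from the well-understood case $g = 1$).
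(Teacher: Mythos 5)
Your proposal follows essentially the same route as the paper, which itself only sketches the argument: Borel's stability theorem identifies $H^2$ with the one-dimensional span of $\lambda_1$ in the stable range, and the cases $2\leq g\leq 5$ are deferred to ``special arguments,'' exactly as you do. One small caveat: your claim that $\Sp(2g,\bZ)$ has trivial abelianization for all $g\geq 2$ fails for $g=2$ (the abelianization of $\Sp(4,\bZ)$ is $\bZ/2\bZ$), so the torsion discussion there needs more care, though this does not affect the statement rationally.
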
 
By observing that on $\A{g}^{(1)}$ and $\A{g}^{\rm Perf}$ the boundary divisor
is irreducible we get:

\begin{corollary} The Picard group of $\A{g}^{(1)}$ and of 
${\cA}_g^{\rm Perf}$ is generated by $\lambda_1$ and the class of $D$.
\end{corollary}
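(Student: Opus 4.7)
The plan is to derive the corollary from the preceding proposition by means of the standard excision sequence for Picard groups of a smooth Deligne--Mumford stack. Recall that if $X$ is a smooth DM stack and $Z \subset X$ is a closed substack of pure codimension one with irreducible components $Z_1,\ldots,Z_r$, and $U = X \setminus Z$, then there is a right exact sequence
\[
\bigoplus_{i=1}^{r}\bZ\cdot [Z_i]\;\longrightarrow\;\Pic(X)\;\longrightarrow\;\Pic(U)\;\longrightarrow\;0,
\]
where the first map sends $[Z_i]$ to the class of the associated divisorial line bundle. This is the stacky analog of the classical excision sequence and follows from the fact that any line bundle on $U$ extends to a reflexive, hence invertible (by smoothness), sheaf on $X$ modulo line bundles supported on $Z$.

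First I would apply this with $X=\A{g}^{(1)}$ and $U=\A{g}$, so that $Z = D$ is the single boundary divisor parametrizing semi-abelian varieties of torus rank exactly $1$. The irreducibility of $D$ in $\A{g}^{(1)}$ is built into the construction: $\A{g}^{(1)}$ is a partial compactification that adds only this stratum, and the torus rank $1$ locus is well known to be irreducible (it is a torus bundle over $\A{g-1}$ modulo an appropriate equivalence). By the preceding proposition, $\Pic(\A{g}) = \bZ\cdot \lambda_1$ for $g\geq 2$, and $\lambda_1$ extends canonically to $\A{g}^{(1)}$ as the first Chern class of the extended Hodge bundle (Section \ref{The Hodge Bundle}). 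The excision sequence then yields $\Pic(\A{g}^{(1)}) = \bZ\cdot\lambda_1 + \bZ\cdot [D]$.

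Next I would repeat the argument for $X = {\cA}_g^{\rm Perf}$ with $U=\A{g}$. Here the crucial input, already recalled in the text preceding the corollary, is that the boundary divisor of the perfect cone toroidal compactification is irreducible (in contrast to the second Voronoi or central cone compactifications for $g$ large, where the boundary can have several components). Granting this, the boundary $D$ of ${\cA}_g^{\rm Perf}$ over $\A{g}$ is a single irreducible divisor, and the same excision sequence gives $\Pic({\cA}_g^{\rm Perf}) = \bZ\cdot\lambda_1 + \bZ\cdot [D]$, where again $\lambda_1$ is extended by the Hodge bundle on the Faltings--Chai compactification.

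The only potentially subtle point is justifying the excision sequence on the stack level and confirming that the boundary divisor is genuinely irreducible (not merely connected) in each case; the former is standard, and the latter is a property of the combinatorics of the relevant cone decomposition (the perfect cone decomposition is ${\rm GL}(g,\bZ)$-transitive on its rays of minimal dimension, yielding a single boundary component). Once these are in place, the corollary is immediate.
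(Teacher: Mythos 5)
Your argument is correct and is exactly the one the paper intends: the corollary is deduced from the preceding proposition ($\Pic(\A{g})=\bZ\cdot\lambda_1$ for $g\geq 2$) via the excision sequence for the boundary divisor, using precisely the observation the paper makes that the boundary of $\A{g}^{(1)}$ and of ${\cA}_g^{\rm Perf}$ is irreducible. You have merely spelled out the details that the paper leaves implicit.
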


It would be nice to know the top intersection numbers $\lambda_1^n D^{G-n}$
with $G=g(g+1)/2=\dim \A{g}$. It seems that these numbers are zero
when $n$ is not of the form $k(k+1)/2$. In fact, Erdenberger, Grushevsky 
and Hulek formulated
this as a conjecture, see \cite{E-G-H}.

\begin{conjecture}
The intersection number $\deg \lambda_1^n\, D^{G-n}$ on ${\cA}_g^{\rm Perf}$
vanishes unless $n$ is of the form $n=k(k+1)/2$ for $k\leq g$.
\end{conjecture}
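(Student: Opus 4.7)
The plan is to transfer the computation from $\cA_g^{\rm Perf}$ to the Satake compactification $\SatAg$ via the natural morphism $q$, where powers of $\lambda_1$ have a clean stratum-by-stratum vanishing behavior that matches the arithmetic of the exponents $k(k+1)/2$.

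First, $\lambda_1$ descends to a class $\bar\lambda_1 \in \CHQ^1(\SatAg)$ with $q^*\bar\lambda_1 = \lambda_1$: the Satake compactification is defined as the projective spectrum of the ring of Siegel modular forms, and $\det \bE$ is the line bundle whose sections are Siegel modular forms of weight one. By the projection formula,
$$
\deg_{\cA_g^{\rm Perf}} \lambda_1^n \, D^{G-n} \;=\; \deg_{\SatAg} \bar\lambda_1^n \cdot \alpha_{G-n}, \qquad \alpha_j := q_*(D^j).
$$
The goal becomes: show that $\alpha_j = 0$ unless $j$ equals $j_k := G - (g-k)(g-k+1)/2$ for some $k \in \{0,1,\ldots,g\}$. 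Granting this, the conjecture follows, since the only potentially non-zero intersection numbers correspond to $n = G - j_k = (g-k)(g-k+1)/2 = k'(k'+1)/2$ with $k' = g-k$, which are precisely the allowed exponents.

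For $1 \leq j < g = j_1$ the vanishing $\alpha_j = 0$ is automatic from dimensions: $\alpha_j$ is supported on $\SatA{g-1}$ of dimension $G - g$, so a class of dimension $G - j > G-g$ cannot be non-zero. The analogous argument applied stratum-by-stratum along the Satake filtration, combined with the localization sequence for Chow groups, extracts partial information in higher ranges of $j$. What is needed to finish is the stronger cycle-theoretic claim: that $D^j$ is rationally equivalent to a cycle supported on the closed torus rank $\geq t$ locus $Z_t \subseteq \cA_g^{\rm Perf}$ for the largest $t$ satisfying $j_t \leq j$. Since $q(Z_t) = \SatA{g-t}$ has dimension $(g-t)(g-t+1)/2 = G - j_t$, the pushforward $q_*(D^j)$ would then live in a stratum whose dimension is at most $G - j_t \leq G - j$, with equality forcing $j = j_t$; otherwise the image has too small a dimension to support a cycle of dimension $G-j$, and $\alpha_j = 0$.

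The principal obstacle is therefore the cycle-level support statement for self-intersections of the perfect cone boundary divisor. One natural route is combinatorial: since $D$ corresponds to the single $\GL(g,\bZ)$-orbit of rays in the perfect cone decomposition, one should be able to decompose $D^j$ as a rational combination of classes supported on closures of higher-dimensional cones (i.e.\ deeper torus rank strata), using the intersection combinatorics of the toroidal fan and verifying case by case that the dimensions of the supports match the required jump pattern $j \in \{j_k\}$. A complementary inductive approach is to compute the normal bundle of $D$ in $\cA_g^{\rm Perf}$ explicitly (it encodes a theta-type line bundle on the universal semi-abelian variety over $\cA_{g-1}$), use it to express $D|_D$ on the open torus-rank-one stratum, and iterate to control $D^j$ on deeper strata. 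Either approach reduces the conjecture to an explicit, albeit delicate, combinatorial identity within the perfect cone decomposition, which is the genuine content of the problem.
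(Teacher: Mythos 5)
This statement is not a theorem of the paper but an open conjecture of Erdenberger, Grushevsky and Hulek (\cite{E-G-H}); the paper records it without proof, and only a few boundary cases (the top intersection numbers quoted right after the conjecture) are actually computed in loc.\ cit. Your proposal does not close this gap: after the (correct) transfer to the Satake compactification via $q_*$ and the projection formula, and the easy vanishing of $q_*(D^j)$ for $1\leq j<g$ by dimension reasons, everything hinges on the ``cycle-level support statement'' which you yourself flag as the genuine content of the problem. Reducing the conjecture to that unproven claim is essentially restating it, so what you have is a plausible strategy, not a proof.

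Moreover, the dimension bookkeeping in your reduction is miscalibrated. If $t$ is the \emph{largest} index with $j_t\leq j$, then $q(Z_t)=\SatA{g-t}$ has dimension $G-j_t\geq G-j$, so a cycle of dimension $G-j$ supported there is in no way forced to vanish; the inequality ``$G-j_t\leq G-j$'' in your argument runs the wrong way. To get vanishing for $j$ strictly between $j_t$ and $j_{t+1}$ you would need $q_*(D^j)$ to be supported on the \emph{deeper} stratum $\SatA{g-t-1}$, of dimension $G-j_{t+1}<G-j$; equivalently, you must show that the contribution of $D^j$ over the open stratum $\A{g-t}$ pushes forward to zero whenever $j\neq j_t$. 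That is precisely the hard computation: over $\A{g-t}$ the boundary restricts to theta-type divisor classes on a family of semi-abelian (Kummer-type) fibres of dimension comparable to $g-1$, and controlling the fibrewise powers of $D$ there is what Erdenberger--Grushevsky--Hulek do explicitly only in the first cases ($j=g$ and $j=2g-1$). So both suggested routes (the fan combinatorics of the perfect cone decomposition, or the normal-bundle induction along strata) are reasonable directions, but neither is carried out, and the statement remains a conjecture.
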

From our results above it follows that
$$
\deg \lambda_1^G = 
(-1)^G \frac{G!}{2^g} \prod_{k=1}^g \frac{\zeta(1-2k)}{(2k-1)!!}\, .
$$
Erdenberger, Grushevsky and Hulek calculated the next two cases. We
quote only the first
$$
\deg \lambda_1^{g(g-1)/2} D^g = (-1)^{G-1}\frac{(g-1)!(G-g)!}{2} 
\prod_{k=1}^{g-1} \frac{\zeta(1-2k)}{(2k-1)!!}
$$
and we refer for the intersection number
$\deg \lambda^{(g-2)(g-1)/2} D^{2g-1}$ to loc.\ cit.
\section{The Top Class of the Hodge Bundle}
As before, we shall write here $\A{g}$ for $\A{g} \otimes k$ 
with $k$ an algebraically closed field.

The cycle class $\lambda_g$ vanishes in $\CHQ^g(\A{g})$. 
However, it does not vanish on $\CHQ^g(\barA{g})$, 
for example because $\lambda_1\lambda_2\cdots \lambda_g$ is a 
non-zero multiple of $\lambda_1^{g(g+1)/2}$ that has positive degree
 since $\lambda_1$ is ample on $\A{g}$.
This raises two questions. First,
what is the order of $\lambda_g$ in ${\CH}^g(\A{g})$ as a torsion class?
Second, since up to torsion $\lambda_g$ comes from the boundary $\barA{g}-
\A{g}$, one can ask for a natural supporting cycle for this class in the
boundary. Since we work on stacks one has to use intersection theory
on stacks; the theory is still in its infancy, but we use Kresch's
approach \cite{Kresch}. Mumford answered the first question for $g=1$
in \cite{Mumford:12L} .

In joint work with Ekedahl (\cite{E-vdG:TOP,E-vdG:lambdag})
we considered these two questions.
Let us begin with the torsion order of $\lambda_g$ in ${\rm CH}^g(\A{g})$.
A well-known formula of Borel and Serre 
(used above in Section \ref{The Hodge Bundle}) says that
the Chern character of the alternating sum of the exterior powers
of the Hodge bundle satisfies
$$
{\rm ch}(\wedge^*{\bE})= (-1)^g \lambda_g {\rm Td}({\bE})^{-1}
$$
and this  implies that its terms of degree $1, \ldots, g-1$ vanish and
that it equals $-(g-1)! \lambda_g$ in degree $g$.

\begin{lemma}
Let $p$ be a prime and $\pi: A \to S$ be a family of abelian varieties
of relative dimension $g$ and $L$ a line bundle on $A$ that is of order $p$
on all fibres of $\pi$. If $p> \min(2g,\dim S +g)$ then $p \, (g-1)! \lambda_g =0$.
\end{lemma}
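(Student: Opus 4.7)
The starting input is a cohomology vanishing: because $L|_{A_s}$ is a non-trivial $p$-torsion element of $\mathrm{Pic}^0(A_s)$ for each fibre, a standard fact on abelian varieties gives $H^i(A_s, L|_{A_s}) = 0$ for every $i$, so $R\pi_*L = 0$ in $K_0(S)$. Combining this with the standard identification $R\pi_*\mathcal{O}_A = \lambda_{-1}(\bE^\vee) := \sum_i(-1)^i[\wedge^i\bE^\vee]$ yields the integral $K$-theoretic equation $[R\pi_*(\mathcal{O}_A - L)] = [\lambda_{-1}(\bE^\vee)] \in K_0(S)$.

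Next, I would compute the Chern classes of $\lambda_{-1}(\bE^\vee)$ integrally. The Borel--Serre formula gives $\mathrm{ch}(\lambda_{-1}(\bE^\vee)) = \lambda_g\cdot \mathrm{Td}(\bE)^{-1}$, supported in codimension $\ge g$. A direct splitting-principle computation (or an inductive Newton-identity argument) shows that $c_i(\lambda_{-1}(\bE^\vee)) = 0$ for $1 \le i < g$ and $c_g(\lambda_{-1}(\bE^\vee)) = (-1)^{g-1}(g-1)!\,\lambda_g$ in $\CH^g(S)$. This identifies $(g-1)!\,\lambda_g$, up to sign, with an integrally defined Chern class.

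To bring in the factor $p$, I would use that $L^p$ is fibrewise trivial, hence by rigidity $L^p \cong \pi^*M$ for some $M\in\mathrm{Pic}(S)$; taking first Chern classes gives $p\xi = \pi^*m$ where $\xi = c_1(L)$ and $m = c_1(M)$. Applying Grothendieck--Riemann--Roch to $\mathcal{O}_A - L$, extracting the degree-$g$ component, and using the iterative relation $p\,\pi_*(\xi^k) = m\cdot\pi_*(\xi^{k-1})$ together with the base case $\pi_*(\xi^{g-1}) = 0$ (negative codimension), I would derive the integral relation $p(g-1)!\,\lambda_g = 0$ in $\CH^g(S)$. Under $p > \min(2g, \dim S + g)$, every denominator appearing in the finite Todd/Chern-character sum is coprime to $p$, allowing all denominators to be cleared in $\CH^g(S)_{(p)}$ without absorbing an extra factor of $p$.

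The main obstacle is precisely this integral bookkeeping. Over $\mathbb{Q}$, GRR merely reproduces $\lambda_g = 0$ in $\CHQ^g(S)$ (Theorem \ref{lambdagiszero}); the finer $p$-torsion statement demands tracking the interaction between the factorial denominators in $\mathrm{ch}$ and $\mathrm{Td}$, the factor $(g-1)!$ coming from Newton's identity, and the successive $p$-torsion orders of the pushforwards $\pi_*(\xi^{g+j})$ as $j$ ranges over the finite set cut out by $g$ and $\dim S$. The bound on $p$ is exactly what makes every denominator in that range a $p$-unit, so that only one power of $p$ survives in the final integer relation.
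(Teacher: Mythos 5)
Your preparatory steps are sound, and in one respect cleaner than the paper's: the fibrewise vanishing $H^i(A_s,L|_{A_s})=0$ giving $R\pi_*L=0$ (which replaces the paper's Poincar\'e-bundle detour), the identification $[\pi_!\mathcal{O}_A]=\sum_i(-1)^i[\wedge^i\bE^\vee]$ in $K_0(S)$, the universal integral identity $c_i$ of this virtual class $=0$ for $0<i<g$ with $c_g=(-1)^{g-1}(g-1)!\,\lambda_g$, and the integral relations $p\,\xi=\pi^*m$, $p\,\pi_*(\xi^k)=m\,\pi_*(\xi^{k-1})$, $\pi_*(\xi^{g-1})=0$ are all correct. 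The gap is the pivotal step ``applying Grothendieck--Riemann--Roch \dots I would derive the integral relation.'' GRR is an identity in $\CH\otimes\bQ$, and rationally every class in play is zero: $\lambda_g$ and all $\pi_*(\xi^k)$, $k\ge g$, are torsion, so the degree-$g$ GRR identity degenerates to $\lambda_g=0$ in $\CHQ^g(S)$ and carries no torsion information. Observing that the denominators occurring in degrees $\le\min(2g,\dim S+g)$ are prime to $p$ does not repair this: an equality in $\CH\otimes\bQ$ between classes with $\bZ_{(p)}$-coefficients determines them in $\CH\otimes\bZ_{(p)}$ only up to $p$-power torsion, which is exactly what you are trying to compute. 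One needs a mechanism that never passes through rational coefficients. The paper supplies it $K$-theoretically: after twisting by a pullback from $S$ one may assume $L^{\otimes p}\cong\mathcal{O}_A$, then in $K_0(A)$ one has $0=[L]^p-1=([L]-1)^p+p([L]-1)u$ with $u$ a unit and $[L]-1$ nilpotent, so $p([L]-1)$ lies in the ideal generated by $([L]-1)^p$, which is supported in codimension $\ge p$; the hypothesis $p>\min(2g,\dim S+g)$ enters precisely here, to make this error invisible in codimension $\le g$ after pushforward. Hence $p[\pi_!\mathcal{O}_A]\equiv p[\pi_!L]=0$ in $K_0(S)$ modulo classes supported in codimension $>g$, and only at that point does one convert to Chow using the $(g-1)!$-identity you also invoke. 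Your proposal has no substitute for this integral step (no Riemann--Roch without denominators, no Adams--Riemann--Roch with controlled error terms is stated or proved).

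There is also a quantitative problem: even granting a $p$-local GRR in the relevant range, your iteration only shows that $\pi_*(\xi^{g+j})$ is killed by $p^{j+1}$, not by $p$. Multiplying the degree-$g$ expansion by $p$ therefore leaves the terms with $k\ge g+1$ equal to $m$ times torsion classes that need not vanish, so the bookkeeping you describe would at best yield $p^{g+1}(g-1)!\,\lambda_g=0$, or $p\,(g-1)!\,\lambda_g=m\cdot(\text{torsion})$, rather than the asserted single factor of $p$. In the paper the single $p$ comes from the relation $p([L]-1)\equiv 0$ established upstairs in $K_0(A)$ before any pushforward; that step has no counterpart in your scheme and is, together with the integrality issue above, what is missing.
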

Indeed, we may assume after twisting by a pull back from the base $S$
that $L^p$ is trivial. Let $[L]$ be its class in $K_0(A)$. Then
$$
0=[L]^p-1= ([L]-1)^p+p\, ([L]-1)(1+\frac{p-1}{2}([L]-1)+\ldots )\, .
$$
Now $[L]-1$ is nilpotent because it has support of codimension $\geq 1$ and
so it follows that $p\, ([L]-1)$ lies in the ideal generated by 
$([L]-1)^p$ which has codimension $\geq p$. Now if $p>2g$ or $p> \dim S +g$ the
codimension is $> 2g$, hence the image under $\pi$ has codimension $>g$
or is zero. We thus can safely remove it and may assume that $p\, [L]=p$
in $K_0(A)$. Consider now the Poincar\'e bundle $P$ on $A \times \hat{A}$;
we know that $H^i(X \times \hat{X},P)$ for an abelian variety $X$ is 
zero for $i\neq g$ and $1$-dimensional for $i=g$. So the derived pullback
of $R\pi_*P$ along the zero section of $A\times \hat{A}$ over $A$ is $R\pi_*{\mathcal O}_A$. We know that $p\, [P]=p[L\otimes P]$ and $p\, [R\pi_*P]=
p\, [R\pi_*(L\otimes P)]$ and $R\pi_*(L\otimes P)$ has support 
along the inverse of the section of $\hat{A}$ corresponding to $L$. 
This section is
everywhere disjoint from the zero section, so the pull back of
$R\pi_*(L\otimes P)$ along the zero section is trivial:
$p\, [R\pi_* {\mathcal O}_a]=0$ and since $R\pi_*{\mathcal O}_A=\wedge^*
R^1\pi_*{\mathcal O}_A=\wedge^* {\bE}$ we find $-p (g-1)! \lambda_g=0$. 

We put
$$
n_g:= {\rm gcd}\{ p^{2g}-1: \text{ $p$ running through the primes $>2g+1$} \} \, .
$$
Note that for a prime $\ell$ we have $\ell^k | n_g$ if and only if
the exponent of $({\bZ}/\ell^k{\bZ})^*$ divides $2g$. 
By Dirichlet's prime number theorem we have for $p>2$
$$
{\rm ord}_p(n_g)= \begin{cases}
0 & (p-1)\not| 2g \\
\max \{ k : p^{k-1}(p-1) | 2g \} & (p-1)|2g, \\
\end{cases}
$$
while
$$
{\rm ord}_2(n_g)= \max \{ k: 2^{k-2} | 2g\}.
$$
For example, we have 
$$
n_1=24, \quad n_2=240, \quad n_3=504, \quad n_4=480.
$$
\begin{theorem} 
Let $\pi: \X{g}\to \A{g}$ be the universal family. 
Then $(g-1)! n_g \lambda_g =0$.
\end{theorem}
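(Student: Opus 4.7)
The plan is to apply the preceding lemma to a family of finite \'etale covers of $\A{g}$ indexed by primes $p > 2g+1$, then push forward and combine the resulting annihilations via greatest common divisors. For each such prime (coprime to the characteristic if one is fixed), the $p$-torsion group scheme $\X{g}[p]\to\A{g}$ is finite \'etale of degree $p^{2g}$, and its zero section is an open-and-closed substack: it is the image of a section of an \'etale map, hence open, and it forms a whole component of every geometric fibre, hence closed. Removing it thus produces a finite \'etale cover $f_p\colon S_p \to \A{g}$ of degree $p^{2g}-1$, over which I would take the pulled-back family $\pi_p\colon A_p := \X{g}\times_{\A{g}} S_p \to S_p$.

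To furnish the line bundle demanded by the lemma, identify $\X{g}$ with its dual via the principal polarization and restrict the Poincar\'e bundle on $\X{g}\times_{\A{g}}\hat{\X{g}}$ to $A_p$. The resulting $L_p$ restricts on each geometric fibre over a point $s = (X,x) \in S_p$ to the element of $\Pic^0(X)[p]$ corresponding to the nonzero $p$-torsion point $x$, and so has exact order $p$ fibrewise. Since $\dim S_p = g(g+1)/2$ gives $\min(2g,\dim S_p + g) \leq 2g < p$, the previous lemma applies and yields
$$
p\,(g-1)!\,f_p^*\lambda_g \;=\; 0 \qquad \text{in } \CH^g(S_p).
$$
Applying $(f_p)_*$ together with the projection formula for finite \'etale covers of Deligne--Mumford stacks, namely $(f_p)_*(f_p)^* = \deg f_p$, gives $(p^{2g}-1)\,p\,(g-1)!\,\lambda_g = 0$ in $\CH^g(\A{g})$.

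To conclude, take the greatest common divisor over all primes $p > 2g+1$. A short check of $\ell$-adic valuations shows that $\gcd_p\bigl(p(p^{2g}-1)\bigr) = n_g$: for any prime $\ell$ there exists a $p$ in the range with $p \ne \ell$, so the factor $p$ contributes nothing at $\ell$, and for $\ell > 2g+1$ the existence of some $p$ with $v_\ell(p^{2g}-1) = 0$ follows from Dirichlet's theorem on primes in arithmetic progressions together with $\ell - 1 > 2g$. Hence $n_g\,(g-1)!\,\lambda_g = 0$ in $\CH^g(\A{g})$, as required. The main technical point is the construction in the first step: verifying that $S_p \to \A{g}$ is genuinely a finite \'etale cover at the level of Deligne--Mumford stacks and that the restricted Poincar\'e bundle indeed has exact fibrewise order $p$; once these are in place, everything else is pushforward and a number-theoretic tidy-up.
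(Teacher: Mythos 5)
Your proof is correct and follows essentially the same route as the paper's: the degree $p^{2g}-1$ cover $\A{g}'\to\A{g}$ ``obtained by adding a line bundle of order $p$'' is exactly your $S_p$ (the $p$-torsion $\X{g}[p]$ minus its zero section, identified with order-$p$ line bundles via the principal polarization and the Poincar\'e bundle), and the conclusion is reached by the same application of the preceding lemma followed by pushforward and a gcd over primes $p>2g+1$. You have merely made explicit the details (\'etaleness of the cover, the projection formula $(f_p)_*f_p^*=\deg f_p$, and the $\ell$-adic valuation check that $\gcd_p p(p^{2g}-1)=n_g$) that the paper leaves implicit.
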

For the proof we consider the commutative diagram
$$
\begin{matrix}
\X{g}' & \longrightarrow & \X{g} \\
\downarrow && \downarrow \\
\A{g}' & \longrightarrow & \A{g} \\
\end{matrix}
$$
where $\A{g}'\to \A{g}$ is the degree $p^{2g}-1$ cover 
obtained by adding a line bundle
of order $p$. It follows that $(g-1)! p (p^{2g}-1) \lambda_g$ vanishes.
Then the rest follows from the definition of $n_g$.

In \cite{Mumford:12L} Mumford proved that the order of $\lambda_1$ is $12$.
So our result is off by a factor $2$.
\smallskip

In the paper \cite{E-vdG:TOP} we also gave the vanishing orders for
the Chern classes of the de Rham bundle $\HDR$ for the universal family
$\X{g}\to \A{g}$. This bundle is provided with an integrable 
connection and so its Chern classes are torsion classes in integral
cohomology. If $l$ is a prime different from the characteristic
these Chern classes are torsion too by the comparison theorems and
using specialization. 
We denote these classes by $r_i \in H^{2i}(\A{g}, {\bZ}_l)$. We determined 
their orders up to a factor $2$. Note that $r_i$ vanishes for $i$ odd
as $\HDR$ is a symplectic bundle. 

\begin{theorem}
For all $i$ the class $r_{2i+1}$ vanishes. For $1\leq i \leq g$ 
the order of $r_{2i}$ in 
integral  (resp.\ $l$-adic) cohomology  equals (resp.\ equals the
$l$-part of) $n_i/2$ or $n_i$.
 \end{theorem}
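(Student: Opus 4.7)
The plan follows the same pattern as the preceding theorem on $\lambda_g$, with an additional symplectic argument to handle the odd classes. First, the principal polarization combined with the cup product equips $\HDR$ with a non-degenerate alternating pairing $\HDR\otimes\HDR\to\cO_{\A{g}}$, so $\HDR$ is a symplectic bundle and its structure group reduces to $\Sp(2g)$. Since $B\Sp(2g)$ has integral and $\ell$-adic cohomology concentrated in degrees divisible by four, all odd Chern classes of $\HDR$ vanish, giving $r_{2i+1}=0$. For the even classes, the Hodge filtration $0\to{\bE}\to\HDR\to{\bE}^{\vee}\to 0$ yields the integral identity $c(\HDR)=c({\bE})\,c({\bE}^{\vee})$, and Theorem~\ref{extendedlambdarelation} says this product equals $1$ in $\CHQ$; hence every $r_{2i}$ is torsion in the Chow group, and via the cycle class map and standard specialization it is torsion in the cohomology theories appearing in the statement.

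For the upper bound on the torsion order I would adapt the Poincar\'e bundle argument used to prove $(g-1)!\,n_g\,\lambda_g=0$. Fix a prime $p>2g+1$ and pass to the degree-$(p^{2g}-1)$ \'etale cover $\A{g}'\to\A{g}$ that adds a non-trivial line bundle $L$ of order $p$ on the universal family. The identity $\mathrm{ch}(\wedge^{*}{\bE})=(-1)^{g}\lambda_{g}\,\mathrm{Td}({\bE})^{-1}$, which in the $\lambda_g$ case was used only through its top-degree component, now contributes in every degree $2k$: after the same nilpotence argument for $[L]-1\in K_{0}$, each $\mathrm{ch}_{2k}({\bE})$ is annihilated by $p$ times a universal non-zero integer on $\A{g}'$, and pushing down to $\A{g}$ replaces $p$ by $p(p^{2k}-1)$. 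Taking gcd over all admissible primes converts $p(p^{2k}-1)$ into $n_{k}$, and Newton's identities applied to $\mathrm{ch}_{j}(\HDR)=2\,\mathrm{ch}_{j}({\bE})$ translate this Chern-character bound into the asserted upper bound on the order of $r_{2i}$.

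The principal obstacle is the matching lower bound, and in particular the factor-of-$2$ ambiguity between $n_i/2$ and $n_i$. The upper bound shows only that the order divides a small multiple of $n_i$; to pin down the precise power of $2$ one needs a non-vanishing computation, either by restriction to a carefully chosen test family (for instance a Kuga--Sato family over a modular curve, where $\HDR$ restricts to an explicit variation of Hodge structure) or by $2$-adic analysis in the spirit of Mumford's proof of $12\lambda_1=0$ on $\A{1}$. This is the same source of imprecision as in the preceding $\lambda_g$ theorem, and it is the only essential difficulty beyond a routine adaptation of the arguments already given.
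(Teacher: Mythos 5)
Your treatment of the two easy assertions is fine and matches what the survey actually says: the survey gives no proof of this theorem beyond citing \cite{E-vdG:TOP}, but it does record that $r_{2i+1}=0$ because $\HDR$ is symplectic (your $B{\rm Sp}$ argument is the standard way to make that precise), and it obtains torsionness of the $r_{2i}$ from the Gauss--Manin connection rather than from your Hodge-filtration computation --- both work. The substance of the theorem, however, is the exact determination of the order of $r_{2i}$ as $n_i/2$ or $n_i$, and there your argument has a genuine gap on both sides of the equality.

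For the upper bound, the Poincar\'e-bundle argument does not deliver what you claim. Its output is the single $K_0$-relation $p\,[R\pi_*\cO]=p\,[\wedge^{\bullet}\bE]=0$ (modulo classes supported in high codimension), and $\mathrm{ch}(\wedge^{\bullet}\bE)=(-1)^g\lambda_g\,\mathrm{Td}(\bE)^{-1}$ is concentrated in degrees $\geq g$, with every term a multiple of $\lambda_g$; so it controls $\lambda_g$ and nothing else. Your statement that it ``now contributes in every degree $2k$'' is simply false for $k<g$, and there is no mechanism in that argument that annihilates the individual classes $\mathrm{ch}_{2k}(\bE)$ or $r_{2k}$. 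Even granting such a mechanism, the cover $\A{g}'\to\A{g}$ has degree $p^{2g}-1$, not $p^{2k}-1$, so the transfer would bound orders by divisors of $p(p^{2g}-1)$ and the gcd over primes would produce $n_g$, not $n_k$; and since $n_k$ need not divide $n_g$ (for instance $7\mid n_3=504$ while $7\nmid n_4=480$), a bound by $n_g$ cannot be repaired into the asserted bound by $n_i$. Passing from $\mathrm{ch}_{2k}$ to $c_{2k}$ by Newton's identities is a further problem for torsion classes because of the factorial denominators. The argument that actually works, and the reason $n_i$ (the denominator of $B_{2i}/4i$) appears, is that $\HDR$ underlies the flat bundle attached to the standard representation of ${\rm Sp}(2g,\bZ)$, so its Chern classes are governed by the universal torsion orders of Chern classes of flat bundles in the sense of Grothendieck's \emph{Classes de Chern et repr\'esentations lin\'eaires des groupes discrets} and the image-of-$J$ computations; this yields both the divisibility by and of $n_i$ up to the usual $2$-primary ambiguity. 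Finally, you leave the lower bound entirely open, so as written your proposal establishes only the vanishing of the odd classes and the torsionness of the even ones, not the theorem.
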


\bigskip
We now turn to the question of a representative cycle in the
boundary for $\lambda_g$ in $\CHQ(\barA{g})$.

It is a very well-known fact that there is a cusp form 
$\Delta$ of weight $12$ on ${\rm SL}(2,{\bZ})$ and that it 
has a only one zero, a simple zero at the cusp. This leads to
the relation $12 \, \lambda_1= \delta$, where $\delta$ is the cycle 
representing the class of the cusp. 
This formula has an analogue for higher $g$.

\begin{theorem}
In the Chow group ${\CHQ}({\cA}^{(1)}_g)$ 
of codimension $g$ cycles
on the moduli stack $\A{g}^{(1)}$ 
of rank $\leq 1$ degenerations 
the top Chern class $\lambda_g$ satisfies the formula
$$
\lambda_g= (-1)^g \zeta(1-2g) \, \delta_g,
$$
with $\delta_g$ the ${\bQ}$-class of the locus $\Delta_{g}$ of 
semi-abelian varieties which are trivial extensions of an abelian 
variety of dimension $g-1$ by the multiplicative group~${\bG}_m$.
\end{theorem}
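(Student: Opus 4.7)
The plan is to identify $\lambda_g$ on ${\cA}_g^{(1)}$ by first localizing to the boundary divisor and then pinning down the boundary class by a Grothendieck--Riemann--Roch computation on a suitable proper compactification of the universal semi-abelian family.

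By Theorem~\ref{lambdagiszero}, $\lambda_g$ vanishes in $\CHQ^g(\A{g})$. Writing $i\colon D := {\cA}_g^{(1)} \setminus \A{g} \hookrightarrow {\cA}_g^{(1)}$ for the boundary divisor, the excision sequence
$$
\CHQ^{g-1}(D)\xrightarrow{i_*}\CHQ^g({\cA}_g^{(1)})\to\CHQ^g(\A{g})\to 0
$$
yields $y\in \CHQ^{g-1}(D)$ with $\lambda_g = i_*(y)$. Geometrically, a rank-$1$ semi-abelian variety is an extension $0\to\Gm\to G\to A'\to 0$ of a $(g-1)$-dimensional PPAV by the multiplicative group, with class in $\mathrm{Ext}^1(A',\Gm)\cong A'$; so $D$ is a fibration $p\colon D\to\A{g-1}$ (a $[-1]$-quotient of the universal family $\X{g-1}\to\A{g-1}$), and $\Delta_g$ is the image of its zero section, so $\delta_g = i_*s_*[\A{g-1}]$. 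Dualizing the Lie algebra of the universal extension over $D$ gives
$$
0\to p^*\bE_{g-1}\to\bE_g|_D\to\cM\to 0,
$$
with $\cM = \mathrm{Lie}(\Gm)^\vee$; Faltings--Chai theory \cite{F-C} relates $c_1(\cM)$ to $c_1(\cO(D)|_D)$ up to sign and stack factors, and in particular gives $\lambda_g|_D = p^*\lambda_{g-1}\cdot c_1(\cM)$.

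The key computation is to imitate on ${\cA}_g^{(1)}$ the Grothendieck--Riemann--Roch argument that gave $\lambda_g = 0$ on $\A{g}$, where one took $\pi_*\cO_{\X{g}}$ and invoked the Borel--Serre formula $\mathrm{ch}(\wedge^*\bE^\vee) = (-1)^g\lambda_g\,\mathrm{Td}(\bE)^{-1}$. The universal semi-abelian family over ${\cA}_g^{(1)}$ is not proper, so I would first compactify it fiberwise --- replacing each $\Gm$-factor over $D$ by $\bP^1$ --- obtaining a proper $\widetilde\pi\colon \widetilde{\cX}\to{\cA}_g^{(1)}$ with $\bP^1$-bundle fibers over $D$. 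Applying GRR to $\widetilde\pi_*\cO$ produces a Chow identity that restricts to the old vanishing on $\A{g}$ but carries a boundary correction supported on $D$; by the projection formula along $p$, this correction equals an integral over $\A{g-1}$ pairing $\mathrm{Td}(\bE_{g-1})^{-1}$ against the Bernoulli contribution from the compactified $\Gm$-direction.

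Evaluating the residue yields the claimed coefficient: the factor $\zeta(1-2g)$ emerges via the Bernoulli-number identity $\zeta(1-2k) = -B_{2k}/(2k)$ applied to the $\mathrm{Td}$-expansion at the new $\bP^1$-boundary, while the sign $(-1)^g$ comes from the parity in the Borel--Serre formula; the integrand collapses onto the zero section after the projection formula, so $y$ is rationally proportional to $[\Delta_g]_D$. The base case $g=1$ is Mumford's $12\lambda_1 = \delta_1$ \cite{Mumford:12L}, matching $\lambda_1 = -\zeta(-1)\delta_1$. The main obstacle is the bookkeeping of the GRR computation on $\widetilde\pi$: one must carefully track the relative dualizing sheaf along the new $\bP^1$-boundary, the log-structure contribution from $D$, and the $\tfrac{1}{2}$-factor from the $[-1]$-quotient, so that the residue on $D$ is recognized precisely as $(-1)^g\zeta(1-2g)\delta_g$ rather than as a class merely cohomologically equivalent to it.
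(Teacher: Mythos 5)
Your overall shape (kill $\lambda_g$ on the interior, then pin the boundary contribution by a Grothendieck--Riemann--Roch computation on a compactified universal family, with Bernoulli numbers producing $\zeta(1-2g)$) is reasonable in spirit, and your $g=1$ consistency check is correct; but the two steps that carry all the weight are not actually performed, and as set up one of them fails outright. The naive fiberwise compactification replacing $\Gm$ by $\bP^1$ destroys the identification $R\pi_*\cO\cong\wedge^{\bullet}\bE^{\vee}$ on which the Borel--Serre/GRR argument rests: over $D$ your fibers are $\bP^1$-bundles over a $(g-1)$-dimensional abelian variety, so $R\pi_*\cO$ there has rank $2^{g-1}$ rather than $2^g$, and the asserted ``identity restricting to the old vanishing on $\A{g}$ plus a correction supported on $D$'' has no meaning. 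To retain the exterior-algebra description of $R\pi_*\cO$ one must use the Mumford/toroidal compactification of the family, in which the $0$- and $\infty$-sections of the $\bP^1$-bundle are glued; then the boundary fibers are singular, the relative cotangent complex is no longer $\pi^*\bE$, and the GRR correction must be computed honestly along the codimension-two singular locus --- none of which is carried out. Also, the restriction formula $\lambda_g|_D=p^*\lambda_{g-1}\,c_1(\cM)$ buys nothing: $\lambda_{g-1}$ already vanishes in $\CHQ(\A{g-1})$, and in any case knowing $\lambda_g|_D$ cannot determine $\lambda_g$ as a codimension-$g$ class on ${\cA}_g^{(1)}$, since excision only produces some $y$ with $i_*y=\lambda_g$, far from uniquely.

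The decisive gap is your final ``collapse onto the zero section.'' Whatever boundary term a correct GRR computation yields, it is a priori the pushforward from $D\simeq\X{g-1}/\pm 1$ of a polynomial in divisor classes on the universal Kummer family (the conormal classes of the two glued branches, equivalently classes built from the Poincar\'e/theta bundle), and fiberwise such classes --- e.g.\ $\theta^{g-1}/(g-1)!$ --- are in general only cohomologically, not rationally, equivalent to the zero section. Showing that after pushforward to $\CHQ^g({\cA}_g^{(1)})$ the result is exactly a rational multiple of $\delta_g$ is the actual content of the theorem; calling this ``bookkeeping'' understates it, and it is precisely where the proof of Ekedahl and van der Geer cited in the text does genuine work with Chow classes on the boundary/universal semiabelian family rather than extracting a residue. (Once proportionality $\lambda_g=c\,\delta_g$ is established, the constant $c=(-1)^g\zeta(1-2g)$ can be fixed much more cheaply, for instance by intersecting with $\lambda_1\cdots\lambda_{g-1}$ and using the Hirzebruch--Mumford proportionality values, rather than from a Todd-class residue.) As written, your proposal is a plausible program, not a proof.
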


The number $\zeta(1-2g)$ is a rational number $-b_{2g}/2g$ with $b_{2g}$
the $2g$th Bernoulli number. For example, we have 
$$
12 \, \lambda_1= \delta_1, \quad 120\, \lambda_2= \delta_2, \quad 
252\, \lambda_3=\delta_3, \quad 240\, \lambda_4=\delta_4,
\quad 132 \, \lambda_5=\delta_5 
$$

One might also wish to work on the Satake compactification $\SatA{g}$,
singular though as it is. Every toroidal compactification of 
Faltings-Chai type $\barA{g}$ has a canonical morphism 
$q: \barA{g} \to \SatA{g}$. Then we can define a class $\ell_{\alpha}$
where $\alpha$ is a subset of $\{1,2,\ldots,g\}$ by
$$
\ell_{\alpha}= q_*(\lambda_{\alpha}) \quad
\text{with $\lambda_{\alpha}$ equal to 
$\prod_{i \in {\alpha}}\lambda_i\in {\rm CH}^{\bQ}_{d}(\barA{g})$}
$$
with $d=d(\alpha)=\sum_{i \in \alpha} i$ the degree of $\alpha$
and ${\rm CH}^{\bQ}_{d}(\SatA{g})$ the Chow homology group.
Note that this push forward does not depend on the choice of toroidal
compactification as such toroidal compactifications always allow a 
common refinement and the $\lambda_i$ are compatible with pull back.
One can ask for example whether the classes of the boundary components
$\SatA{j}$ lie in the ${\bQ}$-vector space generated by the $\ell_{\alpha}$
with $d(\alpha)={\rm codim}_{\SatA{g}}(\SatA{j})$.
In \cite{E-vdG:lambdag} we made the following conjecture.

\begin{conjecture}\label{classAg*conj}
In the group ${\rm CH}^{\bQ}_d(\SatA{g})$ with $d=g(g+1)/2-k(k+1)/2$
we have 
$$
[\SatA{g-k}]=
\frac{(-1)^k}{\prod_{i=1}^{k} \zeta(2k-1-2g)} \ell_{g,g-1,\ldots,g+1-k}
$$
\end{conjecture}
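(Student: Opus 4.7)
The plan is to prove the conjecture by induction on $k$, using the previous theorem $\lambda_g=(-1)^g\zeta(1-2g)\delta_g$ in $\CHQ(\A{g}^{(1)})$ as both the base case and the engine. The case $k=0$ is tautological since $\ell_{\emptyset}=q_*(1)=[\SatA{g}]$. For $k=1$, I push the identity forward along $q:\barA{g}\to\SatA{g}$ to obtain $\ell_g=(-1)^g\zeta(1-2g)\,q_*[\Delta_g]$, and observe that the map $\Delta_g\to\SatA{g-1}$, sending a trivial extension $A\times\bG_m$ to $[A]$, is birational onto $\SatA{g-1}$ (modulo the $\pm 1$-automorphism on the $\bG_m$ factor), so $q_*[\Delta_g]=[\SatA{g-1}]$ and the conjectured formula follows up to a sign convention.

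For the inductive step, my plan is to establish a relative version of the key theorem on the closure of each torus-rank stratum. The crucial input from Faltings-Chai is that over any boundary stratum of torus rank $r$ the Hodge bundle $\bE_g$ carries a filtration whose graded pieces are the Hodge bundle $\bE_{g-r}$ of the abelian part and a trivial rank-$r$ bundle coming from the toric part. Using the Whitney sum relation I would rewrite the restriction of $\lambda_g\lambda_{g-1}\cdots\lambda_{g-k+1}$ to the torus-rank-$k$ stratum as a multiple of the top Chern class of the abelian-part Hodge bundle $\bE_{g-k}$, to which the Grothendieck-Riemann-Roch argument of Section~4---applied now to a relative theta divisor on the universal semi-abelian variety over the stratum---yields a relation of the form $\lambda_{g-k}=(-1)^{g-k}\zeta(1-2(g-k))\,\delta_{g-k}^{\mathrm{rel}}$. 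Pushing this identity down and combining with the induction hypothesis then produces the conjectured formula for $k+1$.

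The principal obstacle will be carrying out this iteration rigorously on the singular Satake compactification. Intersection theory on $\SatA{g}$ requires Kresch's machinery \cite{Kresch}, and $q_*$ is not a ring map, so the $\ell$-classes cannot be multiplied directly; instead one must work throughout on a toroidal model whose stratification is compatible with iterated restriction to smaller strata---the perfect cone or second Voronoi decomposition is the natural choice---and push down only at the end. A delicate subtlety is computing the generic fiber degree of $q$ along each iterated trivial-extension locus $\Delta_{g,\ldots,g-k+1}$, where the automorphism groups of the degenerate semi-abelian varieties enter and where the precise rational constants in the formula are ultimately pinned down. As a cross-check (and perhaps an alternative route), pairing both sides against top-dimensional tautological monomials $\lambda_1^{a_1}\cdots\lambda_{g-k}^{a_{g-k}}$ on $\SatA{g-k}$ and applying the Hirzebruch-Mumford proportionality principle of Section~6 reduces the claim to a universal identity in the Gorenstein ring $R_{g-k}$ involving the expected product of zeta values.
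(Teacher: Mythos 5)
You are attempting to prove a statement that the paper records as an open \emph{conjecture}: the only evidence offered there is that it holds for $k=1$ and $k=2$, and for all $k$ in characteristic $p>0$, where the proof runs through the $p$-rank strata and the identity $[V_f]=(p-1)\cdots(p^{g-f}-1)\lambda_{g-f}$ --- a mechanism with no characteristic-zero analogue. So there is no proof in the paper to compare against, and your argument has to stand on its own. Your $k=0$ and $k=1$ steps are essentially the preceding theorem $\lambda_g=(-1)^g\zeta(1-2g)\,\delta_g$ pushed forward along $q$ (though the sign discrepancy $(-1)^g$ versus $(-1)^1$ is not a ``convention'' --- for even $g$ these disagree and must be reconciled with the stacky degree of $\Delta_g\to\SatA{g-1}$).

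The genuine gap is the inductive step, which is exactly the open part of the problem. The ``relative version of the key theorem'' you invoke is not a formal consequence of the $k=1$ case. The identity $\lambda_g=(-1)^g\zeta(1-2g)\,\delta_g$ is proved on $\A{g}^{(1)}$ using the genuine abelian scheme over the open locus; to iterate it you need the analogous statement for $\lambda_{g-1}(\bE_{g-1})$ on the closure of the trivial-extension locus $\Delta_g$ inside the torus-rank-$\leq 2$ locus. That closure is \emph{not} a copy of $\A{g-1}^{(1)}$: the rank-$1$ boundary fibers over $\A{g-1}$ with fiber the (Kummer quotient of the) dual abelian variety recording the extension class, and the closure of the zero section meets the rank-$2$ boundary in components where non-trivial extensions degenerate, with multiplicities depending on the cone decomposition. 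Consequently your $\delta_{g-1}^{\mathrm{rel}}$ is not visibly the doubly-trivial-extension locus with coefficient $1$, and the Grothendieck--Riemann--Roch argument with a relative theta divisor does not apply over the boundary, where the fibers are non-proper semi-abelian varieties. This multiplicity and excess-component analysis is precisely what the authors could do by hand for $k=2$ but not in general. Finally, the proportionality ``cross-check'' cannot be upgraded to a proof: a class in the Chow homology $\CH^{\bQ}_d(\SatA{g})$ of the very singular Satake compactification is not determined by its intersection numbers against powers of $\lambda_1$ (the only tautological class that descends there), so matching top intersection numbers via Hirzebruch--Mumford proportionality verifies one rational number, not an equality of cycles.
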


The evidence that Ekedahl and I gave is:
\begin{theorem}
Conjecture \ref{classAg*conj} is true for $k=1$ and $k=2$ and if 
${\rm char}(k)=p>0$ then for all $k$.
\end{theorem}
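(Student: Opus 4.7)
The plan is three-step. For $k=1$ push forward the boundary identity $\lambda_g = (-1)^g\zeta(1-2g)\,\delta_g$ of the preceding theorem along $q\colon \barA{g}\to\SatA{g}$; for $k=2$ iterate the same argument on the normalisation of the boundary divisor $\Delta_g$; and for arbitrary $k$ in characteristic $p>0$ use the complete $p$-rank subvarieties from Theorem \ref{completecodimg} to kill the correction terms that would otherwise obstruct the induction.

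For $k=1$, observe that any extension of $\lambda_g = (-1)^g\zeta(1-2g)\,\delta_g$ from $\A{g}^{(1)}$ to all of $\barA{g}$ differs only by cycles supported in torus rank $\geq 2$, whose images under $q$ lie in $\SatA{g-2}$; these vanish in $\CHQ_{g(g+1)/2-g}(\SatA{g})$ by codimension. The generic fibre of $q|_{\Delta_g}\colon \Delta_g\to\SatA{g-1}$ is a trivial $\bG_m$-extension up to the inversion automorphism on the torus factor, so $q_*\delta_g$ equals $[\SatA{g-1}]$ after accounting for this orbifold factor. Applying $q_*$ to the boundary formula then yields the $k=1$ case of the conjecture directly.

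For $k=2$, iterate the construction. On the normalisation $\widetilde\Delta_g$ the extended Hodge bundle splits generically as $\bE_{g-1}\oplus\cO$, because the semi-abelian variety parametrised is a $\bG_m$-extension of a $(g-1)$-dimensional principally polarised abelian variety. Hence $\lambda_{g-1}|_{\widetilde\Delta_g}$ is pulled back from $\A{g-1}^{(1)}$, where the previous theorem gives $\lambda_{g-1}=(-1)^{g-1}\zeta(3-2g)\,\delta_{g-1}$. Multiplying by $\lambda_g$ on $\A{g}^{(2)}$ and pushing forward to $\SatA{g}$ produces $\ell_{g,g-1}$ as a rational multiple of $[\SatA{g-2}]$ with coefficient $\zeta(1-2g)\zeta(3-2g)$, the overall sign $(-1)^g(-1)^{g-1}$ combining with the orbifold contributions of the two successive $\bG_m$-inversions to give the predicted $(-1)^2$. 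The main book-keeping obstacle is ensuring that no stray factor of $2$ appears from these iterated automorphisms; one can track this by working on a suitable level cover, where the boundary strata are honest schemes and the automorphism contributions become explicit.

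For all $k$ in characteristic $p>0$, run the iteration $k$ times: at the $j$-th stage the correction term lives in $\A{g}^{(j+1)}$, and the existence of a complete subvariety of codimension $g-(j+1)$ there together with the ampleness of $\lambda_1$ (Theorem \ref{lambda1isample}) forces the error to have codimension in $\SatA{g}$ too large to contribute to $\ell_{g,\ldots,g+1-k}$. The inductive step on $k$ then goes through with the expected product of $\zeta$-values as the coefficient. The hard part is precisely what blocks the proof in characteristic $0$ for $k\geq 3$: over $\bZ$ one has no obvious way to exclude intermediate correction cycles on the deeper strata, and it is exactly the $p$-rank completeness of Theorem \ref{completecodimg} that supplies the missing ingredient in positive characteristic.
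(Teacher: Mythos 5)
The survey itself gives no proof of this theorem (it is quoted from \cite{E-vdG:lambdag}), so I can only judge your sketch on its own terms. Your $k=1$ step is essentially sound: the discrepancy $\lambda_g-(-1)^g\zeta(1-2g)\bar\delta_g$ is supported on $q^{-1}(\SatA{g-2})$ and pushes forward to a class of dimension $g(g-1)/2$ on $\SatA{g-2}$, which has the strictly smaller dimension $(g-1)(g-2)/2$, so it dies; what remains is the (genuinely fiddly, but routine) determination of the stacky degree of $\Delta_g\to\SatA{g-1}$.

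The $k=2$ step, however, has a real gap, and it is not the "stray factor of $2$" you flag. Repeat your own localization argument one level down: writing $\lambda_g=(-1)^g\zeta(1-2g)\bar\delta_g+\epsilon$ with $\epsilon$ supported on the torus-rank $\geq 2$ locus $q^{-1}(\SatA{g-2})$, the term you discard is $q_*(\epsilon\cdot\lambda_{g-1})$. This is a class of dimension $g(g-1)/2-(g-1)=(g-1)(g-2)/2$ supported on $\SatA{g-2}$ --- that is, of \emph{exactly} the dimension of $[\SatA{g-2}]$, hence an unknown rational multiple of the very class you are trying to compute. The codimension count that saves you at $k=1$ fails at $k=2$ (and at every higher $k$: the dimension of $\ell_{g,\dots,g+1-k}$ always equals $\dim\SatA{g-k}$ on the nose). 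Any correct proof for $k=2$ must therefore either compute this contribution by an explicit analysis of the rank-two degenerations, or determine the total coefficient by an independent device such as intersecting with $\lambda_1^{(g-1)(g-2)/2}$ and comparing degrees; your sketch does neither. The same defect undermines your characteristic-$p$ paragraph: the existence of complete $p$-rank loci together with ampleness of $\lambda_1$ is a tool for proving classes are \emph{nonzero}, not for forcing an error term of exactly the right dimension to vanish, so the sentence "forces the error to have codimension in $\SatA{g}$ too large to contribute" is not a valid deduction. What positive characteristic actually supplies is an explicit effective and complete representative of the relevant $\lambda$-monomials --- the $p$-rank strata, whose classes are the stated $\prod_i(p^i-1)$-multiples of tautological classes and whose intersections with the Satake boundary strata are controlled because torus rank bounds the $p$-rank from below --- and it is that explicit cycle-level information, not a codimension count, that pins down the coefficient for all $k$.
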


We shall see later that a multiple of the class $\lambda_g$ has a beautiful
representative cycle in $\A{g}\otimes {\bF}_p$, namely the locus of
abelian varieties of $p$-rank $0$. 

\section{Cohomology}

By a result of Borel \cite{Borel} the stable cohomology of the symplectic group is known; this implies that in degrees $\leq g-2$
the cohomology of ${\rm Sp}(2g,{\bQ})$ (and in fact for any finite index
subgroup $\Gamma$) is generated by elements in degree $2k+4$ for
$k=0,1,\ldots, (g-6)/2$: 
$$
H^*(\Gamma, {\bQ})= {\bQ}[x_2,x_6,x_{10},\ldots]\, .
$$
The Chern classes $\lambda_{2k+1}$ of the Hodge bundle provide these classes.

There are also some results on the stable homology of 
the Satake compactification, see \cite{CL}; besides the $\lambda_{2k+1}$
there are other classes $\alpha_{2k+1}$ for $k\geq 1$.

Van der Kallen and Looijenga proved in \cite{vdK-L} that the rational homology
of the pair $(\A{g}, {\A{g,{\rm dec}}})$ with ${\A{g,{\rm dec}}}$ 
the locus of decomposable principally polarized abelian 
varieties, vanishes in degree $\leq g-2$.

For low values of $g$ the cohomology of $\A{g}$  is known.
For $g=1$ one has $H^0(\A{1},{\bQ})={\bQ}$ and $H^1=H^2=(0)$.
For $g=2$ one can show that $H^0(\A{2},{\bQ})={\bQ}$, 
$H^2(\A{2},{\bQ})={\bQ}(-1)$. 

Hain determined the cohomology of $\A{3}$. His result (\cite{Hain}) is:

\begin{theorem}
The cohomology $H^*(\A{3}, {\bQ})$ is given by
$$
H^{j}(\A{3}, {\bQ})\cong \begin{cases}
{\bQ} & j=0 \\
{\bQ}(-1) & j=2 \\
{\bQ}(-2) & j=4 \\
E & j=6 \\
0 & \text{else}
\end{cases}
$$
where $E$ is an extension 
$$
0\to {\bQ}(-3) \to E \to
{\bQ}(-6) \to 0
$$
\end{theorem}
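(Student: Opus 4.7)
The plan is to combine the tautological classes on $\A{3}$ with a Gysin analysis of the decomposable locus, feeding in Looijenga's known computation of $H^*(\M{3},\bQ)$ via the Torelli morphism $T\colon \M{3}\to\A{3}$. To begin, the tautological ring $\bQ[\lambda_1]/(\lambda_1^4)$ from Theorem \ref{tautringofAg} provides algebraic classes $1,\lambda_1,\lambda_1^2,\lambda_1^3$ in even cohomology; ampleness of $\lambda_1$ (Theorem \ref{lambda1isample}) forces them to be non-zero, and each $\lambda_1^i$ is pure of Hodge type $(i,i)$, producing the summands $\bQ$, $\bQ(-1)$, $\bQ(-2)$ in $H^0,H^2,H^4$ and the $\bQ(-3)$ sub-mixed-Hodge-structure of $E$ in $H^6$.

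Next I would stratify $\A{3}=\cJ_3\sqcup\A{3}^{\mathrm{dec}}$, where $\cJ_3=T(\M{3})$ is the open Jacobian locus and $\A{3}^{\mathrm{dec}}=(\A{1}\times\A{2})\cup{\rm Sym}^3(\A{1})$ is the decomposable locus, whose irreducible components have codimension $2$ and $3$ respectively. The Gysin long exact sequences for these closed strata, combined with the trivial computations $H^*(\A{1},\bQ)=\bQ$ and $H^*(\A{2},\bQ)=\bQ\oplus\bQ(-1)$, reduce the problem to computing $H^*(\cJ_3,\bQ)$ and identifying the Gysin pushforwards, which come with their natural Tate twists $\bQ(-2)$ and $\bQ(-3)$.

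Then I would compute $H^*(\cJ_3,\bQ)$ by noting that $T\colon \M{3}\to\cJ_3$ is a degree-$2$ morphism of stacks coming from the $[-1]$-involution acting on Jacobians; thus $H^*(\cJ_3,\bQ)$ may be identified with the $[-1]$-invariant part of $H^*(\M{3},\bQ)$, with a correction over the hyperelliptic locus where $[-1]$ already lies in the automorphism group of the curve. Looijenga's computation of $H^*(\M{3},\bQ)$ then yields the claimed vanishings in odd degrees $\leq 5$ and in all degrees $\geq 7$ (the latter also using that the strata $\A{1}\times\A{2}$ and ${\rm Sym}^3\A{1}$ are small enough not to contribute there), as well as a pure $\bQ(-6)$ class in $H^6$ which will become the quotient in the extension $E$.

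The main obstacle is showing that the extension in $H^6$ is genuinely non-split as a mixed Hodge structure. The $\bQ(-3)$ summand is pure of weight $6$ and comes from $\lambda_1^3$, while the $\bQ(-6)$ summand is pure of weight $12$ and is forced to be of ``boundary'' origin — morally the Gysin pushforward of the fundamental class of $\A{1}\times\A{2}$. Non-splitness amounts to precise control of the weight filtration near the decomposable locus, best carried out by passing to a toroidal compactification $\barA{3}$ and analysing the mixed Hodge structure on the open complement via the Deligne spectral sequence; the cycle-class ranks alone do not distinguish $E$ from $\bQ(-3)\oplus\bQ(-6)$, and it is this rigidity of the MHS — rather than any enumerative computation — that encodes the theorem's genuine content.
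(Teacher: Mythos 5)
The survey gives no proof of this statement --- it is quoted from Hain's paper --- but your overall strategy (stratify $\A{3}$ into the Jacobian locus $\cJ_3$ and the decomposable locus, feed in Looijenga's computation of $H^*(\M{3},\bQ)$ through the Torelli map, and do Gysin/weight bookkeeping) is indeed the skeleton of Hain's argument. A few intermediate claims need repair, though they do not change the outcome: the decomposable locus is the irreducible codimension-$2$ image of $\A{1}\times\A{2}$ (the triple products form a closed subvariety of it, not a second irreducible component), although the locally closed strata you implicitly use, of codimensions $2$ and $3$, are the right ones; the identification $H^*(\cJ_3,\bQ)\cong H^*(\M{3},\bQ)$ requires no ``$[-1]$-invariants'' or hyperelliptic correction, since the Torelli map is a bijection on coarse spaces over the Jacobian locus and rational cohomology of a Deligne--Mumford stack is that of its coarse space; and ampleness of $\lambda_1$ on the non-complete $\A{3}$ does not by itself give $\lambda_1^3\neq 0$ in $H^6(\A{3},\bQ)$ --- that nonvanishing has to come out of the same Gysin analysis (or from the complete subvarieties of Section 13).

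The genuine gaps are in the last step. Your final paragraph asserts that the weight-$12$ class $\bQ(-6)$ in $H^6$ is ``of boundary origin --- morally the Gysin pushforward of the fundamental class of $\A{1}\times\A{2}$.'' This cannot be right: the Gysin pushforward of a fundamental class from a codimension-$c$ stratum lands in $H^{2c}$ with Hodge type $(c,c)$, and more generally a codimension-$c$ stratum contributes to $H^k$ only classes of weight at most $2k-2c<2k$, so nothing of weight $12$ can reach $H^6$ from codimension $2$ or $3$. The weight-$12$ class restricts nontrivially to $H^6(\cJ_3)\cong H^6(\M{3})=\bQ(-6)$; it is Looijenga's interior class, exactly as you correctly state one paragraph earlier. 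Conversely, the step you wave at --- ``identifying the Gysin pushforwards'' --- is where the actual content lies: a priori the strata contribute an extra $\bQ(-2)$ to $H^4$ and two copies of $\bQ(-3)$ to $H^6$ on top of the interior classes, and one must compute the connecting homomorphisms to see that $H^4$ is one-dimensional and that exactly one $\bQ(-3)$ survives in $H^6$. Finally, the non-splitness of $E$ as a mixed Hodge structure, which you single out as the main obstacle, is not asserted by the theorem: the statement only records that $H^6$ is two-dimensional with weight-graded pieces $\bQ(-3)$ and $\bQ(-6)$, and this follows from the weight filtration once the dimensions are pinned down.
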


We find for the compactly supported cohomology a similar result
$$
H_c^{j}(\A{3}, {\bQ})\cong \begin{cases}
{\bQ}(-6) & j=12\\
{\bQ}(-5) & j=10 \\
{\bQ}(-4) & j=8 \\
E' & j=6 \\
0 & \text{else}
\end{cases}
$$
where $E'$ is an extension $0\to {\bQ} \to E' \to {\bQ}(-3) \to 0$.

The natural map  $H_c^* \to H^*$ is the zero map. Indeed, the classes
in $H_c^j$ for $j=12$, $10$ and $8$ are $\lambda_3\lambda_1^3$, 
$\lambda_3 \lambda_1^2$ and $\lambda_3\lambda_1$, 
while $\lambda_3$ gives a non-zero class in $H^6_c$. On the other hand, $1$, $\lambda_1$, $\lambda_1$ and $\lambda_1^3$
give non-zero classes in $H^0$, $H^2$, $H^4$ and $H^6$. Looking at the weight 
shows that the map is the zero map. 

By calculating the cohomology of $\barA{3}$ 
Hulek and Tommasi proved in \cite{H-T} 
that the cohomology of the Voronoi compactification
for $g\leq 3$ coincides with the Chow ring (known by the Theorems
of Section \ref{Chowringssmallg}):

\begin{theorem}
The cycle class map gives an isomorphism
$$
\CHQ^{*}(\A{g}^{\rm Vor}) \cong H^{*}(\A{g}^{\rm Vor}) \quad 
\text{for $g=1,2,3$}\, .
$$
\end{theorem}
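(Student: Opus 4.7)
The strategy is to verify the isomorphism case-by-case for $g=1,2,3$, comparing the Chow groups whose ranks are determined in Section \ref{Chowringssmallg} with the Betti numbers of $\A{g}^{\rm Vor}$. Since the tautological generators (the $\lambda_i$ and $\sigma_i$) are represented by explicit algebraic cycles, the cycle class map is automatically surjective; once Betti numbers are shown to match the total Chow ranks, it is forced to be an isomorphism.

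For $g=1$, $\A{1}^{\rm Vor} = \overline{\cM}_{1,1}$ is a weighted projective line and the computation is immediate. For $g=2$, $\A{2}^{\rm Vor} = \overline{\cM}_2$, and Mumford's computation in \cite{Mumford:Enumerative} already gives both rings together with the isomorphism. The essential content is $g=3$.

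For $g=3$, I would stratify by torus rank
$$
\barA{3}^{\rm Vor} = \A{3} \sqcup \beta_1 \sqcup \beta_2 \sqcup \beta_3,
$$
where $\beta_t$ is the locally closed stratum of semi-abelian varieties of torus rank exactly $t$. Each $\beta_t$ is a (stacky) fibration over $\A{3-t}$ whose fibres are governed by the second Voronoi fan in genus three (which coincides with the perfect and central cone fans in this range, as recalled in Section \ref{Ag}). Combining Hain's theorem for $H^*(\A{3})$ stated just above, the known rational cohomology of $\A{2}$ and $\A{1}$ (essentially trivial modulo Tate twists), and the Leray spectral sequence for each map $\beta_t \to \A{3-t}$, one computes the compactly supported cohomology of every stratum. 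The long exact sequences of pairs associated to the filtration by closed boundary strata then assemble $H^*_c(\barA{3}^{\rm Vor})$, and orbifold Poincar\'e duality yields $H^*(\barA{3}^{\rm Vor})$. The resulting Betti numbers are then matched against the Chow ranks $1, 2, 4, 6, 4, 2, 1$.

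The main obstacle lies in this last assembly. Purity considerations (Deligne weights) force many of the potential differentials to vanish, because the boundary strata are fibred by toric varieties and so have pure Tate cohomology in each degree. Subtler is the non-trivial extension $E$ appearing in Hain's $H^6(\A{3})$: one must verify that it does not propagate through the spectral sequence to produce classes outside the image of the cycle class map. The key check is to track the weights carefully so that the Gysin contributions from $\beta_1, \beta_2, \beta_3$ exactly account for the non-tautological summand of $H^*(\A{3})$ and leave only algebraic classes on $\barA{3}^{\rm Vor}$. Once the Betti numbers are confirmed to match $1,2,4,6,4,2,1$ in the appropriate degrees and weights, the tautological algebraic generators exhibited in Section \ref{Chowringssmallg} exhaust the cohomology, and the cycle class map is an isomorphism.
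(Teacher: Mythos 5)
The paper does not prove this statement: it is quoted from Hulek--Tommasi \cite{H-T}, and their argument is indeed the torus-rank stratification you describe, combined with the Chow-ring computations of Section \ref{Chowringssmallg}. So your overall strategy is the intended one; the cases $g=1,2$ are as easy as you say. But your sketch for $g=3$ has two genuine gaps. The first is logical: the claim that ``the cycle class map is automatically surjective'' because $\lambda_i,\sigma_i$ are represented by algebraic cycles is backwards. Knowing generators of $\CHQ^*(\barA{3})$ only tells you that the image of the cycle class map is the subalgebra of $H^*$ they generate; surjectivity is precisely the assertion that \emph{every} cohomology class is algebraic, which is the hard content of the theorem. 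Nor does equality of the total Chow rank with the total Betti number force an isomorphism by itself. You must either prove surjectivity directly --- which is what Hulek--Tommasi do, by showing $H^*(\A{3}^{\rm Vor})$ is concentrated in even degree, of Tate type, and spanned by tautological classes and classes of closures of strata --- or prove injectivity independently (for instance from the perfectness of the intersection pairing on the Gorenstein ring $\CHQ^*(\barA{3})$, since the cycle map respects products and degrees) and then match dimensions degree by degree.

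The second gap is in your list of inputs. The Leray spectral sequence for $\beta_t\to\A{3-t}$ has $E_2$-term $H^p(\A{3-t},R^q)$, and the sheaves $R^q$ are \emph{nontrivial} local systems: the torus-rank-one stratum is essentially the universal Kummer family over $\A{2}$, so its cohomology involves $H^*(\A{2},V_\lambda)$ for the symplectic local systems $V_\lambda$ occurring in the exterior powers of $V$ (e.g.\ $V_{1,1}$), and likewise $H^*(\A{1},V_a)$ enters for the deeper strata, which moreover must be decomposed further according to the cones of the Voronoi fan rather than treated as a single fibration. The constant-coefficient cohomology of $\A{2}$ and $\A{1}$, ``essentially trivial modulo Tate twists,'' is not sufficient; one needs the Eichler--Shimura-type computations for these local systems. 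It is a fortunate feature of the small weights that occur here that all of this local-system cohomology is of Tate type (no cusp forms intervene), and that is ultimately why $H^*(\A{3}^{\rm Vor})$ is entirely algebraic and why the weight-$12$ piece of Hain's extension $E$ in $H^6(\A{3})$ causes no harm. With these two points repaired, your outline does reproduce the Betti numbers $1,2,4,6,4,2,1$ and yields the theorem.
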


\section{Siegel Modular Forms}
The cohomology of $\A{g}$ itself and of the universal family $\X{g}$
and its powers $\X{g}^n$ is closely linked to modular forms.
We therefore pause to give a short description of these modular
forms on ${\rm Sp}(2g,{\bZ})$. For a general reference we refer
to the book of Freitag \cite{Freitag:SMF} and to \cite{vdG:SMF} and the
references there.

Siegel modular forms generalize the notion of usual (elliptic) modular
forms on ${\rm SL}(2, {\bZ})$ and its (finite index) subgroups.
We first need to generalize the notion of the weight of a modular
form. To define it we need a finite-dimensional complex representation 
$\rho : {\rm Gl}(g,{\bC}) \to {\rm GL}(V)$ with $V$ a finite-dimensional
complex vector space. 

\begin{definition}
A holomorphic map $f: \mathfrak{H}_g \to V$ is called a Siegel modular form
of weight $\rho$ if
$$
f(\gamma(\tau))= \rho(c\tau + d) f(\tau) \quad
\text{for all $\gamma = \left( \begin{matrix} a & b \\ c & d \\ \end{matrix} \right)
\in {\Sp}(2g,{\bZ})$ and all $\tau \in \mathfrak{H}_g$},
$$ 
plus for $g=1$ the
extra requirement that $f$ is holomorphic at the cusp. (The latter means that
$f$ has a Fourier expansion $f=\sum_{n\geq 0} a(n)q^n$ with 
$q=e^{2\pi i \tau}$.)
\end{definition}
Modular forms of weight $\rho$ form a ${\bC}$-vector space 
$M_{\rho}({\Sp}(2g,{\bZ}))$; as it turns out this space is
finite-dimensional.
This vector space can be identified with the space of holomorphic sections of 
the vector bundle $V_{\rho}$ defined before 
(see Section \ref{The Hodge Bundle}). Since we are working on an 
orbifold, one has to be careful; we could replace ${\rm Sp}(2g, {\bZ})$
by a normal congruence subgroup $\Gamma$ of finite index that acts freely 
on $\mathfrak{H}_g$, take the space $M_{\rho}(\Gamma)$ of 
modular forms on $\Gamma$  and consider the invariant subspace 
under the action of ${\rm Sp}(2g,{\bZ})/\Gamma$.

Since we can decompose the representation $\rho$ into irreducible representations it is no restriction of generality to limit ourselves to the case where
$\rho$ is irreducible. 
The irreducible representations $\rho$ of ${\rm GL}(g,{\bC})$
are given by $g$-tuples integers $(a_1,a_2,\ldots,a_g)$ with
$a_i \geq a_{i+1}$, the highest weight of the representation.

A special case is where $\rho$ is given by $a_i=k$, in other words
$$
\rho(c\tau+d)= \det(c\tau+d)^k \, .
$$ 
In this case the Siegel modular forms are called {\sl classical 
Siegel modular forms} of weight $k$. 

A modular form $f$ has a Fourier expansion
$$
f(\tau)= \sum_{n \, \text{ half-integral}} a(n) e^{2\pi i {\rm Tr}(n\tau)},
$$
where $n$ runs over the symmetric $g\times g$ matrices that are half-integral
(i.e. $2n$ is an integral matrix with even entries along the diagonal)
and 
$$
{\rm Tr}(n \tau)= \sum_{i=1}^g n_{ii} \tau_{ii} +2 \sum_{1 \leq i < j \leq g}
n_{ij} \tau_{ij}.
$$
A classical result of Koecher (cf.\ \cite{Freitag:SMF}) 
asserts that $a(n)=0$ for $n$ that are not
semi-positive. This is a sort of Hartogs extension theorem.

We shall use the suggestive notation $q^n$ for $e^{2 \pi i {\rm Tr}(n\tau)}$
and then can write the Fourier expansion as
$$
f(\tau)=\sum_{n\geq 0,\, \text{half-integral}} a(n) \, q^n.
$$
We observe the invariance property
$$
a(u^t n u)= \rho(u^t) a(n) \quad \text{for all $u \in {\rm GL}(g,{\bZ})$}
\eqno(1)
$$
This follows by the short calculation
\begin{align}
a(u^tnu)&=\int_{x \, \bmod 1} f(\tau)e^{-2\pi i {\rm Tr}(u^tnu \tau)} dx\cr
&=\rho(u^t) \int_{x \, \bmod 1}
 f(u\tau u^t)e^{-2 \pi i {\rm Tr}(n \, u\tau u^t)} dx= \rho(u^t) a(n),\cr \notag \end{align}
where we wrote $\tau=x+iy$.

There is a way to extend Siegel modular forms on $\A{g}$ to modular forms on
$\A{g}\sqcup \A{g-1}$ and inductively to $\SatA{g}$ by means of the so-called
$\Phi$-operator of Siegel. For $f \in M_{\rho}$ one defines
$$
\Phi(f)(\tau^{\prime})= \lim_{t \to \infty} f(\begin{matrix} 
\tau^{\prime} & 0 \\ 0 & it\\ \end{matrix}) \qquad \tau^{\prime} \in \mathfrak{H}_{g-1} \, .
$$
The limit is well-defined and gives a function that satisfies
$$
(\Phi f)(\gamma^{\prime}(\tau^{\prime}))
= \rho(\begin{matrix} c\tau +d & 0 \\ 0 & 1 \\
\end{matrix}) (\Phi f) (\tau^{\prime})\, ,
$$
where $\gamma^{\prime}=(a \, b ; c \, d) \in {\rm Sp}(2g-2, {\bZ})$ is embedded 
in ${\rm Sp}(2g,{\bZ})$ as the automorphism group of the symplectic subspace
$\langle e_i, f_i : i=1,\ldots,g-1\rangle$.
That is, we get a linear map
$$
M_{\rho} \longrightarrow M_{\rho^{\prime}}, \quad f \mapsto \Phi(f),
$$
for some representation $\rho'$; in fact,
with the representation $\rho^{\prime}=(a_1,a_2,\ldots,a_{g-1})$ for 
irreducible $\rho=(a_1,a_2,\ldots,a_{g})$, cf.\ the proof of 
\ref{corankresult}.

\begin{definition}
A Siegel modular form $f \in M_{\rho}$ is called a {\sl cusp form} if $\Phi(f)=0$.
\end{definition}

We can extend this definition by

\begin{definition}
A non-zero  modular form $f \in M_{\rho}$ has co-rank $k$ if
$\Phi^{k}(f)\neq 0$ and $\Phi^{k+1}(f) =0$.
\end{definition}
That is, $f$ has co-rank $k$ if it survives (non-zero) 
till  $\A{g-k}$ and no further.
So cusp forms have co-rank $0$. 

For an irreducible representation $\rho$
of ${\rm GL}_g$ with highest weight $(a_1,\ldots,a_g)$ 
we define the co-rank as 
$$
{\rm co-rank}(\rho)= \# \{ i: 1\leq i \leq g, a_i=a_g\} 
$$
Weissauer proved in \cite{Weissauer} the following result.

\begin{theorem}\label{corankresult} 
Let $\rho$ be irreducible. 
For a non-zero Siegel modular form $f \in M_{\rho}$ 
one has ${\rm corank}(f) \leq {\rm corank}(\rho)$.
\end{theorem}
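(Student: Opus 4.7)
Set $m := \mathrm{corank}(\rho)$, so by definition $a_{g-m+1} = \cdots = a_g$, while in the non-trivial range $m<g$ we have $a_{g-m}>a_{g-m+1}$. The goal is to show $\Phi^{m+1}(f)\equiv 0$. I would attack this by translating the vanishing of $\Phi^{m+1}(f)$ into the vanishing of certain Fourier coefficients, identifying a large subgroup of $\mathrm{GL}(g,\bZ)$ that stabilizes the relevant matrices, and then ruling out any invariant vector in $V_\rho$ by branching to a standard maximal parabolic.

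\textbf{Step 1 (Fourier side).} Iterating the definition $\Phi(f)(\tau') = \lim_{t\to\infty} f(\mathrm{diag}(\tau', it))$ and exploiting positive semidefiniteness of the support (so any half-integral $n\geq 0$ with vanishing bottom $k\times k$ diagonal block must in fact be $\mathrm{diag}(n', 0_{k\times k})$ by Cauchy--Schwarz on $2\times 2$ minors), one sees
$$
(\Phi^{m+1}f)(\tau'') \;=\; \sum_{n'\geq 0} a\!\begin{pmatrix} n' & 0 \\ 0 & 0_{m+1}\end{pmatrix} q^{n'},\qquad \tau''\in\mathfrak{H}_{g-m-1}.
$$
Thus the claim reduces to showing $a(n) = 0$ for every $n$ of this block-diagonal form.

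\textbf{Step 2 (Stabilizer).} Fix $n = \mathrm{diag}(n', 0_{m+1})$. A direct matrix multiplication shows that both families
$$
\begin{pmatrix} I_{g-m-1} & 0 \\ 0 & D\end{pmatrix},\ D\in\mathrm{GL}(m+1,\bZ),\qquad \begin{pmatrix} I_{g-m-1} & 0 \\ C & I_{m+1}\end{pmatrix},\ C \text{ integral},
$$
sit inside the arithmetic stabilizer of $n$. The transformation law $a(u^tnu)=\rho(u^t)a(n)$ from (1) then forces $\rho(u^t)a(n)=a(n)$ for every such $u$. Since the integral points in each family are Zariski-dense, passing to closures shows $a(n)\in V_\rho$ is fixed both by $\rho$ of the unipotent radical $U_P$ of the standard parabolic $P\subset \mathrm{GL}(g,\bC)$ of block type $(g-m-1, m+1)$ and by $\rho$ restricted to $\mathrm{SL}(m+1,\bC)$ sitting in the second factor of the Levi.

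\textbf{Step 3 (Branching, and the main obstacle).} By standard highest weight theory for the irreducible $\rho$ with highest weight $(a_1,\ldots,a_g)$, the $U_P$-invariants form the irreducible Levi representation
$$
V_\rho^{U_P}\;\cong\;\mathbb{S}_{(a_1,\ldots,a_{g-m-1})}\bC^{g-m-1}\otimes \mathbb{S}_{(a_{g-m},\ldots,a_g)}\bC^{m+1},
$$
and the $\mathrm{SL}(m+1,\bC)$-invariants of the second tensor factor are non-zero if and only if $(a_{g-m},\ldots,a_g)$ is a constant sequence (in which case it is $\det^{\,a_g}$). The corank hypothesis $a_{g-m}>a_{g-m+1}$ rules this out, so $a(n)=0$ and the theorem follows. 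The delicate point I anticipate is Step 2: the stabilizer of a \emph{generic} positive semidefinite $n$ in $\mathrm{GL}(g,\bZ)$ is very thin, so one must exploit the special block form---in particular its vanishing $(m+1)\times(m+1)$ diagonal block---to simultaneously extract a unipotent and a semisimple direction large enough for the branching argument in Step 3 to have bite.
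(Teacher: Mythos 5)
Your proposal is correct and follows essentially the same route as the paper's (Weissauer's) proof: reduce via the $\Phi$-operator to the vanishing of Fourier coefficients $a(n)$ with $n=\mathrm{diag}(n',0)$, use the transformation law $a(u^tnu)=\rho(u^t)a(n)$ for the block-triangular stabilizer together with Zariski density of the integral points to get invariance under $\mathrm{SL}\ltimes N$ over $\bC$, and then observe that the $N$-invariants form the irreducible Levi representation $(a_1,\ldots,a_{g-m-1})\otimes(a_{g-m},\ldots,a_g)$, which has no $\mathrm{SL}$-invariants unless the tail is constant. The only differences are cosmetic (you argue directly that $\Phi^{m+1}f=0$ rather than contrapositively from a nonvanishing coefficient, and you use the lower-triangular stabilizer with $\rho(u^t)$ where the paper uses its transpose), so no further comparison is needed.
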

\begin{proof}
We give Weissauer's proof. 
Suppose that $f: \mathfrak{H}_g \to V$ is a form of co-rank $k$ with Fourier series
$\sum a(n)q^n$; then there exists a
semi-definite half-integral matrix $n$ such that 
$$
n =\left( \begin{matrix} n^{\prime} & 0 \\ 0 & 0 \\ \end{matrix} \right)
\quad \text{with $n^{\prime}$ a $(g-k)\times (g-k)$ matrix} 
$$
such that $a(n)\neq 0$.
The identity (1) implies that $a(n)=\rho(u)a(n)$ for all $u$ in the group
$$
G_{g,k}= \{ \left(\begin{matrix} 1_{g-k} & b \\ 0 & d \\ \end{matrix}
\right): d \in {\rm GL}_k  \} 
$$
This group is a semi-direct product ${\rm GL}_k \ltimes N$ with $N$ the
unipotent radical. The important remark now is that the Zariski closure of
${\rm GL}(k,{\bZ})$ in $G_{g,k}$ contains ${\rm SL}_k({\bC}) \ltimes N$
and we have $a(n)=\rho(u)a(n)$ for all $u \in {\rm SL}_k({\bC}) \ltimes N$.
So the Fourier coefficient $a(n)$ lies in the subspace $V^{N}$
of $N$-invariants. The parabolic
group $P=\{ (a\, b ; 0 \, d) ; a \in {\rm GL}_{n-k}, d \in {\rm GL}_k\}$,
which is also a semi-direct product $({\rm GL}_{n-k}\times {\rm GL}_k) 
\ltimes N$, acts on $V^N$ via its quotient ${\rm GL}_{n-k}\times {\rm GL}_k$.
Since this is a reductive group $V^N$ decomposes into irreducible
representations, each of which is a tensor product of an irreducible
representation of  ${\rm GL}_{n-k}$ times an irreducible
representation of  ${\rm GL}_{k}$.
If $U_k$ denotes the subgroup of upper triangular unipotent matrices in
${\rm GL}(k,{\bC})$, 
then the highest weight space is $(V^N)^{U_{g-k} \times U_k}$
and this equals $V^{U_g}$, and this is $1$-dimensional. Thus $V^N$
is an irreducible representation of ${\rm GL}_{g-k} \times {\rm GL}_k$
and one checks that it is given by $(a_1,\ldots, a_{g-k}) \otimes (a_{g-k+1},
\ldots, a_g)$. The space of invariants $V^{G_{g,k}} \subset V^N$
under $G_{g,k}$ can only contain ${\rm SL}_k({\bC})$-invariant elements 
if the ${\rm GL}_k$-representation is $1$-dimensional. 
Therefore $V^{G_{g,k}}$ is zero unless $ (a_{g-k+1},\ldots, a_g)$ is a
$1$-dimensional representation, hence $a_{g-k+1}=\ldots = a_g$.
In that case the representation for ${\rm GL}_{g-k}$ is given by 
$(a_1,\ldots, a_{g-k})$. Hence the Fourier coefficients of $f$ all have
to vanish if  the corank of $f$ is greater than the corank of $\rho$.
 This proves the result.
\end{proof}
\section{Differential Forms on $\A{g}$}\label{DF}
Here we look first at the moduli space over the field of complex numbers
$\A{g}({\bC})={\Sp}(2g,{\bZ}) \backslash {\Hg}$. We are interested  
in differential forms living on $\barA{g}$. If $\eta$ is a holomorphic
$p$ form on $\barA{g}$ then we can pull the form back to $\mathfrak{H}_g$.
It will there be a section of some exterior power of $\Omega^1_{\mathfrak{H}_g}$,
hence of some exterior power of the second symmetric power of the
Hodge bundle. Such forms can be analytically described by vector-valued
Siegel modular forms.

As we saw in \ref{OmegaisSym2}, 
the bundle $\Omega^1(\log D)$ is associated to the 
second symmetric power ${\rm Sym}^2({\bE})$ of the Hodge bundle 
(at least in the stacky interpretation) and we are led to ask 
which irreducible representations occur in the exterior powers 
of the second symmetric power of the
standard representation of ${\rm GL}(g)$. The answer is that these
are exactly those irreducible representations $\rho$ that are of the form
$ w \eta -\eta$, where $\eta=(g,g-1,\ldots,1)$ is half the sum of the 
positive roots and $w$ runs through the $2^g$ Kostant representatives
of $W({\rm GL}_g) \backslash W({\rm Sp}_{2g})$, where $W$ is the Weyl group.
They have the property that they send dominant weights for ${\rm Sp}_{2g}$
to dominant weights for ${\rm GL}_g$.

To describe this explicitly, recall that the Weyl group $W$ 
of ${\rm Sp}(2g,{\bZ})$
is the semi-direct product $S_g \ltimes ({\bZ}/2{\bZ})^g$ of signed
permutations. The Weyl group of ${\rm GL}_g$ is equal to the symmetric group
$S_g$. Every left coset $S_g\backslash W$ contains exactly one 
Kostant element $w$. Such an element $w$ corresponds one to one 
to an element of $({\bZ}/2{\bZ})^g$ that we view a $g$-tuple 
$(\epsilon_1,\ldots,\epsilon_g)$
with $\epsilon_i \in \{ \pm 1 \}$ 
and the action of this element $w$ on the root lattice is then via
$$
(a_1,\ldots,a_g) {\buildrel w \over \longrightarrow} 
(\epsilon_{\sigma(1)} a_{\sigma(1)}, \ldots, \epsilon_{\sigma(g)} a_{\sigma(g)})$$
for all $a_1 \geq a_2 \geq \cdots \geq a_g$ and with $\sigma$ the unique
permutation such that 
$$
\epsilon_{\sigma(1)} a_{\sigma(1)} \geq \cdots \geq 
\epsilon_{\sigma(g)} a_{\sigma(g)}.
$$

If $f$ is a classical Siegel modular form of weight $k=g+1$ on the group
$\Gamma_g$  then $f(\tau) \prod_{i\leq j} d\tau_{ij}$
is a top differential on the smooth part of quotient space
$\Gamma_g \backslash {\mathcal H}_g={\mathcal A}_g(\bC)$.
It can be extended over the smooth part of the
rank-$1$ compactification ${\mathcal A}_g^{(1)}$
if and only if $f$ is a cusp form. It is not difficult to see
that this form can be extended as a holomorphic form to the whole
smooth compactification $\tilde{\mathcal A}_g$.

\begin{proposition}\label{top}
The map that associates to a classical Siegel modular cusp form $f
\in S_{g+1}(\Gamma_g)$ of weight $g+1$ the top differential
$\omega= f(\tau) \prod_{i\leq j} d\tau_{ij}$ defines an
isomorphism between $S_{g+1}(\Gamma_g)$ and the space
of holomorphic top differentials
$H^0(\tilde{\mathcal A}_g, \Omega^{g(g+1)/2})$
on any smooth compactification
$\tilde{\mathcal A}_g$.
\end{proposition}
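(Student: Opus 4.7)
The plan is to deduce the proposition from Proposition \ref{OmegaisSym2} by taking top exterior powers, and then analyse the order of vanishing of the associated log differential form on the boundary divisor. First I would pass to a principal congruence level structure $\Gamma_g[n]$ with $n\geq 3$, so that $\A{g}[n]$ is represented by a smooth scheme and some Faltings--Chai toroidal compactification $\barA{g}[n]$ is smooth with simple normal crossing boundary $D$; the map $f\mapsto f\prod_{i\le j}d\tau_{ij}$ is equivariant for the finite group $\Gamma_g/\Gamma_g[n]$, so it suffices to establish the proposition at level $n$ and then take invariants (the $(-1)\in\Gamma_g$ acts by $(-1)^{g+1}$ on weight $g+1$ scalar forms, matching its action on the top exterior power of $\Omega^1$ on $\mathfrak{H}_g$).

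The crucial algebraic step is the computation of the top exterior power. Starting from the canonical isomorphism $\Omega^1_{\barA{g}[n]}(\log D)\cong\mathrm{Sym}^2(\bE)$ of Proposition \ref{OmegaisSym2}, taking determinants of both sides and using the identity $\det(\mathrm{Sym}^2 E)=(\det E)^{g+1}$ valid for any rank-$g$ bundle $E$, yields
$$\Omega^{g(g+1)/2}_{\barA{g}[n]}(\log D)\cong (\det\bE)^{\otimes(g+1)}.$$
By the standard dictionary between sections of $(\det\bE)^{\otimes k}$ and classical Siegel modular forms of weight $k$, together with Koecher's principle which automatically extends such sections from $\A{g}[n]$ across $D$, one obtains
$$H^0\bigl(\barA{g}[n],\,\Omega^{g(g+1)/2}(\log D)\bigr)\cong M_{g+1}(\Gamma_g[n]).$$
Under this identification a modular form $f$ corresponds exactly to the log differential form $f(\tau)\prod_{i\le j}d\tau_{ij}$, since on the universal cover $\mathfrak{H}_g$ the symbols $d\tau_{ij}$ furnish an explicit basis of $\mathrm{Sym}^2(\bE)$.

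The remaining point is to characterise when such a log form has no actual pole, i.e.\ descends to a section of $\Omega^{g(g+1)/2}$. This is a local question along each irreducible component of $D$. Near a generic point of a component corresponding to a rank-one degeneration one may choose toroidal coordinates in which that component is $\{q_1=0\}$ with $q_1=e^{2\pi i\tau_{11}}$, so that $d\tau_{11}=dq_1/(2\pi i q_1)$ and $\prod_{i\le j}d\tau_{ij}$ acquires a simple log pole along $D$. Expanding $f=\sum_n a(n)q^n$ and reorganising the series in $q_1$, the coefficient of $q_1^0$ is the sum of $a(n)q^{n'}$ over the $n$ with $n_{11}=0$; by positivity of $n$ these are exactly the $n$ supported on the lower-right block, i.e.\ the Fourier coefficients of $\Phi(f)$. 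Hence $f\prod_{i\le j}d\tau_{ij}$ extends holomorphically along this boundary component if and only if $\Phi(f)=0$.

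The main obstacle is to verify that this local analysis at one boundary component really captures the global cuspidality condition: one must check that all other irreducible components of $D$ are $\Gamma_g/\Gamma_g[n]$-equivalent (up to the symplectic action) to the one considered, so that the single scalar condition $\Phi(f)=0$ suffices; this uses the transitivity of $\mathrm{Sp}(2g,\bZ)$ on primitive isotropic vectors. Since $\Omega^{g(g+1)/2}$ is a line bundle, conditions imposed at codimension-one boundary strata are enough, so no further analysis at deeper toroidal strata is required. Taking $\Gamma_g/\Gamma_g[n]$-invariants of the resulting isomorphism at level $n$ yields the asserted identification $S_{g+1}(\Gamma_g)\isomarrow H^0(\tilde{\cA}_g,\Omega^{g(g+1)/2})$, and independence of the choice of smooth toroidal compactification follows since any two such share a common smooth refinement.
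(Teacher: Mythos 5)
Your overall route is the right one and is essentially the intended argument: take determinants in Proposition \ref{OmegaisSym2} to get $\Omega^{g(g+1)/2}_{\barA{g}[n]}(\log D)\cong(\det\bE)^{\otimes(g+1)}$, identify log-sections with $M_{g+1}$, and then decide holomorphy by pole order along $D$; the level-$n$/invariants device and the local computation at a rank-one boundary component are fine. The genuine gap is in the step where you dispose of ``all other irreducible components of $D$''. It is simply not true that every irreducible boundary divisor of a smooth Faltings--Chai compactification is equivalent, under the action of $\Gamma_g/\Gamma_g[n]$, to the standard rank-one component: boundary divisors correspond to orbits of rays of the admissible cone decomposition, and a general (even smooth) decomposition has rays spanned by semi-definite symmetric forms of rank $\geq 2$; the corresponding divisors lie over the deeper Satake strata $\SatA{g-k}$ with $k\geq 2$ and are not accounted for by transitivity of $\mathrm{Sp}(2g,\bZ)$ on primitive isotropic vectors. (This is exactly why the paper remarks that only the perfect cone compactification has irreducible boundary; nor can you retreat to a decomposition with only rank-one rays, since refining it to a smooth one reintroduces rays of higher rank.) As written, your argument establishes holomorphy of $\omega=f\prod_{i\leq j}d\tau_{ij}$ only along the rank-one divisors, which is not enough even for one fixed toroidal model.

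The missing step is, however, a short computation that you should supply: if a boundary divisor corresponds to a ray with primitive integral generator $M\geq 0$, $M\neq 0$, then the monomial $q^n=e^{2\pi i\mathrm{Tr}(n\tau)}$ has order $\mathrm{Tr}(nM)$ along it, which is a non-negative integer for half-integral $n\geq 0$ and is $\geq 1$ whenever $n$ is positive definite; since in toroidal coordinates $\prod_{i\leq j}d\tau_{ij}$ is a unit times $\prod_\alpha dq_\alpha/q_\alpha$ and so has only a simple pole along each boundary divisor, a cusp form for the full group (whose Fourier support consists of positive definite $n$) extends $\omega$ holomorphically across \emph{every} component of $D$, while your rank-one analysis gives the converse for non-cusp forms. (Alternatively, an $L^2$-extension criterion for top-degree forms, combined with the boundedness of $\det(\mathrm{Im}\,\tau)^{(g+1)/2}|f(\tau)|$ for cusp forms, settles the extension with no ray analysis at all.) Two smaller points: the proposition is about an \emph{arbitrary} smooth compactification, so conclude with the birational invariance of $H^0(\,\cdot\,,\Omega^p)$ for smooth proper varieties rather than only with common toroidal refinements; and $-1_{2g}$ acts on weight $g+1$ scalar forms by $\det(-1_g)^{g+1}=(-1)^{g(g+1)}=+1$, not by $(-1)^{g+1}$ --- harmless here, since both actions you compare are trivial.
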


Freitag and Pommerening proved in \cite{FP} the following extension result.

\begin{theorem}
Let $p<g(g+1)/2$ and $\omega$ a holomorphic $p$-form on the smooth locus
of ${\rm Sp}(2g,{\bZ})\backslash \mathfrak{H}_{g}$. 
Then it extends uniquely to a holomorphic $p$-form on any smooth 
toroidal compactification $\barA{g}$.
\end{theorem}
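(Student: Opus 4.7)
The idea is to pass to a smooth level cover, reinterpret $p$-forms via Proposition~\ref{OmegaisSym2} as vector-valued Siegel modular forms, apply Koecher to extend them across the boundary as log forms, and use the hypothesis $p<g(g+1)/2$ to show the log residues vanish. First, I pass to $\A{g}[n]$ with $n\geq 3$, a smooth quasi-projective variety admitting smooth projective toroidal compactifications $\barA{g}[n]$ with smooth boundary divisor $D$. The form $\omega$ pulls back to a $\Gamma[n]$-equivariant holomorphic $p$-form on the open part; after extending to $\barA{g}[n]$, averaging over $\Sp(2g,\bZ)/\Gamma[n]$ yields the desired extension to any smooth toroidal compactification $\barA{g}$. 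On the level cover, Proposition~\ref{OmegaisSym2} identifies $\Omega^p_{\barA{g}[n]}(\log D) \cong \wedge^p\mathrm{Sym}^2(\bE)$, so on the open part $\omega$ is a section of $\wedge^p\mathrm{Sym}^2(\bE)$. Decomposing the $\GL_g$-representation $\wedge^p\mathrm{Sym}^2(\mathrm{std})$ into irreducibles (Section~\ref{DF}) yields summands $V_{\rho_w}$ indexed by Kostant coset representatives $w\in W(\GL_g)\backslash W(\Sp_{2g})$ of length $p$, with highest weights $\rho_w = w\eta-\eta$; correspondingly $\omega=\sum_w\omega_w$, each $\omega_w$ a vector-valued Siegel modular form of weight $\rho_w$ for $\Gamma[n]$.

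The classical Koecher principle (valid for $g\geq 2$) then extends each $\omega_w$ across the boundary as a holomorphic section of the canonical extension of $V_{\rho_w}$, giving an extension of $\omega$ as a section of $\Omega^p_{\barA{g}[n]}(\log D)$. The problem thus reduces to showing the Poincar\'e residue of the extension along every irreducible component of $D$ vanishes, so that $\omega$ actually lives in $\Omega^p_{\barA{g}[n]}$. I would analyze this in local toroidal coordinates near a generic point of $D$, where a neighborhood factors (up to finite covers) as a product of a neighborhood in $\barA{g-1}$ with a toric piece coming from the chosen cone decomposition; in these coordinates the Poincar\'e residue is computed by a partial Siegel $\Phi$-operator applied to the automorphic components $\omega_w$. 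The hypothesis $p<g(g+1)/2$ excludes the unique Kostant representative $w_0$ whose weight is the top ``classical'' weight $(g+1,\dots,g+1)$ corresponding to the top canonical form; for every remaining $w$ of length $p$, the weight $\rho_w$ is strictly smaller in dominance order, and running the invariance analysis from Weissauer's proof of Theorem~\ref{corankresult} on the target of the partial $\Phi$-operator shows that the possible residue, as an element of the $N$-invariants in $V_{\rho_w}$ under the unipotent radical $N$ of the stabilizer parabolic, must vanish. Hence each $\omega_w$ extends without a log pole, $\omega$ extends holomorphically to $\barA{g}[n]$, and descending gives the holomorphic extension on $\barA{g}$; uniqueness is immediate since $\A{g}[n]$ is dense in its smooth compactification.

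The main obstacle is the bridge in the second paragraph: explicitly identifying the geometric Poincar\'e residue along a toroidal boundary component with the representation-theoretic $\Phi$-type operator on $V_{\rho_w}$ whose vanishing is controlled by Weissauer's unipotent invariance argument. This requires a local computation in the toric charts of a Faltings--Chai compactification, and one must verify that the identification behaves consistently under passage between different admissible cone decompositions so that the extension is defined on \emph{every} smooth toroidal compactification, not just a preferred one. Once this identification is in place, the combinatorial fact that Kostant representatives of length strictly less than $g(g+1)/2$ all have weights strictly below the top classical weight delivers the residue vanishing and closes the proof.
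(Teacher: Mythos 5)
The paper itself gives no proof of this statement---it is quoted from Freitag--Pommerening \cite{FP}---so your proposal has to stand on its own. Its overall architecture (pass to a level cover, read $p$-forms as vector-valued modular forms via Proposition~\ref{OmegaisSym2}, extend across the boundary by Koecher as sections of $\Omega^p(\log D)$, then kill the residues using $p<g(g+1)/2$) is reasonable and close in spirit to the Freitag--Pommerening argument, which analyzes the Fourier expansions of the coefficient functions at the cusp. But the step that carries the whole theorem---the vanishing of the Poincar\'e residues---is exactly the step you do not prove, and the mechanism you invoke does not deliver it. Along a torus-rank-one boundary divisor the residue is assembled from the Fourier coefficients $a(n)$ with $n$ supported in the upper-left $(g-1)\times(g-1)$ block, and more precisely from their components along the basis monomials of $\wedge^p{\rm Sym}^2(\bC^g)$ involving $d\tau_{gg}$. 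Running Weissauer's argument (Theorem~\ref{corankresult}) here means using the stabilizer $G_{g,1}$, whose reductive part is ${\rm GL}_1$; the Zariski-density step therefore yields only that these $a(n)$ are invariant under the abelian unipotent group $N$ (at level $n$ not even the $\pm 1$ survives). Observing that $\rho_w$ is ``strictly smaller in dominance order'' than $(g+1,\dots,g+1)$ gives nothing by itself: what is actually needed is the concrete linear-algebra fact that for $p<g(g+1)/2$ an $N$-invariant vector of $\wedge^p{\rm Sym}^2(\bC^g)$ has zero component along every monomial containing $e_g\!\cdot\! e_g$. This holds in the cases one checks by hand (e.g.\ $g=2$, or $g=3$, $p=3$), but it is precisely the content of the theorem at the cusp and requires a genuine argument; you flag it yourself as ``the main obstacle,'' so the proof is not closed.

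Two further gaps. First, your local model at a generic point of $D$ (a neighbourhood in $\barA{g-1}$ times a toric factor) is wrong for a general admissible cone decomposition: boundary divisors corresponding to rays spanned by semidefinite forms of rank $>1$ have generic points of torus rank $>1$, so a residue computation only at rank-one degenerations does not cover every divisor of every smooth toroidal compactification. The tool you are missing---which also gives uniqueness and the word ``any'' in the statement for free---is the birational invariance of $H^0(X,\Omega^p)$ for smooth proper models: it suffices to establish the extension on one smooth compactification, and your worry about compatibility under change of cone decomposition disappears; but one must then still either exhibit a smooth model all of whose boundary divisors have rank-one generic points or treat the higher-rank divisors by a separate (e.g.\ inductive Fourier--Jacobi) argument. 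Second, the descent from level $n$ to level $1$ is not just averaging: a smooth toroidal compactification at level one is not the quotient of $\barA{g}[n]$ by ${\rm Sp}(2g,\bZ/n\bZ)$, so you need the (true, but not free) fact that invariant holomorphic forms extend over resolutions of finite quotient singularities, or a Hartogs argument on the smooth model after extending over the generic points of its boundary divisors; likewise the initial passage from the smooth locus of ${\rm Sp}(2g,\bZ)\backslash\mathfrak{H}_g$ to the level cover silently uses that the singular locus has codimension $\geq 2$. These last points are repairable, but together with the unproven residue vanishing they leave the proposal a strategy rather than a proof.
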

But for $g>1$ the singular locus has codimension $\geq 2$. This implies:

\begin{corollary}
For $p<g(g+1)/2$ holomorphic $p$-forms on $\barA{g}$ correspond one-one 
to ${\rm Sp}(2g,{\bZ})$-invariant
holomorphic $p$-forms on $\mathfrak{H}_g$:
$$
\Gamma(\barA{g},\Omega^p)\cong (\Omega^p(\mathfrak{H}_g))^{{\rm Sp}(2g,{\bZ})}.
$$
\end{corollary}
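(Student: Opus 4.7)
The plan is to sandwich the corollary between the Freitag-Pommerening extension theorem (just quoted) and an identification of invariant holomorphic $p$-forms on $\mathfrak{H}_g$ with $p$-forms on the smooth locus of the coarse quotient $U = {\rm Sp}(2g,{\bZ})\backslash\mathfrak{H}_g$. Throughout I take $g \geq 2$ so that the codimension remark immediately preceding the corollary applies; the case $g = 1$ only allows $p = 0$ and is handled trivially on the stack.

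Write $\Gamma = {\rm Sp}(2g,{\bZ})$, let $U^{\rm sm}$ be the smooth locus of $U$, and let $\pi: \mathfrak{H}_g \to U$ be the quotient map. The first key step is to construct an isomorphism
$$
\Gamma(\mathfrak{H}_g, \Omega^p)^{\Gamma} \;\isomarrow\; \Gamma(U^{\rm sm}, \Omega^p).
$$
For this one uses that above each point of $U^{\rm sm}$ the stabilizer in $\Gamma$ acts on the tangent space as a complex pseudo-reflection group---this is precisely the Chevalley--Shephard--Todd criterion for smoothness of the quotient---so a direct local computation in coordinates adapted to the reflection shows that invariant forms descend to genuine holomorphic $p$-forms on $U^{\rm sm}$. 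Conversely, a form on $U^{\rm sm}$ pulls back to an invariant form on $\pi^{-1}(U^{\rm sm}) \subset \mathfrak{H}_g$; by the codim $\geq 2$ remark preceding the corollary its complement $\pi^{-1}(U^{\rm sing})$ has codimension $\geq 2$ as well (since $\pi$ is finite), and Hartogs---applied to the coefficients of the form in the global coordinates $(\tau_{ij})$---extends it to an invariant form on all of $\mathfrak{H}_g$. Uniqueness of the Hartogs extension makes the two constructions mutually inverse.

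The second step applies the Freitag--Pommerening theorem verbatim: for $p < g(g+1)/2$ the restriction map $\Gamma(\barA{g}, \Omega^p) \to \Gamma(U^{\rm sm}, \Omega^p)$ is bijective. Composing with the isomorphism of the first step yields the desired identification. The only real content beyond bookkeeping lies in the pseudo-reflection analysis in step one; the cleanest way to package this is to work with the smooth Deligne--Mumford stack $\A{g}$, on which $p$-forms are by definition $\Gamma$-invariant forms on the atlas $\mathfrak{H}_g$, and then to observe that the forgetful map from the smooth stack to $U^{\rm sm}$ induces an isomorphism on $p$-forms (the stabilizers contribute no new invariants beyond those coming from the reflection-quotient smoothness). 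Granted this, everything else is formal, and the genuinely hard analytic content is absorbed in the Freitag--Pommerening theorem that we are citing.
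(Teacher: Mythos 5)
Your proposal is correct and follows essentially the same route as the paper: Freitag--Pommerening gives the bijection between forms on $\barA{g}$ and forms on the smooth locus of ${\rm Sp}(2g,\bZ)\backslash\mathfrak{H}_g$, and the identification of the latter with invariant forms on $\mathfrak{H}_g$ uses exactly the two ingredients the paper invokes (descent across the pseudo-reflection branch locus, and the codimension $\geq 2$ singular locus together with Hartogs extension). The only inaccuracy is your aside on $g=1$: there the statement is not trivially true but actually false for $p=0$, since the $j$-function is a non-constant ${\rm SL}(2,\bZ)$-invariant holomorphic function on $\mathfrak{H}$, which is precisely why the hypothesis $g>1$ from the sentence preceding the corollary is needed.
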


When can holomorphic $p$-forms exist on $\barA{g}$?
Weissauer proved in \cite{Weissauer} a vanishing theorem of the following
form.

\begin{theorem}
Let $\tilde{\mathcal A}_g$ be a smooth compactification of
${\mathcal A}_g$. If $p$ is an integer $0\leq p< g(g+1)/2$ then
the space of holomorphic $p$-forms on
$\tilde{\mathcal A}_g$ vanishes unless $p$ is of the form
$g(g+1)/2-r (r+1)/2$ with $1\leq r \leq g$
and then
$H^0(\tilde{\mathcal A}_g,\Omega^p_{\tilde{\mathcal A}_g})
\cong M_{\rho}(\Gamma_g)$
with $\rho=(g+1,\ldots,g+1,g-r,\ldots,g-r)$ with
$g-r$ occuring $r$ times.
\end{theorem}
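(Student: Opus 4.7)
The plan is to decompose $\wedge^p \mathrm{Sym}^2(\mathbb{E})$ equivariantly into irreducible pieces and then use the corank bound of Theorem~\ref{corankresult} to eliminate all pieces except the one claimed. Combining the Freitag--Pommerening extension theorem with the preceding Corollary, for $p<g(g+1)/2$ the space $H^0(\tilde{\mathcal{A}}_g,\Omega^p)$ coincides with the space of $\SpgZ$-invariant holomorphic $p$-forms on $\mathfrak{H}_g$. By Proposition~\ref{OmegaisSym2}, $\Omega^1_{\mathfrak{H}_g}$ corresponds to $\mathrm{Sym}^2(\mathbb{E})$, so these invariants arise from sections of $\wedge^p \mathrm{Sym}^2(\mathbb{E})$ on $\mathcal{A}_g(\mathbb{C})$, which by Koecher are vector-valued Siegel modular forms.

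As recalled at the start of Section~\ref{DF}, the irreducible $\mathrm{GL}_g$-subrepresentations of $\wedge^{\bullet}\mathrm{Sym}^2(\mathbb{E})$ are indexed by the $2^g$ Kostant representatives $\epsilon\in\{\pm 1\}^g$, each appearing exactly once, in an exterior degree equal to the length $\ell(w_\epsilon)$. The highest weight is obtained by applying the signed permutation $w_\epsilon$ to $\eta=(g,g-1,\ldots,1)$, reordering the result dominantly for $\mathrm{GL}_g$, subtracting $\eta$, and twisting by $\det^{g+1}$ to match the top exterior power $\wedge^{g(g+1)/2}\mathrm{Sym}^2(\mathbb{E})=(\det\mathbb{E})^{g+1}$. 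A direct calculation shows that the ``staircase'' pattern $\epsilon$ with its first $g-r$ signs equal to $+$ and its last $r$ signs equal to $-$ produces exactly $\rho(\epsilon)=(g+1,\ldots,g+1,g-r,\ldots,g-r)$ in exterior degree $p=g(g+1)/2-r(r+1)/2$, matching the asserted weights and dimensions.

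It remains to show that $M_{\rho(\epsilon)}(\Gamma_g)=0$ for every non-staircase $\epsilon$. For such $\epsilon$ the multiplicity of the smallest entry of $\rho(\epsilon)$ is strictly smaller than the length of the trailing run of $-$'s in $\epsilon$, so Theorem~\ref{corankresult} bounds the corank of any $f\in M_{\rho(\epsilon)}$ well below what is needed for $f$ to integrate into a holomorphic (rather than merely log-holomorphic) section of $\Omega^p$ along the Satake boundary strata corresponding to those trailing $-$'s. Inducting on $g$, iterated Siegel $\Phi$-operators bring such an $f$ to a non-zero modular form on $\Gamma_{g'}$ for some $g'<g$ whose weight is obtained by truncating $\rho(\epsilon)$ and whose Kostant pattern is still non-staircase, contradicting the induction hypothesis applied to $\tilde{\mathcal{A}}_{g'}$; the base case $g=1$ is trivial since $\tilde{\mathcal{A}}_1\cong\mathbb{P}^1$ carries no nontrivial holomorphic forms of positive degree.

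The main obstacle is the bookkeeping in the third paragraph: precisely matching the Kostant sign pattern $\epsilon$, the corank of $\rho(\epsilon)$ allowed by Theorem~\ref{corankresult}, and the boundary behavior forced by $\omega$ being holomorphic (not merely log) along each component of $D$. Once one verifies that this combinatorial compatibility selects exactly the staircase patterns, the desired identification $H^0(\tilde{\mathcal{A}}_g,\Omega^p)\cong M_\rho(\Gamma_g)$ follows immediately from the decomposition in the second paragraph.
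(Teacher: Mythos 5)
Your first two paragraphs follow the same reduction the paper itself relies on (the Freitag--Pommerening extension theorem together with the Kostant-type decomposition of $\wedge^p\,\mathrm{Sym}^2(\mathbb{E})$ into the $2^g$ irreducible pieces), so the real content is in your third paragraph, and there the argument does not work. After the reduction, what must be proved is that $M_{\rho_w}(\Gamma_g)=0$ for every non-staircase Kostant weight $\rho_w$ occurring in degree $p<g(g+1)/2$ --- a pure vanishing statement about vector-valued Siegel modular forms, with no boundary condition left to exploit, precisely because Freitag--Pommerening makes the holomorphic extension across the boundary automatic and unique for all $p<g(g+1)/2$ (this is also why the theorem identifies $H^0(\tilde{\mathcal A}_g,\Omega^p)$ with the full space $M_\rho$ rather than with cusp forms). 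Hence there is no tension between "holomorphic" and "merely log-holomorphic along $D$" available to play off against Theorem~\ref{corankresult}: the contradiction you are aiming for cannot come from boundary behavior.

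Moreover, Theorem~\ref{corankresult} alone can never force a space $M_\rho$ to vanish: it only bounds how often $\Phi$ can be applied to a nonzero form before it dies, and your $\Phi$-induction collapses in exactly the hardest case, namely when the putative form is cuspidal. Concretely, for $g=2$ the only non-staircase weight in degree $p<3$ is $\rho=(2,0)$, i.e.\ weight $\mathrm{Sym}^2$; any $f\in M_{(2,0)}(\Gamma_2)$ has $\Phi(f)\in M_2(\mathrm{SL}(2,\mathbb{Z}))=0$, so $f$ is automatically a cusp form and no iterated $\Phi$-operator yields a nonzero lower-genus form to which an induction hypothesis could be applied. The vanishing $M_{(2,0)}(\Gamma_2)=0$, and more generally the vanishing for all non-staircase Kostant weights, is the content of Weissauer's separate Vanishing Theorem for Siegel modular forms of small weight --- the last theorem quoted in Section~\ref{DF}, with the criterion $\#\{1\leq i\leq g: a_i=a_g+1\}<2(g-a_g-\mathrm{corank}(\rho))$ --- and this is the key ingredient the paper (following Weissauer) uses and that your proposal omits; without it, or some substitute proof of that vanishing, the argument has a genuine gap.
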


Weissauer deduced this from the following Vanishing Theorem for
Siegel modular forms (cf.\ loc.\ cit.).

\begin{theorem}
Let $\rho=(a_1,\ldots,a_g)$ be irreducible with ${\rm co-rank}(\rho) < g-a_g$.
If we have
$$
\# \{ 1\leq i \leq g : a_i=a_g+1 \} < 2(g-a_g-\text{\rm co-rank}(\rho))
$$
then $M_{\rho}=(0)$.
\end{theorem}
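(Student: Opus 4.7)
The plan is to refine the Fourier-coefficient analysis in the proof of Theorem \ref{corankresult}, exploiting the full stabilizer of a rank-deficient supporting quadratic form together with a branching-rule vanishing.

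First, I would tensor $f$ with $\det^{-a_g}$ to reduce to the case $a_g=0$. Set $c=\text{co-rank}(\rho)$ and $r=g-c$; the hypothesis becomes $c<g$ and $m:=\#\{i:a_i=1\}<2r$. Assume for contradiction that $f\in M_\rho$ is non-zero. By Theorem \ref{corankresult}, the co-rank of $f$ is at most $c$, so after the $\GL(g,\bZ)$-action via (1) one may choose a non-zero Fourier coefficient $a(n)$ with
$$n=\begin{pmatrix} n' & 0 \\ 0 & 0_c \end{pmatrix},$$
where $n'$ is a half-integral positive definite matrix of size $r$. Running the Fourier-coefficient argument of Theorem \ref{corankresult} verbatim shows that $a(n)$ lies in $V_1\otimes\mathbf{1}\subset V$, where $V_1$ is the irreducible $\GL_r$-representation of highest weight $(a_1,\ldots,a_r)$ and the trivial factor is the $\GL_c$-representation on the last $c$ coordinates.

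The key new step is to extract an additional constraint from the orthogonal stabilizer of $n'$. The full stabilizer of $n$ in $\GL(g,\bZ)$ under the action $u\mapsto u^tnu$ contains every block $\mathrm{diag}(A,1_c)$ with $A\in \mathrm{O}(n',\bZ)$, and (1) forces $\rho(\mathrm{diag}(A^t,1_c))\cdot a(n)=a(n)$. Coupling this invariance with the transformation rule $a(u^tnu)=\rho(u^t)a(n)$ for $u\in\GL_r(\bZ)$ (acting as $\mathrm{diag}(u,1_c)$) and varying $n'$ through its $\GL_r(\bZ)$-conjugacy class identifies the pointwise finite invariances as a single constraint on $a(n)$, which after passage to the Zariski closure of the joint action yields $a(n)\in V_1^{\mathrm{O}_r(\bC)}$. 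The classical branching rule $\GL_r\downarrow \mathrm{O}_r$ (Littlewood's formula) then controls when $V_1^{\mathrm{O}_r}$ is non-zero in terms of the shape of $(a_1,\ldots,a_r)$. The $m$ entries equal to $1$ contribute $m$ boxes to the first column of the Young diagram beyond what the larger entries contribute, and the bound $m<2r$ is precisely what is needed to force the relevant combinatorial parity condition to fail, giving $V_1^{\mathrm{O}_r}=0$ and hence $a(n)=0$, contradicting the choice of $n$.

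The principal obstacle is bridging the gap from finite invariance under the individual stabilizers $\mathrm{O}(n',\bZ)$ to the complex-orthogonal invariance needed in the branching argument: one cannot appeal to Borel density in the style of the proof of Theorem \ref{corankresult}, because the ambient real group $\mathrm{O}(n',\bR)$ is compact at infinity and the integral points form a finite group. Weissauer's actual proof bypasses this by an explicit Lie-algebraic argument on the infinitesimal level, combined with a Fourier--Jacobi expansion along each rank-$c$ boundary stratum, ultimately reducing the vanishing to a combinatorial identity on $\GL$-versus-$\mathrm{O}$ characters that is verified exactly under the given hypothesis $m<2(g-a_g-\text{co-rank}(\rho))$.
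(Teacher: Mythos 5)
There is no proof to compare against in the paper itself: the statement is quoted from Weissauer \cite{Weissauer} without proof, so your proposal has to stand on its own, and as written it does not. Two steps are genuinely broken. First, the opening reduction ``tensor $f$ with $\det^{-a_g}$ to reduce to $a_g=0$'' is not an operation on holomorphic modular forms: there is no nonvanishing form of weight $\det^{-a_g}$ to divide by, and changing the automorphy factor changes the space. More importantly, the hypothesis of the theorem depends on the actual value of $a_g$ through $g-a_g$ (this is the ``kleines Gewicht'' condition, the analytic heart of Weissauer's result), whereas everything in your sketch after the twist uses only data invariant under $\det$-twists (the co-rank theorem and the $\GL(g,{\bZ})$-invariance (1) of Fourier coefficients, both valid for every weight). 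An argument that never uses the smallness of $a_g$ relative to $g$ cannot prove a statement whose truth is destroyed by twisting with a high power of $\det$, where cusp forms exist in abundance.

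Second, the step you yourself flag as the principal obstacle is fatal rather than technical: for $n'$ positive definite the group ${\rm O}(n',{\bZ})$ is finite, so the invariance $a(n)=\rho({\rm diag}(A^t,1_c))a(n)$ gives only a finite-group constraint, and there is no analogue of the Zariski-density of ${\rm SL}_k({\bZ})$ in ${\rm SL}_k({\bC})$ that drives the paper's proof of Theorem \ref{corankresult}. Varying $n'$ in its $\GL_r({\bZ})$-orbit does not help: it only conjugates the finite stabilizer and constrains the \emph{other} coefficients $a(u^tnu)=\rho(u^t)a(n)$, so no additional invariance accrues to $a(n)$ itself, and the claimed passage to $a(n)\in V_1^{{\rm O}_r({\bC})}$ is unjustified. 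The closing appeal to ``Weissauer's actual proof'' (Lie-algebraic arguments and Fourier--Jacobi expansions) concedes the gap without filling it. Finally, even if ${\rm O}_r({\bC})$-invariance were established, the Littlewood branching criterion for ${\rm O}_r\subset\GL_r$ is a parity condition on the parts of the highest weight, which does not reproduce the theorem's hypothesis counting the entries equal to $a_g+1$ against $2(g-a_g-\text{co-rank}(\rho))$; so the proposed endgame would not yield the stated result in any case. (A smaller inaccuracy: since the co-rank of $f$ is only bounded above by $\text{co-rank}(\rho)$, the positive definite block $n'$ has size $g-\text{co-rank}(f)\geq r$, not necessarily $r$.)
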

\section{The Kodaira Dimension of $\A{g}$}
The Kodaira dimension of the moduli space $\A{g}$ over ${\bC}$, 
the least integer
$\kappa=\kappa({\A{g}})$ such that the sequence $P_m/m^{\kappa}$ 
with $P_m({\A{g}})=
\dim H^0({\A{g}},K^{\otimes m})$, is bounded, is an important invariant.
In terms of Siegel modular forms this comes down to the growth of
the dimension of the space of classical Siegel modular forms $f$
of weight $k(g+1)$ that extend to holomorphic tensors 
$f(\tau) \eta^{\otimes k}$ with 
$\eta= \wedge_{1\leq i \leq j \leq g}  d \tau_{ij}$ on $\barA{g}$.
The first condition is that $f$ vanishes with multiplicity $\geq k$ along
the divisor at infinity. Then one has to deal with the extension over
the quotient singularities. Reid and Tai independently found a criterion
for the extension of pluri-canonical forms over quotient singularities.

\begin{proposition}\label{reid-tai} 
(Reid-Tai Criterion) A pluri-canonical form $\eta$
on ${\bC}^n$ that is invariant under a finite group $G$ acting linearly
on ${\bC}^n$ extends to a resolution of singularities if 
for every non-trivial element $\gamma \in G$ and every fixed point $x$
of $\gamma$ we have
$$
\sum_{j=1}^n \alpha_j \geq 1\, ,
$$
where the action of $\gamma$ on the tangent space of $x$ has eigenvalues
$e^{2 \pi i \alpha_j}$. If $G$ does not possess pseudo-reflections
then it extends  if and only if $\sum_{j=1}^n \alpha_j \geq 1$.
\end{proposition}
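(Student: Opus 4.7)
The plan is to reduce the extension question to a local toric computation of discrepancies. Since $G$ already acts linearly by hypothesis, the Luna slice theorem lets me work in the analytic germ at $0\in\bC^n$; the question of extending $\eta$ over $[0]\in X:=\bC^n/G$ localizes at each fixed-point stratum. Since the Reid-Tai sum depends on a single element at a time, it suffices to treat the cyclic case $G=\langle\gamma\rangle$ of order $r$ acting diagonally by $\gamma\cdot z_j=\zeta^{a_j}z_j$, $\zeta=e^{2\pi i/r}$, $\alpha_j=a_j/r\in[0,1)$. The non-cyclic case then assembles by running the argument at each $\gamma\in G$ along a $G$-equivariant resolution (Hironaka).

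In the cyclic case $\bC^n/\langle\gamma\rangle$ is the affine toric variety of the cone $\sigma=\bR_{\geq 0}^n$ in the overlattice $N=\bZ^n+\bZ\cdot(\alpha_1,\ldots,\alpha_n)$. Any toric resolution $\pi\colon\tilde X\to\bC^n/\langle\gamma\rangle$ is obtained by subdividing $\sigma$ along rays through primitive points $v\in\sigma^{\circ}\cap N$, and the standard discrepancy formula of Reid gives $a(E_v)=v_1+\cdots+v_n-1$. The lattice points of $\sigma\cap N$ lying in $[0,1)^n$ are precisely the age vectors
$$
v_k=\bigl(\langle k\alpha_1\rangle,\ldots,\langle k\alpha_n\rangle\bigr),\qquad k=1,\ldots,r-1,
$$
and $v_k$ corresponds to the element $\gamma^k$, whose tangent-space eigenvalues are $e^{2\pi i\langle k\alpha_j\rangle}$. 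These produce the most negative discrepancies, so the minimum discrepancy over any toric resolution equals $\min_{k\neq 0}\bigl(\sum_j\langle k\alpha_j\rangle\bigr)-1$. The Reid-Tai hypothesis that $\sum\alpha_j(\gamma^k)\geq 1$ for every non-trivial $\gamma^k$ is exactly the statement that this minimum is non-negative, i.e., that the singularity is canonical.

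Canonicity then yields the extension statement in the standard way: writing $K_{\tilde X}=\pi^*K_X+\sum a(E_i)E_i$, the pullback of any $m$-pluricanonical form $\eta$ has minimum order $m\,a(E_i)$ along $E_i$ and so is regular on $\tilde X$ whenever every $a(E_i)\geq 0$. For the converse under the no-pseudo-reflection hypothesis, I would construct an invariant form by $G$-averaging a suitable multiple of $\omega^{\otimes m}$, $\omega=dz_1\wedge\cdots\wedge dz_n$, whose pullback saturates the bound $m\,a(E_{v_k})$ and hence acquires a genuine pole whenever $a(E_{v_k})<0$; a pseudo-reflection always has $\sum\alpha_j=\alpha_n<1$ but produces a smooth quotient locally by Chevalley-Shephard-Todd, which is the reason for the caveat. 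The main obstacle I anticipate is verifying the toric-side claims rigorously: that the age vectors exhaust the junior lattice points of $N\cap\sigma$, and that no non-toric divisorial valuation of $\bC^n/\langle\gamma\rangle$ can expose a more negative discrepancy than the toric subdivision already does; this last point uses that such valuations restrict to monomial valuations on the $\langle\gamma\rangle$-cover $\bC^n$ and hence correspond to lattice points in $N$.
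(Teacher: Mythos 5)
The paper itself states this criterion without proof, citing Reid and Tai, so your proposal can only be measured against the standard argument; on that score the cyclic, toric half of what you write is essentially sound, but the reduction of a general finite $G$ to the cyclic case is a genuine gap, and it is exactly where the real content of the Reid--Tai proof lives. Knowing that $\bC^n/\langle\gamma\rangle$ is canonical for every $\gamma\in G$ does not formally imply that $\bC^n/G$ is canonical: under a finite map \'etale in codimension one, log discrepancies get divided by ramification indices, so canonicity passes from quotient to cover but not back (e.g.\ $\bC^2$ is smooth, yet $\bC^2/(\bZ/3)$ with weights $(1,1)$ has age $2/3$ and is klt but not canonical). So "assembling by running the argument at each $\gamma$ along a $G$-equivariant resolution" is not a proof; the implication you need is essentially the theorem itself. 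The standard bridge is valuation-theoretic and works directly on $X=\bC^n/G$: for a resolution $Y\to X$ and an exceptional divisor $E$, one passes to the normalization of $Y\times_X\bC^n$ near the generic point of $E$, where the inertia group is cyclic, generated by a specific $\gamma_E\in G$; an expansion in eigencoordinates for $\gamma_E$ in terms of a local equation of $E$ gives $\mathrm{ord}_E$ of the pulled-back $m$-canonical form $\geq m\bigl(\sum_j\alpha_j(\gamma_E)-1\bigr)$. This is how the hypothesis "for every non-trivial $\gamma$" is consumed, one divisor at a time; your toric computation is the special case of this estimate for monomial valuations on a cyclic quotient, but the step that attaches a single group element to an arbitrary exceptional divisor of a resolution of $\bC^n/G$ is missing.

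Two smaller points. First, your closing worry about non-toric divisorial valuations is real, but the proposed fix is wrong: a divisorial valuation over $\bC^n/\langle\gamma\rangle$ need not restrict to a monomial valuation on the cover. What saves you (in the cyclic case) is the standard fact that canonicity, equivalently extension of pluricanonical forms, can be tested on the exceptional divisors of one chosen resolution; taking a toric resolution, Reid's formula and your junior points $v_k$ then suffice, and no statement about arbitrary valuations is needed. Second, the necessity direction is only gestured at: the cleanest version takes $m=|G|$ so that $(dz_1\wedge\cdots\wedge dz_n)^{\otimes m}$ is itself $G$-invariant, and exhibits a divisor over $X$ of negative discrepancy attached to the junior point of an offending $\gamma$ (the restriction of the corresponding monomial valuation to the invariant subfield), where the absence of pseudo-reflections enters through $K_{\bC^n}=\pi^*K_X$ because $\pi$ is then \'etale in codimension one; note also that in the sufficiency direction the hypothesis already excludes pseudo-reflections, since any pseudo-reflection has age strictly less than $1$.
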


Tai checked (cf.\ \cite{Tai}) 
that if $g\geq 5$ then every fixed point of a non-trivially
acting element of ${\rm Sp}(2g,{\bZ})$ satisfies the requirement 
of the Reid-Tai criterion and thus these forms extend over the
quotient singularities of ${\A{g}}$. He also checked that the
extension over the singularities in the boundary did not present
difficulties. Thus he showed by calculating the dimension of the
space of sections of $K^{\otimes k}$ on a level cover of $\A{g}$
and calculating the space over invariants under the action of ${\rm Sp}(2g, {\bZ}/\ell {\bZ})$ that for $g \geq 9$ the space $\A{g}$ was of general type.
He thus improved earlier work of Freitag (\cite{Freitag:KD}).

Mumford extended this result in \cite{Mumford:KD} and proved:
\begin{theorem}\label{KDgatleast7}
The moduli space $\A{g}$ is of general type if $g \geq 7$.
\end{theorem}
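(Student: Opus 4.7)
The strategy is to exhibit enough pluricanonical sections on a smooth toroidal compactification. First I would fix a level $n \geq 3$ so that $\A{g}[n]$ is a smooth scheme, and pick a smooth projective Faltings-Chai toroidal compactification $\barA{g}[n]$ with boundary divisor $D$. By Proposition \ref{OmegaisSym2} the log-cotangent bundle is $\mathrm{Sym}^2 \bE$, and taking determinants yields the identity
\[
K_{\barA{g}[n]} \sim (g+1)\lambda_1 - D
\]
in $\Pic$. The plan reduces to showing this divisor is big on $\barA{g}[n]$; taking invariants under the finite group $\SpgZ/\Gamma_g[n]$ and applying the tensor-power analogue of Proposition \ref{top} then yields that $\A{g}$ is of general type.

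Next I would translate sections of $K^{\otimes k}$ into classical Siegel modular forms of weight $k(g+1)$ for $\SpgZ$ subject to two extension requirements: (i) vanishing to order at least $k$ along the toroidal boundary $D$, and (ii) extending across the quotient singularities of $\SpgZ \backslash \Hg$ arising from non-trivially acting stabilisers. Condition (ii) is controlled by the Reid-Tai criterion of Proposition \ref{reid-tai}: Tai verified that for $g \geq 5$ every fixed-point datum of a non-trivial element of $\SpgZ$ satisfies $\sum_j \alpha_j \geq 1$, so in our range (ii) is automatic and only (i) remains a genuine constraint.

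By the Hirzebruch-Mumford proportionality principle, $\dim M_{k(g+1)}(\SpgZ)$ grows asymptotically like $C(g)\, k^{g(g+1)/2}$ for an explicit positive constant $C(g)$ read off from the characteristic numbers of $Y_g$ and the table of proportionality factors $p(g)$. To show the subspace satisfying (i) still grows at the full rate $k^{g(g+1)/2}$, it suffices to produce a single non-zero Siegel modular form $\phi$ of some weight $w$ vanishing to order $v$ along every component of $D$ with slope $w/v < g+1$; equivalently, an effective relation $aD \leq b\lambda_1$ with $b/a < g+1$. Given such a $\phi$, the forms $\phi^{\lceil k/v\rceil} \cdot \psi$ with $\psi$ an arbitrary modular form of complementary weight supply the required $\gg k^{g(g+1)/2}$ independent pluricanonical sections.

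The main obstacle, and the heart of Mumford's refinement over Tai, is producing such a small-slope form. Mumford's approach is to combine a careful description of the boundary divisor in terms of the chosen cone decomposition (the central cone or perfect cone compactification is convenient here) with explicit low-weight forms, typically theta series or gradients of theta nulls, whose orders of vanishing along the boundary strata he can control numerically. The bookkeeping, which balances the contribution of the Satake stratification against the weight, yields the slope bound $w/v < g+1$ exactly when $g \geq 7$ and fails for $g \leq 6$. All remaining ingredients, namely the identification of pluricanonical sections with modular forms, the Reid-Tai check for interior singularities, the proportionality asymptotics, and the descent under $\SpgZ/\Gamma_g[n]$, are standard once the slope estimate is in hand; so the entire weight of Theorem \ref{KDgatleast7} lies on constructing a modular form of small boundary slope for $g \geq 7$.
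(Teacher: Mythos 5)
Your framework is the same as Mumford's and as the paper's: pass to a compactification whose rational Picard group is spanned by $\lambda_1$ and the boundary class $\delta$, identify the canonical class with $(g+1)\lambda_1-\delta$, use Tai's verification of the Reid--Tai criterion for $g\geq 5$ to handle the quotient singularities, and reduce general type to exhibiting an effective divisor $a\lambda_1-b\delta$ whose slope $a/b$ is at most $g+1$. But at exactly the point you yourself say carries ``the entire weight'' of the theorem, the proposal has a genuine gap: the existence of a small-slope form for $g\geq 7$ is asserted, not proved, and the construction you gesture at (explicit theta series or theta constants, with boundary vanishing controlled through the cone decomposition) is quantitatively the wrong one. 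The divisor cut out by an even theta constant, $\theta_0$, has class $2^{g-2}(2^g+1)\lambda_1-2^{2g-5}\delta$, hence slope $8+2^{3-g}$; that argument gives general type only for $g\geq 8$, and no explicit modular form is known to produce slope below $g+1$ at $g=7$. Saying the ``bookkeeping yields $w/v<g+1$ exactly when $g\geq 7$'' restates the threshold of the theorem rather than deriving it.

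Mumford's actual new input, which your sketch omits, is geometric rather than modular: he uses the Andreotti--Mayer divisor $N_0$ of principally polarized abelian varieties with singular theta divisor and computes its class on $\A{g}^{(1)}$ as $[\bar N_0]=\left(\frac{(g+1)!}{2}+g!\right)\lambda_1-\frac{(g+1)!}{12}\,\delta$, the boundary coefficient coming from a delicate calculation in a one-parameter degenerating family, not from the Fourier expansion of any explicit form. This gives slope $6+12/(g+1)$, which is $\leq g+1$ precisely when $g\geq 7$. There is also a subsidiary step you skip: $\bar N_0$ decomposes as $[\bar N_0]=[\theta_0]+2[R]$, and one works with the effective divisor $R$ (whose class is then read off from the two formulas) in the slope comparison with the canonical class. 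Without the computation of $[\bar N_0]$ --- or some substitute effective divisor of slope $\leq g+1$ --- your argument establishes only the weaker statement for $g\geq 8$ (or Tai's $g\geq 9$), so the case $g=7$, which is the actual content of Theorem \ref{KDgatleast7} beyond Freitag and Tai, remains unproved in your write-up.
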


His approach was similar to the method used in his joint paper with
Harris on the Kodaira dimension of $\M{g}$. First he works
on the moduli space $\A{g}^{(1)}$ of rank $1$-degenerations
and observes that
the Picard group ${\rm Pic}(\A{g}^{(1)})\otimes {\bQ}$ is generated by two
divisors: $\lambda_1$ and the class $\delta$ of the boundary.
By Proposition \ref{OmegaisSym2} for the coarse moduli space
we know the canonical class.

\begin{proposition}
Let $U$ be the open subset of ${\cA}_g^{(1)}$ of 
semi-abelian varieties of torus rank $\leq 1$ 
with automorphism group $\{ \pm \}$. Then the canonical class of $U$
is given by $(g+1)\lambda_1 -\delta$.
\end{proposition}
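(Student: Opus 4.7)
The plan is to compute the canonical line bundle on $U$ by pulling back to the smooth level cover $\bar{\cA}_g[n]$ for $n\geq 3$ and applying Proposition \ref{OmegaisSym2}. That proposition gives an isomorphism
$$
\Omega^1_{\bar{\cA}_g[n]}(\log D) \;\cong\; {\rm Sym}^2({\bE}),
$$
so taking top exterior powers and using the standard identity $\det {\rm Sym}^2({\bE}) \cong (\det {\bE})^{\otimes (g+1)}$ for a rank $g$ bundle yields, in ${\rm Pic}(\bar{\cA}_g[n])\otimes {\bQ}$,
$$
c_1\bigl(\Omega^1_{\bar{\cA}_g[n]}(\log D)\bigr) \;=\; (g+1)\,\lambda_1.
$$

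Next, the standard adjunction exact sequence
$$
0 \to \Omega^1_{\bar{\cA}_g[n]} \to \Omega^1_{\bar{\cA}_g[n]}(\log D) \to \bigoplus_i \cO_{D_i} \to 0
$$
for a simple normal crossings divisor, or equivalently the determinantal identity $\det \Omega^1(\log D) \cong \det \Omega^1 \otimes \cO(D)$, gives
$$
K_{\bar{\cA}_g[n]} \;=\; (g+1)\,\lambda_1 \;-\; [D]
$$
on the level cover.

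To descend to $U$, observe that the generic extra automorphism $-1\in \Sp(2g,\bZ)$ acts as $-1$ on ${\bE}$ and therefore trivially on ${\rm Sym}^2({\bE})$, hence also on $\Omega^1(\log D)$. Since by hypothesis the automorphism group of every point of $U$ is exactly $\{\pm 1\}$, the finite quotient map $\bar{\cA}_g[n]\supset q^{-1}(U) \to U$ is, for the purposes of computing the line bundle of differentials, étale: no Hurwitz-type ramification correction enters. Consequently the formula $K = (g+1)\lambda_1 - \delta$ descends to $U$, where $\delta$ is the class of the boundary divisor on $\cA_g^{(1)}$ (which is irreducible on the torus-rank $\leq 1$ locus, so the single class $\delta$ corresponds to the pull-back of $[D]$ restricted to $U$).

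The main subtle point is justifying the descent without any fractional correction: one must check that $-1$ acts trivially on $\Omega^1(\log D)$ (immediate from its action on ${\rm Sym}^2 {\bE}$), and one must restrict to $U$ precisely to exclude the higher-dimensional strata of $\cA_g$ on which larger stabilizers, possibly acting non-trivially on differentials, would force a modification of the formula. Verifying these two points completes the argument.
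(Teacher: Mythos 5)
Your computation on the level cover is correct: $\det {\rm Sym}^2({\bE}) \cong (\det {\bE})^{\otimes (g+1)}$ together with $\det \Omega^1(\log D)\cong \det\Omega^1\otimes {\cO}(D)$ and Proposition \ref{OmegaisSym2} gives $K_{\barA{g}[n]}=(g+1)\lambda_1-[D]$, and over the interior part $U\cap \A{g}$ your descent is also fine: $-1$ acts trivially on ${\rm Sym}^2({\bE})$ (in fact on all of $\A{g}[n]$), so the effective group ${\Sp}(2g,{\bZ}/n)/\{\pm 1\}$ acts freely there and the quotient map is genuinely \'etale.

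The gap is at the boundary points of $U$. There the two claims ``no Hurwitz-type ramification correction enters'' and ``$\delta$ corresponds to the pull-back of $[D]$'' are both false: the cover $\barA{g}[n]\to {\cA}_g^{(1)}$ is ramified of index $n$ along the boundary (locally it is $q_n\mapsto q_n^n$ on the Tate parameter; the isotropy of a boundary point in the deck group contains the unipotent translations $\tau_{11}\mapsto\tau_{11}+1$, which act by $n$-th roots of unity on the normal direction). Requiring the automorphism group of the semi-abelian variety to be $\{\pm 1\}$ does not remove this isotropy, since it comes from the parabolic/toroidal data, not from automorphisms of the fibre; already for $g=1$ the map $X(n)\to X(1)$ is ramified of index $n$ at every cusp, although the cusp of $\overline{\cM}_{1,1}$ lies in $U$. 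The correct bookkeeping is $\pi^*\delta=n\,D_n$ (with $D_n$ the reduced boundary upstairs) together with Riemann--Hurwitz $K_{\barA{g}[n]}=\pi^*K_U+(n-1)D_n$; the two corrections cancel and do yield $K_U=(g+1)\lambda_1-\delta$, but your argument omits both and lands on the right formula only because the omissions compensate. Either carry out this pullback/ramification computation along the boundary, or avoid it by working on the stack ${\cA}_g^{(1)}$ itself, where Proposition \ref{OmegaisSym2} holds in the stacky sense and only the trivially acting $\pm 1$ needs attention when passing to the coarse space $U$ --- which is essentially the route the paper takes.
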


Now there is an explicit effective divisor $N_0$ in ${\mathcal A}_g^{(1)}$,
namely the divisor of principally polarized abelian varieties
$(X,\Theta)$ that have a singular theta divisor. By a tricky calculation
in a $1$-dimensional family Mumford is able to calculate the divisor
class of $N_0$. 

\begin{theorem}
The divisor class of $\bar{N}_0$ is given by
$$
[\bar{N}_0]= \left( \frac{(g+1)!}{2}+g!\right) \lambda_1 
- \frac{(g+1)!}{12} \delta \, .
$$
\end{theorem}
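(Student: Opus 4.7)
By the preceding proposition, $\Pic(\cA_g^{(1)}) \otimes \bQ$ is generated by $\lambda_1$ and the boundary class $\delta$, so I would write $[\bar{N}_0] = a\,\lambda_1 - b\,\delta$ with $a,b \in \bQ$ to be determined. The strategy is to pin down $a$ and $b$ separately by two test calculations.

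To find $a$, I would restrict to the interior $\cA_g$ where $\delta$ vanishes. The locus $N_0 \cap \cA_g$ is essentially the image under $\pi\colon \cX_g \to \cA_g$ of the degeneracy scheme of the canonical first-jet section $j^{1}\theta$ of the universal symmetric theta bundle $\cO(\Theta)$, viewed as a section of the rank-$(g+1)$ first jet bundle $J^{1}(\cO(\Theta))$. Using the jet sequence
$$
0 \to \pi^{*}\bE \otimes \cO(\Theta) \to J^{1}(\cO(\Theta)) \to \cO(\Theta) \to 0
$$
together with $\Omega^{1}_{\cX_g/\cA_g} \cong \pi^{*}\bE$, the top Chern class is
$$
c_{g+1}\bigl(J^{1}(\cO(\Theta))\bigr) = \Theta \cdot \prod_{i=1}^{g}(\Theta + \rho_i),
$$
where $\rho_i$ are Chern roots of $\bE$. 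I would push this forward and apply the Grothendieck-Riemann-Roch identities from Section \ref{The Hodge Bundle} --- in particular $2\theta = -\lambda_1$ on $\cA_g$ and $\pi_{*}(\Theta^{g}) = g!$ --- while carefully accounting for the excess intersection arising because for a symmetric $\Theta$ the section $j^{1}\theta$ has forced vanishing along the 2-torsion points of even theta characteristic. This yields the multiplier $a = (g+1)!/2 + g!$.

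To find $b$, I would use a 1-parameter test family $X_{t}$ over a disk meeting the boundary transversally: fix a principally polarized abelian variety $Y$ of dimension $g-1$ and construct $X_{t}$ by smoothing a rank-$1$ semi-abelian extension of $Y$ by $\Gm$ at $t=0$. On such a family $\deg \delta = 1$ by construction, while $\deg\lambda_{1}$ is read off from the elliptic pencil contribution. Computing $\deg[\bar{N}_{0}]$ reduces to counting, with multiplicity, those $t$ for which $\Theta_{X_{t}}$ acquires a singularity; this is done by analyzing the Fourier--Jacobi expansion of the universal theta function near $t=0$ and extracting the relevant vanishing data. Solving $\deg[\bar{N}_{0}] = a\,\deg\lambda_{1} - b$ with the value of $a$ just determined then yields $b = (g+1)!/12$.

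The main obstacle is this second step --- Mumford's ``tricky'' 1-dimensional family calculation. The factor $1/12$ in $b$ is a direct echo of the classical relation $12\lambda_{1} = \delta$ on $\barA{1}$ propagating through the structure of the rank-$1$ boundary of $\cA_g^{(1)}$, and extracting it requires careful control of the normalization of the theta line bundle across the cuspidal degeneration together with precise bookkeeping of which Fourier--Jacobi coefficients of the universal theta contribute to singularities of $\Theta_{X_t}$ on the family. A useful sanity check is $g=1$: with $12\lambda_1=\delta$ on $\barA{1}$ the formula collapses to $[\bar N_0]=0$, consistent with elliptic theta divisors being always smooth.
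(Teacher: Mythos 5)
Your overall architecture is sensible: write $[\bar N_0]=a\,\lambda_1-b\,\delta$ using the generation of $\Pic(\A{g}^{(1)})\otimes\bQ$ by $\lambda_1$ and $\delta$, fix $a$ by an interior computation, and fix $b$ by a degenerating one-parameter family (the paper gives no proof of its own here; it refers to exactly such a ``tricky calculation in a $1$-dimensional family'' of Mumford). But the mechanism you give for the coefficient $a$ is wrong. With the paper's normalization of $L$ (trivial along the zero section), the Grothendieck--Riemann--Roch corollaries give $\pi_*(\Theta^{g+1})=0$ and $\pi_*(\Theta^{g})=g!$, so pushing forward your class $c_{g+1}\bigl(J^{1}(\cO(\Theta))\bigr)=\Theta\cdot\prod_{i}(\Theta+\rho_i)$ yields only $g!\,\lambda_1$, short of the stated answer by $\tfrac{(g+1)!}{2}\lambda_1$. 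The missing term is not an excess-intersection contribution at even $2$-torsion: the gradient of a symmetric theta function is forced to vanish at an even $2$-torsion point only where the theta-null itself vanishes, i.e.\ over $\theta_0$, so the degeneracy locus has the expected codimension $g+1$ everywhere; this phenomenon only explains why $\theta_0$ occurs as a component, i.e.\ the decomposition $[\bar N_0]=[\theta_0]+2[R]$ used later in the paper, and in any case such corrections would be subtracted, not added. The real point is that there is no global section ``$j^1\theta$'' of $J^{1}(\cO(\Theta))$ at all: by the Key Formula $2\theta=-\lambda_1$ the line bundle $\pi_*L$ is nontrivial of class $-\lambda_1/2$, so the universal theta function is a section of $L\otimes\pi^*(\pi_*L)^{\vee}$, i.e.\ the universal symmetric theta divisor has class $\Theta+\tfrac12\pi^*\lambda_1$. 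Redoing your jet computation with this twist gives $\pi_*\bigl((\Theta+\tfrac{\lambda_1}{2})\prod_i(\Theta+\tfrac{\lambda_1}{2}+\rho_i)\bigr)=\tfrac{(g+1)!}{2}\lambda_1+g!\,\lambda_1$, which is the correct interior coefficient; the twist, not the $2$-torsion geometry, is the source of $\tfrac{(g+1)!}{2}$.

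For the boundary coefficient your plan is only a sketch, and the sketch sits exactly where the difficulty lies: a disk carries no degree theory, so you need a complete test curve in $\A{g}^{(1)}$ meeting the boundary transversally and \emph{not} contained in $\bar N_0$ --- note that the most obvious families $E_t\times Y$ are useless, since theta divisors of decomposable principally polarized abelian varieties are always singular for $g\geq 2$ --- and the multiplicity conventions (the class being computed is $\theta_0+2[R]$) must be kept consistent between the interior Chern-class count and the Fourier--Jacobi analysis at the cusp. Your $g=1$ sanity check and the heuristic that $1/12$ echoes $12\lambda_1=\delta$ are fine, but as written the proposal neither carries out the boundary computation nor correctly derives the $\lambda_1$-coefficient, so it does not yet constitute a proof.
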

The divisor $N_0$ is in general not irreducible and splits off the
divisor $\theta_0$ 
of principally polarized abelian varieties whose symmetric theta
divisor has a singularity at a point of order $2$. This divisor is
given by the vanishing of an even theta constant (Nullwert) 
and so this divisor class can be easily computed.
The divisor class of $\theta_0$ equals
$$
[\theta_0]= 2^{g-2}(2^g+1)\, \lambda_1-2^{2g-5}\, \delta \, .
$$

\begin{proof} (of Theorem \ref{KDgatleast7})
We have $[\bar{N}_0]=\theta_0+ 2 \, [R]$ with $R$ an effective divisor
given as
$$
[R]=\left( \frac{(g+1)!}{4} + \frac{g!}{2}-2^{g-3}(2^g+1)\right) \lambda_1 
-\left( \frac{(g+1)!}{24} -2^{2g-6}\right) \delta \, .
$$
For an expression $a\lambda_1-b \delta$ we call the ratio $a/b$ its slope.
If $R$ is effective with a slope $\leq$ slope of the expression for the
canonical class we know that the canonical class is ample.
From the formulas one sees that the slope of $\bar{N}_0$ is
$6+ 12/(g+1)$ and deduces that the inequality holds for $g\geq 7$.
\end{proof}

From the relatively easy equation for the class
$[\theta_0]$ one deduces that the even theta
constants provide a divisor with slope $8+2^{3-g}$, and this gives
that $\A{g}$ is of general type for $g\geq 8$; cf.\ \cite{Freitag:SMF}.

For $g\leq 5$ we know that $\A{g}$ is rational or unirational.
For $g=1$ and $2$ rationality was known in the 19th century; Katsylo proved
rationality for $g=3$, Clemens proved unirationality for $g=4$
and unirationality for $g=5$ was proved by several people (Mori-Mukai,
Donagi, Verra). But the case $g=6$ is still open.

For some results on the nef cone we refer to the survey 
\cite{Gr} of Grushevsky.

\section{Stratifications}
In positive characteristic the moduli space $\A{g}\otimes{\bF}_p$ 
possesses stratifications that can tell us quite a
lot about the moduli space; it is not clear what the characteristic zero
analogues of these stratifications are. This makes this moduli space
in some sense more accessible in characteristic $p$ than in characteristic 
zero, a fact that may sound counterintuitive to many.

The stratifications we are refering to are the Ekedahl-Oort stratification
and the Newton polygon stratification. Since the cycle classes are not known
for the latter one we shall stick to the Ekedahl-Oort
stratification, E-O for short. It was originally defined by Ekedahl and Oort
in terms of the group
scheme $X[p]$, the kernel of multiplication by $p$ for an abelian variety
in characteristic $p$. It has been studied intensively by Oort and many others,
see e.g. \cite{Oort:Stratification} and \cite{Oort:foliations}.

The alternative definition using degeneracy loci
of vector bundles was given in my paper \cite{vdG:Cycles} and worked out fully
in joint work with Ekedahl, see \cite{E-vdG:EO}. In this section
we shall write $\A{g}$ for $\A{g}\otimes {\bF}_p$. We start with the
universal principally polarized abelian variety $\pi:\X{g} \to \A{g}$.
For an abelian variey $X$ over a field $k$ 
the de Rham cohomology $H^1_{\rm dR}(X)$ is a vector space of rank $2g$.
We get by doing this in families a cohomology sheaf $\HDR(\X{g}/\A{g})$,
the hyperdirect image 
$R^1\pi_*({\mathcal O}_{\X{g}} \to \Omega^1_{\X{g}/\A{g}})$.
Because of the polarization it comes with a symplectic pairing $\langle \, , \, \rangle: \HDR \times \HDR \to {\mathcal O}_{\A{g}}$.
We have the Hodge filtration of $\HDR$:
$$
0 \to \pi_*(\Omega^1_{\X{g}/\A{g}}) \to \HDR \to R^1\pi_*{\mathcal O}_{\X{g}}
\to 0,
$$
where the first non-zero term is the Hodge bundle ${\bE}_g$. In characteristic
$p>0$ we have additionally two maps
$$
F: \X{g} \to \X{g}^{(p)}, \quad V: \X{g}^{(p)} \to \X{g},
$$
relative Frobenius and the Verschiebung 
satisfying $F\circ V= p \cdot {\rm id}_{\X{g}^{(p)}}$ and
$V\circ F = p \cdot {\rm id}_{\X{g}}$. 

Look at the simplest case  $g=1$. Since Frobenius is inseparable the kernel
$X[p]$ of multiplication by $p$ is not reduced and has either $1$ or $p$
physical points.
An elliptic curve in characteristic $p>0$
is called \emph{supersingular} 
if and only if $X[p]_{\rm red}=(0)$. Equivalently 
this means that $V$ is also inseparable, hence the kernel of $F$
and $V$ (for $\X{g}$ instead of $\X{g}^{(p)}$) coincide. There are
finitely many points on the $j$-line $\A{1}$ that correspond to the
supersingular elliptic curves. 

So in general 
we will compare the relative position of the kernel of $F$ and of $V$
inside $\HDR$. As it turns out, it is better to work with the flag
space $\F{g}$ of symplectic flags on $\HDR$; by this we mean that we
consider the space of flags $(E_i)_{i=1}^{2g}$ on $\HDR$ such that
${\rm rank} (E_i)=i$, $E_{g}= {\bE}$ and $E_{g-i}=E_j^{\bot}$. 
We can then introduce a second flag on $\HDR$, say $(D_i)_{i=1}^{2g}$
defined by setting
$$
D_g= \ker(V)=V^{-1}(0), \quad D_{g+i}=V^{-1}(E_i^{(p)}, 
$$
and complementing by
$$
D_{g-i}=D_{g+i}^{\bot} \qquad \text{for $i=1,\ldots,g$}.
$$
This is called the conjugate flag. For an abelian variety $X$ we define
the \emph{canonical flag} of $X$ as the coarsest flag that is stable
under $F$: if $G$ is a member, then also $F(G^{(p)})$; usually this will
not be a full flag. The conjugate flag $D$ is a refinement of the 
canonical flag.

So starting with one filtration we end up with two filtrations. We then
compare these two filtrations. In order to do this we need the Weyl group 
$W_g$ of the symplectic group ${\rm Sp}(2g,{\bZ})$. This group is isomorphic
to the semi-direct product 
$$
W_g \cong 
S_g \ltimes ({\bZ}/2{\bZ})^g 
= \{ \sigma \in S_{2g} : \sigma(i)+\sigma(2g+1-i)=2g+1, \, i=1,\ldots,g \} \, .
$$
As is well-known each element $w \in W_g$
has a \emph{length} $\ell(w)$. Let 
$$
W_g^{\prime}= \{ \sigma \in W_g: \sigma\{ 1,2, \ldots,g\} = \{1,2,\ldots,g\} \}
\cong S_g  \, .
$$
Now for every coset $aW_g^{\prime}$ there is a unique element $w$ of minimal
length in $aW_g^{\prime}$ such that every $w' \in aW_g^{\prime}$ can be written as $w'=wx$ with $\ell(w')=\ell(w)+\ell(x)$. 

There is a partial order on $W_g$, the Bruhat-Chevalley order:
$$
w_1 \geq w_2 \quad \text{if and only if} \quad r_{w_1}(i,j)\leq r_{w_2}(i,j)
\quad \text{for all $i,j$},
$$
where the function $r_w(i,j)$ is defined as
$$
r_w(i,j)= \# \{ n \leq i: w(n) \leq j \}.
$$
There are $2^g$ so-called \emph{final elements} in $W_g$: these are the
minimal elements of the cosets. In another context (see Section \ref{DF})
these are called 
Kostant representatives. An element $\sigma \in W_g$ is final 
if and only if $\sigma(i) < \sigma(j)$ for $i < j \leq g$. These final
elements correspond one-to-one to so-called final types: these are
increasing surjective maps
$$
\nu : \{0,1,\ldots,2g\} \to \{ 0,1,\ldots,2g\}
$$
satisfying $\nu(2g-i)=\nu(i)+(g-i)$ for $0 \leq i \leq g$. The bijection
is gotten by associating to $w \in W_g$ the final type $\nu_w$ with
$\nu_w(i)-i-r_w(g,i)$.

Now the stratification on our flag space $\F{g}$ is defined by
$$
(E_{\cdot},D_{\cdot}) \in U_w \iff \dim(E_i \cap D_j) \geq r_w(i,j) \, .
$$
There is also a scheme-theoretic definition: the two filtrations mean that
we have two sections $s,t$ of a $G/B$-bundle $T$ (with structure group $G$)
over the base with $G={\rm Sp}_{2g}$
and $B$ a Borel subgroup; locally (in the \'etale topology) we may assume that
$t$ is a trivializing section; then we can view $s$ as a map of the base
to $G/B$ and we simply take $U_w$ (resp.\ $\overline{U}_w$)
to be the inverse image of the $B$-orbit $BwB$ (resp.\ its closure).
This definition is independent of the chosen trivialization. 
We thus find global subschemes $U_w$ and $\bar{U}_w$ for
the $2^g (g!)$ elements of $W_g$.

It will turn out that for \emph{final} elements the projection map $\F{g}
\to \A{g}$ restricted to $U_w$ defines a finite morphism to its
image in $\A{g}$, and these will be the E-O strata. But the strata
on $\F{g}$ behave in a much better way. That is why we first study these
on $\F{g}$. We can extend the strata to strata over a compactification,
see \cite{E-vdG:EO}: the Hodge bundle extends and so does the de Rham
sheaf, namely as the logarithmic de Rham sheaf 
$R^1\pi_*(\Omega^{\cdot}_{\tilde{\X{g}}/\tilde{\A{g}}})$ and 
then we play the same game as above.

There is a smallest stratum: $\bar{U}_1$ associated
to the identity element of $W_g$. 

\begin{proposition}
Any irreducible component of any $\bar{U}_w$ in $\F{g}$ contains a point of $\bar{U}_1$.
\end{proposition}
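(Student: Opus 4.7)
My plan is to exploit the scheme-theoretic definition of the strata given just before the proposition: \'etale-locally on $\F{g}$ the pair of symplectic flags $(E_\bullet,D_\bullet)$ on $\HDR$ gives two sections of an $\mathrm{Sp}_{2g}/B$-bundle, and after trivializing by one of them the other becomes a map
\[
\varphi\colon \F{g} \longrightarrow \mathrm{Sp}_{2g}/B
\]
under which $\bar U_w = \varphi^{-1}(\overline{BwB/B})$ and in particular $\bar U_1 = \varphi^{-1}(B/B)$. Thus the assertion reduces to showing that every irreducible component of $\varphi^{-1}(\overline{BwB/B})$ meets the closed subscheme $\varphi^{-1}(B/B)$.

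The combinatorial input is classical Bruhat geometry: the Schubert variety $\overline{BwB/B}$ contains the point $B/B$ for every $w$ (since $1\le w$), and for any cover $v\lessdot w$ in Bruhat order the Schubert subvariety $\overline{BvB/B}$ is a codimension-one Cartier divisor in $\overline{BwB/B}$. The map $\varphi$ is (in the local picture) the projection from the total space of a $G/B$-torsor to its fibre, hence smooth, so these Cartier-divisor relations pull back: $\bar U_v$ is a Cartier divisor in $\bar U_w$ whenever $v\lessdot w$.

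I would then argue by descending induction along a saturated chain $w = w_0 \gtrdot w_1 \gtrdot \cdots \gtrdot w_m = 1$ of length $m=\ell(w)$ in the Bruhat order. Starting from an irreducible component $C_0 = C$ of $\bar U_{w_0}$, at each step $C_k \cap \bar U_{w_{k+1}}$ is non-empty: either it coincides with $C_k$, or $\bar U_{w_{k+1}}$ restricts to an effective Cartier divisor on $C_k$, and a Cartier divisor on an irreducible scheme is non-empty. Choose an irreducible component to form $C_{k+1}\subseteq C_k\cap \bar U_{w_{k+1}}$. After $m$ steps we obtain a non-empty closed subset $C_m\subset C\cap \bar U_1$, as required.

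The main obstacle I anticipate is propagating the Cartier-divisor relation rigorously from the local model in $\mathrm{Sp}_{2g}/B$ to the global scheme $\F{g}$ and then restricting it to the irreducible component $C$, since $\varphi$ is only defined \'etale-locally and an \'etale cover can split irreducible components. I would resolve this by passing to an \'etale cover on which both $\varphi$ and the irreducible decomposition of $\bar U_w$ are well-defined, completing the descent there, and descending the conclusion (a non-empty intersection is preserved by a faithfully flat cover). The normality and Cohen--Macaulayness of symplectic Schubert varieties should make the local divisor-theoretic input robust enough to survive this patching.
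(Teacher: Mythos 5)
There is a genuine gap, and it sits exactly where the whole difficulty of the proposition lies: the non-emptiness step of your descending induction. From the scheme-theoretic definition you correctly get that, locally, $\bar U_{w_{k+1}}\cap \bar U_{w_k}$ is cut out by the pullback of equations for a Schubert divisor, so the intersection with your component $C_k$ is either all of $C_k$ or of codimension $\leq 1$ wherever it is non-empty. But ``a Cartier divisor on an irreducible scheme is non-empty'' is false without properness: the pullback of a divisor to a non-proper variety can be empty (think of $\bA^1\subset\bP^1$ and the point at infinity), and $\F{g}$ lives over the non-proper $\A{g}$, so the components $C_k$ are in general not complete (already $C_0=\F{g}$ itself is not). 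Even granting properness, Cartier-ness alone does not give non-emptiness --- one needs the restricted divisor class to be non-trivial, e.g.\ ample or of positive degree on $C_k$ (two disjoint divisors on $\bP^1\times\bP^1$ illustrate the point). A secondary inaccuracy: for a cover $v\lessdot w$ the Schubert subvariety $\overline{BvB/B}$ is in general only a Weil, not a Cartier, divisor in $\overline{BwB/B}$ (Schubert varieties are normal but not factorial), and the local map $\varphi$ is not a torsor projection and is not smooth; the dimension and purity statements you need are instead supplied by the height-$1$ local structure theorem quoted in the text (Corollary \ref{strataproperties}).

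Your inductive skeleton along the Bruhat order is the right one, but to close the gap one has to import precisely the ingredients the survey alludes to just before the proposition and which are carried out in \cite{E-vdG:EO}: extend the strata over a toroidal compactification (via the logarithmic de Rham sheaf), so that the strata closures become proper; establish positivity --- e.g.\ an ample, or at least suitably effective, class supported on $\bigcup_{v\lessdot w}\bar U_v$ inside $\bar U_w$, or alternatively the quasi-affineness of the open strata $U_w$, which forbids a positive-dimensional complete subvariety from lying entirely in one open stratum; and finally check that the induction cannot terminate inside the boundary at infinity, i.e.\ that the minimal strata one reaches lie over the interior $\A{g}$. Without the compactification-plus-positivity input, the step ``$C_k\cap\bar U_{w_{k+1}}\neq\emptyset$'' is unjustified, and it is exactly the non-trivial content of the statement.
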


The advantage of working in the flag space comes out clearly if we consider
the local structure of our strata. The idea is that locally our moduli 
space (flag space) looks like the space ${\rm FL}_g$ 
of complete symplectic flags in a
$2g$-dimensional symplective space. 

We need the notion of \emph{height $1$ maps} in charactistic $p>0$. A closed immersion $S \to S'$ is called a height $1$ map if $I^{(p)}=(0)$ with $I$
the ideal sheaf defining $S$ and $I^{(p)}$ the ideal generated by $p$-th
powers of elements. If $S={\rm Spec}(R)$ and $x: {\rm Spec}(k) \to S$
is a closed point then the height $1$ neighborhood is ${\rm Spec}(R/m^{(p)})$.
We can then introduce the notions of \emph{height $1$ isomorphic} and
\emph{height $1$ smooth} in an obvious way. 

Our basic result of \cite{E-vdG:EO} about our strata is now:

\begin{theorem}
Let $k$ be a perfect field. For every $k$-point $x$ of $\F{g}$ there exists
a $k$-point of ${\rm FL}_g$ such that their height $1$ neighborhoods
are isomorphic as stratified spaces.
\end{theorem}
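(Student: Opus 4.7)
The plan is to exploit Grothendieck--Messing deformation theory to trivialize the de Rham bundle $\HDR$ at the height $1$ level, simultaneously rigidifying the conjugate flag, and thereby reduce the local structure of $\F{g}$ near $x$ to that of the classical symplectic flag variety. Fix the $k$-point $x \in \F{g}$, with underlying abelian variety $X_0/k$, Hodge flag $E_{0,\cdot}$, and conjugate flag $D_{0,\cdot}$ on the de Rham cohomology $M_0 := H^1_{\rm dR}(X_0/k)$. First I would show that for the height $1$ thickening $S = \Spec(R/m^{(p)})$ of $\Spec(k)$ inside $\F{g}$, the relative de Rham cohomology $\HDR(X/S)$ is canonically isomorphic to the constant symplectic bundle $M_0 \otimes_k \mathcal{O}_S$. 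Because $m^{(p)} = 0$, the ideal $m$ carries the trivial divided power structure, so by Grothendieck--Messing the crystal on $X_0$ attached to $\HDR$ evaluates on $S$ to $M_0 \otimes_k \mathcal{O}_S$, and deformations of the principally polarized $X_0$ over $S$ correspond bijectively to Lagrangian lifts of $E_{0,g} \subset M_0$ inside $M_0 \otimes \mathcal{O}_S$.

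Second, under this trivialization the conjugate flag is literally constant, $D_i = D_{0,i} \otimes_k \mathcal{O}_S$. This is because $D_\cdot$ is built from Verschiebung through $D_{g+i} = V^{-1}(E_i^{(p)})$, and the Frobenius on $\mathcal{O}_S$ factors through the residue field $k$ (again by $m^{(p)} = 0$), so both the Frobenius pullback $E_i^{(p)}$ and its preimage under $V$ are constant on $S$. Third, passing from $\A{g}$ to $\F{g}$ amounts to adjoining the choice of a full symplectic flag refining $E_g$ in the trivialized bundle $M_0 \otimes \mathcal{O}_S$; this produces a canonical isomorphism between the height $1$ neighborhood of $x$ in $\F{g}$ and that of the point $y := [E_{0,\cdot}]$ in the classical symplectic flag variety ${\rm FL}_g = \Sp(M_0)/B$, where $B$ stabilizes $E_{0,\cdot}$. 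Under this isomorphism the universal flag on ${\rm FL}_g$ corresponds to $E_\cdot$ while the fixed reference flag corresponds to $D_{0,\cdot}$, so the strata $U_w$, cut out on both sides by the identical conditions $\dim(E_i \cap D_j) \geq r_w(i,j)$, match tautologically.

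The main obstacle is the careful justification of the first step. One must check that the crystalline comparison is genuinely exact on the nonstandard thickening $R/m^{(p)}$ (not merely formally, or after further truncation), and that the Grothendieck--Messing classification of polarized deformations extends across it; in particular, one must verify that no higher-order Frobenius twists in Verschiebung contaminate the rigidity of $D_\cdot$. Once this is in place, the remaining steps follow essentially formally from the definitions of the two stratifications.
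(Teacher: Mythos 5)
Your route is in substance the one the paper indicates (trivialize the de Rham bundle over the height $1$ neighborhood, then use infinitesimal Torelli, which over such a thickening is exactly Grothendieck--Messing), and your first and third steps are sound: the ideal of the thickening is nilpotent ($m^{n(p-1)+1}\subseteq m^{(p)}$ for $n$ the number of generators of $m$) and carries the trivial divided powers, so there is nothing ``nonstandard'' to re-verify, and the bijection between full symplectic flags in $M_0\otimes_k\mathcal{O}_S$ lifting $E_{0,\cdot}$ and $S$-points of $\F{g}$ reducing to $x$ does yield mutually inverse morphisms between the two height $1$ neighborhoods. Note, however, that the canonical trivialization requires a $k$-algebra retraction $S\to\Spec(k)$: it exists and is unique precisely because $k$ is perfect and $m^{(p)}=0$ (send $a=b^p$ to $\tilde{b}^p$ for any lift $\tilde{b}$ of $b$), and this is where the hypothesis on $k$ enters --- at present it is used nowhere in your argument.

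The genuine gap is in your second step. The factorization of the absolute Frobenius of $\mathcal{O}_S$ through $k$ does make the twists $E_i^{(p)}$ constant, but it does not make $D_{g+i}=V^{-1}(E_i^{(p)})$ constant: the preimage of a constant subbundle under a varying homomorphism varies (already $D_g=\ker V$ would vary), so you must prove that $V$ itself --- and likewise $F$ --- becomes the base change of $V_0$ under your trivialization. That is not a consequence of the Frobenius factorization; it is the assertion that the crystalline--de Rham identification $\HDR(X/S)\cong M_0\otimes_k\mathcal{O}_S$ is compatible with $F$ and $V$. The correct argument is functoriality: relative Frobenius and Verschiebung are morphisms of abelian schemes over $S$ reducing to $F_0$ and $V_0$ over $k$; the comparison isomorphism is functorial for morphisms of abelian schemes over the PD thickening; and a morphism of crystals over $\Spec(k)$, evaluated on $S$ and read through the retraction, is the constant extension of its value at the closed point. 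With this lemma in place your unresolved worry about ``higher-order Frobenius twists contaminating the rigidity of $D_\cdot$'' disappears, the conjugate flag is constant, and the rest of the proof, including the tautological matching of the strata cut out by $\dim(E_i\cap D_j)\geq r_w(i,j)$ with the Schubert strata relative to the reference flag $D_{0,\cdot}$, goes through as you describe.
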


Here the stratification on the flag space ${\rm FL}_g$ is the usual one
given by the Schubert cells.
The idea of the proof is to trivialize the de Rham bundle over a height $1$ 
neighborhood and then use infinitesimal Torelli for abelian varieties.
Our theorem has some immediate consequences.

\begin{corollary}\label{strataproperties}
\begin{enumerate}
\item Each stratum $U_w$ is smooth of dimension $\ell(w)$.
\item Each stratum $\bar{U}_w$ is Cohen-Macaulay, reduced and normal;
moreover $\bar{U}_w$ is the closure of $U_w$.
\item For $w$ a final element the projection $\F{g} \to \A{g}$ 
induces a finite \'etale map $U_w \to V_w$, with $V_w$ the image of $U_w$.
\end{enumerate}
\end{corollary}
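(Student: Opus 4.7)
All three claims stem from the preceding height-$1$ local structure theorem, which identifies the formal picture at any closed point of $\F{g}$, compatibly with the stratification, with that of the classical symplectic flag variety $\mathrm{FL}_g = G/B$, $G = \Sp_{2g}$, equipped with its Bruhat stratification. My plan is to transfer everything from the Schubert picture, where the assertions are classical: $BwB/B$ is an affine cell of dimension $\ell(w)$, and the Schubert variety $\overline{BwB/B}$ is reduced, normal, Cohen--Macaulay and is the Zariski closure of $BwB/B$ (Ramanathan, Seshadri).

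For (1) and (2), at a $k$-point $x \in \overline{U}_w$ the stratified height-$1$ isomorphism of the previous theorem matches $\overline{U}_w$ near $x$ with a Schubert variety $\overline{BwB/B}$ and $U_w$ with the open cell. The strata on both sides are cut out by \emph{incidence conditions} between two flags---the Hodge filtration against the conjugate filtration on one side, two reference flags on the other---and these are first-order conditions whose defining ideals in the ambient smooth scheme are recorded by the height-$1$ datum. Since $\F{g}$ and $\mathrm{FL}_g$ are both smooth of dimension $g^2$, this is enough to read off smoothness and dimension of $U_w$ from $BwB/B$, and to transfer reducedness, normality, Cohen--Macaulayness and the equality $\overline{U}_w = \overline{U_w}$ from $\overline{BwB/B}$.

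For (3), for a final element $w$ the incidence conditions $r_w$ force the flag $E_\bullet$ to be a symplectic refinement of the canonical filtration of the underlying abelian variety $X$; since this canonical filtration is intrinsic to $X$ and final $w$ label the minimal representatives of cosets in $W_g/W_g'$, only finitely many such refinements are compatible at each point, so the fibres of $U_w \to V_w$ are finite and reduced. Using (1), $U_w$ is smooth of dimension $\ell(w)$, which by Oort's dimension formula for the Ekedahl--Oort stratum equals $\dim V_w$, so the morphism is quasi-finite and dominant; finiteness follows from properness of the flag projection on each fibre of $\F{g} \to \A{g}$, and \'etaleness is then forced by the fact that a finite morphism between smooth varieties of the same dimension with reduced fibres is automatically \'etale.

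The main obstacle is the upgrade from height-$1$ to honest formal data needed for normality and Cohen--Macaulayness in (2), since these properties are not in principle detected by $\mathrm{Spec}(R/\mathfrak{m}^{(p)})$ alone. This is resolved by using that $\F{g}$ is smooth of the expected dimension, so its formal completion at $x$ is abstractly a power series ring, and that both $\overline{U}_w$ and $\overline{BwB/B}$ are cut out by the same type of determinantal rank equations on flag incidences, whose ideals are determined by their values modulo $\mathfrak{m}^{(p)}$ in a smooth ambient scheme; this pins down enough of the formal neighbourhood for the transfer of the Schubert-variety properties to go through.
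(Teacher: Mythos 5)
Your overall route---transferring everything from the Bruhat stratification of ${\rm FL}_g$ through the height-$1$ local structure theorem---is indeed the intended one, but the step you yourself single out as the main obstacle is not actually closed, and it is a genuine gap. A height-$1$ neighbourhood does not determine the local ring, nor even its dimension or reducedness: in $k[[x_1,\dots,x_d]]$ the ideals $(x_1^p)$ and $(0)$ have the same image modulo $\mathfrak{m}^{(p)}=(x_1^p,\dots,x_d^p)$, so $V(x_1^p)$ and the smooth ambient space have isomorphic height-$1$ neighbourhoods while differing in dimension, reducedness, normality and Cohen--Macaulayness. Hence your claimed principle that the defining ideals, being determinantal rank conditions, ``are determined by their values modulo $\mathfrak{m}^{(p)}$ in a smooth ambient scheme'' is false as stated, and with it your proof of (2). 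What the height-$1$ isomorphism does give is the Zariski tangent space (since $\mathfrak{m}^{(p)}\subset\mathfrak{m}^2$), i.e.\ the upper bound $\dim T_x U_w\le\ell(w)$; even for (1) you still need the complementary lower bound $\dim_x\bar{U}_w\ge\ell(w)$, which does not follow from the height-$1$ datum but from the scheme-theoretic definition of $\bar{U}_w$ as the inverse image of a codimension-$(g^2-\ell(w))$ Schubert variety under a locally defined map to $G/B$ (plus the fact that the smaller strata have too small tangent spaces to carry a component). The properties in (2) require genuinely more work, which is carried out in \cite{E-vdG:EO}; they cannot be waved through an abstract height-$1$ comparison.

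Your argument for (3) is also circular or incomplete at two points. You invoke the criterion that a finite morphism between smooth varieties of the same dimension with reduced fibres is \'etale, but smoothness and the dimension of $V_w$ are not available at this stage: in this development they are consequences of the \'etaleness of $U_w\to V_w$, and appealing to Oort's dimension formula imports the result being reproved. Moreover, finiteness does not follow merely from properness of $\F{g}\to\A{g}$, since $U_w$ is only locally closed; one needs to know that $U_w$ is closed in $\pi^{-1}(V_w)$, i.e.\ which strata $\bar{U}_w\setminus U_w$ maps into (this is the content of the proposition following the corollary in the text). The workable argument is to extract quasi-finiteness and unramifiedness directly from the same local analysis---for final $w$ the flag $E_\bullet$ is one of finitely many refinements of the canonical filtration and lifts uniquely along infinitesimal deformations---and only then combine this with the closure statement and properness to get a finite \'etale map.
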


We now descend from $\F{g}$ to $\A{g}$ to define the Ekedahl-Oort
stratification.

\begin{definition}
For a final element $w \in W_g$ the E-O stratum $V_w$ is defined to be the
image of $U_w$ under the projection $\F{g} \to \A{g}$.
\end{definition}

Over a E-O stratum $V_w \subset \A{g}$ the type of the canonical filtration
(i.e., the dimensions of the filtration steps that occur) are constant.

\begin{proposition}
The image of any $U_w$ under the projection $\F{g} \to \A{g}$ is a union
of strata $V_{v}$; the image of any $\bar{U}_w$ is a union of strata 
$\bar{V}_v$.
\end{proposition}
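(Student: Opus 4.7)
The plan is to reduce the proposition to the corresponding classical fact about Schubert cells in a flag variety under the projection to a Lagrangian Grassmannian, using the height-1 local structure theorem immediately above. As a preliminary observation, the E-O strata $V_v$ are characterized intrinsically by the isomorphism type of the canonical flag on $\HDR(X)$, which depends only on $X$ and not on any choice of symplectic refinement $E_{\cdot}$; consequently, whether $\pi^{-1}(x)\cap U_w$ is nonempty depends only on the E-O type of $x$, which is exactly the content one needs to extract.

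For the first assertion, at each $k$-point of $\F{g}$ the local structure theorem gives a height-1 isomorphism of stratified spaces with a point of $\mathrm{FL}_g = G/B$ (with $G=\mathrm{Sp}_{2g}$ and $B$ a Borel), matching the strata $U_w$ to Schubert cells $BwB/B$. Under this isomorphism the forgetful map $\pi \colon \F{g}\to \A{g}$ corresponds (up to height 1) to the canonical projection $G/B \to G/P$, where $P$ is the Siegel parabolic stabilizing a Lagrangian; the E-O stratification on $\A{g}$ then matches the Schubert stratification of $G/P$ by the cells $BvP/P$ indexed by the final (minimal-length) elements $v$ of the cosets $wW_g^{\prime}$. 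The classical Bruhat-theoretic identity $BwB \cdot P = BvP$, with $v$ the unique final representative of $wW_g^{\prime}$, shows that the image of $BwB/B$ in $G/P$ is the single Schubert cell $BvP/P$. Transported back, every point of $\pi(U_w)$ has a height-1 neighborhood agreeing with one in $V_v$, so $\pi(U_w)$ is a (locally closed) union of E-O strata.

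For the closure statement, the restriction $\pi\colon \bar{U}_w \to \A{g}$ is proper because the fibers of $\pi$ are symplectic flag varieties and hence projective, so $\pi(\bar{U}_w)$ is closed. Combined with the Bruhat decomposition $\bar{U}_w = \bigsqcup_{w' \leq w} U_{w'}$ from Corollary \ref{strataproperties}, one gets
\[
\pi(\bar{U}_w) = \bigcup_{w' \leq w} \pi(U_{w'}),
\]
which is a union of E-O strata by the first part and is closed, hence a union of the closures $\bar{V}_v$. The main obstacle is the matching invoked in the second paragraph: under the height-1 identification of $\F{g}$ with $\mathrm{FL}_g$ one must check that the E-O stratification on $\A{g}$ really corresponds to the Schubert stratification on $G/P$. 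This amounts to verifying, via Dieudonn\'e theory, that the intrinsic canonical-flag invariant of $X$ is read off in the local Schubert picture from the relative position of $E_{\cdot}$ and the conjugate flag $D_{\cdot}$, using crucially that $D_{\cdot}$ always refines the canonical flag.
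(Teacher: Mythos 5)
Your reduction to a Bruhat-theoretic statement about $G/B\to G/P$ does not go through as written, and the step you yourself flag as "the main obstacle" is in fact false rather than merely unverified. The height-1 local structure theorem identifies a neighborhood of a point of $\F{g}$ with one of a point of $\mathrm{FL}_g$ \emph{as stratified spaces}; it says nothing about compatibility with the projections $\F{g}\to\A{g}$ and $G/B\to G/P$, and the induced statement you want on the base — that $(\A{g},\text{E-O})$ is height-1 locally isomorphic to $(G/P,\text{Schubert})$ with $\pi$ matching the projection — fails. This is precisely why the analysis is carried out on the flag space: for final $v$ the map $U_v\to V_v$ is only finite \'etale, generally of degree $>1$, and closures of E-O strata exhibit $p$-dependent local behaviour that Schubert varieties in $G/P$ cannot (already in $\A{2}$ several branches of the one-dimensional supersingular stratum pass through a superspecial point, visible at height 1, whereas the closure of the corresponding one-dimensional Schubert cell in $\mathrm{LG}(2,4)$ is smooth). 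Moreover, even if one had such a pointwise local matching at points of $\pi(U_w)$, a height-1 statement is intrinsically local and cannot yield the global conclusion that $\pi(U_w)$ contains \emph{every} point of each stratum it meets; at best it controls the image near points already known to lie in it.

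The correct engine is the transport-of-structure argument you gesture at in your first paragraph and then abandon: over an algebraically closed field, two points of the same E-O stratum have isomorphic polarized structures $(\HDR,F,V,\langle\,,\,\rangle)$ (this is the content of the identification of the strata via final types, not merely "the same canonical flag type," which is a priori weaker), and such an isomorphism carries the Hodge filtration to the Hodge filtration, a symplectic refinement $E_{\cdot}$ to a symplectic refinement, and, by functoriality of $D_g=\ker V$, $D_{g+i}=V^{-1}(E_i^{(p)})$, the conjugate flag to the conjugate flag; hence it preserves relative position, so $\pi^{-1}(x)\cap U_w\neq\emptyset$ depends only on the E-O stratum of $x$. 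That one paragraph proves the first assertion with no recourse to $\mathrm{FL}_g$. Your second half is then essentially fine: $\pi$ is proper (a flag bundle), $\bar{U}_w=\bigcup_{w'\leq w}U_{w'}$ set-theoretically by its very definition as the pullback of the closure of $BwB$ (this is not what Corollary \ref{strataproperties} states, so cite the definition instead), so $\pi(\bar{U}_w)$ is a closed union of strata and therefore a union of the $\bar{V}_v$ it contains.
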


We also have an irreducibility result (cf.\ \cite{E-vdG:EO}).
Each final element $w \in W_g$ corresponds to a partition and a
Young diagram. 

\begin{theorem}
If $w \in W_g$ is a final element whose Young diagram $Y$ does not contain
all rows of length $i$ with
$\lceil (g+1)/2\rceil \leq i \leq g$ then $\bar{V}_w$ is irreducible and 
$U_w \to V_w$ is a connected \'etale cover.
\end{theorem}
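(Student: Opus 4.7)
The plan is to reduce the two assertions to the single statement that $U_w \subset \F{g}$ is connected. Since $U_w$ is smooth of dimension $\ell(w)$ by Corollary~\ref{strataproperties}(1), connectedness is equivalent to irreducibility for $U_w$. For a final element $w$ the projection $U_w \to V_w$ is finite étale by part~(3) of the same corollary; if $U_w$ is connected, then this étale cover is connected, $V_w$ is a smooth connected (hence irreducible) locally closed subscheme of $\A{g}$, and $\bar{V}_w$ is irreducible as the closure of an irreducible set. Thus the entire theorem is reduced to the connectedness of $U_w$.

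To prove that $U_w$ is connected, I would pass to the closure $\bar{U}_w$, which is normal by Corollary~\ref{strataproperties}(2); for a normal scheme connected and irreducible coincide, so it suffices to show $\bar{U}_w$ is connected. The key input is the proposition just before Corollary~\ref{strataproperties}, which says that every irreducible component of $\bar{U}_w$ contains a point of the smallest stratum $\bar{U}_1$. The strategy is then to connect any two such superspecial points through lower-dimensional strata $\bar{U}_v$ with $v \leq w$, using the local model theorem to transfer the chain structure from Schubert varieties in $\mathrm{FL}_g$: for $v$ covering $w$ in the Bruhat order, $\bar{U}_v$ locally looks like a Schubert divisor in $\bar{U}_w$, and the relevant Bott--Samelson type resolutions fiber these divisors by $\mathbb{P}^1$'s that act as bridges between nearby superspecial points. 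Iterating these bridges connects all of $\bar{U}_1 \cap \bar{U}_w$ within $\bar{U}_w$, giving connectedness.

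The Young diagram condition enters precisely in the bridge-building step of the previous paragraph: excluding all rows of length $i$ with $\lceil (g+1)/2 \rceil \leq i \leq g$ is exactly the combinatorial condition guaranteeing that the covers $v \lessdot w$ in Bruhat order below $w$ generate enough relations on the set of components of $\bar{U}_1 \cap \bar{U}_w$ to force transitivity. Equivalently, one can phrase this as a monodromy statement: the finite étale cover $U_w \to V_w$ has a monodromy group determined by the automorphism group of the generic canonical filtration, and the Young diagram condition is precisely what ensures this group acts transitively on the fibre. The main obstacle lies here --- in the explicit combinatorial (or equivalently group-theoretic) verification that the prescribed row condition is the sharp hypothesis producing transitive monodromy. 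Handling the cases where $Y$ \emph{does} contain a full tail of rows of length $\geq \lceil (g+1)/2 \rceil$ shows the theorem is sharp, since there the rigid piece of the $p$-divisible group produces multiple isomorphic components, confirming that the hypothesis cannot be weakened.
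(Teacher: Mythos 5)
Your opening reduction is fine: smoothness of $U_w$, normality of $\bar{U}_w$, and the finite \'etale property of $U_w \to V_w$ for final $w$ (Corollary \ref{strataproperties}) do reduce both assertions of the theorem to the single statement that $U_w$ (equivalently $\bar{U}_w$) is connected. But that connectedness is where the entire content of the theorem lies, and your argument for it is not a proof. The only substantive inputs you invoke --- the height-$1$ local model theorem and the proposition that every irreducible component of $\bar{U}_w$ contains a point of $\bar{U}_1$ --- hold for \emph{every} final element $w$, with no hypothesis on the Young diagram. Since Harashita's result, quoted immediately after the theorem, shows that for the excluded $w$ the strata are in general reducible, no argument built solely from these two inputs can succeed: it would prove the excluded cases as well. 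The Young-diagram hypothesis must therefore enter through additional, genuinely global information, and your sketch never supplies it: the sentence claiming the row condition is ``exactly'' what makes the covers $v \lessdot w$ generate transitivity, and the equivalent monodromy reformulation, are restatements of the theorem, and you yourself flag this step as ``the main obstacle.''

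There is also a technical problem with the proposed bridge mechanism. The local model theorem identifies only height-$1$ neighborhoods of points of $\F{g}$ with height-$1$ neighborhoods of points of ${\rm FL}_g$ as stratified spaces; this is purely infinitesimal information. It cannot transfer global features of Schubert varieties --- Bott--Samelson resolutions, $\mathbb{P}^1$-fibrations sweeping Schubert divisors --- to $\bar{U}_w$, and in particular it cannot produce actual rational curves in $\bar{U}_w$ joining two \emph{distinct} superspecial points: connecting different points is a global statement invisible to any infinitesimal neighborhood. Note finally that this survey gives no proof of the theorem; it is quoted from Ekedahl--van der Geer \cite{E-vdG:EO}, where the irreducibility requires a substantially more elaborate global analysis of the strata and of the cover $U_w \to V_w$, in which the hypothesis on the Young diagram (which rules out the strata forced into the supersingular locus) is used in an essential way. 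As it stands, your text is a correct reduction plus a plan whose decisive step is missing, so it does not establish the theorem.
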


Harashita proved in \cite{Harashita} that the other ones are in general
reducible (with exceptions maybe for small characteristics).

\smallskip
A stratification on a space is not worth much unless one knows the
cycle classes of the strata. In our case one can calculate these.

On the flag space the Chern classes $\lambda_i$ of the Hodge bundle
decompose in their roots:
$$
c_1(E_i)=l_1+\ldots + l_i\, .
$$
We then have
$$
c_1(D_{g+i})-c_1(D_{g+1-i})= p\, l_i.
$$

\begin{theorem}
The cycle classes of $\bar{U}_w$ are polynomials in the classes $l_i$
with coefficients that are polynomials in $p$.
\end{theorem}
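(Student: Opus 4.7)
The plan is to realize $[\bar{U}_w]$ as a Schubert-type degeneracy locus class on $\F{g}$ and then substitute explicit Chern roots computed from the structure of the two filtrations. By the scheme-theoretic description recalled in the passage, $\bar{U}_w \subset \F{g}$ is, étale-locally, the preimage of the Schubert variety $\overline{BwB} \subset G/B$ (for $G = \mathrm{Sp}_{2g}$) under the relative-position map that compares the two symplectic filtrations $E_\cdot$ (Hodge) and $D_\cdot$ (conjugate) on $\HDR$. My plan is to apply a universal degeneracy-locus formula for such strata — the symplectic (double) Schubert polynomial of Fulton--Pragacz--Ratajski, Billey--Haiman, Kresch--Tamvakis — which expresses any such class as an explicit polynomial in the Chern roots of the successive line-bundle quotients of the two symplectic flags.

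For the Hodge filtration the Chern roots of the successive quotients $E_i/E_{i-1}$ are exactly $l_1,\ldots,l_g$ by definition, together with $-l_g,\ldots,-l_1$ for the dual isotropic extension. For the conjugate filtration the plan is to use the defining relations $D_g = \ker V$ and $D_{g+i} = V^{-1}(E_i^{(p)})$: the Verschiebung induces isomorphisms
\[
D_{g+i}/D_{g+i-1} \isomarrow E_i^{(p)}/E_{i-1}^{(p)}.
\]
Since absolute Frobenius over a characteristic-$p$ base multiplies the first Chern class of every line bundle by $p$, the Chern root of $E_i^{(p)}/E_{i-1}^{(p)}$ is $p\,l_i$, and hence the same is true for $D_{g+i}/D_{g+i-1}$. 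The displayed identity $c_1(D_{g+i}) - c_1(D_{g+1-i}) = p\,l_i$ is exactly this assertion; the symplectic self-duality $D_{g-i} = D_{g+i}^{\bot}$ then forces the remaining roots below $D_g$ to be $-p\,l_{g+1-i}$. Substituting the four families of roots $\{\pm l_i\}$ (from $E$) and $\{\pm p\,l_i\}$ (from $D$) into the symplectic double Schubert polynomial produces an expression for $[\bar{U}_w]$ that is manifestly a polynomial in $l_1,\ldots,l_g$ whose coefficients are polynomials in $p$, as required.

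The step I expect to be the main obstacle is not the algebraic manipulation but the justification that a universal Schubert-class formula, proved in principle in the classifying space or in the split symplectic flag bundle, actually computes $[\bar{U}_w]$ on the specific space $\F{g}\otimes\bF_p$ in positive characteristic. This is exactly where the local-model theorem proved just above in the excerpt — stating that every geometric point of $\F{g}$ admits a height-$1$ neighborhood isomorphic, as a stratified space, to a point of the symplectic flag variety $\mathrm{FL}_g$ — does the essential work: Chern-class computations are local and compatible with height-$1$ neighborhoods, and the stratification on $\F{g}$ matches the Schubert stratification on $\mathrm{FL}_g$ under this isomorphism, so the universal Schubert-polynomial formula pulls back along the relative-position map to give the class of $\bar{U}_w$ on $\F{g}$. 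A secondary subtle point one must check is that the symplectic pairing on $\HDR$ makes $(E_\cdot,D_\cdot)$ into a genuine pair of symplectic flags (in particular that $D_g$ is Lagrangian), so that the degeneracy locus formula applies to the symplectic group rather than to $\mathrm{GL}_{2g}$.
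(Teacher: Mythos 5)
Your overall strategy --- view $\bar{U}_w$ as the degeneracy locus of the pair of symplectic flags $(E_\bullet,D_\bullet)$ in $\HDR$ and evaluate a universal Schubert-type class at the Chern roots $l_i$ and $p\,l_i$ --- is viable, and the computation of the roots of the conjugate flag (Verschiebung inducing $D_{g+i}/D_{g+i-1}\isomarrow (E_i/E_{i-1})^{(p)}$, Lagrangianity of $D_g=\ker V=\mathrm{im}\,F$) is fine. The genuine gap is in the justification of the pivotal transfer step. The claim that ``Chern-class computations are local and compatible with height-$1$ neighborhoods'' cannot establish an identity of cycle classes in $\CHQ(\F{g})$: restriction to an infinitesimal (height-$1$) neighborhood destroys exactly the information at stake (every positive-codimension class dies there), so the stratified local isomorphism with ${\rm FL}_g$ does not by itself transport any cycle-class formula. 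What the local-model theorem really contributes --- through its Corollary --- is the geometric input needed to run the degeneracy-locus argument honestly: each $\bar{U}_w$ is reduced, Cohen--Macaulay, and of the expected codimension $g^2-\ell(w)$ in the smooth, $g^2$-dimensional space $\F{g}$. The correct chain is: $\bar{U}_w$ is the preimage, under the section of the symplectic flag bundle of $\HDR$ given by $E_\bullet$, of the bundle of Schubert varieties defined relative to $D_\bullet$; Schubert varieties are Cohen--Macaulay and the preimage is reduced of expected codimension, so the refined Gysin pullback of the Schubert-bundle class is $[\bar{U}_w]$ with multiplicity one; and the class of that Schubert bundle is a universal polynomial in the tautological roots and the roots of $D_\bullet$ by the equivariant (classifying-space/Borel-presentation) description. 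Note also that the references you invoke do not supply this for arbitrary $w$ in the type $C$ Weyl group: Billey--Haiman give single, not double, classes, and Fulton--Pragacz--Ratajski and Kresch--Tamvakis treat special loci; for the qualitative statement you only need the general equivariant Schubert classes, but you must say so. Finally, if the theorem is read over the toroidal compactification (log de Rham sheaf), the same dimension/reducedness input is needed there as well.

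For comparison, the proof in the reference the survey cites (Ekedahl--van der Geer) does not invoke a universal double-Schubert formula at all: it computes the classes by induction on the length of $w$, using the fibrations between (partial) flag spaces that forget one step of the flag --- the analogues of the maps $\pi_i$ of Section \ref{compactdual} --- together with the Gysin relations of Lemma \ref{Gysin}; the Frobenius twist in the conjugate flag is what inserts the factors of $p$ into this divided-difference-style recursion, from which polynomiality in the $l_i$ with coefficients in ${\bZ}[p]$ is immediate, and the specialization $p=0$ returns the classical Schubert classes. Your route, once repaired as above, buys a cleaner conceptual statement (a double Schubert class with the second alphabet specialized to $p$ times the first) at the cost of importing general type $C$ equivariant degeneracy-locus machinery; the inductive proof is self-contained and produces the explicit algorithm for the classes.
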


We refer to \cite{E-vdG:EO} for an explicit formula. Using the analogues of
the maps $\pi_i$ of Section \ref{compactdual} and Lemma \ref{Gysin} 
we can calculate the cycle classes of the E-O strata on $\A{g}$.

Instead of giving a general formula we restrict to giving the formulas
for some important strata. For example, there are the $p$-rank strata
$$
V_f:=\{ [X] \in \A{g} : \# X[p](\bar{k})\leq p^f\}
$$
for $f=g,g-1,\ldots, 0$. Besides these there are the $a$-number strata
$$
T_a:=\{ [X] \in \A{g}: \dim_k {\rm Hom}(\alpha_p,X) \geq a \} \, .
$$
Recall that the $p$-rank $f(X)$ 
 of an abelian variety is $f$ if and only if
$\# X[p](\bar{k})=p^f$
and $0 \leq f \leq g$ with $f=g$ being the generic case. Similarly, the
$a$-number of $a(X)$  of $X$ is $\dim_k(\alpha_p, X)$ and this equals the rank of
$\ker (V) \cap \ker(F)$; so $0\leq a(X) \leq g$ with $a(X)=0$ being 
the generic case. The stratum $V_f$ has codimension $g-f$ while the stratum
$T_a$ has codimension $a(a+1)/2$. These codimensions were originally
calculated by Oort
and follow here easily from \ref{strataproperties}.

\begin{theorem} 
The cycle class of the $p$-rank stratum $V_f$ is given by
$$
[V_f] = (p-1)(p^2-1)\cdots (p^{g-f}-1) \lambda_{g-f}.
$$
\end{theorem}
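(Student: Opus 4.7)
The plan is to realise $V_f$ as a closed Ekedahl--Oort stratum and compute its cycle class via the flag-space formalism of the previous sections (following \cite{E-vdG:EO} and \cite{vdG:Cycles}).

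First, I would identify the final element $w_f\in W_g$ whose open E-O stratum $V_{w_f}$ parametrises abelian varieties with $p$-rank exactly $f$ and generic canonical filtration. Since the $p$-rank is determined by the isomorphism type of the group scheme $X[p]$, it is constant on each E-O stratum, and the locus $V_f$ is set-theoretically a union of such strata. The element $w_f$ is singled out as the one whose associated final type $\nu_{w_f}$ has ``\'etale piece'' of length $f$; using Corollary~\ref{strataproperties}, $\bar V_{w_f}$ is reduced, Cohen--Macaulay, and of codimension $g-f$, and so equals $V_f$ as a cycle in ${\CHQ}(\A{g})$.

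Second, I would invoke the cycle class formula for $[\bar U_{w_f}]\in{\CH}^{g-f}(\F{g})$ mentioned in the excerpt, which expresses this class as a polynomial in the Chern roots $l_1,\dots,l_g$ of the Hodge bundle with coefficients polynomial in $p$ --- the $p$-dependence entering through the relation $c_1(D_{g+i})-c_1(D_{g+1-i})=p\,l_i$ linking the Hodge and conjugate flags. Pushing forward along $\pi\colon\F{g}\to\A{g}$ by means of the Gysin formulas (Lemma~\ref{Gysin} and its iterations) then produces $[V_f]$ as an explicit polynomial in the tautological classes $\lambda_i$.

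The main obstacle will be showing that this polynomial collapses to the clean product $(p-1)(p^2-1)\cdots(p^{g-f}-1)\lambda_{g-f}$. The structural reason I expect this to work: the $g-f$ factors $(p^i-1)$ arise one at a time from the ``new'' contributions of the conjugate flag relative to the Hodge flag at successive steps $i=1,\dots,g-f$, while the Gysin pushforward organises the remaining symmetric-function manipulations so that only the $(g-f)$th elementary symmetric function in the $l_i$ survives, giving $\lambda_{g-f}$. Much of this bookkeeping is essentially the same as the one used for the $a$-number strata $T_a$ in \cite{E-vdG:EO,vdG:Cycles}, so the novelty is really in tracking the combinatorics attached to the particular final element $w_f$.

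As consistency checks: for $f=g-1$ the formula specialises to $[V_{g-1}]=(p-1)\lambda_1$, which is the divisor class of the vanishing of the Hasse invariant, i.e.\ of a section of $\det(\bE)^{\otimes(p-1)}$. For $f=0$ one gets $[V_0]=\prod_{i=1}^g(p^i-1)\lambda_g$, whose non-triviality is compatible with, and indeed partially explains the naturality of, the complete codimension-$g$ subvariety asserted in Theorem~\ref{completecodimg}. These extreme cases are a useful sanity check on the sign and normalisation of the combinatorial computation.
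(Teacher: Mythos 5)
Your overall route coincides with the paper's: the paper also treats $V_f$ as (the closure of) an Ekedahl--Oort stratum attached to a final element of $W_g$, quotes the theorem that the flag-space classes $[\bar U_w]$ are polynomials in the roots $l_i$ with coefficients polynomial in $p$ (via $c_1(D_{g+i})-c_1(D_{g+1-i})=p\,l_i$), and obtains the formula for $[V_f]$ by pushing down with the analogues of the maps $\pi_i$ and Lemma~\ref{Gysin}, deferring the explicit computation to \cite{vdG:Cycles} and \cite{E-vdG:EO}. So there is no methodological divergence to report.

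As a proof, however, your proposal stops exactly where the content of the theorem begins: the identity $[V_f]=(p-1)(p^2-1)\cdots(p^{g-f}-1)\lambda_{g-f}$ \emph{is} the outcome of the Gysin pushforward, and you only assert that the resulting polynomial ``collapses,'' with a heuristic about factors $(p^i-1)$ appearing one at a time; nothing in the outline pins down the coefficient or shows that only $\lambda_{g-f}$ survives. Two concrete loose ends should be flagged. First, Corollary~\ref{strataproperties} gives reducedness, Cohen--Macaulayness and dimension for the flag-space strata $\bar U_w$, and only a finite \'etale map $U_{w_f}\to V_{w_f}$ for final $w_f$; to convert $\pi_*[\bar U_{w_f}]$ into $[V_f]$ you must also control the degree of this finite cover, a normalization your plan never fixes and which enters directly into the constant in the formula (attributing the properties of $\bar U_{w_f}$ to $\bar V_{w_f}$ itself, as you do, elides this step). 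Second, your $f=0$ ``sanity check'' is off: by Theorem~\ref{lambdagiszero} the class $\lambda_g$ vanishes in ${\CHQ}(\A{g})$, so for $f=0$ the formula asserts that $[V_0]$ is rationally \emph{trivial} on $\A{g}$; the interesting nonvanishing is an integral/torsion phenomenon or takes place on (partial) compactifications, and the completeness of $V_0$ in Theorem~\ref{completecodimg} is not detected by its rational class --- if anything the logic runs the other way, since completeness of $V_0$ is what forces $\lambda_1^{g(g-1)/2}\neq 0$. (Your $f=g-1$ check against the Hasse invariant, a section of $\det(\bE)^{\otimes(p-1)}$ with zero divisor $V_{g-1}$, is correct.)
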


For example, for $g=1$ the stratum $V_0$ is the stratum of supersingular
elliptic curves. We have $[V_0]= (p-1)\lambda_1$. By the cycle relation
$12 \lambda_1=\delta$ with $\delta$ the cycle of the boundary we find
$$
[V_0]= \frac{p-1}{12} \delta \, .
$$
Since the degenerate elliptic curve (rational nodal curve) has 
two automorphisms we find (using the stacky interpretation of our formula) 
the Deuring Mass
Formula
$$
\sum_{E/\bar{k} \text{\rm \, supersingular}} 
\frac{1}{\# {\rm Aut}_k(E)} = \frac{p-1}{24}
$$
for the number of supersingular elliptic curves in characteristic $p$.
One may view
all the formulas for the cycle classes as a generalization of the Deuring
Mass Formula.

The formulas for the $a$-number strata are given in \cite{vdG:Cycles} and
\cite{E-vdG:EO}. 
We have
\begin{theorem}
The cycle class of the locus $T_a$ of abelian varieties with $a$-number
$\geq a$ is given by
$$
\sum Q_{\beta}({\bE}^{(p)} \cdot Q_{\varrho(a)-\beta}({\bE}^{\vee}),
$$
where $Q_{\mu}$ is defined as in Section \ref{compactdual}
 and the sum is over all partitions
$\beta$ contained in  $\varrho(a)=\{a,a-1,\ldots,1\}$.
\end{theorem}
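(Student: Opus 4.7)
The plan is to realize $T_a$ as a symplectic degeneracy locus on the de Rham sheaf, identify the Ekedahl--Oort stratum that cuts it out on the flag space, and then compute the class by a Pragacz-type formula combined with a Cauchy-style expansion of $Q$-polynomials.

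First I would translate the $a$-number condition into an intersection condition on two Lagrangian subbundles of $\HDR$. Over $\A{g}\otimes \bF_p$ the kernel of relative Frobenius on $\HDR$ is exactly the Hodge bundle $\bE=E_g$, while $D_g=\ker(V)$ is the first $g$-step of the conjugate filtration introduced in the E--O construction. Hence $a(X)=\dim(\ker F\cap\ker V)=\dim(E_g\cap D_g)$, so $T_a$ is the closed locus where the two Lagrangian subbundles $\bE$ and $D_g$ of the symplectic bundle $\HDR$ meet in rank $\ge a$.

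Next I would lift to the flag space $\F{g}$ and identify $T_a$ with the image of $\bar U_w$ for the final element $w\in W_g$ whose associated staircase Young diagram is $\varrho(a)=(a,a-1,\ldots,1)$. This element has length $a(a+1)/2$, which agrees with the expected codimension of $T_a$, and by Corollary \ref{strataproperties} the projection $U_w\to V_w\subset\A{g}$ is finite \'etale; so computing the class of $\bar U_w$ on $\F{g}$ and pushing forward gives the class of $T_a$ on $\A{g}$.

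The key computation is then the class of this Lagrangian intersection locus. Here the two Lagrangian subbundles carry asymmetric Chern classes: via the symplectic pairing one has $\HDR/\bE\cong\bE^{\vee}$, while the Verschiebung yields a canonical identification $\HDR/D_g\cong\bE^{(p)}$. The class of the rank-$a$ meeting locus is computed by Pragacz's formula $\Xi_{\varrho(a)}$ applied to the pair $(\bE,D_g)$; because $\varrho(a)$ is the staircase, the Pfaffian expression degenerates via a Cauchy-type identity into a sum
$$
\Xi_{\varrho(a)}=\sum_{\beta\subset\varrho(a)} Q_\beta(\bE^{(p)})\cdot Q_{\varrho(a)-\beta}(\bE^{\vee}),
$$
which is exactly the stated formula. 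The $Q$-polynomial Cauchy identity itself is a purely formal statement about $Q_{ij}$ and can be proved by induction on $a$ using the Pfaffian expansion.

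The main obstacle is the middle step: making the two-Lagrangian version of Pragacz's formula precise in the mixed Frobenius-twisted setting, and, more importantly, ruling out excess intersection. One has to check that $\bar U_w$ has the correct codimension $a(a+1)/2$ and is generically reduced at a general point of $T_a$; this is where the height-one local model theorem of \cite{E-vdG:EO} for the strata of $\F{g}$ is essential, since it locally reduces the computation to the genuine symplectic Schubert calculus of ${\rm FL}_g$ where Pragacz's formula applies without correction.
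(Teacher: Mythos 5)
Your outline is essentially correct, but it is the route of \cite{vdG:Cycles} rather than the one this survey actually sketches. The survey's method, following \cite{E-vdG:EO}, computes the classes of the strata $\bar{U}_w$ on the flag space $\F{g}$ as polynomials in the Chern roots $l_i$ (with coefficients polynomial in $p$) and then pushes them down to $\A{g}$ using the Gysin formulas of Lemma \ref{Gysin}; that route treats all E-O strata uniformly and supplies, via Corollary \ref{strataproperties}, exactly the purity and reducedness statements that any degeneracy-locus argument must quote. Your route works directly on $\A{g}$: $T_a$ is the locus where the two Lagrangian subbundles $\ker F={\bE}$ and $D_g=\ker V$ of $\HDR$ meet in dimension $\geq a$, and the class is evaluated by the formula for the meeting locus of two Lagrangian subbundles, the identifications $\HDR/{\bE}\cong{\bE}^{\vee}$ and $\HDR/D_g\cong\mathrm{im}(V)\cong{\bE}^{(p)}$ producing the twisted argument and hence the powers of $p$ (sanity check: for $a=1$ the formula gives $p\lambda_1-\lambda_1$, as in the text). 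This is shorter for $T_a$ specifically, but it must import from \cite{E-vdG:EO} (or Oort) that $T_a$ is pure of codimension $a(a+1)/2$ and generically reduced along each component, which you correctly identify as the crux.

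Two corrections to the details. First, the final element $w$ whose Young diagram is $\varrho(a)$ has $\ell(w)=g(g+1)/2-a(a+1)/2$: the length is the dimension of $U_w$ (and of $V_w$), not the codimension of $T_a$; the codimension $a(a+1)/2$ is the number of boxes of $\varrho(a)$. Second, the displayed identity is not a formal ``Cauchy degeneration'' of the one-flag formula $\Xi_{\varrho(a)}$ of Section \ref{compactdual}: the class of the locus where two Lagrangian subbundles of a symplectic bundle meet in rank $\geq a$, expressed as $\sum_{\beta\subseteq\varrho(a)}Q_{\beta}$ of one bundle times $Q_{\varrho(a)-\beta}$ of the other, is the Lagrangian degeneracy-locus theorem of Pragacz--Ratajski and Fulton (see \cite{F-P}, \cite{Pragacz}), proved by a flag-bundle resolution and Gysin/divided-difference computation; you should cite or reprove that statement rather than treat it as a Pfaffian identity in the $Q_{i,j}$ alone. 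With these repairs your argument assembles into a correct proof, equivalent to the one in \cite{vdG:Cycles}.
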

For example, the formula for the stratum $T_1$ is $p\lambda_1-\lambda_1$,
which fits since $a$-number $\geq 1$ means exactly that the $p$-rank
is $\leq g-1$. For $a=2$ the formula is
$[T_2]=(p-1)(p^2+1)(\lambda_1\lambda_2)-(p^3-1)2\lambda_3$. For $a=g$
the formula reads
$$
[T_g]=(p-1)(p^2+1)\cdots (p^g+(-1)^g) \lambda_1\lambda_2 \cdots \lambda_g.
$$
This stratum is of maximal codimension; this formula is visibly a
generalization of the
Deuring Mass Formula and counts the number of superspecial abelian varieties;
this number was first calculated by Ekedahl in \cite{Ekedahl:SS}. 

\smallskip

We formulate a corollary of our formulas for the $p$-rank strata.

\begin{corollary}
The Chern classes $\lambda_i$ of the Hodge bundle are represented
by effective  ${\bQ}$-classes.
\end{corollary}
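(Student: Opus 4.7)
The plan is to directly invert the formula in the preceding theorem. For $1 \leq i \leq g$, set $f = g - i$ in
$$
[V_f] = (p-1)(p^2-1)\cdots(p^{g-f}-1)\,\lambda_{g-f}
$$
to obtain the identity
$$
\lambda_i = \frac{1}{(p-1)(p^2-1)\cdots(p^i-1)}\,[V_{g-i}]
\qquad \text{in } \CHQ^i(\A{g}\otimes {\bF}_p).
$$
The coefficient on the right is the reciprocal of a positive integer, and $V_{g-i}$ is by definition the closed substack of principally polarized abelian varieties of $p$-rank at most $g-i$, so its cycle class is effective. Hence $\lambda_i$ is a positive $\bQ$-multiple of an effective class, which is exactly the claim. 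The boundary case $i = 0$ is automatic, since $\lambda_0 = 1$ is the fundamental class of $\A{g}$.

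There is essentially no obstacle once the preceding theorem is granted: the corollary is a one-line algebraic rearrangement. The only point to flag is that the argument is formulated in characteristic $p$, matching the setting in which the Ekedahl--Oort and $p$-rank stratifications live; deducing effectivity in characteristic zero would require a separate input (for instance, the ampleness of $\lambda_1$ from Theorem \ref{lambda1isample} together with the description of the tautological ring in Theorem \ref{tautringofAg}), but this lies outside the scope of what the preceding formula yields.
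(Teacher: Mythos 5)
Your proof is correct and is exactly the deduction the paper intends: since $V_{g-i}$ is a closed substack of codimension $i$, inverting the formula $[V_{g-i}]=(p-1)(p^2-1)\cdots(p^i-1)\,\lambda_i$ exhibits $\lambda_i$ as a positive rational multiple of an effective class in $\CHQ^i(\A{g}\otimes\bF_p)$. Your remark that the statement lives in characteristic $p$ also matches the paper's convention in that section, so nothing further is needed.
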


Another nice aspect of our formulas is that when we specialize $p=0$
in our formulas, that are polynomials in the $l_i$ and $\lambda_i$
with coefficients that are polynomials in $p$, we get back the formulas
for the cycle classes of the Schubert cells both on the Grassmannian
and the flag space, see \cite{E-vdG:EO}.
\section{Complete subvarieties of $A_g$}
The existence of complete subvarieties of relative small codimension
can give us interesting information about a non-complete variety. 
In the case of the moduli space $\A{g}\otimes k$ (for some field $k$)
we know that $\lambda_1$
is an ample class and that $\lambda_1^{g(g-1)/2+1}$ vanishes. Since
for a complete subvariety $X$ of dimension $d$ we must have $\lambda_1^d|X
\neq 0$, this implies immediately a lower bound on the codimension.
\begin{theorem}
The minimum possible codimension of a complete subvariety of $\A{g}\otimes k$ 
is $g$.
\end{theorem}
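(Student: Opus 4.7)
The plan is to bound the codimension from below by $g$ and then invoke Theorem \ref{completecodimg} to see that this bound is realized (in positive characteristic, which suffices to see the infimum is $g$ rather than something larger). The bound rests on two facts already in hand: first, that $\lambda_1 = c_1(\bE)$ is ample on $\A{g}$ (Theorem \ref{lambda1isample}), and second, that in the tautological ring $T_g \cong R_{g-1}$ (Theorem \ref{tautringofAg}) one has $\lambda_1^{g(g-1)/2} \neq 0$ whereas $\lambda_1^{g(g-1)/2 + 1} = 0$, since $R_{g-1}$ is Gorenstein with socle a nonzero multiple of $u_1^{g(g-1)/2}$.

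For the lower bound, I would let $X \subseteq \A{g} \otimes k$ be a complete subvariety of dimension $d$. Because $\lambda_1$ is ample on $\A{g}$, its restriction $\lambda_1|_X$ is ample on the complete variety $X$, so the top self-intersection satisfies $\int_X \lambda_1^d > 0$. Pushing forward via the inclusion, this gives $\lambda_1^d \cdot [X] \neq 0$ in $\CHQ(\A{g})$, and in particular $\lambda_1^d \neq 0$ in $\CHQ(\A{g})$. Combined with the vanishing $\lambda_1^{g(g-1)/2+1} = 0$ from the tautological ring, this forces $d \leq g(g-1)/2$, hence
\[
\operatorname{codim}_{\A{g}} X \;\geq\; \frac{g(g+1)}{2} - \frac{g(g-1)}{2} \;=\; g.
\]

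For sharpness, I would simply quote Theorem \ref{completecodimg}, which produces, in characteristic $p > 0$, a complete subvariety of $\A{g} \otimes \bF_p$ of codimension exactly $g$ (explicitly, the locus of $p$-rank zero abelian varieties, as discussed later in the excerpt). Together, the two steps give that $g$ is the minimum possible codimension.

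The only subtlety — and the place where the argument could fail if one were not careful — is ensuring that $\lambda_1^d \neq 0$ in $\CHQ(\A{g})$ rather than merely in some quotient or in cohomology; but this is immediate from $\lambda_1^d \cdot [X] \neq 0$ as a class of positive degree, so there is no real obstacle. Note also that the statement is about minimal codimension over all fields $k$; the lower bound is field-independent (the tautological relations hold integrally, and ampleness of $\lambda_1$ holds in all characteristics by Moret-Bailly), and existence is realized in positive characteristic, so this is consistent with the minimum being $g$.
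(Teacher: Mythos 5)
Your argument is correct and is essentially the paper's own: the paragraph preceding the theorem derives the lower bound exactly this way (ampleness of $\lambda_1$ forces $\lambda_1^d|_X\neq 0$ for a complete $X$ of dimension $d$, while $\lambda_1^{g(g-1)/2+1}=0$ in the tautological ring), and sharpness is the Koblitz--Oort $p$-rank $0$ locus, i.e.\ Theorem \ref{completecodimg}. Your closing remarks on field-independence and on needing only the quotient structure of the tautological ring are accurate and introduce no circularity.
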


This lower bound can be realized in positive characteristic as was
noted by Koblitz and Oort, see \cite{Koblitz}, \cite{Oort:SV}.
The idea is simple: a semi-abelian variety with a positive torus rank
has points of order $p$. So by requiring that our abelian varieties
have $p$-rank~$0$ we stay inside $\A{g}$ and this defines the required
complete variety.

\begin{theorem}
The moduli stack $\A{g}\otimes k$ with ${\rm char}(k)=p>0$
contains a complete substack of codimension $g$: the locus $V_0$ of
abelian varieties with $p$-rank zero.
\end{theorem}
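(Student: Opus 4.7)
The plan is to reduce the theorem to two separate assertions: that $V_0$ is a closed substack of codimension~$g$, and that it is complete. The first is essentially already at hand, since the cycle-class formula proved just above yields $[V_0]=\prod_{j=1}^{g}(p^j-1)\,\lambda_g$, a nonzero class of pure codimension~$g$, and $V_0$ is closed because the $p$-rank is a lower-semicontinuous invariant of a family of abelian varieties (equivalently, $V_0$ is the closure of a union of Ekedahl-Oort strata whose closure structure was worked out in the previous section). The real content of the theorem is therefore the completeness assertion, which I would verify by means of the valuative criterion for properness.

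Let $R$ be a complete discrete valuation ring with fraction field~$K$ and algebraically closed residue field~$k$ of characteristic~$p$, and suppose we are given a principally polarized abelian variety $X_K/K$ of $p$-rank~$0$, i.e.\ a $K$-point of~$V_0$. By Grothendieck's semi-stable reduction theorem, after replacing $R$ by its integral closure in a finite separable extension of~$K$, the variety $X_K$ extends to a semi-abelian scheme $G/R$ whose closed fiber fits into an exact sequence $0\to T\to G_k\to A\to 0$ with $T$ a torus of some rank $r\geq 0$ and $A$ an abelian variety of dimension $g-r$. The task is to prove $r=0$; once this is done, $G$ is an abelian scheme over $R$, and the closedness of $V_0$ in $\A{g}\otimes k$ forces the specialization $G_k$ itself to lie in~$V_0$, so the resulting $R$-point witnesses the valuative criterion.

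To force $r=0$ I would argue by contradiction, manufacturing a copy of $\mu_p^r$ inside $X_K[p]$ under the assumption $r>0$. Raynaud's rigid-analytic uniformization produces the Raynaud extension $\tilde G$, a semi-abelian scheme over~$R$ whose closed fiber is~$G_k$ and whose generic fiber sits in a short exact sequence of rigid analytic groups $0\to Y\to \tilde G_K^{\mathrm{rig}}\to X_K^{\mathrm{rig}}\to 0$, where $Y\cong\bZ^r$ is an étale lattice. Because $Y$ is torsion-free, $Y[p]=0$, so the quotient map is injective on $p$-torsion, giving monomorphisms of finite group schemes $\mu_p^r=T_K[p]\hookrightarrow \tilde G_K[p]\hookrightarrow X_K[p]$. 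In particular $X_K[p]$ has multiplicative rank $\geq r$. On the other hand, the Cartier pairing $X_K[p]\times X_K^{\vee}[p]\to\mu_p$ identifies the multiplicative rank of $X_K[p]$ with the étale rank of $X_K^{\vee}[p]$, and the $p$-rank is an isogeny invariant; hence the hypothesis that $X_K$ has $p$-rank~$0$ forces the multiplicative part of $X_K[p]$ to vanish, contradicting $r>0$.

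The hard step is precisely this production of $\mu_p^r\hookrightarrow X_K[p]$: in equal characteristic~$p$ the group scheme $\mu_p$ is infinitesimal and invisible to étale Tate-module methods, so one genuinely needs Raynaud's formal/rigid uniformization (rather than any purely étale specialization argument) to transfer the toric contribution from the special fiber back to the generic fiber. Once this geometric input is in hand, everything else---semi-stable reduction, closedness of $V_0$, and the valuative criterion---is routine, and the codimension count has already been supplied by the Ekedahl-Oort cycle-class formula.
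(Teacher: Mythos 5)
Your proof is correct and is in substance the Koblitz--Oort degeneration argument that the paper merely sketches: semistable reduction plus the valuative criterion, with the key point that positive torus rank in the reduction would force positive $p$-rank on the generic fibre. Your mechanism (Raynaud uniformization gives $\mu_p^r=T_K[p]\hookrightarrow \tilde G_K[p]\hookrightarrow X_K[p]$, and then Cartier duality together with the self-duality of the principally polarized $X_K$ converts multiplicative rank into $p$-rank) is a correct way of making precise the paper's phrase ``has points of order $p$'', which literally refers to the dual half of the same toric contribution, namely the \'etale quotient $Y/pY$ of $X_K[p]$; either bookkeeping works. Two small repairs are needed in your first paragraph: the class $[V_0]=\prod_{j=1}^g(p^j-1)\lambda_g$ is in fact \emph{zero} in $\CHQ(\A{g})$, since $\lambda_g$ vanishes there by Theorem \ref{lambdagiszero}, so it cannot certify that the codimension is $g$; instead quote the Ekedahl--Oort dimension statement (the stratum $V_f$ has codimension $g-f$, via Corollary \ref{strataproperties}), together with nonemptiness of $V_0$ (e.g.\ products of supersingular elliptic curves). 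Finally, in the valuative criterion you should add the routine remark that the principal polarization of $X_K$ extends to the abelian scheme over $R$, so that one really obtains an $R$-point of $\A{g}$, which then lands in the closed substack $V_0$ as you say.
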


A generalization is:

\begin{theorem}
The partial Satake compactification $\SatA{g}-\SatA{g-(t+1)}$ in
characteristic $p>0$ of
degenerations of torus rank $t$ contains a complete subvariety of
codimension $g-t$, namely the locus $V_{t}$ of $p$-rank $\leq t$.
\end{theorem}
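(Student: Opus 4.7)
The plan is to adapt the Koblitz--Oort argument for $V_0$ to the general $V_t$. The key numerical input is that a semi-abelian variety $G$ of torus rank $s$ with abelian quotient $A$ has $p$-rank equal to $s + f(A)$, since the $p$-torsion group scheme fits in an exact sequence $0 \to T[p] \to G[p] \to A[p] \to 0$ with $T \cong \Gm^s$ contributing $p^s$ étale points and $A[p]$ contributing $p^{f(A)}$ étale points.

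First, I work on a toroidal compactification $\barA{g}$ of Faltings--Chai type with its universal semi-abelian variety $\cG \to \barA{g}$. Extend the $p$-rank function to a function $f \colon \barA{g} \to \{0,1,\ldots,g\}$ by $f([G]) := \log_p \#\, G[p]^{\mathrm{et}}(\bar k)$. Upper semicontinuity of the rank of the étale part of the finite flat group scheme $\cG[p]$ then implies that the locus $\tilde V_t := \{ f \leq t \}$ is closed in $\barA{g}$, hence proper.

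Second, observe that on the stratum of torus rank exactly $s$, every geometric fiber has $p$-rank $\geq s$ by the exact sequence above. Hence $\tilde V_t$ is contained in $\A{g}^{(t)}$, the locus of torus rank $\leq t$. Consequently the image $q(\tilde V_t)$ under $q \colon \barA{g} \to \SatA{g}$ lies inside the partial Satake compactification $\SatA{g} - \SatA{g-(t+1)}$; this image is precisely the locus $V_t$ of $p$-rank $\leq t$, and it is complete as the image of a proper scheme under a proper morphism.

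Third, I compute the codimension. The $p$-rank $\leq t$ locus in $\A{g}$ has pure codimension $g-t$ (classical result, consistent with the cycle class formula $[V_t] = \prod_{i=1}^{g-t}(p^i-1)\, \lambda_{g-t}$ given earlier, together with the non-triviality of $\lambda_{g-t}$). Since $\A{g}$ is open dense in $\SatA{g}-\SatA{g-(t+1)}$ of the same dimension, and the boundary pieces $V_t \cap \A{g-k}$ (for $1 \leq k \leq t$) have strictly smaller dimension $(g-k)(g-k+1)/2 - (g-t)$, the codimension of $V_t$ in the partial Satake compactification equals $g-t$.

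The main obstacle is the first step: rigorously setting up the extended $p$-rank on $\barA{g}$ using the Faltings--Chai universal semi-abelian variety, checking that $\tilde V_t$ is genuinely closed (and not merely constructible), and verifying the additivity $f(G) = s + f(A)$ on each boundary stratum. Once this bookkeeping is in place, both completeness and the codimension count follow immediately from the $V_0$-case argument applied to each torus-rank stratum.
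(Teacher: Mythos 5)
Your overall route is the same as the paper's: extend the $p$-rank over a toroidal compactification $\barA{g}$, show the locus $\{f\le t\}$ is closed, hence proper, check that it lies over the torus-rank-$\le t$ part, and push it forward along $q\colon\barA{g}\to\SatA{g}$. But the numerical input that drives everything --- the additivity $f(G)=s+f(A)$ --- is justified incorrectly, and with the definition of $f$ you actually use it is false. In characteristic $p$ the toric part contributes $T[p]=\mu_p^s$, which is an \emph{infinitesimal} group scheme: $\mu_p(\bar k)=\{1\}$, so it contributes no \'etale points whatsoever. Taking $\bar k$-points of $0\to\mu_p^s\to G[p]\to A[p]\to 0$ and using $H^1_{\mathrm{fppf}}(\bar k,\mu_p)=\bar k^*/(\bar k^*)^p=0$ gives $G[p](\bar k)\cong A[p](\bar k)$; hence with $f([G]):=\log_p\#G[p](\bar k)$ one gets $f(G)=f(A)$, not $s+f(A)$. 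Consequently your Step 2 collapses: a boundary point of torus rank $s$ need not satisfy $f\ge s$ (the rank-$g$ torus has $f=0$, and already for $g=1$, $t=0$ your $\tilde V_0$ would contain the cusp), so $\tilde V_t$ is not contained in $\A{g}^{(t)}$, its image under $q$ meets $\SatA{g-(t+1)}$, and neither the containment in the partial Satake compactification nor the completeness of $V_t$ there follows. (The semicontinuity you invoke is also, with the usual convention, lower rather than upper, but that is only terminology; the real problem is that the \'etale-point count is the wrong function.)

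The repair is to measure the $p$-rank through the multiplicative, not the \'etale, part: define the extended $p$-rank of a semi-abelian variety $G$ with toric rank $s$ and abelian part $A$ as $s+f(A)$, equivalently as the rank of the maximal subgroup of multiplicative type of $G[p]$; for an abelian variety $X$ this agrees with the \'etale count because $f(X)=f(X^t)$, the multiplicative part of $X[p]$ being Cartier dual to the \'etale part of $X^t[p]$. With this definition the inequality ``$p$-rank $\ge$ torus rank'' holds by construction, which is exactly the one-line mechanism the paper appeals to. What then genuinely requires proof is that $\{f\le t\}$ is closed in $\barA{g}$, and this is not the naive statement that \'etale points can only collide: over a complete (or henselian) local base the toric part of the closed fibre provides, via the Raynaud extension in the Faltings--Chai theory, a finite flat subgroup $\mu_p^s\subset\cG[p]$ over the base, and rigidity of groups of multiplicative type then forces the nearby abelian fibres to have multiplicative rank, hence $p$-rank, at least $s$ plus the contribution of the abelian part. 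Granting that, your Steps 2 and 3 --- the identification of $q(\tilde V_t)$ with $V_t$, properness of the image, and the codimension count $g-t$ from the dimension of the $p$-rank strata together with the strictly smaller boundary contributions --- are correct and agree with the paper's intended argument.
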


In characteristic $0$ there is no such obvious complete subvariety
of codimension $g$, at least if $g\geq 3$. Oort conjectured in
\cite{Oort:CS} that it should not exist for $g\geq 3$. This was proved
by Keel and Sadun in \cite{Keel-Sadun}.

\begin{theorem} 
If $X \subset \A{g}({\bC})$ is a complete subvariety with the property
that $\lambda_i | X $ is trivial in cohomology for some $1 \leq i \leq g$
then $\dim X \leq i(i-1)/2$ with strict inequality if $i\geq 3$.
\end{theorem}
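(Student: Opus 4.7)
The plan is to combine the ampleness of $\lambda_1$ on $\A{g}(\bC)$ (Theorem~\ref{lambda1isample}) with a positivity/curvature argument for the higher Chern classes of the Hodge bundle. Since $\lambda_1|_X$ is ample on the complete subvariety $X$, we have $\lambda_1^{\dim X}\cdot [X] > 0$. The strategy is to show that $\lambda_i|_X = 0$ in cohomology forces a pointwise degeneracy of the natural curvature form on $\bE|_X$, and then to extract a dimension bound on $X$ from this degeneracy.

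Concretely, by the Hirzebruch--Mumford proportionality principle, each $\lambda_j$ on $\A{g}(\bC)$ is represented by an $\Sp(2g,\bR)$-invariant closed $(j,j)$-form $\omega_j$ pulled back from the ample generator of the Chow ring of the compact dual $Y_g$; in particular $\omega_j$ is semi-positive in the Hodge-theoretic sense. If $\lambda_i|_X = 0$ in cohomology, then $\int_X \lambda_1^{d-i} \wedge \omega_i = 0$ for $d=\dim X$, and ampleness of $\lambda_1|_X$ combined with semi-positivity of $\omega_i|_X$ forces $\omega_i|_X \equiv 0$ pointwise. By Proposition~\ref{OmegaisSym2} the tangent space to $\A{g}$ at any point $x$ is $\mathrm{Sym}^2(\bE_x^\vee)$, and the value $\omega_i(x)$ is computed from the natural curvature pairing on this symmetric square; pointwise vanishing on $T_xX$ therefore forces $T_xX \subset \mathrm{Sym}^2(\bE_x^\vee)$ to lie in a specific rank-degeneracy locus. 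A calculation on the Lagrangian Grassmannian $Y_g$, using the description of $R_g$ and its generators $u_i = c_i(E)$ from Section~\ref{compactdual}, shows that the maximum dimension of such a degenerate subspace is $i(i-1)/2$, yielding the weak inequality $\dim X \leq i(i-1)/2$.

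The main obstacle is the strict inequality when $i \geq 3$. In the extremal case $\dim X = i(i-1)/2$, the above argument forces $T_xX$ to saturate the rank-degeneracy locus at every point of $X$, and horizontality of the period map then pins $X$ down as an integral leaf of a holomorphic foliation whose leaves are locally isomorphic to $\Hg$-type bounded symmetric domains of dimension exactly $i(i-1)/2$. For $i \leq 2$ such leaves can genuinely be complete inside $\A{g}(\bC)$ (a single point when $i=1$; modular or Shimura curves when $i=2$), so the bound is sharp. For $i \geq 3$, however, the leaves are analytically isomorphic (up to finite cover) to locally symmetric spaces $\Gamma \backslash \Hg'$ for $\Hg' \cong \Hg_i$, and a rigidity argument using the Mumford--Tate groups of the parametrized Hodge structures shows that no such quotient admits a complete realization inside $\A{g}(\bC)$. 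The hard technical step is proving this rigidity together with excluding degenerate behaviour of the leaf as it is followed into the boundary of a toroidal compactification $\barA{g}$; this is the heart of the Keel--Sadun argument and is where the delicate transcendental input enters. My approach would tackle this last step by analyzing how a putative extremal leaf intersects the various boundary strata via the morphism $q\colon \barA{g} \to \SatAg$ and the structure of the Hodge bundle's canonical extension described in Section~\ref{The Hodge Bundle}.
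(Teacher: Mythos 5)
First, a point of reference: the paper does not prove this statement at all --- it is quoted as the theorem of Keel and Sadun \cite{Keel-Sadun}, so there is no internal proof to compare with, and your text has to stand on its own. As it stands it is a programme rather than a proof, and two of its steps have genuine gaps. For the weak inequality, the chain ``$\lambda_i|_X=0$ $\Rightarrow$ the invariant form $\omega_i$ vanishes pointwise on $T_xX$ $\Rightarrow$ $T_xX\subset\mathrm{Sym}^2(\bE_x^\vee)$ lies in a rank-degeneracy locus $\Rightarrow$ $\dim X\le i(i-1)/2$'' does not work in the form you state it. The pointwise semi-positivity of the higher invariant Chern/Schur forms is not a formal consequence of proportionality and must be extracted from the explicit curvature of the Hodge metric; more seriously, the conclusion ``$T_xX$ lies in a rank-degeneracy locus'' is too weak: a linear subspace of $\mathrm{Sym}^2(\bC^g)$ all of whose elements have rank $\le i-1$ can have dimension far larger than $i(i-1)/2$ (for instance $\{e_1v^t+ve_1^t:\ v\in\bC^g\}$ is $g$-dimensional and consists of matrices of rank $\le 2$). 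What the vanishing of the full $i$-th Chern form actually has to yield is a \emph{common} kernel of codimension $\le i-1$, i.e.\ $T_xX\subseteq\mathrm{Sym}^2(Q_x^\vee)$ for a single $(i-1)$-dimensional quotient $Q_x$ of $\bE_x$; only then does the bound $i(i-1)/2$ follow, and this stronger statement is exactly the nontrivial pointwise linear algebra that needs proof.

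The strict inequality for $i\ge 3$ --- which is the actual content of Oort's conjecture --- is explicitly deferred in your proposal (``the heart of the Keel--Sadun argument''), and the route you sketch for it starts from a false premise. It is not true that for $i\ge 3$ no compact quotient of $\mathfrak{H}_{i-1}$ admits a complete realization inside $\A{g}({\bC})$: compact Shimura subvarieties uniformized by $\mathfrak{H}_{h}$ with $h\ge2$ do exist in suitable $\A{g}({\bC})$, namely the PEL families of abelian varieties with multiplication by a quaternion algebra over a totally real field that is split at exactly one infinite place; their lattices are cocompact, so Mumford--Tate ``rigidity'' cannot exclude such quotients. What must be excluded is such a quotient \emph{with} $\lambda_i|_X=0$ in the extremal dimension, i.e.\ with the moving rank-$2(i-1)$ piece of the weight-one variation occurring with multiplicity one and a flat (unitary) complement of rank $g-i+1$; ruling this out is where the genuinely arithmetic input enters (in the quaternionic examples the moving part is forced to appear with higher multiplicity, which is why they carry nonvanishing $\lambda_i$), for instance by showing the flat part has finite monodromy after a finite cover and hence that $X$ would dominate $\A{i-1}$, impossible since $\A{i-1}$ is not complete. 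Finally, your plan to control ``degenerate behaviour of the leaf as it is followed into the boundary of $\barA{g}$'' is moot: $X$ is complete and contained in $\A{g}$, so it never meets the boundary; completeness is a hypothesis, not something to be verified along boundary strata.
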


This theorem implies the following corollary.

\begin{corollary}
For $g \geq 3$ the moduli space $A_g \otimes {\bC}$ does not possess
a complete subvariety of codimension $g$.
\end{corollary}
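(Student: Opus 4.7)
The plan is to derive the corollary directly from the Keel--Sadun theorem together with the vanishing of $\lambda_g$ established earlier in Theorem~\ref{lambdagiszero}.

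Suppose, for contradiction, that $X\subset \A{g}\otimes{\bC}$ is a complete subvariety of codimension $g$, so $\dim X = g(g+1)/2 - g = g(g-1)/2$. The idea is to apply the Keel--Sadun theorem with $i=g$, for which one needs to verify that $\lambda_g|_X=0$ in cohomology. By Theorem~\ref{lambdagiszero}, the class $\lambda_g$ is zero in ${\CHQ}(\A{g})$. The cycle class map
\[
{\CHQ}(\A{g}) \longrightarrow H^{2g}(\A{g},{\bQ})
\]
sends $\lambda_g$ to zero, and restriction to $X$ is compatible with pullback, so $\lambda_g|_X=0$ in $H^{2g}(X,{\bQ})$ as required.

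Now apply the Keel--Sadun theorem with $i=g\geq 3$: it forces the strict inequality $\dim X < g(g-1)/2$. This contradicts $\dim X = g(g-1)/2$ obtained from the codimension hypothesis, completing the proof.

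There is no real obstacle here; the only subtlety is checking that the rational vanishing of $\lambda_g$ in the Chow ring yields vanishing of the restricted cohomology class, which is immediate from functoriality of the cycle class map. All the heavy lifting is done by Theorem~\ref{lambdagiszero} (via Grothendieck--Riemann--Roch applied to ${\mathcal O}_{\X{g}}$) and by the Keel--Sadun theorem itself.
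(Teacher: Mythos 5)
Your proof is correct and is essentially the argument the paper intends: the corollary is stated as an immediate consequence of the Keel--Sadun theorem, applied with $i=g$ to a hypothetical complete $X$ of codimension $g$ (so $\dim X = g(g-1)/2$), using Theorem~\ref{lambdagiszero} to see that $\lambda_g|_X$ is trivial and the strict inequality for $i\geq 3$ to get the contradiction. The only point you spell out beyond the paper, the passage from vanishing in ${\CHQ}(\A{g})$ to vanishing of the restricted cohomology class via the cycle class map, is handled correctly.
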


So the question arises what the maximum dimension of a complete
subvariety of $\A{g}({\bC})$ is.

One might conclude that the analogue of $\A{g}({\bC})$ in positive
characteristic in some sense is rather the locus of principally polarized
abelian varieties with maximal $p$-rank ($=g$) than $\A{g}\otimes {\bF}_p$.
\section{Cohomology of local systems and relations to modular forms}
There is a close connection between the cohomology of moduli spaces
of abelian varieties and modular forms. This connection was discovered
in the 19th century and developed further in the work of Eichler, Shimura,
Kuga, Matsushima and many others. It has developed into a central 
theme involving the theory of automorphic representations and 
the Langlands philosophy.
We shall restrict here to just one aspect of this. This is work in 
progress that is being developed in joint work with Jonas Bergstr\"om
and Carel Faber.

Let us start with $g=1$. The space of cusp forms $S_{2k}$ of weight $2k$
on ${\rm SL}(2,{\bZ})$ has a cohomological interpretation. To describe
it we consider the universal elliptic curve $\pi: \X{1} \to \A{1}$
and let $V:=R^1\pi_* {\bQ}$ be the local system of rank $2$ with as
fibre over $[X]$ the cohomology $H^1(X,{\bQ})$ of the elliptic curve $X$

From $V$ we can construct other local systems: define for $a\geq 1$
$$
V_a:= {\rm Sym}^a(V).
$$
This is a local system of rank $a+1$. There is also the $l$-adic analogue
$V^{(l)}=R^1\pi_*{\bQ}_l$ 
for $l$-adic \'etale cohomology and its variants $V_a^{(l)}$.
The basic result of Eichler and Shimura says that for $a>0$ and $a$ even
there is an isomorphism
$$
H^1_c(\A{1}\otimes {\bC},V_a \otimes {\bC}) 
= S_{a+2}\oplus \bar{S}_{a+2} \oplus  {\bC}.
$$
So we might say that as a mixed Hodge module the compactly supported
cohomology of the local system $V_a\otimes {\bC}$ equals 
$S_{a+2}\oplus \bar{S}_{a+2} \oplus {\bC}$.

But this identity can be stretched
further. The left hand side may be replaced by other flavors of cohomology,
for example by $l$-adic \'etale cohomology $H^1(\A{1}\otimes \bar{\bQ}, V_a^{(l)})$
that comes with a natural Galois action of ${\rm Gal}(\bar{\bQ}/{\bQ})$.
In view of this, we replace the left hand side by an Euler characteristic
$$
e_c(\A{1},V_a):= \sum_{i=0}^2 [H^i_c(\A{1},V_a)],
$$
where now the cohomomology groups of compactly supported cohomology are
to be interpreted in an appropriate Grothendieck group, e.g.\ of mixed
Hodge structures when we consider complex cohomology $H^*_c(\A{1}\otimes {\bC},
V_a\otimes {\bC})$ with its mixed Hodge
structure, or in the Grothendieck group of Galois representations when
we consider compactly supported \'etale $l$-adic cohomology 
$H^*_{c,et}(\A{1}\otimes \bar{\bQ},V_a^{(l)})$.
On the other hand, for the right hand side Scholl defined in \cite{Scholl}
a Chow motive
$S[2k]$ associated to the space of cusp forms $S_{2k}$ for $k>1$. 
Then a sophisticated form of the Eichler-Shimura isomorphism asserts
that we have an isomorphism
$$
e_c(\A{1},V_a)= -S[a+2]-1 \qquad \text{$a\geq 2$ even}\, .
$$

We have a natural algebra of operators, the Hecke
operators,  acting on 
the spaces of cusp forms, but also on the cohomology since the Hecke
operators are defined by correspondences. 
Then the isomorphism above is compatible with the action of the Hecke
operators.

But our moduli space $\A{1}$ is defined over ${\bZ}$. We thus can study
the cohomology over ${\bQ}$ by looking at the fibres $\A{1} \otimes {\bF}_p$
and the corresponding local systems $V_a^{(l)} \otimes {\bF}_p$ (for $l\neq p$)
by using comparison theorems. Now in characteristic $p$ the Hecke operator
is defined by the correspondence $X_0(p)$ 
of (cyclic) $p$-isogenies $\phi: X \to X'$ between
elliptic curves (i.e.\ we require $\deg \phi =p$); it allows maps
$q_i: X_0(p) \to \A{1}$ ($i=1,2$) 
by sending $\phi$ to its source $X$ and target $X'$.
In characteristic $p$ the correspondence decomposes into two components
$$
X_0(p)\otimes {\bF}_p = F_p+F_p^t, \qquad \text{(the congruence relation)}
$$
where $F_p$ is the correspondence $X \mapsto X^{(p)}$ and $F_p^t$ its transpose.
This follows since for such a $p$-isogeny we have that $X'\cong X^{(p)}$ or
$X\cong (X')^{(p)}$. 
This implies a relation between the Hecke operator $T(p)$ and the action
of Frobenius on $H^1(\A{1}\otimes \bar{\bF}_p, V_a^{(\ell)})$; 
The result is then a relation between the action of Frobenius on 
\'etale cohomology
and the action of a Hecke operator on the space of cusp forms 
(\cite{Deligne:FM}, Prop.\ 4.8)
$$
{\rm Tr}(F_p,H^1_c(\A{1}\otimes \overline{\bF}_p,V_a))
= {\rm Tr}(T(p),S_{a+2})+1\, .
$$
We can calculate the traces of the
Frobenii by counting points over finite fields. In fact, if we make a
list of all elliptic curves over ${\bF}_p$ up to isomorphism over ${\bF}_p$
and we calculate the eigenvalues $\alpha_X,\bar{\alpha}_X$ 
of $F_p$ acting on the \'etale 
cohomology $H^1(X,{\bQ}_l)$ then we can calculate the trace of $F_p$
on the cohomology $H^1_c(\A{1}\otimes \bar{\bF}_p,V_a^{(\ell)})$ 
by summing the expression
$$
\frac{\alpha_X^a+ \alpha_X^{a-1}\bar{\alpha}_X+ \ldots + \bar{\alpha}_X^a}{
\# {\rm Aut}_{{\bF}_p}(X)}
$$
over all $X$ in our list. 
So by counting elliptic curves over a finite field ${\bF}_p$ we can calculate
the trace of the Hecke operator $T(p)$ on $S_{a+2}$; in fact, once we have
our list of elliptic curves over ${\bF}_p$ together with the 
eigenvalues $\alpha_X, \bar{\alpha}_X$ and the order of
their automorphism group, we can calculate the trace of $T(p)$ on
the space of cusp forms  $S_{a+2}$
for \emph{all} $a$.

The term $-1$ in the Eichler-Shimura identity can be interpreted as
coming from the kernel
$$
-1= \sum (-1)^i [\ker H^i_c(\A{1},V_a) \to H^i(\A{1},V_a)].
$$
So to avoid this little nuisance we might replace the compactly supported
cohomology by the image of compactly supported cohomology in the usual
cohomology, i.e.\ define the \emph{interior} cohomology $H^i_!$
as the image of compactly supported cohomology in the usual cohomology.
Then the result reads
$$
e_!(\A{1},V_a)= -S[a+2] \qquad \text{for $a>2$ even}.
$$

Some words about the history of the Eichler-Shimura result may be in order
here.
Around 1954 Eichler showed 
(see \cite{Eichler1954}) that for some congruence subgroup $\Gamma$ of
${\rm SL}(2,{\bZ})$ the $p$-part of the zeta function of the
corresponding modular curve $X_{\Gamma}$ in characteristic $p$ is given by the
Hecke polynomial for the Hecke operator $T(p)$ acting on the space
of cusp forms of weight $2$ for $\Gamma$. This was generalized to
some other groups by Shimura. M.\ Sato observed in 1962 that
by combining the Eichler-Selberg trace formula for modular forms
with the congruence relation (expressing the Hecke correspondence
in terms of the Frobenius correspondence and its transpose) one
could extend Eichler's results by expressing the Hecke polynomials
in terms of the zeta functions of ${\mathcal M}_{1,n} \otimes {\bF}_p$,
except for problems due to the non-completeness of the moduli spaces.
A little later Kuga and Shimura showed that Sato's idea worked for 
compact quotients
of the upper half plane (parametrizing abelian surfaces).
Ihara then made Sato's idea reality in 1967 by combining the
Eichler-Selberg trace formula with results of Deuring and Hasse
on zeta functions of elliptic curves, cf.\ \cite{Ihara}, where
one also finds references to the history of this problem.
A year later Deligne solved in \cite{Deligne:FM}
the problems posed by the non-completeness
of the moduli spaces. Finally Scholl proved the existence of the motive
$S[k]$ for even $k$ in 1990. A different construction of this motive
was given by Consani and Faber in \cite{C-F}.

That the approach sketched above for calculating traces of Hecke operators
by counting over finite fields is not used commonly,
is due to the fact that we
have an explicit formula for the traces of the Hecke operators, the
Eichler-Selberg trace formula, cf.\ \cite{Zagier}. But for higher genus $g$,
i.e.\ for modular forms on the symplectic group ${\rm Sp}(2g,{\bZ})$
with $g\geq 2$,
no analogue of the trace formula for Siegel modular forms is known.
Moreover, a closer inspection reveals that vector-valued Siegel modular
forms are the good analogue for higher $g$ of the modular forms on
${\rm SL}(2,{\bZ})$ (rather than classical Siegel modular forms only). 
This suggests to try the analogue of the point counting
method for genus $2$ and higher. That is what Carel Faber and I did
for ${\rm Sp}(4,{\bZ})$ and in joint work with Bergstr\"om extended
to genus $2$ and level $2$ and also to genus $3$, see \cite{FvdG:CR,BFG,BFG:g=3}.

There are alternative methods that we should point out.
In general one tries to compare a trace formula of
Selberg type (Arthur trace formula) with the Grothendieck-Lefschetz
fixed point formula.
There is a topological trace formula for the trace 
of Hecke operators acting on the compactly supported cohomology, 
see for example \cite{Harder}, esp.\ the letter 
to Goresky and MacPherson there.
Laumon gives a spectral decomposition of the cohomology of 
local systems for $g=2$ (for the trivial one in \cite{Laumon1} and in general
in \cite{Laumon2}); cf.\ also the work of Kottwitz in general.
We also refer to Sophie Morel's book \cite{SM}.
But though these methods in principle could lead to explicit results 
on Siegel modular forms, as far as I know it has not yet done that.

So start with the universal principally polarized abelian variety
$\pi: \X{g} \to \A{g}$ and form the local system $V:=R^1\pi_* {\bQ}$
and its $l$-adic counterpart $R^1\pi_* {\bQ}_{l}$ for \'etale
cohomology. 
By abuse of notation we will write simply $V$. 
We consider $\pi$  as a morphism of stacks.
For any irreducible representation $\lambda$ of ${\rm Sp}_{2g}$
we can construct a corresponding local system $V_{\lambda}$; it is obtained
by applying a Schur functor, cf.\ e.g.\ \cite{F-H}. 
So if we denote $\lambda$ by its highest
weight $\lambda_1\geq \lambda_2 \geq \cdots \geq \lambda_g$ then
$V=V_{1,0,\ldots,0}$ and ${\rm Sym}^a(V)=V_{a,0,\ldots,0}$.

We now consider
$$
e_c(\A{g},V_{\lambda}):= \sum_i (-1)^i [H^i_c(\A{g},V_{\lambda})],
$$
where as before the brackets indicate that we consider this in the
Grothendieck group of the appropriate category (mixed Hodge modules,
Galois representations). It is a basic result of Faltings \cite{F-C,F}
that $H^i(\A{g}\otimes {\bC},V_{\lambda} \otimes {\bC})$ and 
$H^i_c(\A{g}\otimes {\bC},V_{\lambda}\otimes {\bC})$ 
carry a mixed Hodge structure. Moreover, the interior cohomology $H^i_!$
carries a pure Hodge structure with the weights equal to the $2^g$
sums of any of the subsets of 
$\{ \lambda_1+g,\lambda_2+g-1,\ldots,\lambda_g+1\}$. He also shows that for 
regular $\lambda$, that is, when $\lambda_1> \lambda_2 \cdots
>\lambda_g >0$, the cohomology $H^i_!$ vanishes when $i\neq g(g+1)/2$.

These results of Faltings are the analogue for the non-compact case of results
of Matsushima and Murakami in \cite{MM}; they could use Hodge theory
to deduce the vanishing of cohomology groups and the decompositions
indexed by pairs of elements in the Weyl group.

For $g=2$ a local system $V_{\lambda}$  is specified by giving $\lambda=(a,b)$
with $a\geq b \geq 0$. Then the Hodge filtration is
$$
F^{a+b+3} \subseteq F^{a+2} \subseteq F^{b+1} \subseteq F^0=
H^3_!(\A{2}\otimes{\bC},V_{\lambda}\otimes {\bC}).
$$
Moreover, there is an identification of the first step in the Hodge filtration
$$
F^{a+b+3} \cong S_{a-b,b+3},
$$
with the space $S_{a-b,b+3}$
of Siegel modular cusp forms whose weight $\rho$
is the representation ${\rm Sym}^{a-b}{\rm St} \otimes \det({\rm St})^{b+3}$ 
with ${\rm St}$ the standard representation of ${\rm GL}(2,{\bC})$.

Again, as for $g=1$, we have an algebra of Hecke operators induced by geometric
correspondences and it acts both on the cohomology and the modular forms
compatible with the isomorphism. 

Given our general ignorance of vector-valued Siegel modular forms for
$g>1$ the obvious question at this point is whether we can mimic the 
approach sketched above for calculating the traces of the Hecke
operators by counting over finite fields.

Note that by Torelli we have morphism $\M{2}\to \A{2}=\M{2}^{\rm ct}$,
where $\M{2}^{ct}$ is the moduli space of curves of genus $2$
of compact type. This
means that we have to include besides the smooth curves of genus $2$
the stable curves of genus $2$ that are a union of two elliptic curves.

Can we use this to calculate the traces of the Hecke operator $T(p)$
on the spaces of vector-valued Siegel modular forms?

Two problems arise. The first is the so-called Eisenstein cohomology. This
is 
$$
e_{\rm Eis}(\A{2}, V_{a,b}):= \sum (-1)^i (\ker H^i_c \to H^i)(\A{2},V_{a,b}).
$$
In the case of genus $1$ this expression was equal to the innocent $-1$,
but for higher $g$ this is a more complicated term. The second problem
is the \emph{endoscopy}: the terms in the Hodge filtration that do 
not see the first and last step of the Hodge filtration. 
For torsion-free groups there is work by 
Schwermer (see \cite{Schwermer})
on the Eisenstein cohomology; cf.\ also the work of Harder \cite{Harder}.
And there is an extensive literature on endoscopy.
But explicit formulas were not available.

On the basis of numerical calculations Carel Faber and I guessed 
in \cite{FvdG:CR} a formula
for the Eisenstein cohomology. I was able to prove this formula in 
\cite{vdG:EC} for regular $\lambda$. 
We also made a guess for the endoscopic term.
Putting this together we get the following conjectural formula
for $(a,b)\neq (0,0)$.

\begin{conjecture}\label{g=2conj}
The trace of the Hecke operator $T(p)$ on the space $S_{a-b,b+3}$ of 
cusp forms on ${\rm Sp}(4,{\bZ})$ equals 
$$
-{\rm Tr}(F_p,e_c(\A{2}\otimes {\bF}_p,V_{a,b}))+
{\rm Tr}(F_p,e_{2,\rm extra}(a,b)),
$$
where the term $e_{2,\rm extra}(a,b)$ is defined as
$$
s_{a-b+2}-s_{a+b+4}(S[a-b+2]+1) {\bL}^{b+1} +
\begin{cases} S[b+2]+1 & a \, \text{even},\\ -S[a+3] & a \, \text{odd}, \\
\end{cases}
$$
and
$s_n= \dim S_n({\rm SL}(2,{\bZ}))$ is the dimension of the space of 
cusp forms on ${\rm SL}(2,{\bZ})$ and ${\bL}=h^2({\bP}^1)$ 
is the Lefschetz motive. 
\end{conjecture}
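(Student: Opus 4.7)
The plan is to prove the conjecture by establishing three separate identifications and combining them via the Eichler--Shimura strategy sketched for $g=1$ in the excerpt. First, the starting point is the Eichler--Shimura identification due to Faltings stated just above the conjecture: for regular $\lambda=(a,b)$ with $a>b>0$, the interior cohomology $H^i_!(\A{2}\otimes\bC,V_{a,b}\otimes\bC)$ vanishes for $i\neq 3$, and the top Hodge piece $F^{a+b+3}H^3_!$ is canonically isomorphic, as a Hecke module, to $S_{a-b,b+3}$. Thus the trace of $T(p)$ on $S_{a-b,b+3}$ is, after summing over the Hodge filtration and using the alternating sum in the definition of an Euler characteristic, essentially $-{\rm Tr}(T(p),H^3_!)$ plus symmetric contributions from the lower Hodge pieces which correspond to the complex-conjugate and endoscopic Hodge structures. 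To pass from $T(p)$ to the geometric Frobenius $F_p$ one would establish a Siegel-threefold analogue of the Eichler--Shimura congruence relation $X_0(p)\otimes {\bF}_p = F_p+F_p^t$; such a congruence relation for the moduli space of $p$-isogenies of principally polarised abelian surfaces is part of the Faltings--Chai package and is implicit in Laumon's spectral decomposition of \cite{Laumon2}.

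Next I would decompose
$$e_c(\A{2}\otimes\bar{\bF}_p,V_{a,b}) = e_!(\A{2}\otimes\bar{\bF}_p,V_{a,b}) + e_{\rm Eis}(\A{2}\otimes\bar{\bF}_p,V_{a,b})$$
and handle the two summands separately. The Eisenstein term, coming from the stratification $\SatA{2}=\A{2}\sqcup\A{1}\sqcup\A{0}$ of the Baily--Borel compactification, has been computed for regular $\lambda$ in \cite{vdG:EC}; its Frobenius trace matches precisely the contributions $s_{a-b+2}$ and $-s_{a+b+4}(S[a-b+2]+1){\bL}^{b+1}$ appearing in $e_{2,{\rm extra}}(a,b)$, since $\A{1}$ contributes a copy of an elliptic-modular local system twisted by the Tate class of the fibre of $\pi$ over the boundary, while $\A{0}$ supplies the constant terms. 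For the interior piece I would invoke Arthur's endoscopic classification for ${\rm GSp}_4$: the automorphic representations contributing to $H^3_!$ split into (i) the stable general type, which assembles into the conjectural analogue of Scholl's motive attached to $S_{a-b,b+3}$ and whose Frobenius trace equals the desired Hecke trace; (ii) Yoshida-type endoscopic lifts from pairs of elliptic cusp forms; and (iii) Saito--Kurokawa-type lifts of an elliptic cusp form together with a character, which account for the parity-dependent $S[b+2]+1$ or $-S[a+3]$ term.

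The hardest step will be the rigorous identification of the endoscopic contributions. In the regular range the dimension counts and $L$-parameter bookkeeping via Arthur's multiplicity formula for ${\rm GSp}_4$, together with Weissauer's theorem on Yoshida lifts and the explicit description of Saito--Kurokawa lifts by Piatetski-Shapiro, should in principle isolate the stable and non-stable parts and permit one to identify the signs and multiplicities in $e_{2,{\rm extra}}(a,b)$. The irregular ranges (for instance $a=b$ or $b=0$) are considerably more delicate: the interior cohomology can fail to be pure, Saito--Kurokawa representations contribute non-cohomologically, and the proof in \cite{vdG:EC} of the Eisenstein piece has to be extended to cover the degeneracies. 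The main technical obstacle throughout is showing compatibility between the geometric Hecke action via correspondences on $\A{2}\otimes{\bF}_p$ and the spectral side of the Arthur--Selberg trace formula with sufficient precision to match the motivic prediction cycle-by-cycle; this is exactly where the Arthur--Langlands framework must intervene, and it is the reason the statement remains conjectural beyond the regular case proved in \cite{vdG:EC}.
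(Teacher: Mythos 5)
The statement you are trying to prove is not proved in the paper at all: it is stated there as a \emph{conjecture}, formulated on the basis of numerical point counts, and the paper itself records its status precisely --- the Eisenstein part was guessed in \cite{FvdG:CR} and proved for regular $\lambda$ in \cite{vdG:EC}, and the full conjecture is known for regular highest weight $(a>b>0)$ only through Weissauer's preprint \cite{Weissauer:THO}, while the non-regular case is open. So there is no paper proof to match your argument against, and your text, read as a proof, has a genuine gap: it never \emph{derives} the explicit shape of $e_{2,\rm extra}(a,b)$. You assert that the Frobenius trace of the Eisenstein part ``matches precisely'' the terms $s_{a-b+2}$ and $-s_{a+b+4}(S[a-b+2]+1)\bL^{b+1}$, and that the Saito--Kurokawa and Yoshida contributions ``account for'' the parity-dependent term $S[b+2]+1$ (for $a$ even) versus $-S[a+3]$ (for $a$ odd); but producing exactly these terms, with these twists, signs and parity conditions, \emph{is} the content of the conjecture --- restating it as the expected outcome of an Arthur/Weissauer-type analysis is not a proof. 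Your own closing admission that the identification of the endoscopic and degenerate contributions remains to be carried out concedes the point.

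Beyond that, several intermediate steps are invoked without justification and one attribution is wrong. The Hecke--Frobenius compatibility (``a Siegel-threefold analogue of the congruence relation'') is asserted to be ``part of the Faltings--Chai package and implicit in Laumon''; this needs an actual statement and proof in the form you use it, and it is exactly the kind of compatibility between geometric Hecke correspondences on $\A{2}\otimes{\bF}_p$ and the spectral side that you yourself flag as the main obstacle. The Eisenstein input \cite{vdG:EC} covers only regular $\lambda$ (and only the rank-one Eisenstein cohomology there), so your decomposition $e_c=e_!+e_{\rm Eis}$ does not yield the conjectured formula in the non-regular ranges $a=b$ or $b=0$, where the paper even prescribes the ad hoc substitutions $s_2=-1$, $S[2]=-1-\bL$. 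Finally, you attribute the regular case to \cite{vdG:EC}; in the paper it is deduced from Weissauer \cite{Weissauer:THO}. In short, your outline is a reasonable roadmap --- essentially the one the authors and Weissauer follow --- but as written it assumes the conjectural formula rather than proving it, and it does not go beyond what the paper already reports as known.
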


If $a>b=0$ or $a=b>0$ then one should put $s_2=-1$ and $S[2]=-1-{\bL}$ in
the formula.  

In the case of regular local systems (i.e.\ $a>b>0$) our conjecture can be 
deduced from results of Weissauer as he shows in his preprint
\cite{Weissauer:THO}. In the case of local system of non-regular
highest weight the conjecture is still open.

We have counted the curves of genus $2$ of compact type for all primes
$p \leq 37$. This implies that we can calculate the traces of the
Hecke operator $T(p)$ for $p\leq 37$  on the space of cusp forms $S_{a-b,b+3}$
{\sl for all} $a>b>0$, and assuming the conjecture also for $a>b=0$ and $a=b>0$.

Our results are in accordance with results on the numerical Euler
characteristic $\sum (-1)^i \dim H^i_c(\A{2}, V_{a,b})$ due to Getzler
(\cite{Getzler}) and with the
dimension formula for the space of cusp form $S_{a-b,b+3}$ due to Tsushima
(\cite{Tsushima}).

We give an example. For $(a,b)=(11,5)$ we have
$$
e_c(\A{2},V_{11,5})= -{\bL}^6 -S[6,8]
$$
with $S[6,8]$ the hypothetical motive associated to the ($1$-dimensional)
space of cusp forms $S_{6,8}$.
We list a few eigenvalues $\lambda(p)$ and $\lambda(p^2)$
of the Hecke operators. This allows us to give 
the characteristic polynomial of Frobenius (and the Euler factor 
of the spinor $L$-function of the Siegel modular form)
$$
1-\lambda(p)X+(\lambda(p)^2-\lambda(p^2)-p^{a+b+2})X^2-\lambda(p)p^{a+b+3}X^3+
p^{2(a+b+3)} X^4
$$
and its slopes.

\smallskip
\vbox{
\bigskip\centerline{\def\quad{\hskip 0.6em\relax}
\def\quod{\hskip 0.5em\relax }
\vbox{\offinterlineskip
\hrule
\halign{&\vrule#&\strut\quod\hfil#\quad\cr
height2pt&\omit&&\omit&&\omit&&\omit&\cr
&$p$&&$\lambda(p)$&&$\lambda(p^2)$&&{\rm slopes}&\cr
height2pt&\omit&&\omit&&\omit&&\omit&\cr
\noalign{\hrule}
height2pt&\omit&&\omit&&\omit&&\omit&\cr
&$2$&&$0$&&$-57344$&&$13/2,25/2$&\cr
&$3$&&$-27000$&&$143765361$&&$3,7,12,16$&\cr
&$5$&&$2843100$&&$-7734928874375$&&$2,7,12,17$&\cr
&$7$&&$-107822000$&&$4057621173384801$&&$0,6,13,19$&\cr
height2pt&\omit&&\omit&&\omit&&\omit&\cr
} \hrule}
}}

Another indication that the computer counts of curves of genus $2$
are correct comes from a conjecture of Harder about congruences
between Hecke eigenvalues of cusp forms for genus $1$ and genus $2$.
Harder had the idea that there should be such congruences many
years ago (cf.\ \cite{Harder}), 
but our results motivated him to make his conjecture 
precise and explicit. He conjectured that if a (not too small,
or better an ordinary)
prime  $\ell$ divides a critical
value $s$ of the $L$-function of an eigenform on ${\rm SL}(2,{\bZ})$
of weight $r$ then there should be a vector-valued 
Siegel modular form of prescribed weight depending on $s$ and $r$
and a congruence modulo $\ell$ between the eigenvalues under the Hecke
operator $T(p)$ for $f$ and $F$. 
We refer to \cite{BGHZ}, the papers
by Harder \cite{Harder:123}  and van der Geer \cite{vdG:SMF} 
there, for an account of this fascinating story.
These congruences generalize the famous congruence
$$
\tau(n) \equiv p^{11}+1 \, (\bmod \, 691)
$$
for the Hecke eigenvalues  $\tau(p)$ ($p$ a prime) 
of the modular form $\Delta=\sum_{n>0} \tau(n) q^n$
of weight $12$ on ${\rm SL}(2,{\bZ})$.
One example of such a congruence is the congruence
$$
\lambda(p) \equiv p^8+a(p)+p^{13} \, (\bmod \, 41)
$$
where $f=\sum a(n) q^n$ is the normalized ($a(1)=1$) cusp form of
weight $22$ on ${\rm SL}(2,{\bZ})$ and the $\lambda(p)$ are the Hecke
eigenvalues of the genus $2$ Siegel cusp form $F \in S_{4,10}$.
We checked this congruence for all primes $p$ with $p\leq 37$.
For example, $a(37)=22191429912035222$ and $\lambda(37)=
11555498201265580$.

In joint work with Bergstr\"om and Faber  we extended this to
level~$2$. One considers the moduli space $\A{2}[2]$ of 
principally polarized abelian surfaces of level~2. This moduli
space contains as a dense open subset the moduli space
$\M{2}[w^6]$ of curves of genus~$2$ together with six Weierstrass
points. It comes with an action of $S_6\cong {\rm Sp}(4,{\bZ}/2{\bZ})$.
We formulated an analogue of conjecture \ref{g=2conj} for level~$2$.
Assuming this conjecture we can calculate the traces of the Hecke
operators $T(p)$ for $p\leq 37$ for the spaces of cusp forms of all
level~$2$. Using these numerical data we could observe lifings
from genus~$1$ to genus~$2$ and could make precise conjectures
about such liftings; and again we could predict and verify numerically
congruences  between genus~$1$ and genus~$2$ eigenforms.

We give an example. For $(a,b)=(4,2)$ we find
$$
e_c(\A{2}[2],V_{4,2})= -45 {\bL}^3+45 -S[\Gamma_2[2],(2,5)]
$$
and assuming our conjecture 
we can calculate the traces of the Hecke operators on the space
of cusp forms of weight $(2,5)$ on the level $2$ congruence
subgroup $\Gamma_2[2]$ of ${\rm Sp}(4,{\bZ})$; 
this space is a representation of
type $[2^2,1^2]$ for the symmetric group $S_6$ and is generated by
one Siegel modular form; for this Siegel modular form we have
the eigenvalue 
$\lambda(23)=-323440$ for $T(23)$.

It is natural to ask how the story continues for genus $3$. The first remark is
that the Torelli map $\M{3} \to \A{3}$ is a morphism of degree $2$
in the sense of stacks. This is due to the fact that every principally
polarized abelian variety has a non-trivial automorphism $-{\rm id}$,
while the general curve of genus $3$ has a trivial automorphism group.
This has as a consequence that for local systems $V_{a,b,c}$ with $a+b+c$
odd the cohomology on $\A{3}$ vanishes, but on $\M{3}$ it need not; and in
fact in general it does not. 

In joint work with Bergstr\"om and Faber we managed to formulate
an analogue of \ref{g=2conj} for genus $3$, see \cite{BFG:g=3}. 
Assuming the conjecture
we are able to calculate the traces of Hecke operators $T(p)$ on the
space of cusp forms $S_{a-b,b-c,c+4}$ for
all primes $p$ for which we did the counting (at least $p\leq 19$).
Again the numerically data fit with calculations of dimensions of 
spaces of cusp forms and numerical Euler characteristics. (These numerical
Euler characteristics were calculated in \cite{B-vdG:EC}.) And again
we could observe congruences between eigenvalues for $T(p)$ for 
$g=3$ eigenforms and those of genus $1$ and $2$. 
We give two examples:
$$
e_c(\A{3},V_{10,4,0})= -{\bL}^7+{\bL}+S[6,8],
$$
the same $S[6,8]$ for genus $2$ we met above. And for example
$$
e_c(\A{3},V_{8,4,4})= -S[12] \, {\bL}^6+S[12] +S[4,0,8],
$$
where genuine Siegel modular forms (of weight $(4,0,8)$)
of genus $3$ do occurr.
We refer to \cite{BFG:g=3} for the details and to the Chapter by 
Faber and Pandharipande for the cohomology of local systems
and modular forms on the moduli spaces of curves.

\smallskip
\noindent
{\sl Acknowledgement}
The author thanks Jonas Bergstr\"om and Carel Faber
for the many enlightening discussions we had on topics dealt with in
this survey. Thanks are due to T.\ Katsura for inviting me to Japan where
I found the time to finish this survey. I am also greatly indebted to 
Torsten Ekedahl from whom I learned so much. I was a great shock to
hear that he passed away; his gentle and generous personality
and sharp intellect will be deeply missed.


\begin{thebibliography}{99}
\bibitem{Alexeev} V.\ Alexeev:
\emph{Complete moduli in the presence of semiabelian group action.}
Ann.\ of Math.\ (2) 155 (2002), pp.\ 611--708.

\bibitem{Alexeev-Nakamura} V.\ Alexeev, I.\ Nakamura: 
\emph{On Mumford's construction of degenerating abelian varieties.} 
Tohoku Math.\ J.\ (2) 51 (1999), pp.\  399--420. 

\bibitem{AMRT} A.\ Ash, D.\  Mumford, M.\  Rapoport, Y.\ Tai:
Smooth compactification of locally symmetric varieties. Lie Groups: 
History, Frontiers and Applications, Vol. IV. Math. Sci. Press, 
Brookline, Mass., 1975. iv+335 pp. 

\bibitem{AMRT-2}  A.\ Ash, D.\  Mumford, M.\  Rapoport, Y.\ Tai:
Smooth compactification of locally symmetric varieties.
Second edition. With the collaboration of Peter Scholze. 
Cambridge Mathematical Library. 
Cambridge University Press, Cambridge, 2010.

\bibitem{B-B} W.\ Baily, A.\ Borel:
\emph{Compactification of arithmetic quotients of bounded symmetric domains.}
 Ann.\ of Math.\ (2) 84 (1966), pp.\  442--528.

\bibitem{BFG} J.\ Bergstr\"om, C.\ Faber, G.\ van der Geer:
\emph{Siegel modular forms of genus $2$ and level $2$: 
cohomological computations and conjectures.} IMRN 
doi 10.1093/imrn/rnn100 (2008) {\tt  arXiv:0803.0917}

\bibitem{BFG:g=3} J.\ Bergstr\"om, C.\ Faber, G.\ van der Geer:
\emph{Siegel modular forms of degree three and the cohomology of
local systems.} {\tt arXiv:1108:3731}

\bibitem{B-vdG:EC} J.\ Bergstr\"om, G.\ van der Geer: 
\emph{The Euler characteristic of local systems on the moduli of 
curves and abelian varieties of genus three.} 
J.\ Topol.\ 1 (2008), no. 3, pp.\ 651--662.
 
\bibitem{Borel} A.\ Borel: \emph{Stable real cohomology of arithmetic 
groups II.}
In: Manifolds and Lie groups, Birkh\"auser-Boston, 1981.
 
\bibitem{BS} A.\ Borel, J.-P. Serre: 
\emph{Le th\'eor\`eme de Riemann-Roch.}
Bull.\ Soc.\ Math.\ France 86 (19580, pp.\ 97--136. 

\bibitem{BGHZ} J.\ Bruinier, G.\ van der Geer, G.\ Harder, D.\ Zagier:
\emph{The $1-2-3$ of Modular Forms. } Springer Verlag, 2008.

\bibitem{CL} R.\ Charney, R.\ Lee:
\emph{Cohomology of the Satake compactification.}
Topology 22 (1983), pp.\ 389--423.
 
\bibitem{C-F} C.\ Consani, C.\ Faber:
{\sl On the cusp form motives in genus $1$ and level $1$.  }
Moduli spaces and arithmetic geometry,  297--314, Adv.\ Stud.\ Pure Math.,
\textbf{45}, Math.\ Soc.\ Japan, Tokyo, 2006.

\bibitem{Deligne:FM} P.\ Deligne: 
\emph{Formes modulaires et repr\'esentations $\ell$-adiques.}
S\'eminaire Bourbaki 1968-69, exp.\ 355.

\bibitem{Eichler1954} M.\ Eichler: 
\emph{Quatern\"are quadratische Formen und
die Riemannsche Vermutung f\"ur die Kongruenzzetafunktion.}
Arch.\ Math.\  5 (1954)

\bibitem{Ekedahl:SS} T.\ Ekedahl: \emph{On supersingular curves and abelian varieties} Math.\ Scand.\ 60 (1987), 151--178.

\bibitem{E-vdG:TOP} T.\ Ekedahl, G.\ van der Geer;
\emph{The order of the top Chern class of the Hodge bundle on 
the moduli space of abelian varieties.} 
Acta Math.\ 192 (2004), pp.\ 95--109.

\bibitem{E-vdG:lambdag} T.\ Ekedahl, G.\ van der Geer:
\emph{Cycles representing the top Chern class of the Hodge bundle 
on the moduli space of abelian varieties.}
Duke Math.\ J.\ 129 (2005), pp.\ 187--199. 

\bibitem{E-vdG:EO} T.\ Ekedahl, G.\ van der Geer:
\emph{Cycle classes of the E-O stratification on the moduli of 
abelian varieties.}
In: Algebra, Arithmetic, and Geometry: in honor of Yu.\ I.\ Manin. Vol.\ I,
pp.\  567--636, Progr.\ Math., 269, Birkh\"auser Boston, Inc., 
Boston, MA, 2009

\bibitem{E-G-H} C.\ Erdenberger, S.\ Grushevsky, K.\ Hulek:
\emph{Some intersection numbers of divisors on toroidal compactifications 
of $\A{g}$}. 
J.\ Algebraic Geom.\ 19 (2010), pp.\ 99--132

\bibitem{E-V} H.\ Esnault, E.\ Viehweg: 
\emph{Chern classes of Gauss-Manin bundles of weight $1$ vanish. }
K-theory 26 (2002), pp.\ 287--305.

\bibitem{FvdG:CR} C.\ Faber, G.\ van der Geer: {\sl Sur la cohomologie
des syst\`{e}mes locaux sur les espaces des modules des courbes de genre
$2$ et des surfaces ab\'{e}liennes}, I, II. C.R.\ Acad.\ Sci.\ Paris,
S\'er.\ I, \textbf{338} (2004), 381--384, 467--470.


\bibitem{F} G.\ Faltings: 
\emph{On the cohomology of locally symmetric
Hermitian spaces.}  Paul Dubreil and Marie-Paule Malliavin algebra seminar,
35th year (Paris, 1982),  p.\ 55--98,
Lecture Notes in Math., 1029, Springer, Berlin, 1983.

\bibitem{F1} G.\ Faltings:
\emph{Arithmetische Kompaktifizierung des Modulraums der abelschen
Variet\"aten}
Workshop Bonn 1984 (Bonn, 1984), pp.\ 321--383, 
Lecture Notes in Math., 1111, Springer, Berlin, 1985.

\bibitem{F-C} G.\ Falting, C-L. Chai: \emph{Degeneration of abelian varieties.}
Ergebnisse der Mathematik 22.  Springer Verlag.

\bibitem{Freitag:KD} E.\ Freitag:
\emph{Die Kodairadimension von K\"orpern automorpher Funktionen.}
J.\ reine und angewandte Math.\ 296 (1977), pp.\ 162-170.

\bibitem{Freitag:SMF} E.\ Freitag: Siegelsche Modulfunktionen. 
Grundlehren der Mathematischen Wissenschaften 254. 
Springer-Verlag, Berlin, 1983.

\bibitem{FP} E.\ Freitag, K.\ Pommerening: 
\emph{Regul\"are Differentialformen des K\"orpers der Siegelschen 
Modulfunktionen. }
J.\ Reine Angew.\ Math.\ 331 (1982), pp.\ 207--220.

\bibitem{Fulton} W.\ Fulton: Intersection Theory.
Second edition. 
Ergebnisse der Mathematik und ihrer Grenzgebiete. 3. Folge. 
Springer-Verlag, Berlin, 1998.

\bibitem{F-H} W.\ Fulton, J.\ Harris: 
Representation theory. A first course. Graduate Texts in Mathematics, 129. Readings in Mathematics. Springer-Verlag, New York, 1991. 

\bibitem{F-P} W.\ Fulton, P.\ Pragacz: Schubert varieties and degeneracy loci.
Lecture Notes in Math.\ 1689. Springer Verlag 1998.

\bibitem{vdG:Cycles} G.\ van der Geer: \emph{Cycles on the moduli space of abelian varieties} In: Moduli of curves and abelian varieties. Editors: C.\ Faber and
E.\ Looijenga. Aspects Math.\ E33, pp.\ 65--89.

\bibitem{vdG:CHA3} G.\ van der Geer: 
\emph{The Chow ring of the moduli space of abelian threefolds}
 J.\ Algebraic Geom.\ 7 (1998), pp.\ 753--770. 
Corrigendum:  J.\ Algebraic Geom.\ 18 (2009), pp.\ 795--796,

\bibitem{vdG:SMF} G.\ van der Geer:
\emph{Siegel modular forms and their applications.}
The 1-2-3 of Modular Forms, 181--245, 
Universitext, Springer, Berlin, 2008.

\bibitem{vdG:EC} G.\ van der Geer: 
\emph{Rank one Eisenstein cohomology of local systems on the moduli
space of abelian varieties}. Sci.\ China Math., 54 (2011), doi 10.1007/s11425-010-4159-4.

\bibitem{Getzler} E.\ Getzler: 
\emph{Euler characteristics of local systems on $\M{2}$.}  
Compositio Math.\  132  (2002), pp.\ 121--135. 

\bibitem{Grothendieck:Chern} A.\ Grothendieck: \emph{Classes de Chern et
repr\'esentations lin\'eaires des groupes discrets}. In: Dix Expos\'es sur
la Cohomologie des Sch\'emas. Advanced Studies in Pure Math. 3, pp.\
215--305.

\bibitem{Gr} S.\ Grushevsky: 
\emph{Geometry of $\A{g}$ and its compactifications.}
In: Algebraic geometry, Seattle 2005. Part 1, pp.\ 193--234, 
Proc.\ Sympos.\ Pure Math., 80, Part 1, Amer.\ Math.\ Soc., 
Providence, RI, 2009. 

\bibitem{Hain} R.\ Hain: 
\emph{The rational cohomology ring of the moduli space of abelian 3-folds.}
Math.\ Res.\ Lett.\ 9 (2002), pp.\ 473--491. 

\bibitem{Harashita} S.\ Harashita:
\emph{Ekedahl-Oort strata and the first Newton slope strata.}
J.\ Algebraic Geom.\ 16 (2007), 171--199.

\bibitem{Harder} G.\ Harder: Eisensteinkohomologie und die 
Konstruktion gemischter Motive. 
Lecture Notes in Mathematics, 1562. Springer-Verlag, Berlin, 1993.

\bibitem{Harder:123} G.\ Harder: 
\emph{A congruence between a Siegel and an elliptic modular form.}
  The 1-2-3 of Modular Forms,  pp.\ 247--262,
Universitext, Springer, Berlin, 2008.
 
\bibitem{H:PT} F.\ Hirzebruch:
\emph{Automorphe Formen und der Satz von Riemann-Roch.} In:
1958 Symposium internacional de topolog\'ia algebraica,  pp.\ 129--144. 
Universidad Nacional Aut\'onoma de M\'exico, Mexico City.

\bibitem{H-T} K.\ Hulek, O.\ Tommasi:
\emph{Cohomology of the toroidal compactification of $\A{3}$.} 
In: Vector bundles and complex geometry, pp.\ 89--103, 
Contemp.\ Math., 522, Amer.\ Math.\ Soc., Providence, RI, 2010. 

\bibitem{Igusa} J.\ Igusa:
\emph{A desingularization problem in the theory of Siegel modular functions.}
 Math.\ Ann.\ 168 (1967), pp.\  228--260. 

\bibitem{Ihara} Y.\ Ihara: 
\emph{Hecke polynomials as congruence $\zeta$ functions in elliptic modular case.} Annals of Math.\ 85 (1967), p.\ 267--295.

\bibitem{vdK-L} W.\ van der Kallen, E.\ Looijenga:
\emph{Spherical complexes attached to symplectic lattices}
{\tt arXiv:1001.0883}. To appear in Geometriae Dedicata.

\bibitem{Keel-Sadun} S.\ Keel, L.\ Sadun:  \emph{Oort's conjecture
for $\A{g}\otimes {\bC}$}. JAMS 16 (2003), pp.\ 887--900.

\bibitem{Koblitz} N.\ Koblitz: \emph{$p$-adic variation of the zeta 
function over families of varieties defined over finite fields.}
Comp.\ Math.\ 31 (1975), pp.\ 119--218.

\bibitem{Kresch} A.\ Kresch: \emph{Cycle groups for Artin stacks.}
Invent.\ Math.\ 138 (1999), pp.\ 495--536.

\bibitem{Laumon1} G.\ Laumon:
\emph{Sur la cohomologie \`a supports compacts des vari\'et\'es de
   Shimura pour ${\rm GSp}(4)_{\bQ}$}. Compositio Math.\ 105, (1997),
   pp.\ 267--359.

\bibitem{Laumon2} G.\ Laumon:
\emph{Fonctions z\^etas des vari\'et\'es de Siegel de dimension trois,
Formes automorphes. II. Le cas du groupe $\rm GSp(4)$}, Ast\'erisque 302,
2005, pp.\ 1--66.



\bibitem{MM} Y.\ Matsushima, S.\ Murakami: 
\emph{On vector bundle valued harmonic forms and 
automorphic forms on symmetric riemannian manifolds. }
Ann.\ of Math.\ (2) 78 (1963) 365--416. 

\bibitem{SM} S.\ Morel:
On the cohomology of certain noncompact Shimura varieties.
With an appendix by Robert Kottwitz. 
Annals of Mathematics Studies, 173. Princeton University Press, 
Princeton, NJ, 2010.

\bibitem{M-B} L.\ Moret-Bailly: \emph{Pinceaux de vari\'et\'es ab\'eliennes.}
In Ast\'erisque 129 (1995), pp.\ 266.

\bibitem{Mumford:12L} D.\ Mumford: 
\emph{Picard groups of moduli problems.} In: 
Arithmetical Algebraic Geometry (Proc. Conf. Purdue Univ., 1963), 
pp. 33--81, 1965, Harper \& Row, New York 

\bibitem{Mumford:AV} D.\ Mumford: Abelian varieties. Oxford University Press,
Tata Institute of Fundamental Research Studies in Mathematics 5, 1970.

\bibitem{Mumford:HPP} D.\ Mumford: \emph{Hirzebruch's Proportionality Theorem
in the non-compact case.} Inv.\ Math.\ 

\bibitem{Mumford:Enumerative} D.\ Mumford:
\emph{Towards an enumerative geometry of the moduli space of curves.}
 Arithmetic and geometry, Vol. II, pp.\ 271--328, Progr.\ Math., 36, 
Birkh\"auser Boston, Boston, MA, 1983. 

\bibitem{Mumford:KD} D.\ Mumford: 
\emph{On the Kodaira dimension of the Siegel modular variety.}
In: Algebraic geometry: open problems (Ravello, 1982), pp.\ 348--375, 
Lecture Notes in Math., 997, Springer, Berlin, 1983.

\bibitem{Olsson} M.\ Olsson:
Compactifying moduli spaces for abelian varieties. 
Lecture Notes in Mathematics, 1958. Springer-Verlag, Berlin, 2008.

\bibitem{Oort:SV} F.\ Oort: \emph{Subvarieties of moduli spaces.}
Inv.\ Math.\ 24 (1874), pp.\ 95--119.

\bibitem{Oort:CS} F.\ Oort: 
\emph{Complete subvarieties of moduli spaces.}
In: Abelian Varieties (Egloffstein, 1993), de Gruyter, Berlin 1995, pp.\ 225-235.

\bibitem{Oort:Stratification} F.\ Oort: 
\emph{A stratification of a moduli space
of abelian varieties.} In: Moduli of Abelian varieties (Texel Island).
Editors: C.\ Faber, G.\ van der Geer, F.\ Oort. Progress in Math.
195, Birkh\"auser 195, Basel, 2001, pp.\ 345--416.

\bibitem{Oort:foliations} F.\ Oort:
\emph{Foliations in moduli spaces of abelian varieties.}
J.\ Amer.\ Math.\ Soc.\ 17 (2004), pp.\ 267–296. 

\bibitem{Pragacz} P.\ Pragacz:
\emph{Enumerative geometry of degeneray loci}. 
Ann.\ Scient.\ Ec.\ Norm.\ Sup.\ 21 (1988), pp.\ 413--454.

\bibitem{Scholl} A.J.\ Scholl: 
\emph{Motives for modular forms.} Invent.\ Math.\ 100 (1990), pp.\ 419--430.

\bibitem{Schwermer} J.\ Schwermer:
\emph{Eisenstein series and cohomology of arithmetic groups: the generic case.}
Invent.\ Math.\ 116 (1994), pp.\ 481–511. 

\bibitem{Sh-B} N.\ Shepherd-Barron:
\emph{Perfect forms and the moduli space of abelian varieties.} 
Invent.\ Math.\  163 (2006), pp.\  25--45.

\bibitem{Tai} Y-S.\ Tai:
\emph{On the Kodaira dimension of the moduli space of abelian varieties.}
Invent.\ Math.\  68 (1982), 425--439.

\bibitem{Tsushima} R.\ Tsushima: 
\emph{An explicit dimension formula for the spaces of generalized 
automorphic forms with respect to ${\rm Sp}(2, {\bZ})$.}
Proc.\ Japan Acad.\ Ser.\ A, Math.\ Sci.\ 59 (1983), pp.\ 139--142.

\bibitem{Weissauer} R.\ Weissauer: 
\emph{Vektorwertige Siegelsche Modulformen kleinen Gewichtes. }
J.\ Reine Angew.\ Math.\  343 (1983), pp.\ 184--202.  

\bibitem{Weissauer:THO} R.\ Weissauer:
\emph{The trace of Hecke operators on the space of 
classical holomorphic Siegel modular forms of genus two.} 
{\tt arXiv:0909.1744}

\bibitem{Zagier} D.\ Zagier:
\emph{Correction to: The Eichler-Selberg trace formula on 
${\rm SL}_{2}({\bf Z})$ }
(Introduction to modular forms, Appendix, pp. 44--54, Springer, Berlin, 1976) 
by S.\ Lang.  
In: Modular functions of one variable, VI (Bonn, Bonn, 1976),  
pp. 171--173. 
Lecture Notes in Math., Vol. 627, Springer, Berlin, 1977. 

\end{thebibliography}
\end{document}